\documentclass{article}
\usepackage{microtype}
\usepackage{graphicx}
\usepackage{subcaption}
\usepackage{booktabs} 
\usepackage{hyperref}
\usepackage{adjustbox}
\usepackage{multirow} 
\usepackage{colortbl}

\usepackage[accepted]{icml2026}

\usepackage{amsmath}
\usepackage{amssymb}
\usepackage{mathtools}
\usepackage{amsthm}
\usepackage{siunitx}
\DeclareSIUnit\angstrom{\text{Å}}

\usepackage[capitalize,noabbrev]{cleveref}

\Crefname{appsec}{Appendix}{Appendices}

\theoremstyle{plain}
\newtheorem{theorem}{Theorem}[section]

\newtheorem{lemma}{Lemma}[section]
\theoremstyle{definition}

\theoremstyle{remark}
\newtheorem{remark}{Remark}[section]

\usepackage[disable,textsize=tiny]{todonotes}

\begin{document}

\twocolumn[
  \icmltitle{Spectral-Informed Neural Networks Outperform Spectral Methods in High-dimensional PDEs}

  \icmlsetsymbol{equal}{*}

  \begin{icmlauthorlist}
    \icmlauthor{Tianchi Yu}{axxx,applied}
    \icmlauthor{Ivan Oseledets}{axxx,inm}
  \end{icmlauthorlist}

  \icmlaffiliation{axxx}{AXXX}
  \icmlaffiliation{applied}{Applied AI Institute}
  \icmlaffiliation{inm}{INM}

  \icmlcorrespondingauthor{Tianchi Yu}{807966083@qq.com}

  \icmlkeywords{Machine Learning, ICML}

  \vskip 0.3in
]



\printAffiliationsAndNotice{}  

\begin{abstract}
For low-dimensional problems ($d\leq3$), spectral methods can achieve exceptionally high accuracy. For middle-dimensional problems ($4 \leq d \lesssim 10$), spectral methods remain feasible through specific techniques such as sparse grids or hyperbolic cross. However, for high-dimensional problems ($d\gg 10$), spectral methods suffer frome the curse of dimensionality. Physics-informed neural networks (PINNs) have emerged as a promising approach to overcome this challenge, offering scalability to high dimensions, but often suffer from limited accuracy and efficiency. Recently proposed spectral-informed neural networks (SINNs) combine spectral methods with PINNs, operating directly in the spectral domain to avoid spatial derivative computations and to reduce memory consumption. In this work, we introduce Modified SINNs, which integrate coefficient decay scaling and basis embeddings motivated by harmonic analysis to enhance accuracy in high-dimensional problems and enable accurate approximation of unknown spectral coefficients. Numerical experiments on steady and time-dependent partial differential equations demonstrate that Modified SINNs outperform sparse grid spectral methods on middle-dimensional problems with incomplete spectral information and achieve superior accuracy compared to PINNs on high-dimensional problems.
\end{abstract}
\section{Introduction}
Partial differential equations (PDEs) are the most widely used tools for solving physical and engineering problems~\cite{karniadakis2005spectral}. Many applications (e.g., in financial engineering, quantum mechanics) lead to
high-dimensional problems~\cite{peherstorfer2015selected}. Numerical methods, such as spectral methods, are the backbone for computing numerical solutions of PDEs. Conventional numerical methods can compute accurate solutions for low-dimensional PDEs ($d\leq3$). For middle-dimensional PDEs ($4 \leq d \lesssim 10$), the equations can be tackled using techniques, for example, combining sparse grids~\cite{bungartz2004sparse} with finite element methods~\cite{peherstorfer2015multigrid}, combining compressed sensing~\cite{candes2008introduction} with spectral methods~\cite{brugiapaglia2018theoretical}, and combining tensor trains~\cite{oseledets2011tensor} with backward stochastic differential equations~\cite{chertkov2021solution, richter2024continuous}. However, for high-dimensional PDEs ($d \gg 10$), conventional numerical methods suffer from the curse of dimensionality (CoD), \textit{i.e.}, the computational cost grows exponentially with the dimension.

Deep learning methods have demonstrated an impressive ability to approximate high-dimensional and highly complex functions, as evidenced by their remarkable success in computer vision and natural language processing~\cite{lecun2015deep}. In the context of numerical PDEs, deep learning methods also exhibit strong potential for tackling high-dimensional problems, such as physics-informed neural networks (PINNs)~\cite{raissi2019physics} and the deep Ritz method (DRM)~\cite{weinan2018deep}. For PINNs, recent studies~\cite{han2018solving,shi2024stochastic} have increased the maximum achievable dimensionality to one million dimensions. Some current works integrate conventional numerical methods into PINNs to enhance their performance. For example, CANPINN~\cite{chiu2022can} incorporates finite difference methods, DFVM~\cite{cen2024deep} adopts finite volume methods, and SINN~\cite{yu2025spectral} employs spectral methods.

In this work, we propose using Spectral-Informed Neural Networks (SINNs) to solve high-dimensional PDEs. By incorporating prior knowledge from harmonic analysis---specifically including coefficient decay and basis-dependent embeddings---our Modified SINNs improve stability and accuracy. Furthermore, by leveraging this prior knowledge, the modification enables SINNs not only to compute known spectral coefficients efficiently, but also to approximate missing or uncomputed coefficients.

Our specific contributions can be summarized as follows:
\begin{itemize}
    \item We propose \emph{Modified Spectral-Informed Neural Networks} (Modified SINNs), which incorporate prior knowledge from harmonic analysis into the original SINN framework, leading to improved stability and accuracy.
    \item We reveal that traditional sparse grid spectral methods are still limited by the curse of dimensionality in high-dimensional problems, while SINNs provide an effective framework for solving high-dimensional PDEs. Furthermore, by approximating missing or uncomputed spectral coefficients, SINNs can improve the accuracy of spectral methods.
    \item Through extensive numerical experiments, we demonstrate that Modified SINNs outperform sparse grid spectral methods on middle-dimensional problems when some coefficients are missing and achieve superior accuracy on high-dimensional problems.
\end{itemize}

The paper is structured as follows. \cref{section: preliminary} reviews spectral methods, physics-informed neural networks, and spectral-informed neural networks. \cref{section: modified sinn} presents the proposed Modified SINNs. \cref{section: experiments} reports numerical experiments on coefficient prediction, comparisons with sparse grid spectral methods, and comparisons with PINNs and DRMs. Finally, \cref{section: conclusion} concludes the paper and discusses future directions.
\section{Preliminary}\label{section: preliminary}
Consider a general PDE defined on a spatial domain $\Omega \subset \mathbb{R}^d$:  
\begin{equation}\label{eq: general pde}
\begin{aligned}
    \mathcal{N}[u](\boldsymbol{x},t) & = f(\boldsymbol{x},t), &(\boldsymbol{x},t) \in \Omega\times [0,T], \\
    u(\boldsymbol{x},t) & = g(\boldsymbol{x},t), & (\boldsymbol{x},t) \in \partial\Omega\times [0,T],\\
    u(\boldsymbol{x},0) & = h(\boldsymbol{x}), & \boldsymbol{x} \in \Omega.
\end{aligned}
\end{equation}

Here, $\mathcal{N}$ is a differential operator including spatial and temporal derivatives, $f$ is a forcing term, and $u$ is the solution. In \cref{eq: general pde}, the first equation is called the governing equation, the second equation is called the boundary condition, and the third equation is called the initial condition. In the following, we assume that \cref{eq: general pde} is well-posed and that $u:\mathbb{R}^d\rightarrow \mathbb{R}$.
\subsection{Spectral Methods}\label{section: spectral method}
Spectral methods are a class of numerical methods for solving differential equations by representing the solution as a global expansion in terms of spectral bases. Unlike local discretization schemes, spectral methods achieve high accuracy by exploiting the global regularity of the solution~\cite{trefethen2000spectral, boyd2001chebyshev}. When the target function is sufficiently smooth, the convergence rate of spectral approximations is often exponential, a property known as spectral accuracy~\cite{canuto2007spectral}. Due to their accuracy and efficiency for smooth solutions, spectral methods have been widely applied in fluid dynamics~\cite{canuto2007spectral}, quantum mechanics~\cite{deloff2007semi}, electromagnetism~\cite{wang2024exact}, and beyond.
 
In the spectral method, the solution $u(\boldsymbol{x},t)$ in \cref{eq: general pde} is approximated by an expansion.
\begin{equation}
u(\boldsymbol{x},t) = \sum_{\boldsymbol{k} \in \mathcal{K}} \hat{u}_{\boldsymbol{k}}(t)   \phi_{\boldsymbol{k}}(\boldsymbol{x}),
\end{equation}
In practice, the truncated expansion is used:
\begin{equation}\label{eq: truncted_inverse_transform}
u_N(\boldsymbol{x},t) = \sum_{\boldsymbol{k} \in \mathcal{K}_N} \hat{u}_{\boldsymbol{k}}(t)   \phi_{\boldsymbol{k}}(\boldsymbol{x}),
\end{equation}
where $\{\phi_{\boldsymbol{k}}\}$ are global basis functions\footnote{See \cref{appendix: spectral bases} for details of other spectral bases.} (e.g., Fourier or Chebyshev), $\hat{u}_{\boldsymbol{k}}$ are expansion coefficients, and $\mathcal{K}=\{\boldsymbol{k} : \|\boldsymbol{k}\|_\infty< +\infty\}$ is the index set and $\mathcal{K}_N$ is a truncated set of $\mathcal{K}$ . The coefficients $\hat{u}_{\boldsymbol{k}}$ are determined by Galerkin methods (\cref{eq:galerkin}) or collocation methods (\cref{eq:collocation})(See \cref{appendix: formula_spectral_method} for details of spectral methods.).

However, spectral accuracy is limited for complex geometries, high dimensionality, or non-smooth solutions. To enhance the applicability of spectral methods under these challenging cases, various extensions have been proposed, including spectral element methods~\cite{karniadakis2005spectral}, fictitious domain spectral methods~\cite{gu2021efficient}, and sparse grid spectral methods~\cite{shen2010sparse}.

The sparse grid spectral method (SGSM) is a variant of conventional spectral methods that mitigates the curse of dimensionality by employing a sparser grid and computing fewer spectral coefficients. In some problems, this approach can achieve accuracy comparable to that of full-grid spectral methods, but with significantly lower memory usage and computational cost~\cite{shen2012efficient,temlyakov2014sparse,wang2024compressive}. However, the memory usage and computational cost still increase exponentially with dimensionality, which limits its applicability to high-dimensional problems ($d \gg 10$). We point out this phenomenon in \cref{appendix: cost_hyperbolic_cross}.

In this paper, the algorithms and schemes of SGSM developed in~\cite{shen2010efficient,shen2012efficient} are adopted as the baseline in the experiments. These methods employ hyperbolic cross\footnote{See \cref{appendix: hyperbolic_cross} for details of hyperbolic cross.} index sets to construct sparse grids and introduce efficient, well-established algorithms for solving PDEs within the SGSM framework.

\subsection{Physics-informed Neural Networks}\label{section: PINN}
Physics-informed neural networks (PINNs) have emerged as an effective framework for solving problems constrained by physical laws, especially partial differential equations (PDEs). Unlike traditional data-driven neural networks, PINNs embed governing equations into the loss function, thereby enabling training without labeled data. In PINN frameworks, the solution $u(\boldsymbol{x},t)$ in \cref{eq: general pde} is approximated by a neural network $u^\theta(\boldsymbol{x},t)$ parameterized by $\theta$. The network is trained to minimize a composite loss function that enforces physical constraints:  
\begin{equation}\label{eq: loss pinn}
\mathcal{L}(\theta) = \lambda_r   \mathcal{L}_r(\theta) 
+ \lambda_b   \mathcal{L}_b(\theta) 
+ \lambda_i   \mathcal{L}_i(\theta) 
\end{equation}
where $\lambda_r, \lambda_b, \lambda_i$ are nonnegative weights. Let $N_r,N_b,N_i$ denote the cardinality of the corresponding training sets; the individual terms in \cref{eq: loss pinn} are
\begin{equation}
\mathcal{L}_r(\theta) = \frac{1}{N_r}\sum_{j=1}^{N_r} 
\Big| \mathcal{N}[u^\theta](\boldsymbol{x}_j^r,t_j) - f(\boldsymbol{x}_j^r,t_j) \Big|^2 ,
\end{equation}
\begin{equation}
\mathcal{L}_b(\theta) = \frac{1}{N_b}\sum_{j=1}^{N_b} 
\Big| u^\theta(\boldsymbol{x}_j^b,t_j) - g(\boldsymbol{x}_j^b,t_j) \Big|^2 ,
\end{equation}
\begin{equation}
\mathcal{L}_i(\theta) = \frac{1}{N_i}\sum_{j=1}^{N_i} 
\Big| u^\theta(\boldsymbol{x}_j^i,0) - h(\boldsymbol{x}_j^i) \Big|^2 ,
\end{equation}
which enforce the PDE, boundary conditions, and initial conditions, respectively. By minimizing $\mathcal{L}(\theta)$, the network learns an approximation $u^\theta$ that simultaneously satisfies the governing PDE, boundary conditions, and initial conditions. Automatic differentiation provides efficient evaluation of derivatives in $\mathcal{N}[u^\theta]$, eliminating the need for explicit discretization of differential operators. Thus, unlike conventional numerical schemes, PINNs are particularly attractive for solving high-dimensional problems.

\subsection{Spectral-Informed Neural Networks}
Inspired by PINNs, Spectral-Informed Neural Networks (SINNs)~\cite{yu2025spectral} are proposed to integrate spectral methods (\cref{section: spectral method}) into neural networks. SINNs accelerate training by eliminating the computation of spatial derivatives. Moreover, due to the spectral convergence of spectral bases, SINNs obtain better accuracy than PINNs. SINNs reconstruct the loss function (\cref{eq: loss pinn}) using the collocation formulation (\cref{eq:collocation}). Suppose $N_{p}$ is the number of points used for the discrete forward transform, $\mathcal{T}$ is the set of timestamps sampled randomly from $[0,T]$ (Here, $\mathcal{T}_f$ is the set for the residual loss and $\mathcal{T}_b$ is the set for the boundary loss), and $\mathcal{K}_{N}$ is the index set (Here, $\mathcal{K}_{N_f}$ is the set for the residual loss, $\mathcal{K}_{N_b}$ is the set for the boundary loss and $\mathcal{K}_{N_i}$ is the set for the initial loss); then the loss function of SINNs is defined as follows:
\begin{equation}\label{eq: loss sinn}
\hat{\mathcal{L}}(\theta) = \lambda_f \hat{\mathcal{L}}_f(\theta) 
+ \lambda_b \hat{\mathcal{L}}_b(\theta) 
+ \lambda_i \hat{\mathcal{L}}_i(\theta) 
\end{equation}
where 
\begin{tiny}
\begin{equation}\label{eq: sinn_f_loss}
\begin{aligned}
\hat{\mathcal{L}}_f(\theta) &= \frac{1}{|\mathcal{T}_{f}|N_{p}}
\sum_{t\in\mathcal{T}_f}\sum_{j=1}^{N_{p}} \left| 
\mathcal{N}\left[
\sum_{\boldsymbol{k} \in \mathcal{K}_{N_f}} \hat{u}_{\boldsymbol{k}}^\theta(t)  \phi_{\boldsymbol{k}}(\boldsymbol{x}_j)
\right]\right.  \\
&\left.\quad - \sum_{\boldsymbol{k} \in \mathcal{K}_{N_f}} \hat{f}_{\boldsymbol{k}}  (t)  \phi_{\boldsymbol{k}}(\boldsymbol{x}_j) 
\right|^2.
\end{aligned}
\end{equation}
\end{tiny}
\begin{tiny}
\begin{equation}\label{eq: sinn_b_loss}
\begin{aligned}
\hat{\mathcal{L}}_b(\theta) &= \frac{1}{|\mathcal{T}_{b}|N_{p}}
\sum_{t\in\mathcal{T}_b}\sum_{j=1}^{N_p} \sum_{\boldsymbol{k} \in \mathcal{K}_{N_b}}
\left| \hat{u}_{\boldsymbol{k}}^\theta(t)  \phi_{\boldsymbol{k}}(\boldsymbol{x}_j) -  \hat{g}_{\boldsymbol{k}}(t)  \phi_{\boldsymbol{k}}(\boldsymbol{x}_j) \right|^2, \\
&= \frac{1}{|\mathcal{T}_{b}|} \sum_{t\in\mathcal{T}_b}
\sum_{\boldsymbol{k} \in \mathcal{K}_{N_b}}
\left| \hat{u}_{\boldsymbol{k}}^\theta(t) - \hat{g}_{\boldsymbol{k}}(t) \right|^2.
\end{aligned}
\end{equation}
\end{tiny}
\begin{tiny}
\begin{equation}\label{eq: sinn_i_loss}
\begin{aligned}
    \hat{\mathcal{L}}_i(\theta) & = \frac{1}{N_{p}}\sum_{j=1}^{N_p} \sum_{\boldsymbol{k} \in \mathcal{K}_{N_i}}
\left| \hat{u}_{\boldsymbol{k}}^\theta(0)  \phi_{\boldsymbol{k}}(\boldsymbol{x}_j) - \hat{h}_{\boldsymbol{k}}\phi_{\boldsymbol{k}}(\boldsymbol{x}_j) \right|^2,\\
& =\sum_{\boldsymbol{k} \in \mathcal{K}_{N_i}}
\left| \hat{u}_{\boldsymbol{k}}^\theta(0)  - \hat{h}_{\boldsymbol{k}} \right|^2.
\end{aligned}
\end{equation}
\end{tiny}

The second equality in each of \cref{eq: sinn_i_loss,eq: sinn_b_loss} holds due to the orthogonality of $\phi$. Whether the loss terms include the normalization factor $\frac{1}{|\mathcal{K}_N|}$(where $N \in \{N_f, N_b, N_i, N_p\}$) depends on the implementation of the discrete transform. By minimizing $\hat{\mathcal{L}}(\theta)$, the network learns an approximation $\hat{u}^\theta$ of the coefficients $\hat{u}$. If the solution $u(\boldsymbol{x},t)$ is required in the physical domain, one can use \cref{eq: truncted_inverse_transform} to compute its value at any point $(\boldsymbol{x},t)\in\Omega\times[0,T]$. Furthermore, if the spectral basis inherently satisfies the boundary condition (e.g., periodic for the Fourier basis, homogeneous Dirichlet for the sinusoidal basis, or homogeneous Neumann for the cosine basis), then $\hat{\mathcal{L}}_b(\theta)$ is removed from \cref{eq: loss sinn}. Herein, one of the impressive features is that SINNs can satisfy the boundary condition strictly for some PDEs.

Compared to PINNs, in SINNs, the input is the frequency $\boldsymbol{k}$ and the temporally sampled points $t$, and the output is the coefficients $\hat{u}^\theta(\boldsymbol{k},t)$\footnote{We use the notation $\hat{u}^\theta(\boldsymbol{k},t)$ rather than $\hat{u}^\theta_{\boldsymbol{k}}(t)$ to highlight the input of $\hat{u}^\theta$ is $(\boldsymbol{k},t)$} in the spectral domain. Similar to PINNs, for each iteration in SINNs, the input sets $\mathcal{K}_{N_f},\mathcal{K}_{N_b},\mathcal{K}_{N_i}$ are randomly sampled from $\mathcal{K}_{N_p}$ which is the index set generated by $N_p$ discrete points. However, when $\mathcal{N}$ is nonlinear or the differential operator of $\phi$ is not diagonal, SINNs become unstable if the input sets are not large enough. Herein, this paper proposes Modified SINNs that integrate prior knowledge from harmonic analysis to enhance stability and accuracy.

\section{Modified SINNs}\label{section: modified sinn}
In this section, we introduce prior knowledge into SINNs and propose Modified SINNs. In particular, Modified SINNs preserve the input--output form $((\boldsymbol{k},t),\hat{u})$ but integrate an amplitude scaler (\cref{section: coef_decay}) and an embedding module (\cref{section: basis_embeddings}). These two components explicitly encode prior knowledge from harmonic analysis to improve stability, accuracy, and generalization.

\begin{figure}[ht]
    \centering
    \includegraphics[width=0.8\linewidth]{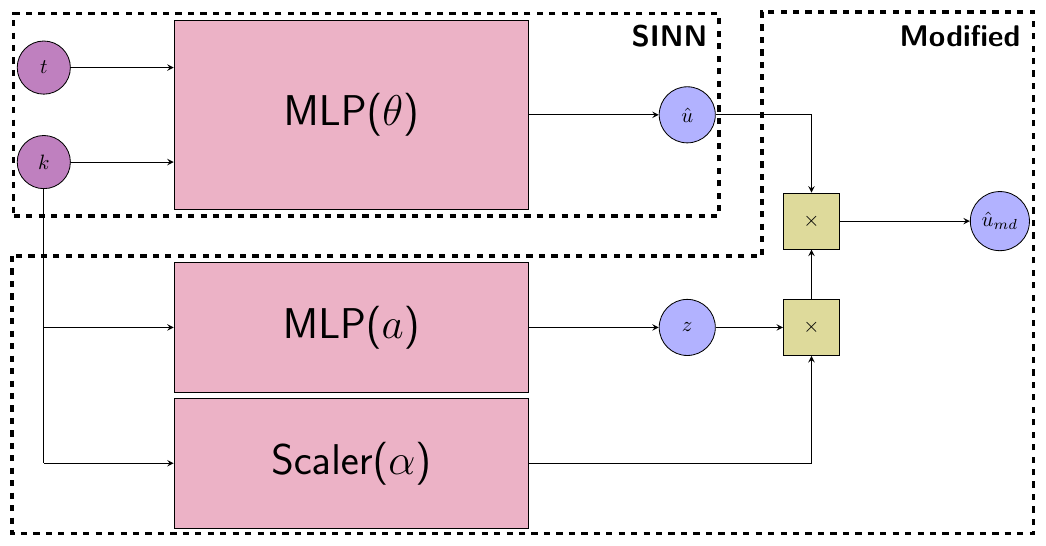}
    \caption{This diagram illustrates Modified SINNs. Purple circles denote input variables: $t$ and $\boldsymbol{k}$, blue circles represent output values: $\hat{u}$, $z$, and $\hat{u}_{\text{md}}$, yellow squares indicate operators ($\times$ means point-wise multiplication), and red rectangles signify neural network components where $\theta$, $a$ and $\alpha$ are the corresponding learnable parameters. The top dashed section shows the original SINNs and the bottom dashed section illustrates the modified part.}
    \label{fig:network}
\end{figure}
\subsection{Coefficient Decay}\label{section: coef_decay}
The decay of expansion coefficients plays a fundamental role in spectral methods, as it directly determines the accuracy and efficiency of numerical schemes. In harmonic analysis, it is known that the rate of coefficient decay is bounded by smoothness (see \cref{appendix: coef decay} for a discussion of the decay rate for spectral bases). Since the smoothness and analyticity of PDE solutions can often be determined from the structure of the given equations, the coefficients, and the source term \cite{canuto2006spectral}, the decay rate of corresponding spectral coefficients can be regarded as prior knowledge when learning their distribution. Thus, in Modified SINNs, a scaling factor is applied to the output. For example, when considering the analytic functions in Fourier basis, \cref{theorem: coef decay for fourier} establishes that the magnitude of the corresponding coefficients satisfies  $|\hat u_{\boldsymbol{k}}| \le C\,e^{-\|\boldsymbol{\alpha}\cdot \boldsymbol{k}\|_1}$ for some $\boldsymbol{\alpha},C>0$, thus the output of SINNs is multiplied by $e^{- \|\boldsymbol{\alpha} \cdot \boldsymbol{k}\|_1}$, where $\boldsymbol{\alpha}$ is a learnable parameter. By reweighting different frequencies according to this prior, the scaler redistributes approximation capacity toward strongly dominant low-frequency components and suppresses weakly dominant high-frequency components.
When the assumed decay rate is consistent with the true regularity of the solution, this scaler improves generalization efficiently.

\subsection{Basis Embedding}\label{section: basis_embeddings}
In spectral methods, the coefficient $\hat{u}_{\boldsymbol{k}}$ is calculated by integration:
\begin{equation}
    \hat{u}_{\boldsymbol{k}}=\int_{\Omega} u(\boldsymbol{x})\phi_{\boldsymbol{k}}(\boldsymbol{x})\mathrm{d}\boldsymbol{x}.
\end{equation}
The discrete forward transform is computed via a quadrature rule on the collocation point set $\mathcal{X} \subset \Omega$:
\begin{equation}\label{eq: quadrature}
    \hat{u}_{\boldsymbol{k}}^{\mathcal{X}}=\sum_{\boldsymbol{x}\in \mathcal{X}} \omega(\boldsymbol{x}) u(\boldsymbol{x})\phi_{\boldsymbol{k}}(\boldsymbol{x}),
\end{equation}
where $\omega(\boldsymbol{x})$ is the weight of the chosen quadrature. Since the basis expression is known and the function $u(\boldsymbol{x})$ can be approximated by neural networks, Modified SINNs introduce a much coarser collocation point set $\mathcal{X}_c$, where $|\mathcal{X}_c|\ll|\mathcal{X}|$, and the embedding:
\begin{equation}\label{eq: basis embedding}
    z^a(\boldsymbol{k})=\operatorname{MLP}\left(\boldsymbol{\phi}_{\boldsymbol{k}}\right), 
\end{equation}
where $\boldsymbol{\phi}_{\boldsymbol{k}}=
\left(\phi_{k_1}(x_1),\,\phi_{k_2}(x_2),\,\dots,\,\phi_{k_d}(x_d)\right)_{\boldsymbol{x}\in\mathcal{X}_c} \in \mathbb{R}^{d\times|\mathcal{X}_c|}$ is the matrix that the $i$-th row of $\boldsymbol{\phi}_{\boldsymbol{k}}$ consists of the evaluations of the one-dimensional basis function $\phi_{k_i}$ along the $i$-th coordinate at the collocation points $\mathcal{X}_c$, and $a$ represents the learnable parameters of the embedding module. The MLP is designed to approximate the underlying physical function $u(\boldsymbol{x})$; thus, the basis embedding can provide the neural network with a more powerful capability to represent oscillatory coefficients. If the parameter $a$ is large enough (which can naturally satisfy the condition $|z^a(\boldsymbol{k})-\hat{u}_{\boldsymbol{k}}^{\mathcal{X}_c}|\ll \varepsilon(|\mathcal{X}_c|)$, for all $\boldsymbol{k}$ in the valid index set by the universal approximation theorem~\cite{hornik1989multilayer}.), \cref{theorem: basis_embedding} reveals that $z^a(\boldsymbol{k})$ converges to $\hat{u}_{\boldsymbol{k}}^{\mathcal{X}}$ with the same convergence rate as that of the quadrature\footnote{The proof of \cref{theorem: basis_embedding} and further discussions, including practical implementation, are provided in \cref{appendix: embedding}.}.

\begin{theorem}\label{theorem: basis_embedding}
    Suppose that, for a given quadrature and a given integrand function $f(\boldsymbol{x})=u(\boldsymbol{x})\phi_{\boldsymbol{k}}(\boldsymbol{x})$, the error of the quadrature depends on the number of collocation points:
    \begin{equation}
        \left|\int_{\Omega} f(\boldsymbol{x})\mathrm{d}\boldsymbol{x}-\sum_{\boldsymbol{x}\in \mathcal{X}} \omega(\boldsymbol{x}) f(\boldsymbol{x})\right|\leq \varepsilon(|\mathcal{X}|).
    \end{equation}
Let $\mathcal{K}_{w}$ ,$\mathcal{K}_{t}$ denote the total dataset and the training dataset respectively, and $\Phi=\left[\boldsymbol{\phi}_{\boldsymbol{k}}\right]_{\boldsymbol{k}\in \mathcal{K}_t}\in \mathbb{R}^{|\mathcal{X}_c|\times|\mathcal{K}_t|}$ is the matrix stacked by vectors $\boldsymbol{\phi}_{\boldsymbol{k}}, \ \boldsymbol{k}\in \mathcal{K}_t$. If $\operatorname{Rank}(\Phi)\geq |\mathcal{X}_c|$ and $|z^a(\boldsymbol{k})-\hat{u}_{\boldsymbol{k}}^{\mathcal{X}_c}|\ll \varepsilon(|\mathcal{X}_c|), \ \forall \boldsymbol{k} \in \mathcal{K}_t$, then
\begin{equation}
    |z^a(\boldsymbol{k})-\hat{u}_{\boldsymbol{k}}^{\mathcal{X}}|=\mathcal{O}\left(\varepsilon(|\mathcal{X}_c|)\right), \quad \forall \boldsymbol{k}\in \mathcal{K}_w\setminus\mathcal{K}_t.
\end{equation}
\end{theorem}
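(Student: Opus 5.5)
The plan is to read $\hat{u}_{\boldsymbol{k}}^{\mathcal{X}_c}$ as a \emph{linear functional} of the embedding $\boldsymbol{\phi}_{\boldsymbol{k}}$, and to use the rank condition so that fitting on $\mathcal{K}_t$ pins down this functional. Then generalization to unseen $\boldsymbol{k}$ follows from linearity, and the quadrature error contributes the $\mathcal{O}(\varepsilon(|\mathcal{X}_c|))$ term.

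\textbf{Step 1 (linear structure).} By \cref{eq: quadrature}, $\hat{u}_{\boldsymbol{k}}^{\mathcal{X}_c}=\sum_{\boldsymbol{x}\in\mathcal{X}_c}\omega(\boldsymbol{x})u(\boldsymbol{x})\phi_{\boldsymbol{k}}(\boldsymbol{x})=\boldsymbol{w}^\top\boldsymbol{\phi}_{\boldsymbol{k}}$. Here $\boldsymbol{w}=(\omega(\boldsymbol{x})u(\boldsymbol{x}))_{\boldsymbol{x}\in\mathcal{X}_c}$, and $\boldsymbol{\phi}_{\boldsymbol{k}}$ is identified with the vector of tensor-product values $\phi_{\boldsymbol{k}}(\boldsymbol{x})=\prod_i\phi_{k_i}(x_i)$, which the MLP can form from the row matrix. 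So the target map $\boldsymbol{\phi}_{\boldsymbol{k}}\mapsto\hat{u}^{\mathcal{X}_c}_{\boldsymbol{k}}$ is a single fixed vector $\boldsymbol{w}\in\mathbb{R}^{|\mathcal{X}_c|}$ acting on $\mathbb{R}^{|\mathcal{X}_c|}$.

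\textbf{Step 2 (identifiability from training data).} The training hypothesis gives $z^a(\boldsymbol{k})=\boldsymbol{w}^\top\boldsymbol{\phi}_{\boldsymbol{k}}+\delta_{\boldsymbol{k}}$ with $|\delta_{\boldsymbol{k}}|\ll\varepsilon(|\mathcal{X}_c|)$ for all $\boldsymbol{k}\in\mathcal{K}_t$. Since $\operatorname{Rank}(\Phi)\ge|\mathcal{X}_c|$, the columns $\{\boldsymbol{\phi}_{\boldsymbol{k}}\}_{\boldsymbol{k}\in\mathcal{K}_t}$ span $\mathbb{R}^{|\mathcal{X}_c|}$. Hence any $\boldsymbol{k}\in\mathcal{K}_w\setminus\mathcal{K}_t$ satisfies $\boldsymbol{\phi}_{\boldsymbol{k}}=\Phi\boldsymbol{c}_{\boldsymbol{k}}$ for some coefficient vector $\boldsymbol{c}_{\boldsymbol{k}}$, for instance the minimum-norm one $\Phi^{+}\boldsymbol{\phi}_{\boldsymbol{k}}$. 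I will take the network to realize the interpolating linear functional on the span, i.e.\ $z^a(\boldsymbol{k})=\sum_j c_{\boldsymbol{k},j}z^a(\boldsymbol{k}_j)$. This is the implicit modelling assumption: the MLP acts as an approximation of $u$ sampled on $\mathcal{X}_c$, so it is linear in $\boldsymbol{\phi}_{\boldsymbol{k}}$ to leading order. Under it, $z^a(\boldsymbol{k})-\hat u^{\mathcal{X}_c}_{\boldsymbol{k}}=\boldsymbol{c}_{\boldsymbol{k}}^\top\boldsymbol{\delta}$, whose size is at most $\|\boldsymbol{c}_{\boldsymbol{k}}\|_1\max_j|\delta_{\boldsymbol{k}_j}|$. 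I will bound $\|\boldsymbol{c}_{\boldsymbol{k}}\|$ using $\|\Phi^{+}\|$ and the uniform boundedness of the bases (e.g.\ $|\phi_k|\le1$ for Fourier and Chebyshev). This keeps the term at $o(\varepsilon(|\mathcal{X}_c|))$, or $\mathcal{O}$ at worst.

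\textbf{Step 3 (triangle inequality).} Write
\[
|z^a(\boldsymbol{k})-\hat u^{\mathcal{X}}_{\boldsymbol{k}}|\le|z^a(\boldsymbol{k})-\hat u^{\mathcal{X}_c}_{\boldsymbol{k}}|+\Bigl|\hat u^{\mathcal{X}_c}_{\boldsymbol{k}}-\int_\Omega u\phi_{\boldsymbol{k}}\Bigr|+\Bigl|\int_\Omega u\phi_{\boldsymbol{k}}-\hat u^{\mathcal{X}}_{\boldsymbol{k}}\Bigr|.
\]
The hypothesis on the quadrature bounds the second and third terms by $\varepsilon(|\mathcal{X}_c|)$ and $\varepsilon(|\mathcal{X}|)$. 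Since $|\mathcal{X}|\gg|\mathcal{X}_c|$ and $\varepsilon$ is nonincreasing, $\varepsilon(|\mathcal{X}|)\le\varepsilon(|\mathcal{X}_c|)$. Together with Step 2 this gives the claim.

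The main obstacle is Step 2. A generic MLP is not linear in its input, so fitting on a spanning set does not by itself control its values off $\mathcal{K}_t$. I would first make the linear-readout assumption explicit: the last layer is linear in features that reproduce $\boldsymbol{\phi}_{\boldsymbol{k}}$, which matches the "MLP approximates $u$" interpretation. I would also need to control the conditioning $\|\Phi^{+}\|$ so that the small training residuals are not amplified.
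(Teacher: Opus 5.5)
Your proposal is correct and follows essentially the same route as the paper. The paper likewise models the embedding as linear, $z^a(\boldsymbol{k})=\boldsymbol{a}\cdot\boldsymbol{\phi}_{\boldsymbol{k}}^{\mathcal{X}_c}$, uses the rank condition to force the learned $\boldsymbol{a}^*$ close to $\boldsymbol{w}^{\mathcal{X}_c}=(\omega(\boldsymbol{x})u(\boldsymbol{x}))_{\boldsymbol{x}\in\mathcal{X}_c}$, and closes with Cauchy--Schwarz plus the same triangle inequality $\varepsilon(|\mathcal{X}|)+\varepsilon(|\mathcal{X}_c|)=\mathcal{O}(\varepsilon(|\mathcal{X}_c|))$; your expansion $\boldsymbol{\phi}_{\boldsymbol{k}}=\Phi\boldsymbol{c}_{\boldsymbol{k}}$ is just the dual phrasing of that step, and the linearity and conditioning ($\|\Phi^{+}\|$) assumptions you flag are exactly the ones the paper leaves implicit.
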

\subsection{Integrated Architecture}
Combined with the learnable scaler in \cref{section: coef_decay} and embedding module in \cref{section: basis_embeddings}, for the smooth solution $u(\boldsymbol{x},t)$, the output $\hat{u}_{md}^{\tilde{\theta}}$ of modified SINNs based on Fourier basis can be represented as: 
\begin{equation}
    \hat{u}_{md}^{\tilde{\theta}}(\boldsymbol{k},t)=z^a(\boldsymbol{k})\hat{u}^\theta(\boldsymbol{k},t)e^{-\|\boldsymbol{\alpha}\cdot \boldsymbol{k}\|_1},
\end{equation}
where $a, \theta,\boldsymbol{\alpha}$ are learnable parameters and $\tilde{\theta}=\{a, \theta,\boldsymbol{\alpha}\}$ is the collection. \cref{fig:network} concludes the Modified SINNs compared with SINNs. In \cref{fig:network}, $\text{MLP}(a)$ denotes the implementation of $z^a(\boldsymbol{k})$, realized by the MLP architecture, and $\text{Scaler}(\alpha)$\footnote{Here we use $\alpha$ rather than $\boldsymbol{\alpha}$ to denote the learnable parameters in networks} represents the implementation function of $e^{-\|\boldsymbol{\alpha}\cdot \boldsymbol{k}\|_1}$. We also provide a convergence analysis in \cref{appendix: convergence}.
\begin{remark}
(i) If the used basis is not Fourier basis, then one can use the coefficient decay theorem of other spectral bases as discussed in \cref{appendix: coef decay} and change the function $\phi$ in $z^a(\boldsymbol{k})$. (ii) If the solution is not smooth, then the output is
\begin{equation}
    \hat{u}_{md}^{\tilde{\theta}}(\boldsymbol{k},t)=z^a(\boldsymbol{k})\hat{u}^\theta(\boldsymbol{k},t)/\prod_{j=1}^d |k_j|^{n_j},
\end{equation}
where $\boldsymbol{n}$ uses the same notation as in \cref{theorem: coef decay for fourier} and is learnable. Notably, the decay scaling is applied only to nonzero frequency components; for $k_j=0$, the corresponding factor is set to one to avoid singularity.
\end{remark}

The coefficient decay scaler explicitly encodes the analytical prior that spectral coefficients diminish with increasing frequency. By enforcing a learnable decay profile, the network produces coefficients with physically plausible magnitudes across the entire spectral domain, preventing overestimation in high-frequency regions and stabilizing training, especially when the sampled frequencies are sparse. The basis embedding module incorporates the low-dimensional structural information carried by the spectral basis itself, providing the network with an explicit representation of this structure. As a result, Modified SINNs are able to generalize to missing frequencies outside the training set by leveraging the essential prior knowledge in amplitude and structure. We provide further discussion and experiments in \cref{section: pred_miss_coef} and \cref{appendix: miss coef}.

In high-dimensional problems, the principal challenge lies in identifying valid coefficients\footnote{A valid coefficient means its magnitude is greater than a given threshold (e.g., $10^{-6}$).}. Existing research typically addresses this difficulty by either restricting the index set to a predetermined structure, such as a hyperbolic cross, or by adaptively identifying coefficients based on properties of the underlying equation (see, e.g., \cite{gross2025sparse}). However, because identification of all valid coefficients becomes computationally prohibitive as the dimension increases, accurately approximating these missing valid coefficients can substantially reconstruct the complete set of valid coefficients and further improve the accuracy of SGSM. 
\subsection{Genuinely Dense Coefficients in High Dimensions}\label{section: genuinely_dense_coefficients}
Before presenting the experiments, this section discusses genuinely dense coefficients in high dimensions, since all experiments in this work are conducted under sparse coefficient settings. Here, ''genuinely dense coefficients'' means that the target function admits no sparse representation under any basis. To the best of our knowledge, PDEs with genuinely dense coefficients remain fundamentally challenging for existing high-dimensional methods, including PINNs and DRMs.

Take PINNs as an example. Let $f(\boldsymbol{x}):\mathbb{R}^d\to\mathbb{R}$ denote the target function and let $u^\theta(\boldsymbol{x})$ be the neural network approximation. Since the universal approximation theorem shows that MLPs can approximate arbitrary continuous functions, including genuinely dense ones, we consider an MLP here. For a standard MLP, if the outputs of the penultimate layer are denoted by $\{\phi_j(\boldsymbol{x})\}_{j=1}^w$, then the final network output can be written as
\begin{equation}
u^\theta(\boldsymbol{x})
=
\sum_{j=1}^{w} c_j \phi_j(\boldsymbol{x}),
\end{equation}
where $c_j$ are the coefficients in the final linear layer (the bias term can be absorbed by including $\phi \equiv 1$). Therefore, even a  neural network can be interpreted as representing the target function by the MLP-constructed basis $\{\phi_j(\boldsymbol{x})\}_{j=1}^w$. Then, if the target function is genuinely dense, accurate approximation in high dimensions requires either an extremely large width $w$ or otherwise incurs a large approximation error. This suggests that PDEs with genuinely dense coefficients remain highly challenging for PINNs.
\section{Experiments}\label{section: experiments}
In this section, experiments are conducted to show the performance of SINNs (here and in the following, "SINNs" refers to our Modified SINNs) and compare them with SGSM and PINNs. Since \cite{yu2025spectral} has already compared SINNs with other SOTA methods on low-dimensional PDEs ($d\leq3$), the experiments in this section mainly focus on $d>3$. First, we conduct experiments on approximating missing coefficients in \cref{section: pred_miss_coef} to demonstrate this capability. Second, we use some middle-dimensional problems to compare with SGSM in \cref{section: sgsm_vs_sinn}. Third, we conduct experiments on high-dimensional problems to compare with PINNs and DRMs in \cref{section: exp_high_pde}. Fourth, we explore the strategy that combines spectral methods and SINNs in \cref{section: sgsm_+_sinn}. Finally, we conduct an ablation study to demonstrate the improved stability and accuracy of the proposed two modules in \cref{section: ablation}. Notably, since PINNs and DRMs are unable to recover missing coefficients and generally perform worse than spectral methods in middle-dimensional problems, they are not considered baselines in \cref{section: sgsm_vs_sinn}, however, we provide their performance in \cref{table: pinn_drm_middle_dim} as valuable empirical results. Also, as we demonstrate that SGSM is not scalable to high-dimensional problems, it is not included in the comparisons presented in \cref{section: exp_high_pde}.

We mainly focus on the Fourier basis (representative of diagonal differential operators) and the Chebyshev basis (representative of non-diagonal differential operators). As typical representatives of two distinct categories of spectral bases, our methodologies and results are applicable to analogous spectral bases. The details of all experiments are provided in \cref{appendix: pde_experiments}. Since both the forward and inverse transforms of spectral methods suffer from CoD for high-dimensional functions, we discuss the algorithms and costs of both transforms in \cref{appendix: pde_physical}. Furthermore, for nonlinear equations, since generating high-accuracy numerical targets for high-dimensional nonlinear PDEs is difficult, we conduct an experiment on the 2D Navier--Stokes equations to demonstrate the feasibility of approximating missing coefficients for nonlinear equations in \cref{appendix: 2d_ns}.

The metric used in this paper is the relative $L^2$ error (see \cref{appendix: metrics}). These experiments are conducted on an A40-40G with CUDA version 12.4. Other hyperparameters refer to the GitHub repository \url{https://github.com/DUCH714/SINN_high/tree/master}.
\subsection{Approximate Missing Coefficients}\label{section: pred_miss_coef}
First, we evaluate the network's ability to approximate randomly masked coefficients sampled from a uniform distribution. Although such masking is unrealistic since significant coefficients are usually concentrated at low frequencies, it provides an intuitive benchmark for evaluating the performance of Modified SINNs. A more practical experiment is conducted in \cref{section: sgsm_+_sinn}. 

We set $1-s$ to be the proportion of missing coefficients. Let $N_{\text{valid}}$ represent the total number of valid coefficients. We randomly sample $sN_{\text{valid}}$ coefficients to form the training set. We demonstrate the performance in \cref{table: miss_coef_fourier} with Fourier basis and $s=0.9$. In \cref{table: miss_coef_fourier}, "Error" denotes the relative $L^2$ error over all valid coefficients, "Error (Missing)" denotes the relative $L^2$ error over the missing coefficients, and "Rate" denotes the training rate of SINNs. The details of this experiment, and other additional experiments (e.g., Chebyshev basis) are provided in \cref{appendix: miss coef}. 

The results in \cref{table: miss_coef_fourier} show that SINNs can accurately approximate masked Fourier coefficients when only a partial spectral set ($s=0.9$) is available. Across all dimensions, the error on the missing coefficients (“Error (Missing)”) is comparable to the overall error (“Error”), indicating that coefficient masking is not the dominant source of approximation error. Instead, the errors are mainly driven by the intrinsic oscillatory complexity of the solution. These results confirm that SINN effectively exploits spectral correlations to recover missing coefficients. Furthermore, to explore the limitations of approximating missing coefficients, we conduct experiments with different $s$ and $|\mathcal{X}_c|$ on the 2D Chebyshev basis. The results are shown in \cref{fig:diff_s} and indicate that an approximation accuracy of $\mathcal{O}(10^{-3})$ can still be achieved when $s\geq0.6$. Additionally, \cref{fig: interpolation_cheby_pred} demonstrates the approximation performance for the 2D Chebyshev basis with the function $u(x,y)= e^{-5 \left( x^2 + y^2 \right)} \sin(5x) \sin(3y) + 0.2 \cos(10xy), \ x,y \in [0,1]$ where 10\% of the coefficients are randomly sampled to be masked. The results show that the predicted coefficients are close to the target values, and the errors are mainly concentrated in large magnitude coefficients.

\begin{table}[ht]
  \caption{Predicting missing coefficients for Fourier basis with $s=0.9$.}
  \label{table: miss_coef_fourier}
  \begin{center}
    \begin{adjustbox}{width=\columnwidth, center}
      \begin{sc}
        \begin{tabular}{lccc}
          \toprule
          Dim  & Error        &   Error (Missing) & Rate (ite/s)  \\
          \midrule
          2     & $4.21\text{e-}4 \pm 8.00\text{e-}5$ & $1.15\text{e-}3 \pm 1.20\text{e-}3$ & $1.16\text{e+}3$  \\
          3     & $5.60\text{e-}5 \pm 1.40\text{e-}5$ & $7.14\text{e-}5 \pm 4.27\text{e-}6$ & $8.01\text{e+}2$ \\
          6     & $2.40\text{e-}2 \pm 3.23\text{e-}2$ & $3.08\text{e-}2 \pm 2.76\text{e-}2$ & $6.43\text{e+}1$ \\
          8     & $3.92\text{e-}3 \pm 2.42\text{e-}4$ & $4.98\text{e-}3 \pm 3.54\text{e-}4$ & $5.34\text{e+}1$\\
          \bottomrule
        \end{tabular}
      \end{sc}
    \end{adjustbox}
  \end{center}
  \vskip -0.1in
\end{table}

\begin{figure}[ht]
  \centering
  \begin{subfigure}[b]{0.45\linewidth}\centering
    \includegraphics[width=\linewidth]{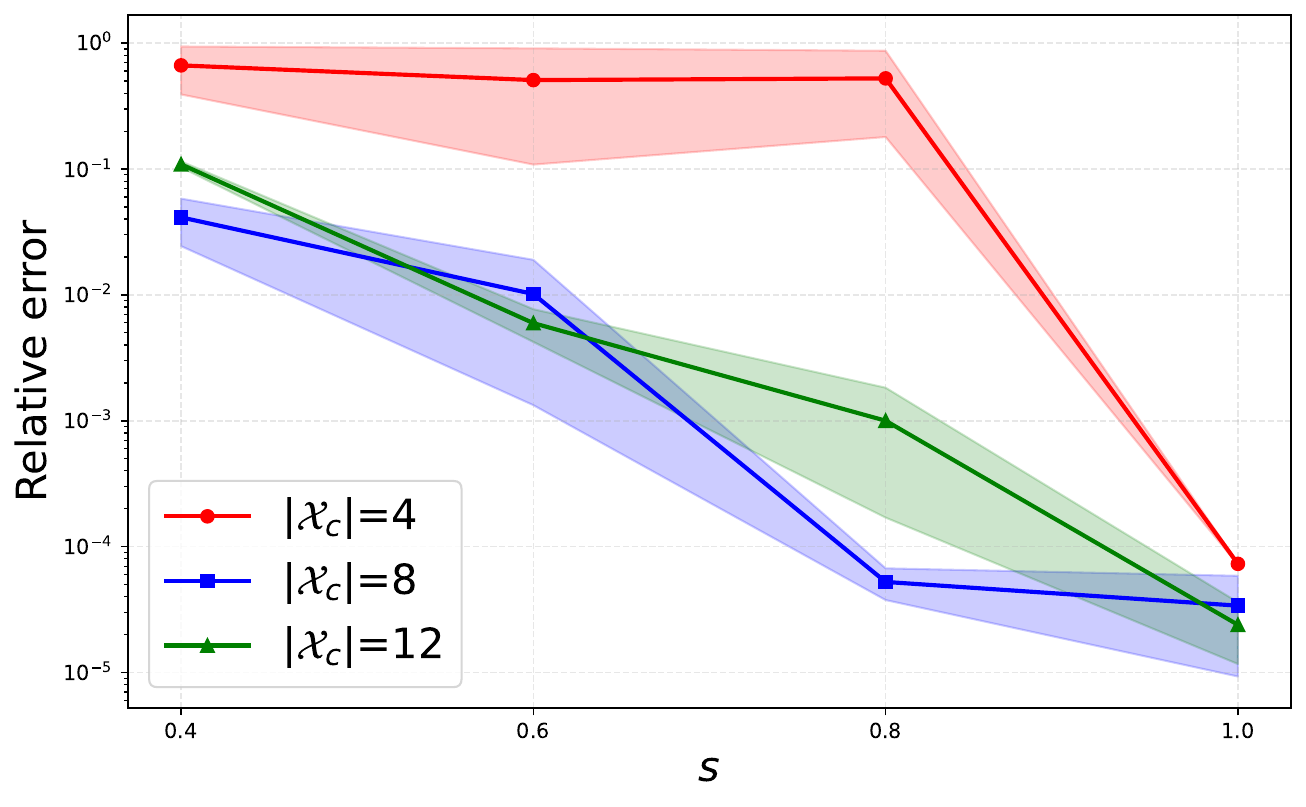}
    \caption{Approximation}\label{different_s}
  \end{subfigure}\hfill
  \begin{subfigure}[b]{0.45\linewidth}\centering
    \includegraphics[width=\linewidth]{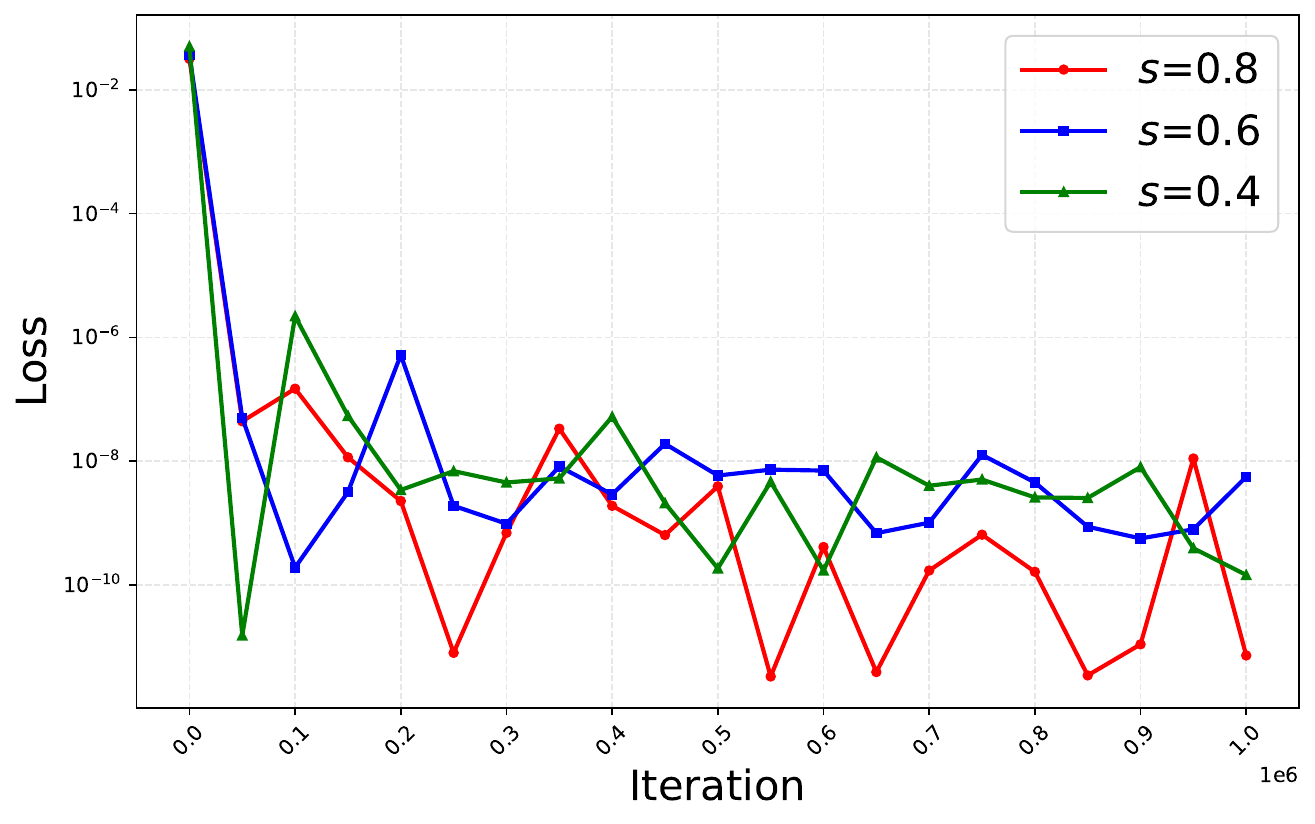}
    \caption{Loss with $|\mathcal{X}_c|=4$}\label{loss_different_s}
  \end{subfigure}\hfill
  \caption{Approximation performance for missing coefficients on the Chebyshev basis ($d=2$) with different $s$ and $|\mathcal{X}_c|$. Figure \subref{different_s} shows the approximation relative error. Figure \subref{loss_different_s} shows the evolution of the training loss for $s=0.8,0.6,0.4$ with $|\mathcal{X}_c|=4$.}
  \label{fig:diff_s} 
\end{figure}

\begin{figure*}[ht]
  \centering
  \begin{subfigure}[b]{0.2\linewidth}\centering
    \includegraphics[width=\linewidth]{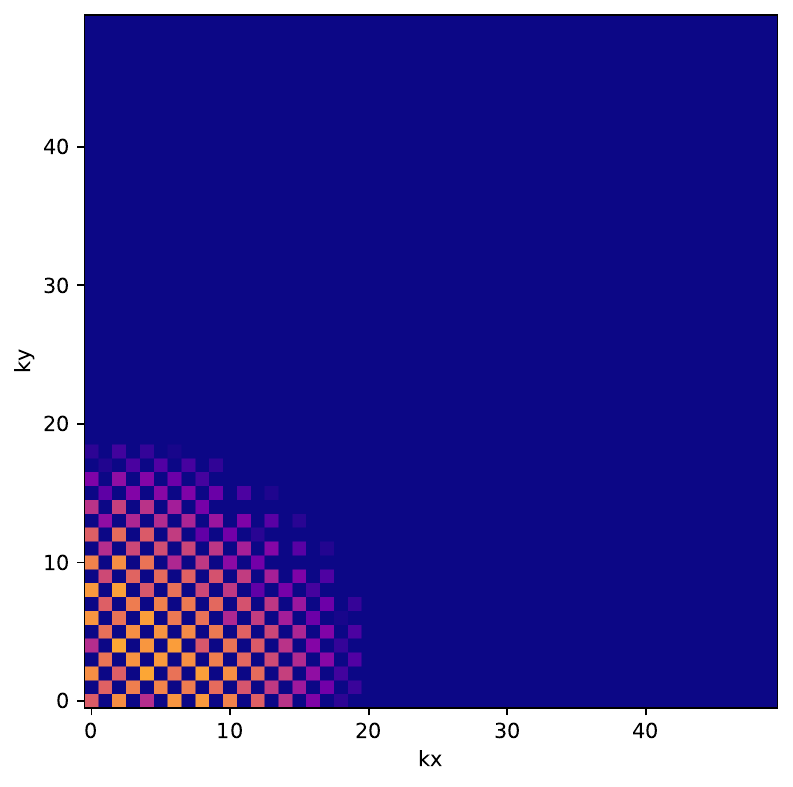}
    \caption{Target $c_{ij}$}\label{cheby_intepolation}
  \end{subfigure}
  \begin{subfigure}[b]{0.2\linewidth}\centering
    \includegraphics[width=\linewidth]{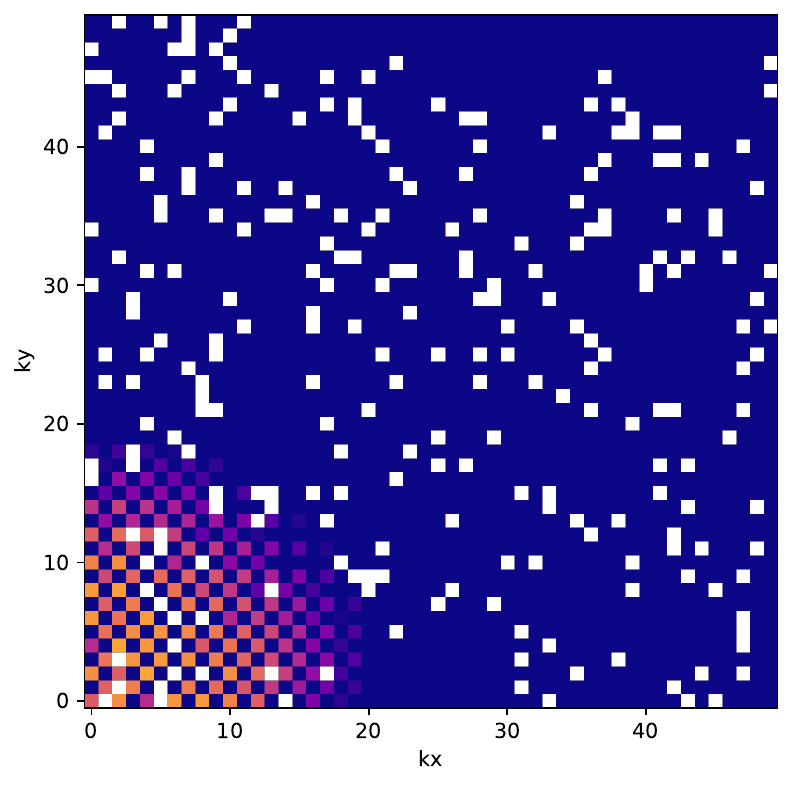}
    \caption{Target $c_{ij}$ with mask}\label{cheby_intepolation_masked}
  \end{subfigure}
  \begin{subfigure}[b]{0.2\linewidth}\centering
    \includegraphics[width=\linewidth]{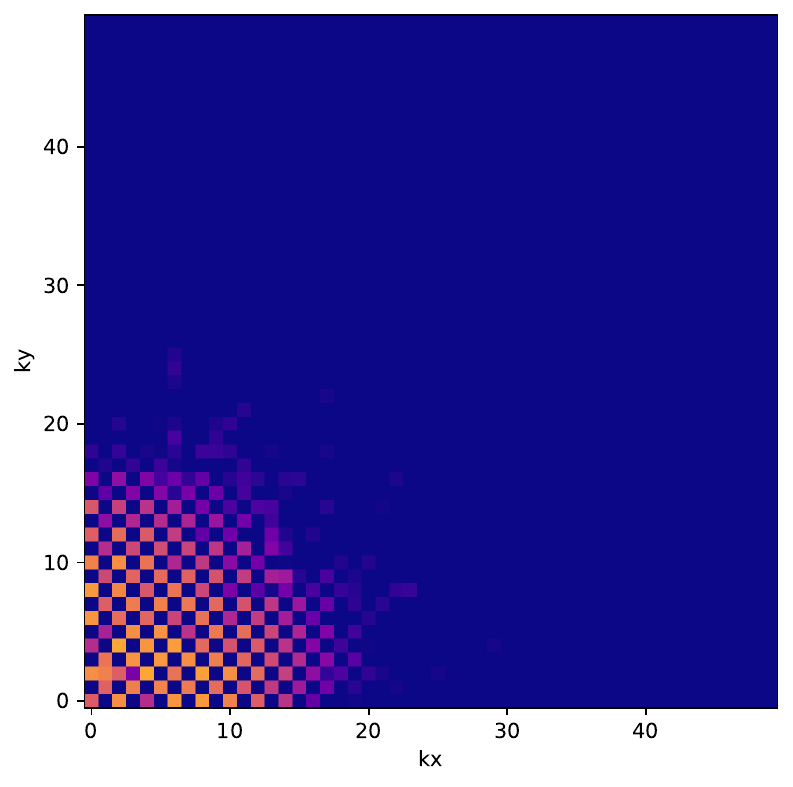}
    \caption{Predicted $c_{ij}$}\label{cheby_pred}
  \end{subfigure}
  \begin{subfigure}[b]{0.245\linewidth}\centering
    \includegraphics[width=\linewidth]{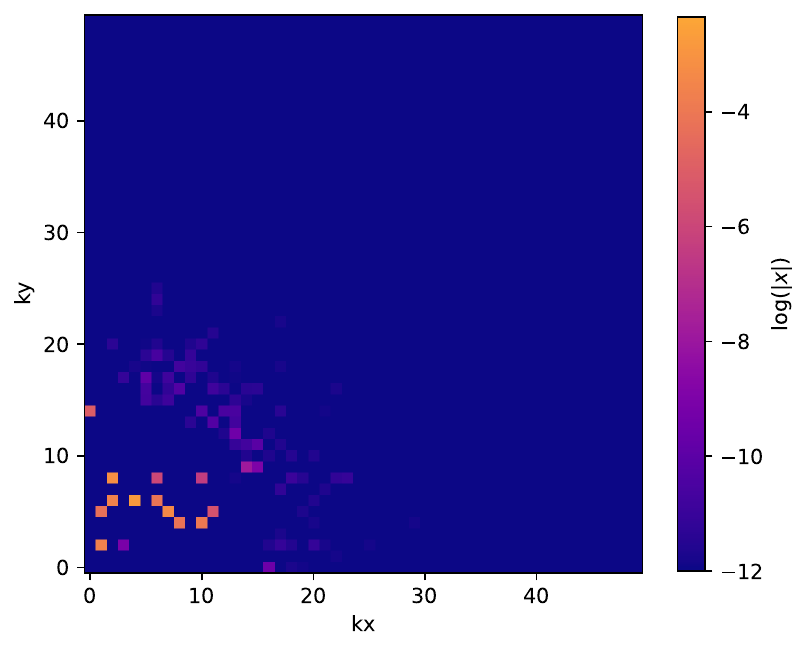}
    \caption{Absolute error}\label{cheby_error}
  \end{subfigure}\\

  \begin{subfigure}[b]{0.2\linewidth}\centering
    \includegraphics[width=\linewidth]{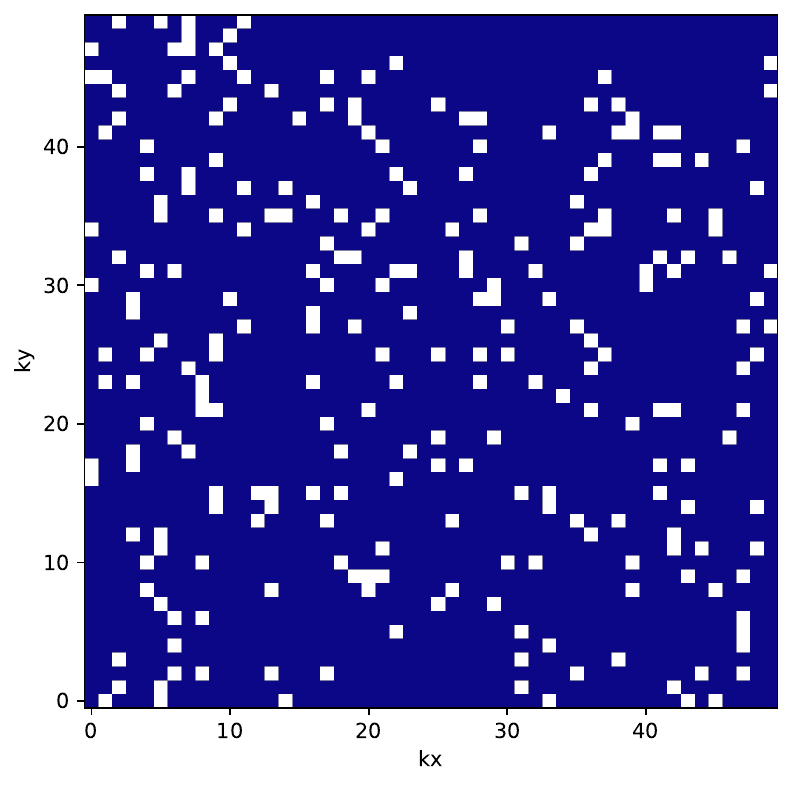}
    \caption{Mask}\label{cheby_masked}
  \end{subfigure}
  \begin{subfigure}[b]{0.2\linewidth}\centering
    \includegraphics[width=\linewidth]{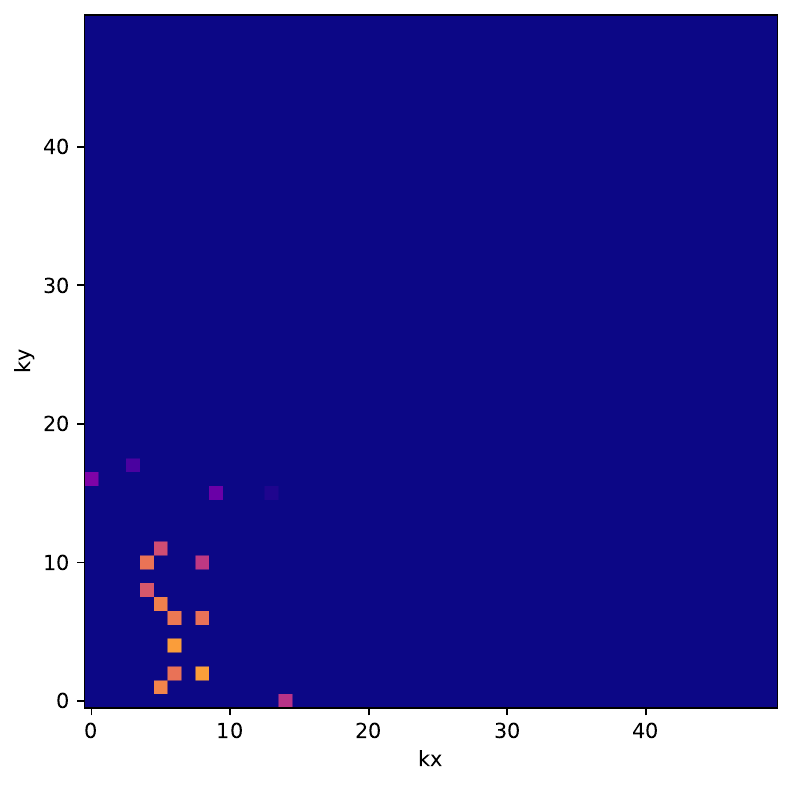}
    \caption{Masked target $c_{ij}$}\label{cheby_intepolation_immasked}
  \end{subfigure}
  \begin{subfigure}[b]{0.2\linewidth}\centering
    \includegraphics[width=\linewidth]{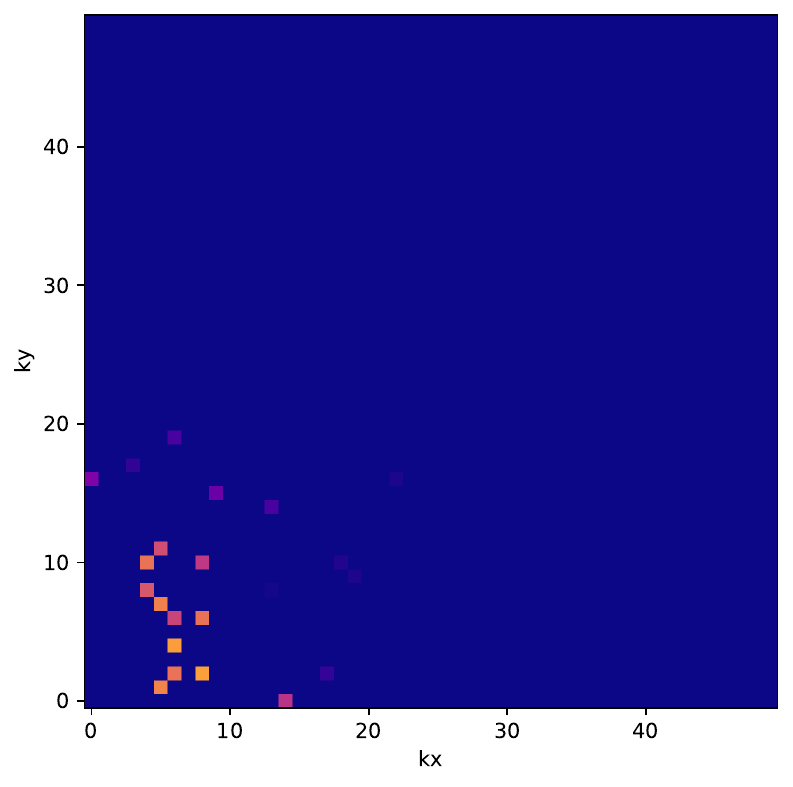}
    \caption{Masked predicted $c_{ij}$}\label{cheby_pred_immasked}
  \end{subfigure}
  \begin{subfigure}[b]{0.245\linewidth}\centering
    \includegraphics[width=\linewidth]{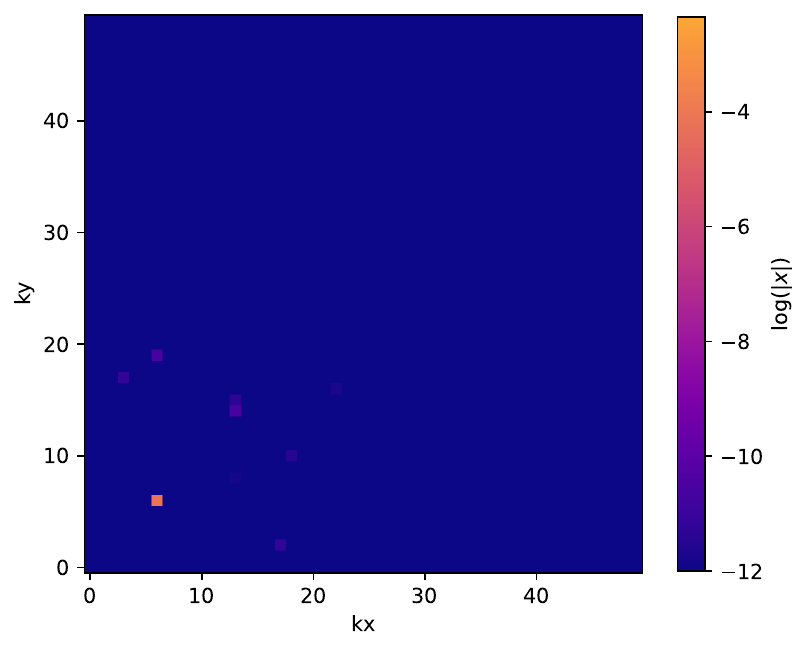}
    \caption{Masked absolute error}\label{cheby_error_immasked}
  \end{subfigure}

  \caption{The approximation with 2D Chebyshev polynomials. \subref{cheby_intepolation}, \subref{cheby_pred}, \subref{cheby_error} show the distribution of target values, predicted values and the absolute errors respectively; \subref{cheby_intepolation_immasked}, \subref{cheby_pred_immasked} and \subref{cheby_error_immasked} highlight the masked elements respectively. The axes $k_x,k_y$ denote the corresponding frequency of 2D Chebyshev polynomials. The masked matrix masks 10\% of the original matrix and is generated by uniformly sampling. \subref{cheby_intepolation_masked} and \subref{cheby_masked} show the distribution of masked elements (the white pixels). All figures are filled by logarithm of the absolute values.}
  \label{fig: interpolation_cheby_pred}
\end{figure*}

\subsection{Compare with Spectral Methods}\label{section: sgsm_vs_sinn}
Since the SGSM can only handle PDEs with $d\lesssim 10$, we conducted experiments on several types of PDEs\footnote{See \cref{appendix: pde_experiments} for details of these PDEs.}: the Poisson's equation and heat equation with the Fourier basis for $d=2,3,5,8$, as well as the steady- and time-dependent convection equations with the Chebyshev basis for $d=2, 3, 5, 6$. To fairly compare the performance of SINNs and SGSM ($q=10$), we use the same masked matrix for both methods.    

\cref{fig:sinn_vs_sgsm} shows that when $s=1$, SGSM achieves higher accuracy due to its high-accuracy computation of spectral coefficients. However, when coefficients become partially unavailable ($s<1$), the performance of SGSM deteriorates rapidly, while SINNs deteriorate more slowly. These results indicate the advantage of SINNs over SGSM when some coefficients are missing, demonstrating their ability to effectively approximate unknown spectral coefficients and highlighting the robustness of SINNs in incomplete spectral cases.
\begin{figure*}[ht]
  \centering
  \begin{subfigure}[b]{0.4\linewidth}\centering
    \includegraphics[width=\linewidth]{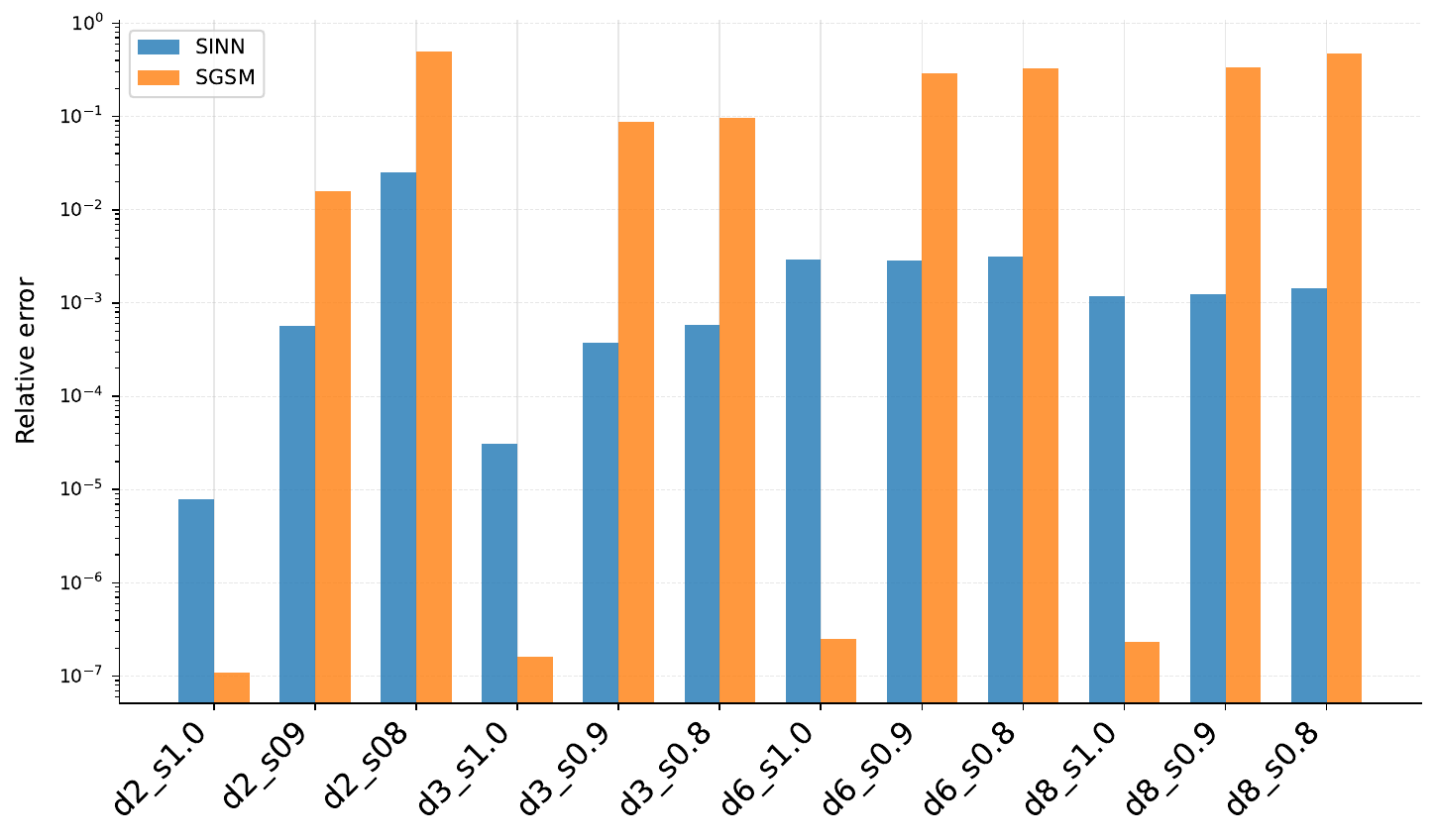}
    \caption{Poisson's equation}\label{poisson}
  \end{subfigure}
  \begin{subfigure}[b]{0.4\linewidth}\centering
    \includegraphics[width=\linewidth]{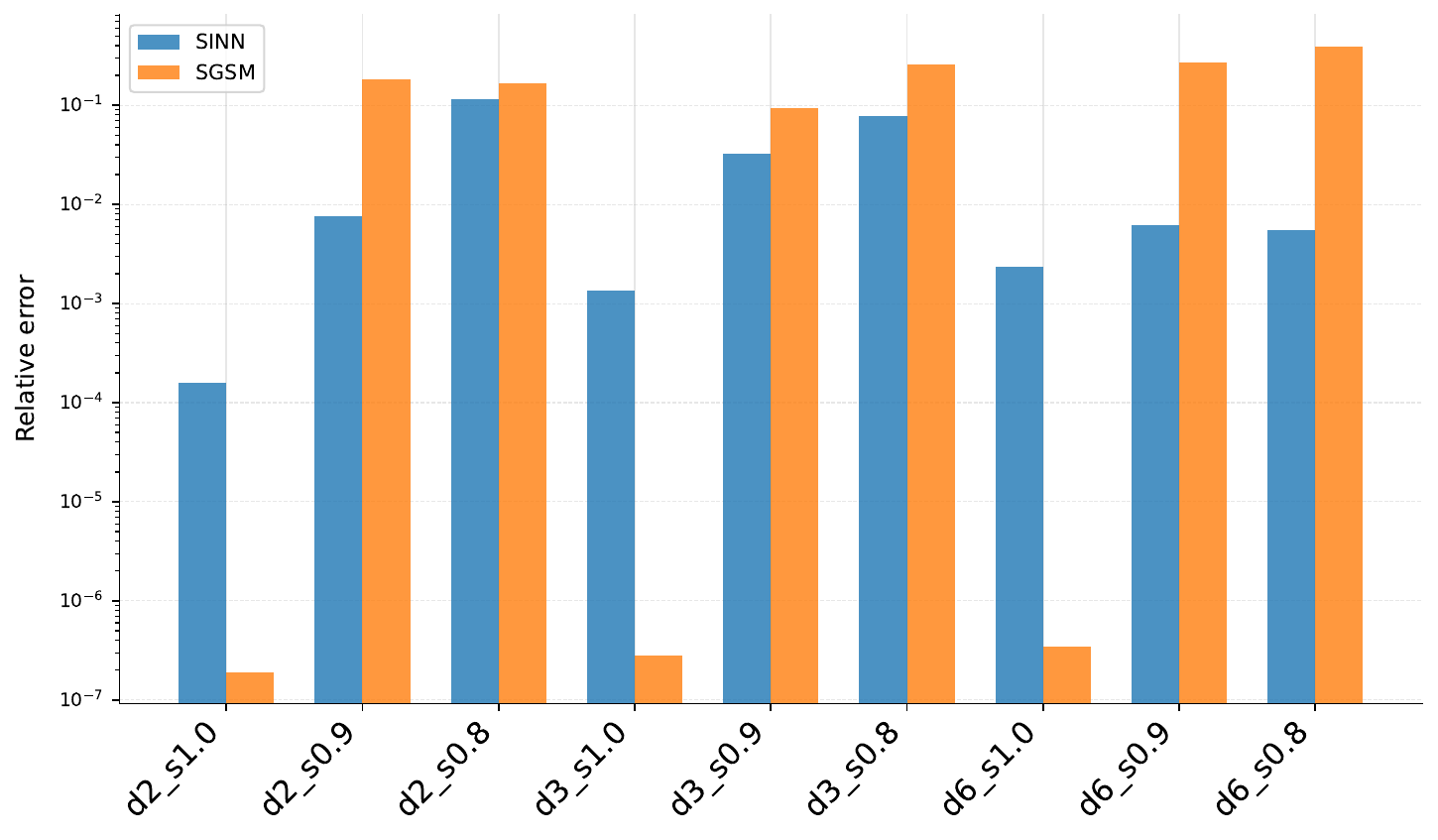}
    \caption{Heat equation}\label{heat}
  \end{subfigure}\hfill
    \begin{subfigure}[b]{0.4\linewidth}\centering
    \includegraphics[width=\linewidth]{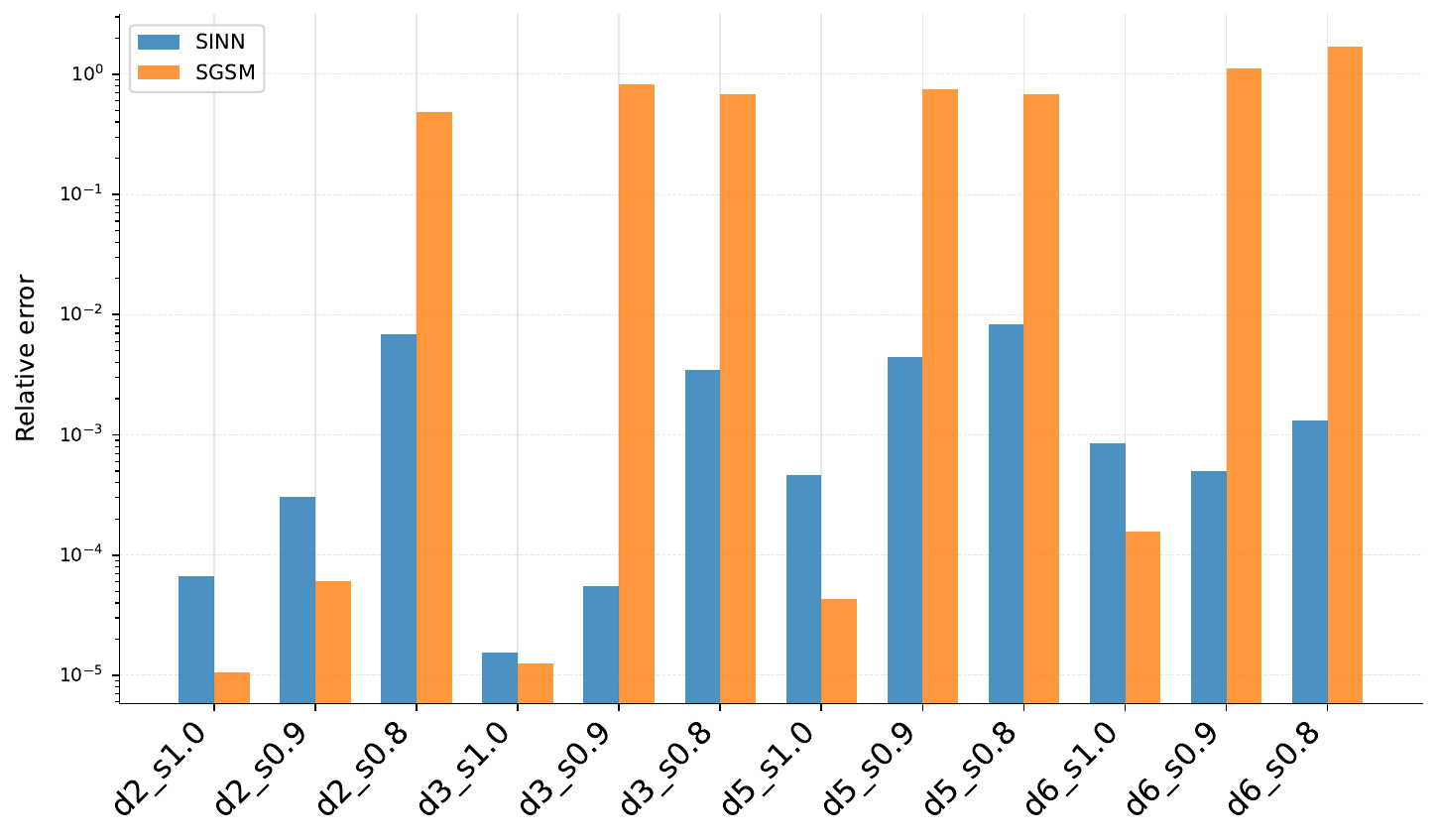}
    \caption{Steady convection equation}\label{one order}
  \end{subfigure}
  \begin{subfigure}[b]{0.4\linewidth}\centering
    \includegraphics[width=\linewidth]{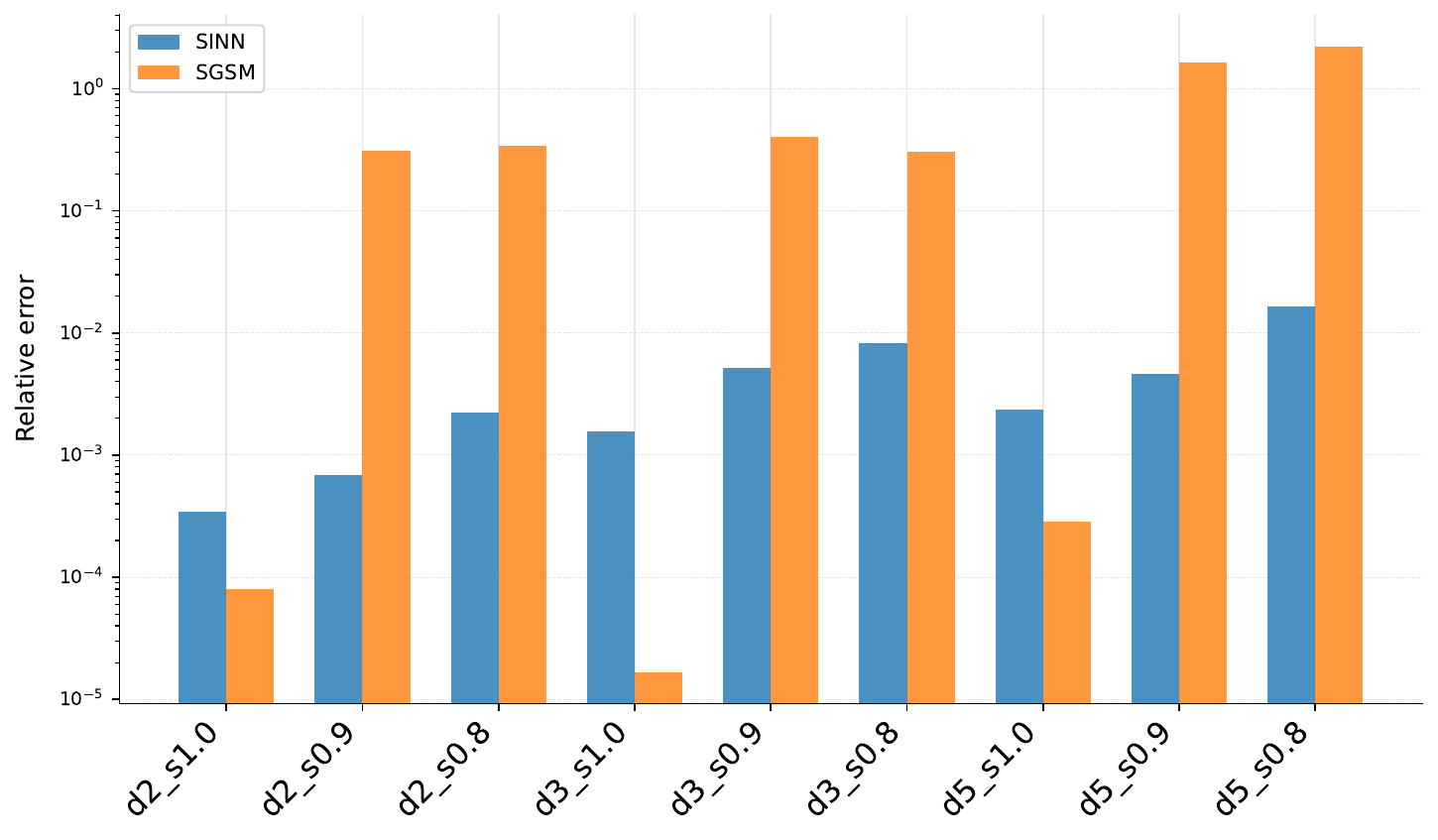}
    \caption{Convection equation}\label{convection}
  \end{subfigure}\hfill
  \caption{Compare SGSM and SINN in different problems on varying dimensions $d$ and the proportion of available coefficients $s$. In figures, $dX_sY$ denotes $d=X$ and $s=Y$.}
  \label{fig:sinn_vs_sgsm} 
\end{figure*}

\subsection{High-dimensional PDEs}\label{section: exp_high_pde}
For high-dimensional PDEs, we compared our SINNs with representative machine learning models: physics-informed neural networks (PINNs)~\cite{raissi2019physics} and the deep Ritz methods (DRMs)~\cite{weinan2018deep} on high-dimensional Poisson's equations.

For PINNs, we use the network physics-informed residual adaptive networks (PirateNets) \cite{wang2024piratenets} which can mitigate the spectral bias for PINNs. Notably, for the standard deviation $s$ of Fourier feature embeddings in PirateNets, we set $s=1/d$ to avoid the accumulation problem when $d$ is large. For DRMs, in addition to periodic embedding~\cite{wang2024piratenets}, we use natural deep Ritz methods (NDRMs)~\cite{yu2025natural}, which reveal the sensitivity to the weight of the penalty term ($\mathcal{L}_b$) and propose integrating the penalty term into the energy functional loss. However, since $\text{curl}\ \boldsymbol{u}$ is computationally expensive for high-dimensional functions, we omit the proposed correction term of NDRMs. The formulations of the models used are introduced in \cref{appendix: models for high equations}.

\cref{table: high_pdes_possion,table: high_pdes_heat} demonstrates the results for varying dimensionalities. While the errors of PINNs and DRMs increase rapidly with dimensionality, SINNs maintain significantly lower errors, especially in high dimensions. These results highlight the advantage of SINNs in high-dimensional problems. Furthermore, we conducted additional experiments on challenging problems to demonstrate the advantage of SINNs in multiscale problems, the results are shown in \cref{table: high_pdes_possion_challenge}.

\begin{table}[ht]
  \caption{High-dimensional Poisson's equations.}
  \label{table: high_pdes_possion}
  \begin{center}
    \begin{adjustbox}{width=\columnwidth, center}
      \begin{sc}
        \begin{tabular}{lccc}
          \toprule
          Dim    & PINN      & DRM           & SINN                     \\
          \midrule
          2      & $3.20\text{e-}4 \pm 6.77\text{e-}5$ & $2.48\text{e-}3 \pm 4.36\text{e-}5$ & \cellcolor{blue!25}$1.61\text{e-}4 \pm 1.64\text{e-}5$ \\ \midrule
          5      & $5.54\text{e-}3 \pm 1.13\text{e-}3$ & $7.95\text{e-}3 \pm 1.87\text{e-}4$ & \cellcolor{blue!25}$2.24\text{e-}4 \pm 6.80\text{e-}5$ \\ \midrule
          10     & $9.68\text{e-}3 \pm 1.34\text{e-}3$ & $1.03\text{e-}2 \pm 4.82\text{e-}4$ & \cellcolor{blue!25}$1.99\text{e-}4 \pm 3.97\text{e-}5$ \\ \midrule
          30     & $1.70\text{e-}2 \pm 2.20\text{e-}3$ & $2.23\text{e-}2 \pm 1.45\text{e-}3$ & \cellcolor{blue!25}$2.76\text{e-}4 \pm 8.51\text{e-}5$ \\ \midrule
          50     & $3.17\text{e-}2 \pm 5.13\text{e-}3$ & $3.20\text{e-}2 \pm 5.19\text{e-}3$ & \cellcolor{blue!25}$1.20\text{e-}3 \pm 6.20\text{e-}4$ \\ \midrule
          100    & $5.50\text{e-}1 \pm 7.46\text{e-}1$ & $5.94\text{e-}2 \pm 1.51\text{e-}2$ & \cellcolor{blue!25}$7.75\text{e-}3 \pm 1.67\text{e-}3$ \\
          \bottomrule
        \end{tabular}
      \end{sc}
    \end{adjustbox}
  \end{center}
  \vskip -0.1in
\end{table}
\begin{table}[ht]
  \caption{High-dimensional Heat equations.}
  \label{table: high_pdes_heat}
  \begin{center}
    \begin{adjustbox}{width=\columnwidth, center}
      \begin{sc}
        \begin{tabular}{lccc}
          \toprule
          Dim    & PINN      & DRM           & SINN                     \\
          \midrule
          2      & $8.61\text{e-}4 \pm 3.76\text{e-}4$ & $1.12\text{e-}1 \pm 4.10\text{e-}2$ & \cellcolor{blue!25}$2.31\text{e-}4 \pm 2.00\text{e-}5$ \\ \midrule
          5      & $1.41\text{e-}2 \pm 2.00\text{e-}3$ & $5.33\text{e-}2 \pm 2.73\text{e-}2$ & \cellcolor{blue!25}$2.20\text{e-}4 \pm 4.53\text{e-}5$ \\ \midrule
          10     & $2.09\text{e-}2 \pm 1.63\text{e-}3$ & $3.40\text{e-}2 \pm 1.27\text{e-}2$ & \cellcolor{blue!25}$3.66\text{e-}4 \pm 3.80\text{e-}5$ \\ \midrule
          30     & $3.01\text{e-}2 \pm 5.57\text{e-}3$ & $3.10\text{e-}2 \pm 1.36\text{e-}2$ & \cellcolor{blue!25}$1.46\text{e-}3 \pm 4.30\text{e-}4$ \\ \midrule
          50     & $5.86\text{e-}2 \pm 6.25\text{e-}3$ & $2.02\text{e-}2 \pm 8.00\text{e-}3$ & \cellcolor{blue!25}$8.57\text{e-}3 \pm 3.17\text{e-}3$ \\ \midrule
          100    & $7.39\text{e-}1 \pm 1.99\text{e-}2$ & $4.18\text{e-}1 \pm 5.60\text{e-}1$ & \cellcolor{blue!25}$3.69\text{e-}1 \pm 4.47\text{e-}1$ \\
          \bottomrule
        \end{tabular}
      \end{sc}
    \end{adjustbox}
  \end{center}
  \vskip -0.1in
\end{table}
\subsection{Combine SGSM and SINNs}\label{section: sgsm_+_sinn}
\cref{fig:sinn_vs_sgsm} also shows that, when $s=1$, SGSM achieves higher accuracy than SINNs due to its highly accurate computation of frequency coefficients. This section proposes a hybrid strategy in which SGSM is used to compute the coefficients corresponding to unmasked frequencies, while SINNs are trained on these SGSM-computed coefficients to predict the coefficients corresponding to masked frequencies. The experiments in this section are conducted on the time-dependent harmonic oscillator Schrödinger equation benchmark taken from \cite{de2024qmsolve}. Notably, to simulate a practical experimental setting, the missing frequencies in this study are sampled from a distribution with higher density over high-frequency components.

The results in \cref{table: sgsm+sinn} show that the proposed SGSM+SINN strategy significantly improves the accuracy of SGSM when $s<1$. While SGSM exhibits a degradation in accuracy as $s$ decreases, incorporating SINNs recovers the masked spectral coefficients and reduces the error by up to several orders of magnitude across all tested dimensions.

\begin{table}[ht]
  \caption{High-dimensional Schrödinger equations solved by SGSM+SINN.}
  \label{table: sgsm+sinn}
  \centering  
  \begin{sc}
    \begin{tabular}{lccc} 
      \toprule
      Metric & \multicolumn{3}{c}{Dimension} \\  
      \cmidrule(lr){2-4}                     
             & 2 & 5 & 8 \\                  
      \midrule
      SGSM ($s=1.0$)    & $2.62\text{e-}7$ & $4.42\text{e-}6$ & $3.89\text{e-}4$ \\  
      $N_{\text{valid}}$    & $1.47\text{e+}3$ & $1.91\text{e+}5$ & $8.46\text{e+}5$ \\  \midrule
      SGSM ($s=0.9$)    & $2.28\text{e-}3$ & $9.37\text{e-}2$ & $3.05\text{e-}1$ \\  
      +SINN             & $8.83\text{e-}5$ & $1.60\text{e-}2$ & $1.23\text{e-}3$ \\  
      Promotion         & 96.13\% & 82.92\% & 99.60\% \\  
      $N_{\text{valid}}$    & $1.32\text{e+}3$ & $1.72\text{e+}5$ & $7.62\text{e+}5$ \\  \midrule
      SGSM ($s=0.8$)    & $2.07\text{e-}1$ & $2.43\text{e-}1$ & $4.41\text{e-}1$ \\  
      +SINN             & $1.10\text{e-}3$ & $3.74\text{e-}2$ & $1.67\text{e-}3$ \\  
      Promotion         & 99.47\% & 84.61\% & 99.62\% \\  
      $N_{\text{valid}}$    & $1.18\text{e+}3$ & $1.53\text{e+}5$ & $6.77\text{e+}5$ \\ 
      \bottomrule
    \end{tabular}
    \end{sc}
  \vspace{-0.1in}  
\end{table}
\subsection{Ablation Study}\label{section: ablation}
Here we present an ablation study of our proposed Modified SINNs on the 2D Fourier basis. We provide the results in \cref{table: ablation_study}, where 'w/o' means 'without' and the label 'w/o Decay \& Embedding' exactly represents the original SINN architecture.

The results demonstrate that incorporating either the coefficient decay module or the embedding module yields consistent performance improvements; integrating both (i.e., using the full Modified SINN) delivers the best performance while retaining a comparable iteration speed. In particular, for $s=0.6$, the coefficient decay module slightly improves the prediction of missing coefficients, while the embedding module significantly enhances the accuracy. The results reveal that the coefficient decay module primarily reduces numerical error, whereas the embedding module, which implicitly reconstructs the physical function, is more effective in approximating the missing-coefficient error. 
\begin{table}[ht]
  \caption{Ablation Study}
  \label{table: ablation_study}  
  \centering  
      \begin{adjustbox}{width=\columnwidth, center}
  \begin{sc}
    \begin{tabular}{lccc} 
      \toprule
      \multirow{2}{*}{Architecture} & $s=1.0$ & $s=0.6$ \\  
      \cmidrule(lr){2-2} \cmidrule(lr){3-3} 
      & Error  & Error &  Rate (ite/s) \\       
      \midrule
      w/o Decay \& Embedding & $6.54\text{e-}3\pm 1.17\text{e-}3$ &  $2.11\text{e-}1 \pm 5.37\text{e-}2$ & $1.60\text{e+}3$ \\
      w/o Embedding & $1.09\text{e-}3 \pm 1.73\text{e-}4$ &  $2.06\text{e-}1 \pm 1.22\text{e-}2$ & $1.59\text{e+}3$ \\
      w/o Decay & $1.25\text{e-}3 \pm 3.44\text{e-}4$ & $3.27\text{e-}3 \pm 1.61\text{e-}3$ & $1.56\text{e+}3$ \\
      Modified SINN & \cellcolor{blue!25}$5.55\text{e-}4 \pm 7.05\text{e-}5$ &  \cellcolor{blue!25}$3.08\text{e-}3 \pm 1.53\text{e-}3$ &  $1.56\text{e+}3$ \\
      \bottomrule
    \end{tabular}
  \end{sc}
  \end{adjustbox}
  \vspace{-0.1in}  
\end{table}
\section{Conclusion}\label{section: conclusion}
We propose Modified Spectral-Informed Neural Networks (Modified SINNs) for solving high-dimensional PDEs by integrating prior knowledge into SINNs. Modified SINNs achieve improved stability and accuracy and enable effective approximation of unknown spectral coefficients. Experiments demonstrate that Modified SINNs achieve superior accuracy compared to machine learning methods on high-dimensional PDEs and outperform spectral methods on middle-dimensional PDEs with incomplete spectral coefficients.
\paragraph{Limitations and future research.}
All experiments above are conducted under the assumption of sparse coefficients. However, existing approaches for high-dimensional problems, including PINNs, fundamentally rely on the presence of special low-dimensional structures in high-dimensional functions~\cite{trefethen2017cubature}, and these structures can induce sparsity in the spectral coefficients under suitable bases. As discussed in \cref{section: genuinely_dense_coefficients}, high-dimensional problems with genuinely dense coefficients and without any underlying low-dimensional structure are intractable for all current methods.

Although Modified SINNs can approximate unknown coefficients, the highly accurate results are achieved when the solution has specific structures, e.g., symmetry and separability. In our experiments, except \cref{appendix: complex_fun_demonstration}, \cref{appendix: 2d_ns}, and \cref{appendix: high-dim}, the remaining cases use solutions with separable or symmetric structures. Experiments with specific structures generally achieve higher accuracy than those without, suggesting that the approximation of unknown coefficients benefits significantly from structural information in the solution.

Furthermore, the approximation may fail in some extreme situations, for example, when the missing coefficients concentrate in a specific range or when too many low frequencies are masked. However, in practical applications, low-frequency coefficients are typically unmasked, as these components are often regarded as contributing most significantly to the function. Consequently, an important direction for future research is the identification of unmasked frequencies, for which some algorithms have been proposed for specific problems~\cite{gross2025sparse, kammerer2022uniform}. In this direction, machine learning has not yet been widely applied and may be leveraged to infer potential structural regularities of given PDEs and to more accurately identify the most informative spectral coefficients.

\section*{Impact Statement}
This paper presents work whose goal is to advance the field of Machine Learning. There are many potential societal consequences of our work, none of which we feel must be specifically highlighted here.
\bibliography{ref}
\bibliographystyle{icml2026}

\newpage
\appendix
\onecolumn
\section{Spectral Bases}\label[appsec]{appendix: spectral bases}
Here we briefly present some representative 1D spectral bases, from which the multidimensional basis can be constructed via tensor products.
\begin{enumerate}
    \item Fourier basis: $\phi_n(x) = e^{\mathrm{i}nx}$ for $n \in \mathbb{Z}$, defined on $x \in [0, 2\pi]$. $\mathrm{i}$ denotes the imaginary unit.
    \item Sine basis: $\phi_n(x) = \sin\left(nx\right)$ for $n \geq 1$, defined on $x \in [0, \pi]$.
    \item Cosine basis: $\phi_n(x) = \cos\left(n x\right)$ for $n \geq 0$, defined on $x \in [0, \pi]$.
    \item Chebyshev polynomials (first kind): $T_n(x) = \cos(n\arccos x)$ for $n \geq 0$, defined on $x \in [-1, 1]$.
    \item Legendre polynomials: $P_n(x) = \frac{1}{2^n n!}\frac{\mathrm{d}^n}{\mathrm{d}x^n}\left[(x^2 - 1)^n\right]$ for $n \geq 0$, defined on $x \in [-1, 1]$.
    \item Hermite polynomials: $H_n(x) = (-1)^n e^{x^2}\frac{\mathrm{d}^n}{\mathrm{d}x^n}\left[e^{-x^2}\right]$ for $n \geq 0$, defined on $x \in \mathbb{R}$.
    \item Laguerre polynomials: $L_n(x) = \frac{1}{n!}e^x\frac{\mathrm{d}^n}{\mathrm{d}x^n}\left[x^n e^{-x}\right]$ for $n \geq 0$, defined on $x \in [0, \infty)$.
\end{enumerate}
Among these spectral bases, the Fourier basis yields a diagonal representation of differential operators, and the sine and cosine bases yield diagonal representations for second-order operators. In contrast, the remaining bases have non-diagonal differential operators, as differentiation induces coupling among spectral modes.

\section{Formulations of Spectral Methods}\label[appsec]{appendix: formula_spectral_method}
Different formulations of spectral methods arise depending on how the residual, i.e.,
\begin{equation}
R_N(\boldsymbol{x}) = \mathcal{N}[u_N](\boldsymbol{x}) - f(\boldsymbol{x}),
\end{equation}
is enforced. Among the most widely used are the Galerkin and collocation approaches:

\begin{itemize}
    \item \textbf{Galerkin method:}  
    In the Galerkin formulation, one requires the residual to be orthogonal to the approximation space spanned by the chosen basis functions. More precisely, for each basis function $\phi_{\boldsymbol{k}}$ in the finite-dimensional space $\mathcal{K}_N$, one enforces
    \begin{equation}\label{eq:galerkin}
    \int_{\Omega} R_N(\boldsymbol{x}) \, \phi_{\boldsymbol{k}}(\boldsymbol{x}) \, d\boldsymbol{x} = 0,
    \quad \forall \boldsymbol{k} \in \mathcal{K}_N.
    \end{equation}
    The Galerkin method arises naturally from variational principles and functional analysis, making it closely related to weak formulations of PDEs. It is widely used in problems where conservation laws, stability, or energy estimates are critical, such as fluid dynamics and structural mechanics. By construction, the Galerkin formulation often inherits stability properties from the underlying PDE, particularly when symmetric or self-adjoint operators are involved \cite{orszag1971accurate,canuto2007spectral}.  

    \item \textbf{Collocation method:}  
    In contrast, the collocation (or pseudo-spectral) method enforces the residual to vanish exactly at the collocation points $\{\boldsymbol{x}_i\}_{i=1}^{N}$:
    \begin{equation}\label{eq:collocation}
    \mathcal{N}[u_N](\boldsymbol{x}_i) = f(\boldsymbol{x}_i), \quad i = 1,\dots,N.
    \end{equation}
    The collocation method is grounded in interpolation theory, particularly polynomial interpolation at special nodes such as Chebyshev or Legendre points. It is especially popular in practice due to its straightforward implementation and efficiency, making it suitable for time-dependent problems, nonlinear PDEs, and high-dimensional approximations.  The stability of collocation methods strongly depends on the choice of collocation nodes; Chebyshev or Legendre nodes yield well-conditioned systems and spectral accuracy for smooth problems, whereas equispaced nodes may lead to instability due to Runge's phenomenon~\cite{trefethen2000spectral}.
\end{itemize}

Both formulations share the hallmark feature of spectral methods, namely their ability to achieve extremely fast convergence rates for smooth or analytic solutions. However, they differ in implementation, theoretical properties, and numerical stability, which motivates choosing one formulation over the other depending on the application. The SINNs are inspired by the collocation methods.

Additionally, for time-dependent PDEs, the temporal variable is generally discretized using high-order finite difference methods, such as Runge--Kutta methods, to ensure that the temporal accuracy matches the spatial accuracy of spectral methods~\cite{tang2006spectral}.

In this paper, the numerical experiments use collocation-based spectral methods for spatial variables and the third-order total variation diminishing Runge--Kutta method (third-order TVD RK) for temporal variables in \cref{section: sgsm_vs_sinn} for SGSM.
\section{Hyperbolic Cross}\label[appsec]{appendix: hyperbolic_cross}
The hyperbolic cross approximation is a fundamental tool in multivariate numerical analysis. The main idea is to reduce the set of multi-indices $\mathcal{K}_N$ to the so-called hyperbolic cross index set. For classes of multivariate functions ($d\geq3$) with mixed smoothness, hyperbolic cross approximation achieves superior error convergence, whereas full-grid approximation exhibits significant degradation in convergence due to the curse of dimensionality~\cite{dung2018hyperbolic}. To further explain how hyperbolic cross helps SGSM solve high-dimensional problems, we define the hyperbolic cross index set:

\begin{equation}
    \Upsilon_N^H = \left\{ \boldsymbol{k} \in \mathbb{N}_0^d : 1 \leq |\boldsymbol{k}|_{\text{mix}} \leq N \right\},
\end{equation} 
where $|\boldsymbol{k}|_{\text{mix}}=\prod_{j=1}^d \max\{1, k_j\}$.  This construction effectively favors basis functions with balanced contributions across dimensions, while discarding those dominated by high frequencies in multiple coordinates simultaneously. As a result, $\Upsilon_N^H$ achieves near-optimal approximation rates for functions with mixed regularity, while the number of nodes grows only polynomially with $N$ and only mildly with the dimension $d$. In this way, the hyperbolic cross approximation provides a principled reduction of degrees of freedom that preserves accuracy in high-dimensional spectral methods. However, as noted in~\cite{shen2010efficient}, the classical hyperbolic cross approximation is of limited practicality because it lacks a direct representation in the physical space. The authors further demonstrated that, in the frequency domain, the hyperbolic cross approximation can be closely linked to Smolyak's sparse grid through an interpolation operator constructed from hierarchical basis functions. First, we introduce the notation used in the following:
\begin{equation}
\begin{gathered}
    \mathcal{X}_d^q:=\bigcup_{d\leq |\boldsymbol{i}|_1\leq q} \mathcal{X}^{i_1}\times \mathcal{X}^{i_2} \times \cdots \times \mathcal{X}^{i_d},\\
    \mathcal{I}_d^q:=\bigcup_{d\leq |\boldsymbol{i}|_1\leq q} \tilde{\mathcal{I}}^{i_1}\times \tilde{\mathcal{I}}^{i_2} \times \cdots \times \tilde{\mathcal{I}}^{i_d},
\end{gathered}
\end{equation}
where $\mathcal{X}^{i}=\{x^i_j\}_{j=0}^{N_i}$, $\tilde{\mathcal{I}}^i=\mathcal{I}^i \backslash \mathcal{I}^{i-1}$, and $\mathcal{I}^i=\{0,1,\cdots,N_i-1\}$. The index set $\mathcal{I}_d^q$ has a close relationship with $ \Upsilon_N^H$ and can be regarded as a quasi-hyperbolic cross index set. To provide an intuitive understanding, we show the distribution of $\mathcal{I}_d^q$ in \cref{fig:hyperbolic_2} for $d=2$ and \cref{fig:hyperbolic_3} for $d=3$. Let $\otimes$ denote the tensor product. The Smolyak formula is defined by $\mathcal{A}(q, d) = \sum_{|\boldsymbol{i}|_1 \leq q} \left( \Delta^{i_1} \otimes \cdots \otimes \Delta^{i_d} \right)$, where $\Delta^i = \mathcal{U}^i - \mathcal{U}^{i-1}$, $\mathcal{U}^i(f)(x) = \sum_{k \in \mathcal{I}^i} b_k \phi_k(x)$, and $b_k$ is the coefficient. \cite{barthelmann2000high} shows that the complexity of $\mathcal{A}(q,d)$ can be simplified to $\mathcal{O} \left(\sum_{\substack{q - d < |\boldsymbol{i}|_1 \leq q}} N_{i_1} N_{i_2} \cdots N_{i_d},\right)$.

Although the operator constructed by Smolyak's algorithm can alleviate the CoD, it still exhibits exponential growth in complexity with respect to $d$. Based on hierarchical basis, an efficient algorithm proposed in~\cite{shen2010efficient} can avoid the CoD when the cost of one-dimensional transform is $\mathcal{O}(N)$ or $\mathcal{O}(N \log N)$. Nevertheless, \cite{shen2011spectral} proved that the hyperbolic cross index set satisfies $|\Upsilon_N^H| = C N (\ln N)^{d-1}$, which implies that the generation of $\Upsilon_N^H$ and hence its variant $\mathcal{I}_d^q$, still suffers from the CoD. We also confirm this problem by experiments in \cref{appendix: cost_hyperbolic_cross}. In this paper, the numerical experiments use the SGSM from \cite{shen2010efficient} with a given $q$.

\begin{figure}[ht]
  \centering
  \begin{subfigure}[b]{0.3\linewidth}\centering
    \includegraphics[width=\linewidth]{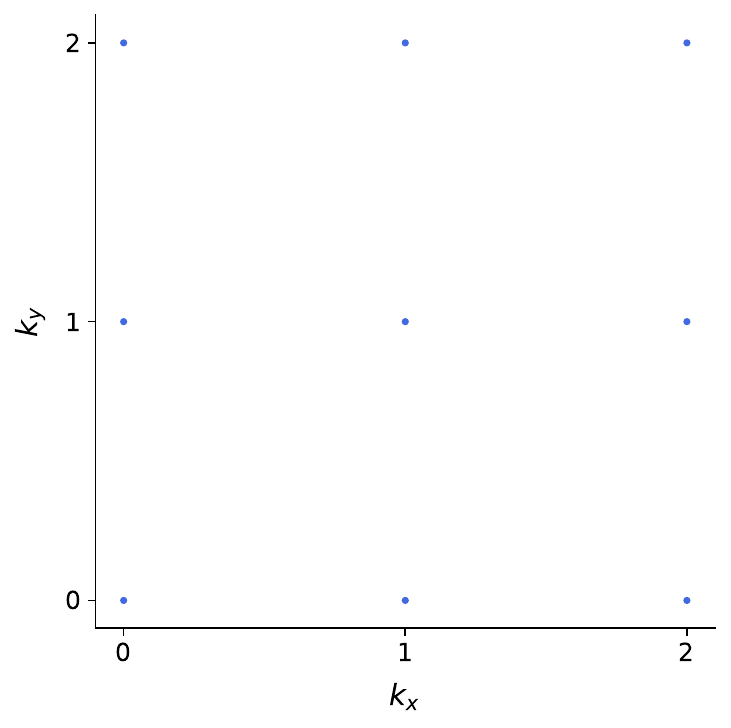}
    \caption{$d=2$, $q=2$}\label{hyper2_2}
  \end{subfigure}\hfill
  \begin{subfigure}[b]{0.3\linewidth}\centering
    \includegraphics[width=\linewidth]{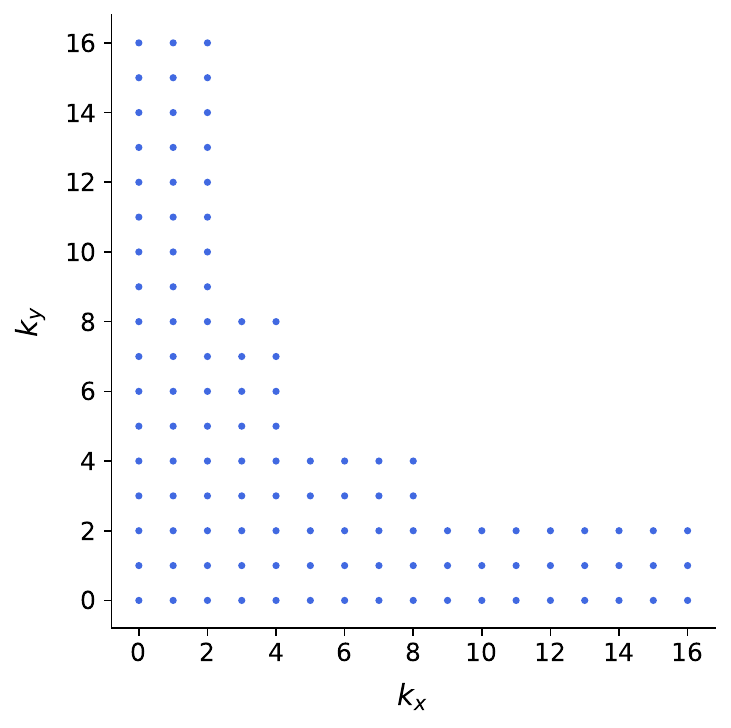}
    \caption{$d=2$, $q=5$}\label{hyper2_5}
  \end{subfigure}\hfill
  \begin{subfigure}[b]{0.3\linewidth}\centering
    \includegraphics[width=\linewidth]{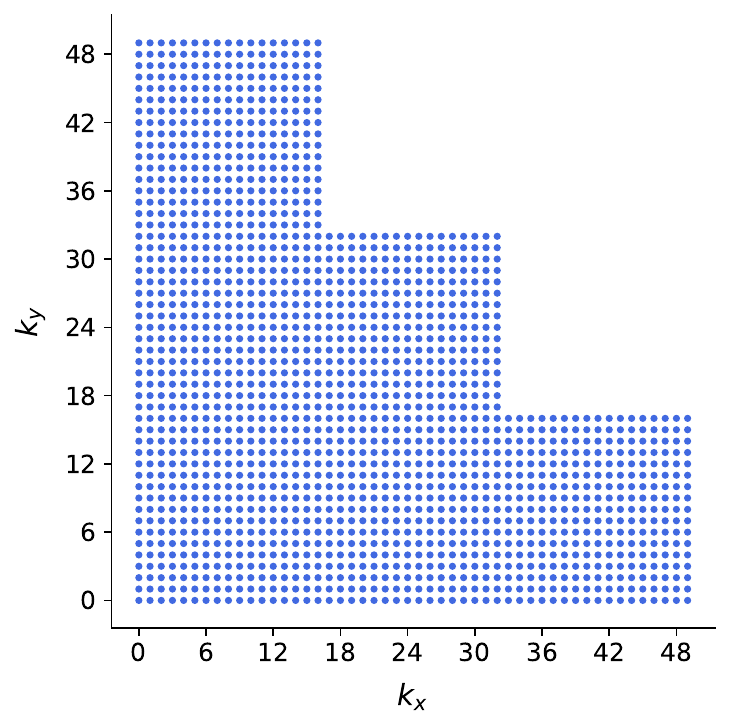}
    \caption{$d=2$, $q=10$}\label{hyper2_10}
  \end{subfigure}
  \caption{Hyperbolic cross for $d=2$ with different q values}  
  \label{fig:hyperbolic_2}
\end{figure}

\begin{figure}[ht]
  \centering
  \begin{subfigure}[b]{0.3\linewidth}\centering
    \includegraphics[width=\linewidth]{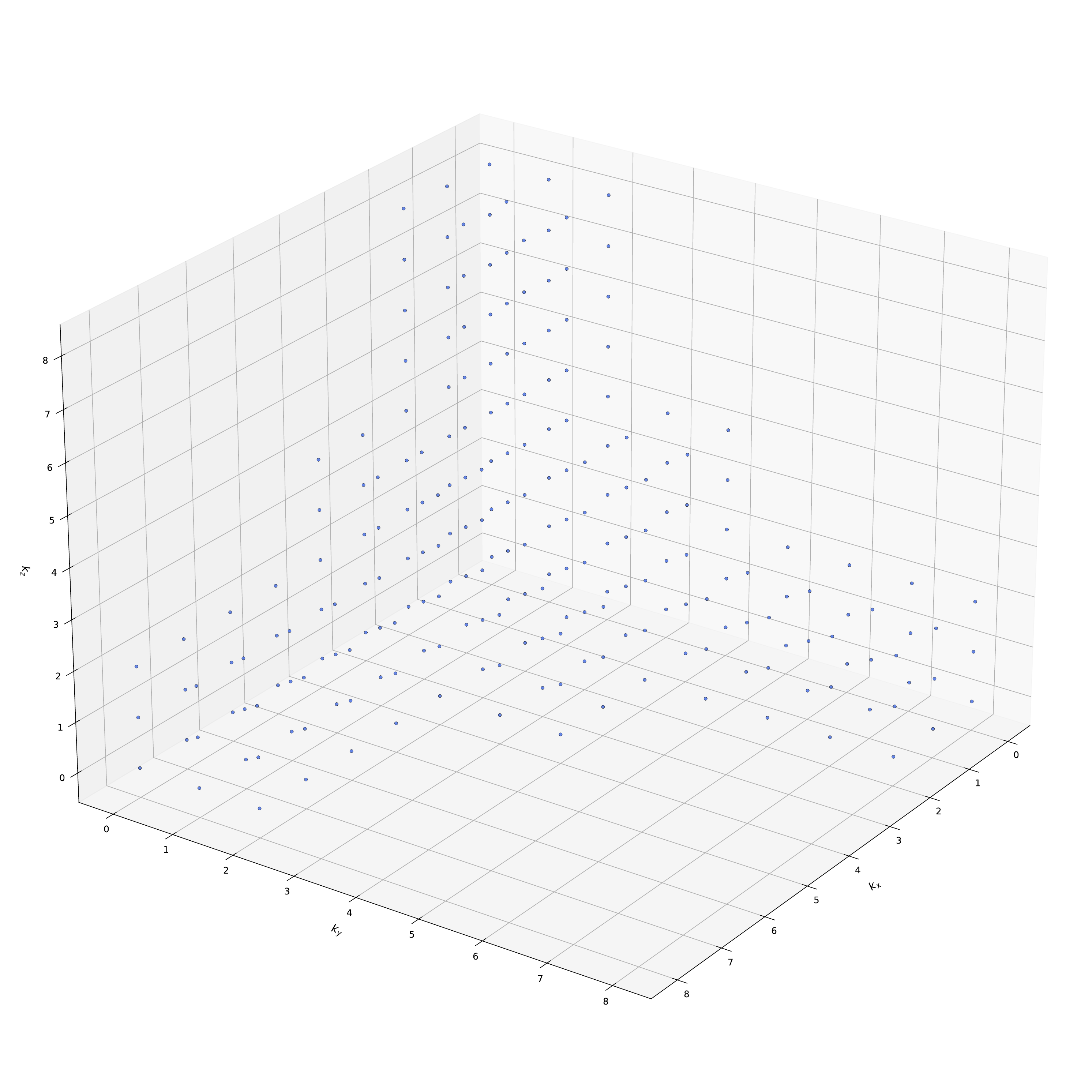}
    \caption{$d=3$, $q=5$}\label{hyper3_5}
  \end{subfigure}\hfill
  \begin{subfigure}[b]{0.3\linewidth}\centering
    \includegraphics[width=\linewidth]{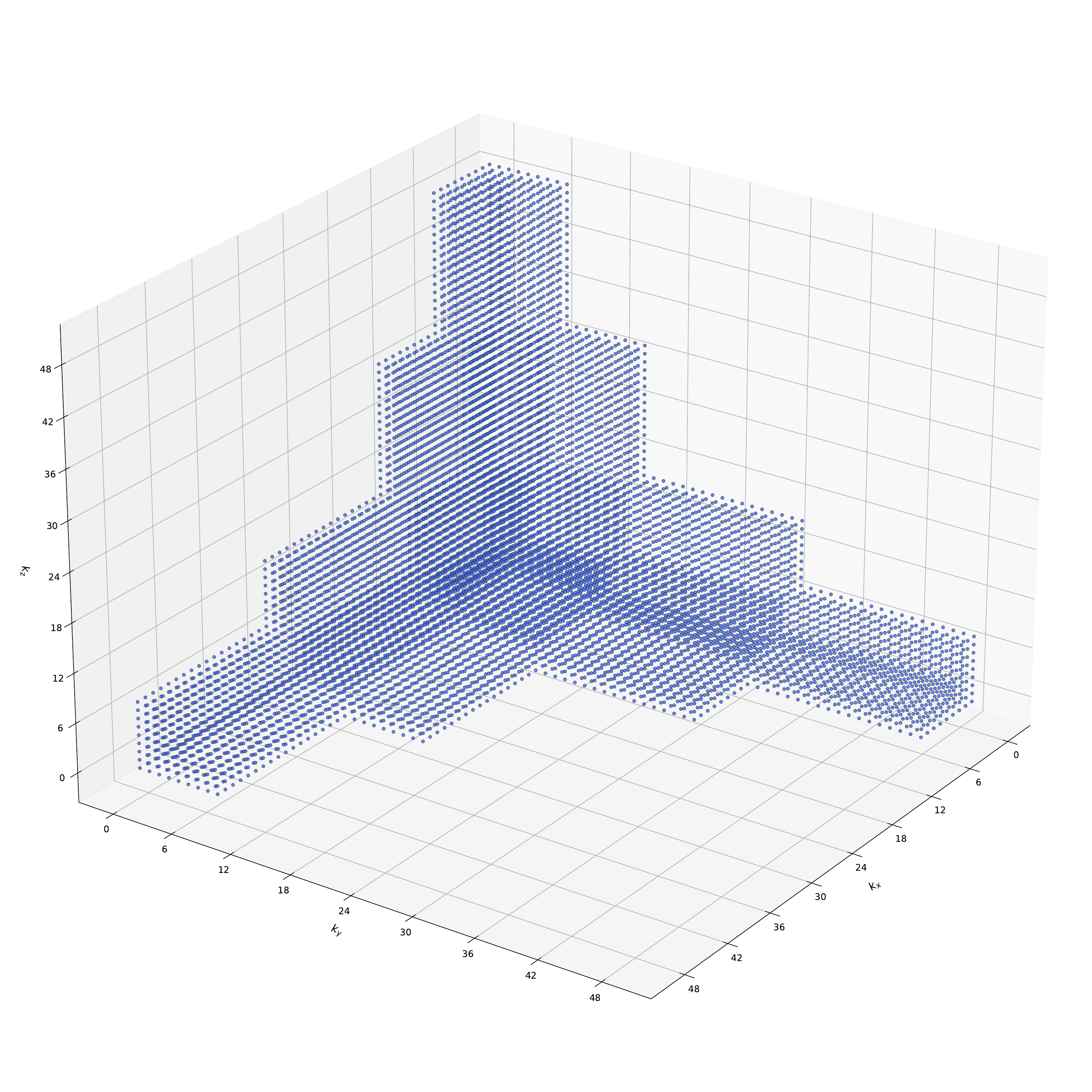}
    \caption{$d=3$, $q=10$}\label{hyper3_10}
  \end{subfigure}\hfill
  \begin{subfigure}[b]{0.3\linewidth}\centering
    \includegraphics[width=\linewidth]{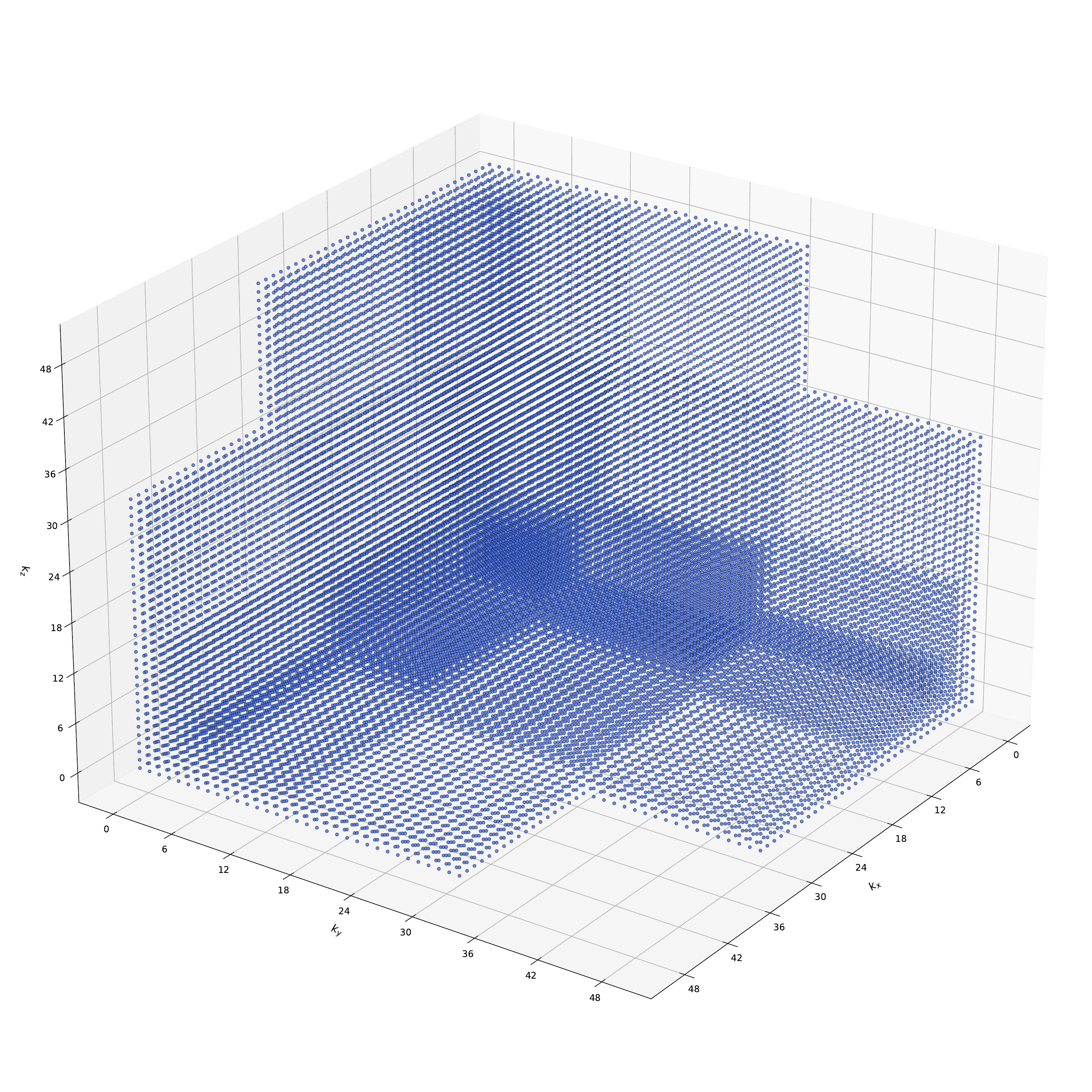}
    \caption{$d=3$, $q=12$}\label{hyper3_12}
  \end{subfigure}
  \caption{Hyperbolic cross for $d=3$ with different q values}  
  \label{fig:hyperbolic_3}
\end{figure}
\section{Cost of generating the index set}\label[appsec]{appendix: cost_hyperbolic_cross}
Here we focus on generating the index set $\mathcal{I}_d^q$, which contains all valid frequency indices for the spectral basis. We argue that
although the index set $\mathcal{I}_d^q$ is generated by an efficient recursive method, the cost is still significant for high-dimensional problems. Here we conduct experiments to illustrate this phenomenon in \cref{fig:memory_time_generating}. These experiments reveal that:
\begin{enumerate}
    \item For fixed $d$, in low-dimensional (i.e., $d=2,3$), the memory usage and computation time grow sub-exponentially with $q$. However, once the dimensionality $d$ is greater than  5, the computation time grows exponentially with $q$ and the memory usage grows super-exponentially with $q$. 
    \item For fixed $q$, the memory usage and computation time grow sub-exponentially in $d$ for small $d$, and super-exponentially for large $d$.
    \item For $d>10$, since $q>d$, the computation is infeasible due to prohibitive memory usage. Thus, the SGSM is unsuitable for high-dimensional PDEs.
\end{enumerate} 

Herein, while the SGSM demonstrates excellent accuracy and efficiency for low-dimensional PDEs, it becomes increasingly impractical as the problem dimensionality grows. In contrast, physics-informed neural networks (PINNs) have been shown to successfully solve PDEs with up to millions of dimensions \cite{shi2024stochastic}, highlighting the limited competitiveness of SGSM for high-dimensional problems.

\begin{figure}[ht]
  \centering
  \begin{subfigure}[b]{0.35\linewidth}\centering
    \includegraphics[width=\linewidth]{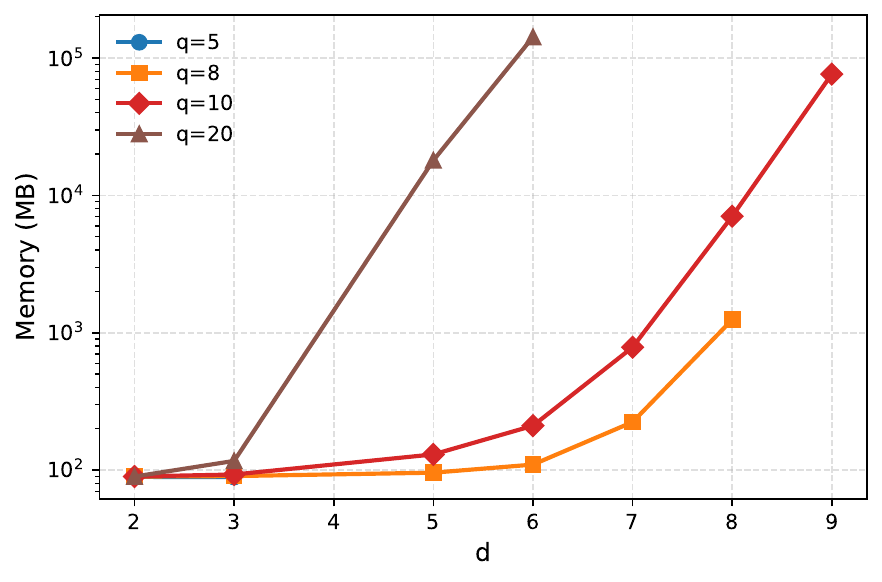}
    \caption{Memory vs $d$}\label{d_memory}
  \end{subfigure}
  \begin{subfigure}[b]{0.35\linewidth}\centering
    \includegraphics[width=\linewidth]{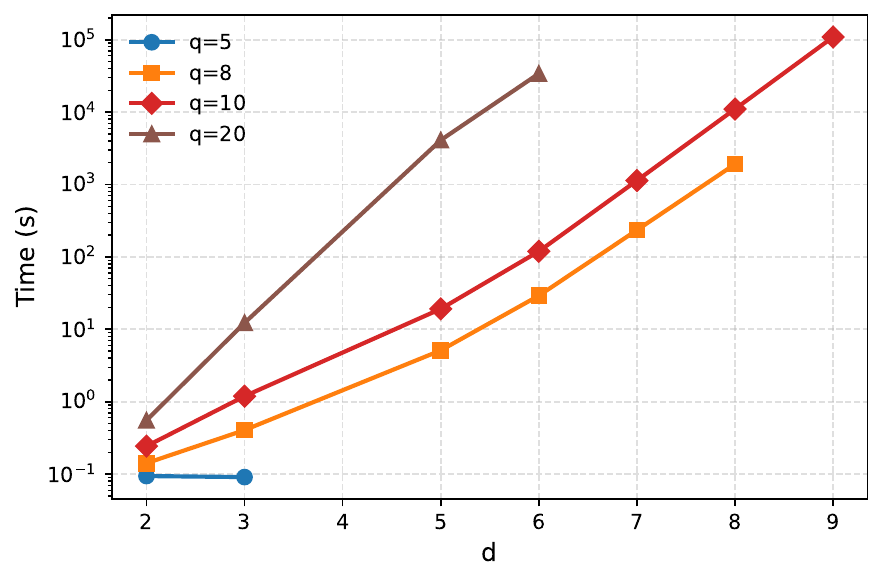}
    \caption{Time vs $d$}\label{d_time}
  \end{subfigure}\\
  \begin{subfigure}[b]{0.35\linewidth}\centering
    \includegraphics[width=\linewidth]{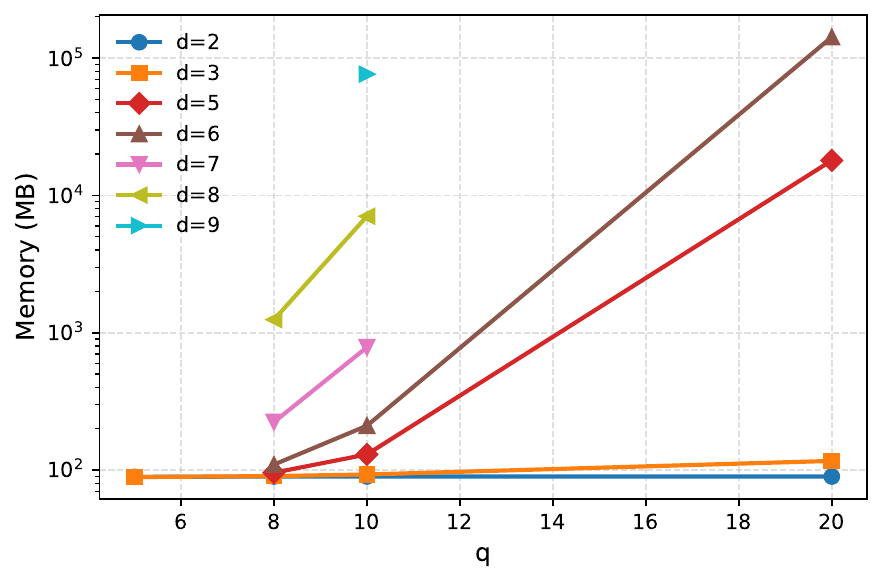}
    \caption{Memory vs $q$}\label{q_memory}
  \end{subfigure}
  \begin{subfigure}[b]{0.35\linewidth}\centering
    \includegraphics[width=\linewidth]{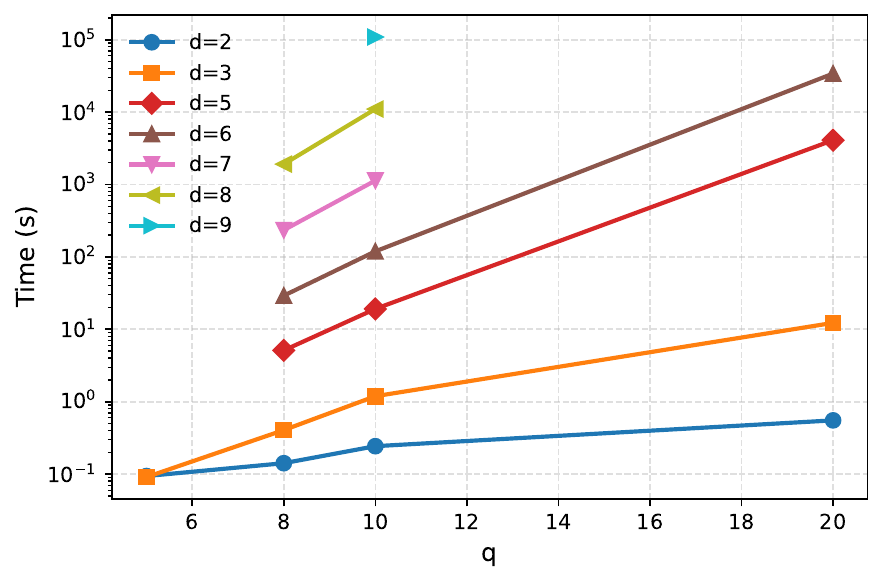}
    \caption{Time vs $q$}\label{q_time}
  \end{subfigure}
  \caption{Performance comparison in terms of memory usage and computation time with $d$ and $q$ in $\mathcal{I}_d^q$.}
  \label{fig:memory_time_generating} 
\end{figure}

\section{Coefficient Decay for Spectral Bases}\label[appsec]{appendix: coef decay}
In this section, we discuss the rate of coefficient decay. First, suppose that $\mathcal{K}^\ast := \{\boldsymbol{k}\in\mathcal{K} : k_j\neq 0,\ j=1,\dots,d\}$.

\begin{theorem}\label{theorem: coef decay for fourier}
Let $u:[0,2\pi]^d \to \mathbb{R}$ be $2\pi$-periodic in each variable, $\mathrm{i}$ is the imaginary unit and let $u(\boldsymbol{x})=\sum_{\boldsymbol{k}\in\mathcal{K}}\hat u_{\boldsymbol{k}} e^{\mathrm{i}\boldsymbol{k}\cdot\boldsymbol{x}}$, where $\hat u_{\boldsymbol{k}}$ is the Fourier coefficient. Let $\boldsymbol{k}=\{k_i\}_{i=1}^d \in \mathcal{K}$ and $\boldsymbol{n}=\{n_j\}_{j=1}^d\in \mathbb{N}^d$. Suppose that for every multi-index $\boldsymbol{\gamma}=(\gamma_1,\dots,\gamma_d)$ satisfying $0\le \gamma_j \le n_j \ \forall j=1,\cdots,d$, the mixed derivative $\partial^{\boldsymbol{\gamma}} u$ belongs to $L^1([0,2\pi]^d)$.
Then  
\begin{equation}
|\hat u_{\boldsymbol{k}}| \le \frac{C}{\prod_{j=1}^d |k_j|^{n_j}},\quad \forall \boldsymbol{k}\in\mathcal{K}^\ast
\end{equation}
where $C$ is a constant that depends on $u$.

Furthermore, if $u$ is analytic,\footnote{Here analytic means real-analytic on $[0,2\pi]^d$, i.e., $u$ admits a holomorphic extension to a complex strip 
$S_{\boldsymbol{\alpha}}=\{\boldsymbol{z}\in\mathbb C^d:\ |\Im z_j|<\alpha_j,\ j=1,\dots,d\}$ for some $\boldsymbol{\alpha}>\boldsymbol{0}$.} 
then
\begin{equation}
|\hat u_{\boldsymbol{k}}| \le C\,e^{-\|\boldsymbol{\alpha}\cdot \boldsymbol{k}\|_1},\qquad \forall \boldsymbol{k}\in \mathcal{K},
\end{equation}
where $C,\boldsymbol{\alpha}=\{\alpha_i\}_{i=1}^d$ are constants that depend on $u$.
\end{theorem}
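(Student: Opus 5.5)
The plan is to reduce both claims to one-dimensional estimates applied coordinatewise, via Fubini. Throughout, I use the normalization $\hat u_{\boldsymbol{k}}=(2\pi)^{-d}\int_{[0,2\pi]^d}u(\boldsymbol{x})e^{-\mathrm{i}\boldsymbol{k}\cdot\boldsymbol{x}}\,\mathrm{d}\boldsymbol{x}$.

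\emph{First claim (algebraic decay).} Fix $\boldsymbol{k}\in\mathcal{K}^\ast$ and integrate by parts $n_j$ times in the variable $x_j$, for each $j$.
\begin{itemize}
\item Periodicity of $u$ and of its mixed derivatives $\partial^{\boldsymbol{\gamma}}u$ with $\boldsymbol{\gamma}\le\boldsymbol{n}$ makes every boundary term vanish.
\item Each integration by parts in $x_j$ contributes a factor $(\mathrm{i}k_j)^{-1}$.
\item The result is $\hat u_{\boldsymbol{k}}=\prod_j(\mathrm{i}k_j)^{-n_j}\,\widehat{(\partial^{\boldsymbol{n}}u)}_{\boldsymbol{k}}$.
\end{itemize}
Since $|\widehat{g}_{\boldsymbol{k}}|\le(2\pi)^{-d}\|g\|_{L^1}$, we get $C=(2\pi)^{-d}\|\partial^{\boldsymbol{n}}u\|_{L^1}$.

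The technical point is justifying these integrations by parts under only the $L^1$ hypothesis. I would read the hypothesis as saying that $u$ lies in the mixed Sobolev space $W^{\boldsymbol{n},1}_{\mathrm{mix}}$ of periodic functions, with weak derivatives. The identity $\widehat{(\partial^{\boldsymbol{\gamma}}u)}_{\boldsymbol{k}}=(\mathrm{i}\boldsymbol{k})^{\boldsymbol{\gamma}}\hat u_{\boldsymbol{k}}$ then follows directly from the definition of the weak derivative, tested against the smooth periodic function $e^{-\mathrm{i}\boldsymbol{k}\cdot\boldsymbol{x}}$. So no pointwise boundary terms are ever needed.

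\emph{Second claim (exponential decay).} Let $u$ extend holomorphically to the strip $S_{\boldsymbol{\alpha}'}$. Fix $0<\alpha_j<\alpha'_j$, so that $u$ is bounded, say by $M$, on the closed strip $\overline{S_{\boldsymbol{\alpha}}}$. (If necessary, shrink $\boldsymbol{\alpha}$ slightly so that the extension is continuous and bounded up to its boundary.)

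For a given $\boldsymbol{k}$, shift the contour in each variable separately: replace $x_j$ by $x_j-\mathrm{i}\,\mathrm{sgn}(k_j)\alpha_j$, and make no shift when $k_j=0$.
\begin{itemize}
\item Apply Cauchy's theorem in one complex variable at a time, on the rectangle $[0,2\pi]\times[0,\mp\alpha_j]$, keeping the other variables fixed.
\item Periodicity in $\Re z_j$ makes the two vertical sides cancel.
\item After all shifts, $|e^{-\mathrm{i}\boldsymbol{k}\cdot\boldsymbol{z}}|=e^{-\sum_j|k_j|\alpha_j}=e^{-\|\boldsymbol{\alpha}\cdot\boldsymbol{k}\|_1}$.
\end{itemize}
This gives $|\hat u_{\boldsymbol{k}}|\le M e^{-\|\boldsymbol{\alpha}\cdot\boldsymbol{k}\|_1}$, so we may take $C=M$.

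The main obstacle is this multivariable contour shift. I would handle it by iterating the one-variable argument, noting that at each intermediate stage the partially shifted integrand stays holomorphic in the remaining variables. That holds because the function is jointly holomorphic on the product strip. The bound then holds for every $\boldsymbol{k}\in\mathcal{K}$, including indices with some $k_j=0$, which matches the statement.
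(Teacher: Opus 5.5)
Your proof is correct, and it is the argument the paper leaves implicit. The paper gives no proof; it only remarks that the theorem is the coordinatewise extension of the classical one-dimensional result in Canuto et al., and your scheme carries that extension out: the torus weak-derivative identity $\widehat{(\partial^{\boldsymbol n}u)}_{\boldsymbol k}=\prod_j(\mathrm{i}k_j)^{n_j}\hat u_{\boldsymbol k}$ for the algebraic bound, and one-variable contour shifts iterated via Fubini for the analytic bound.

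Two of your choices are necessary rather than merely convenient. First, the hypothesis must be read as weak derivatives on the torus: with derivatives taken only on the open cube the claim fails, e.g.\ $u(x)=x(2\pi-x)$ with $n=3$ has $\hat u_k=-2/k^2$. Second, $\alpha_j$ must be taken strictly below the strip width. The one step you should state explicitly is that the holomorphic extension is $2\pi$-periodic in each $\Re z_j$, by the identity theorem on the connected strip. You use this both for the uniform bound $M$ on $\overline{S_{\boldsymbol\alpha}}$ and for cancelling the vertical sides.
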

For the Fourier basis, \cref{theorem: coef decay for fourier} reveals the relationship between decay rate and smoothness. There is no proof of this theorem since this theorem merely extends the classical one-dimensional result~\cite{canuto2007spectral} to multivariate functions.

\begin{theorem}\label{theorem: coef decay for cheby}
Let $u:[-1,1]^d \to \mathbb{R}$, and let
$u(\boldsymbol{x})=\sum_{\boldsymbol{k}\in\mathcal{K}}\hat u_{\boldsymbol{k}} T_{\boldsymbol{k}}\left(\boldsymbol{x}\right)$, where $\hat u_{\boldsymbol{k}}$ is the Chebyshev coefficient, $T_{\boldsymbol{k}}\left(\boldsymbol{x}\right)=\prod_{i=1}^{d}T_{k_i}(x_i)$, and $T_n\left(x\right)$ is the Chebyshev polynomial (first kind). For $n>0$, if all mixed partial derivatives \( \partial^\alpha u \) (with \( \alpha = (\alpha_1, \alpha_2, \dots, \alpha_d) \) a multi-index and \( |\alpha| = \alpha_1 + \alpha_2 + \cdots + \alpha_d \leq n \)) satisfy
$\partial^\alpha u \cdot \prod_{j=1}^d \frac{1}{\sqrt{1 - x_j^2}} \in L^1([-1,1]^d)$, then 
\begin{equation}
|\hat u_{\boldsymbol{k}}| \le \frac{C}{\|\boldsymbol{k}\|_1^{\,n}},\quad \forall \boldsymbol{k}\in\mathcal{K}^\ast,
\end{equation}
where $C$ is a constant that depends on $u$.
 
Furthermore, if $u$ is analytic,\footnote{Here analytic means real-analytic on $[-1,1]^d$, i.e., $u$ admits a holomorphic extension to the interior of the Bernstein ellipse
$E_{\boldsymbol{\rho}}=\left\{\boldsymbol{z}\in \mathbb{C}^d:\ z_j = \frac{1}{2} \left( \rho_j e^{\mathrm{i}\theta_j} + \frac{1}{\rho_j e^{\mathrm{i}\theta_j}} \right),\ \theta_j \in [0, 2\pi) \ j=1,\dots,d \right\}$, for some $\boldsymbol{\rho}\geq\boldsymbol{1}$.} 
then 
\begin{equation}
|\hat u_{\boldsymbol{k}}| \le C\,\rho^{-\|\boldsymbol{k}\|_1},\quad \forall \boldsymbol{k}\in\mathcal{K},
\end{equation}
where $C,\rho$ are constants that depend on $u$.
\end{theorem}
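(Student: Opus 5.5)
The plan is to reduce the Chebyshev case to the Fourier case (\cref{theorem: coef decay for fourier}) through the substitution $x_j=\cos\theta_j$. Define $v(\boldsymbol{\theta})=u(\cos\theta_1,\dots,\cos\theta_d)$. This function is $2\pi$-periodic and even in each $\theta_j$. Since $T_{k_j}(\cos\theta_j)=\cos(k_j\theta_j)$ and $\mathrm{d}x_j/\sqrt{1-x_j^2}=-\mathrm{d}\theta_j$, the Chebyshev coefficient becomes
\begin{equation*}
\hat u_{\boldsymbol{k}} = c_{\boldsymbol{k}}\int_{[0,\pi]^d} v(\boldsymbol{\theta})\prod_{j=1}^d\cos(k_j\theta_j)\,\mathrm{d}\boldsymbol{\theta},
\end{equation*}
where $c_{\boldsymbol{k}}$ is a bounded normalisation factor (powers of $2/\pi$ and $1/\pi$). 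By evenness this integral equals, up to a further constant, the Fourier coefficient of $v$ on $[0,2\pi]^d$. So it suffices to establish the decay for the cosine coefficients of $v$.

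\textbf{Algebraic decay.} Use $\|\boldsymbol{k}\|_1\le d\max_j|k_j|$: it is enough to show $|\hat u_{\boldsymbol{k}}|\le C'/\max_j|k_j|^n$ and then set $C=d^nC'$. Fix the index $j^\ast$ at which $|k_j|$ is maximal and integrate by parts $n$ times in $\theta_{j^\ast}$ alone. The boundary terms vanish because $v$ and its $\theta_{j^\ast}$-derivatives are $2\pi$-periodic, or equivalently because the sine factors vanish at $0$ and $\pi$. Each step gains a factor $1/k_{j^\ast}$, which gives
\begin{equation*}
|\hat u_{\boldsymbol{k}}|\le \frac{C}{|k_{j^\ast}|^n}\,\|\partial_{\theta_{j^\ast}}^n v\|_{L^1}.
\end{equation*}
By Fa\`a di Bruno, $\partial_{\theta}^m v$ is a sum of terms $\partial_{x_{j^\ast}}^{r}u\cdot P_r(\sin\theta,\cos\theta)$ with $r\le m\le n$ and bounded trigonometric polynomials $P_r$. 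Changing variables back, $\mathrm{d}\boldsymbol{\theta}=\prod_j(1-x_j^2)^{-1/2}\mathrm{d}\boldsymbol{x}$. Hence $\|\partial^n_{\theta_{j^\ast}}v\|_{L^1}$ is controlled by the weighted $L^1$ norms of $\partial^{\alpha}u$ with $\alpha=r e_{j^\ast}$ and $|\alpha|\le n$, which are finite by hypothesis.

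\textbf{Analytic decay.} The map $\theta\mapsto\cos\theta$ sends the strip $|\Im\theta_j|<\log\rho_j$ onto the interior of the Bernstein ellipse $E_{\rho_j}$. Therefore $v$ extends holomorphically to the polystrip $\{|\Im\theta_j|<\log\rho_j\}$ and is periodic there. Write $\cos(k_j\theta_j)$ as a sum of exponentials. In each coordinate, shift the contour of integration to $\Im\theta_j=\mp(\log\rho_j-\epsilon)$, choosing the sign according to the exponential so that the integrand decays; the vertical sides cancel by periodicity. This bounds each term by $\sup|v|\,\prod_j(\rho_j-\epsilon')^{-|k_j|}$, with the supremum taken on a slightly smaller closed strip where $v$ is bounded. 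Setting $\rho:=\min_j(\rho_j-\epsilon')>1$ yields $C\rho^{-\|\boldsymbol{k}\|_1}$. This step needs $\rho_j>1$; for $\rho_j=1$ the bound is vacuous, so the footnote's condition should be read as strict.

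\textbf{Main obstacle.} I expect the delicate point to be the endpoint regularity in the algebraic case. The chain rule introduces factors of $\sin\theta_j=\sqrt{1-x_j^2}$, and one must check that integrating by parts in $\theta$ is legitimate under only the weighted $L^1$ hypothesis, i.e.\ that $\partial^{m}_{\theta}v$ is absolutely continuous for $m<n$. I would first handle this by approximating $u$ with smooth functions, using density in the weighted Sobolev-type space, and passing to the limit in the final inequality, which depends only on the weighted $L^1$ norms.
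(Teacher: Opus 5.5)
Your argument is correct. The paper gives no proof of this theorem: like the Fourier version, it is stated as a multivariate extension of the classical one-dimensional results, so there is no competing route to compare with. Your proof supplies what is missing. It uses the substitution $x_j=\cos\theta_j$, integrates by parts in the single direction of largest frequency, applies $\|\boldsymbol{k}\|_1\le d\max_j|k_j|$, and shifts contours from the strip onto the Bernstein region. These are the standard one-dimensional proofs applied coordinatewise, and each step holds as you describe.

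\textbf{The obstacle you flag is milder than you fear.} The weighted density argument is unnecessary, and you would otherwise have to prove that density result. Since $\prod_j(1-x_j^2)^{-1/2}\ge 1$, the hypothesis already gives $\partial^r_{x_j}u\in L^1([-1,1]^d)$ (unweighted) for $r\le n$. By Fubini and the ACL characterization, the restriction of $u$ to almost every line parallel to the $x_j$-axis lies in $W^{n,1}(-1,1)$. So $\partial^r_{x_j}u$ is absolutely continuous on the closed interval $[-1,1]$ for $r\le n-1$. Composing with the monotone map $\cos$ on $[0,\pi]$, multiplying by the bounded trigonometric factors, and extending evenly, $\partial^m_{\theta_j}v$ is absolutely continuous and $2\pi$-periodic in $\theta_j$ for $m\le n-1$. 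This is all the integrations by parts need, line by line.

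\textbf{Your proof gives more than is stated.} Only the pure derivatives $\partial^r_{x_j}u$ enter, and the bound holds for every $\boldsymbol{k}\neq\boldsymbol{0}$, not only on $\mathcal{K}^\ast$.

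\textbf{Your strict reading $\rho_j>1$ of the footnote is right.} Real analyticity on the compact cube always provides such $\rho_j$. The holomorphic extension is defined on a neighbourhood of $[-1,1]^d$, and that neighbourhood contains a product of closed Bernstein regions once the $\rho_j$ are close enough to $1$.
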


For the Chebyshev basis, \cref{theorem: coef decay for cheby} reveals the relationship between decay rate and smoothness. Compared with \cref{theorem: coef decay for fourier}, one can find that both the Chebyshev and Fourier bases have algebraic decay of coefficients for finitely smooth functions and exponential decay of coefficients for analytic functions. However, the definitions of smoothness and analyticity are slightly different. Aside from the difference in domain, smoothness for the Fourier basis is defined in $L^1$ space, while smoothness for the Chebyshev basis is defined in $L_{w}^1$ space, which is a weighted $L^1$ space with $w=\frac{1}{\sqrt{1-x^2}}$. Furthermore, their analytic functions require different complex domain (For Fourier, $S_\alpha$, for Chebyshev $E_{\rho}$).

\begin{remark}
    Compared with \cref{theorem: coef decay for fourier}, \cref{theorem: coef decay for cheby} replaces the anisotropic decay rate by an isotropic rate. The isotropic rate is a relaxed condition of anisotropic rate by $n=\min\{\boldsymbol{n}\}$ and $\rho=\min\{\boldsymbol{\rho}\}$.
\end{remark} 

For general spectral bases, since an advantage of spectral bases is their ability to produce decaying expansion coefficients when representing sufficiently smooth functions, the feature of coefficient decay exists in every spectral basis, but the classes of functions for which this decay is pronounced and optimal differ across different bases.  Recall the universal notation in \cref{eq: truncted_inverse_transform}: $u_N(\boldsymbol{x},t) = \sum_{\boldsymbol{k} \in \mathcal{K}_N} \hat{u}_{\boldsymbol{k}}(t)   \phi_{\boldsymbol{k}}(\boldsymbol{x})$. 

For finitely smooth functions, the algebraic decay depends on the smoothness defined on the corresponding orthogonal space (e.g., $L^1$ for Fourier, $L^1_w$ for Chebyshev) and the associated Sturm--Liouville operator 
\begin{equation}
\mathcal{L}\phi_n = \lambda_n \phi_n,
\end{equation}
where $\lambda_n$ increases monotonically with $n$ and $\lambda_n \to \infty$ as $n\to\infty$.  Suppose that the function $f$ satisfies the corresponding smoothness condition with $\boldsymbol{n}$, then 
\begin{equation}
    |\hat u_{\boldsymbol{k}}| \le C\,\prod_{j=1}^d \lambda_{k_j}^{-n_j/2},\quad \forall \boldsymbol{k}\in\mathcal{K}^\ast.
\end{equation}
Herein, since $\lambda_n$ is monotonically increasing with $n$, the growth rate directly dictates the decay rate of the coefficients: faster growth of $\lambda_n$ leads to more rapid algebraic decay.

For analytic functions, the exponential decay depends primarily on whether the function’s analytic domain matches (or extends into) the analytic domain of the spectral basis in complex domain. And the speed of exponential decay actually depends on both the size of the function’s analytic domain and the shape of the spectral basis’s analytic domain. 

Additionally, if a function is infinitely smooth but not analytic, then the coefficient decay rate is super-algebraic.
\section{Proof of Embedding}\label[appsec]{appendix: embedding}
This section first proves \cref{theorem: basis_embedding} and then discusses it.
\begin{proof}
First, suppose that $\boldsymbol{k}$ is arbitrary and that $\boldsymbol{k}\in\mathcal{K}_t$.
Recall the definition of $\varepsilon(|\mathcal{X}|)$ 
\begin{equation}
        \left|\int_{\Omega} u(\boldsymbol{x})\phi_{\boldsymbol{k}}(\boldsymbol{x})\mathrm{d}\boldsymbol{x}-\sum_{\boldsymbol{x}\in \mathcal{X}} \omega(\boldsymbol{x}) u(\boldsymbol{x})\phi_{\boldsymbol{k}}(\boldsymbol{x})\right|\leq \varepsilon(|\mathcal{X}|).
\end{equation}
Suppose $\boldsymbol{\phi}_{\boldsymbol{k}}^{\mathcal{X}}=\{\phi_{\boldsymbol{k}}(\boldsymbol{x})\}_{\boldsymbol{x}\in \mathcal{X}} \in \mathbb{R}^{|\mathcal{X}|}$ is the vector that every element is the corresponding discrete value of $\phi_{\boldsymbol{k}}$, $\boldsymbol{w}^{\mathcal{X}}=\{\omega(\boldsymbol{x})u(\boldsymbol{x})\}_{\boldsymbol{x}\in \mathcal{X}} \in \mathbb{R}^{1\times|\mathcal{X}|}$, and $\boldsymbol{a}$ is the learnable vector. Then
\begin{equation}
\begin{aligned}
    \left|\hat{u}_{\boldsymbol{k}}^{\mathcal{X}}-\hat{u}_{\boldsymbol{k}}^{\mathcal{X}_c}\right| & =\left|\boldsymbol{w}^{\mathcal{X}}\cdot \boldsymbol{\phi}_{\boldsymbol{k}}^{\mathcal{X}}-\boldsymbol{w}^{\mathcal{X}_c}\cdot\boldsymbol{\phi}_{\boldsymbol{k}}^{\mathcal{X}_c}\right| \\
    & \leq \left|\int_{\Omega} u(\boldsymbol{x})\phi_{\boldsymbol{k}}(\boldsymbol{x})\mathrm{d}\boldsymbol{x}-\boldsymbol{w}^{\mathcal{X}}\cdot \boldsymbol{\phi}_{\boldsymbol{k}}^{\mathcal{X}}\right| +\left|\int_{\Omega} u(\boldsymbol{x})\phi_{\boldsymbol{k}}(\boldsymbol{x})\mathrm{d}\boldsymbol{x}-\boldsymbol{w}^{\mathcal{X}_c}\cdot\boldsymbol{\phi}_{\boldsymbol{k}}^{\mathcal{X}_c}\right|\\
    & \leq \varepsilon(|\mathcal{X}|)+\varepsilon(|\mathcal{X}_c|).
\end{aligned}
\end{equation}

The network $z^a(\boldsymbol{k})$ is designed to approximate $\hat{u}_{\boldsymbol{k}}^{\mathcal{X}}$. Thus, the parameter $\boldsymbol{a}$ is obtained by optimizing:
\begin{equation}\label{eq: min_a}
    \min_a \left|z^a(\boldsymbol{k})-\hat{u}_{\boldsymbol{k}}^{\mathcal{X}}\right| \Leftrightarrow \min_{\boldsymbol{a}} \left|\boldsymbol{a}\cdot\boldsymbol{\phi}_{\boldsymbol{k}}^{\mathcal{X}_c}-\boldsymbol{w}^{\mathcal{X}}\cdot \boldsymbol{\phi}_{\boldsymbol{k}}^{\mathcal{X}}\right|.
\end{equation}
Since there exists $\boldsymbol{a}$ such that $\boldsymbol{a}=\boldsymbol{w}^{\mathcal{X}_c}$, 
\begin{equation}
        \min_{\boldsymbol{a}} \left|\boldsymbol{a}\cdot\boldsymbol{\phi}_{\boldsymbol{k}}^{\mathcal{X}_c}-\boldsymbol{w}^{\mathcal{X}}\cdot \boldsymbol{\phi}_{\boldsymbol{k}}^{\mathcal{X}}\right| =\left|\boldsymbol{w}^{\mathcal{X}}\cdot \boldsymbol{\phi}_{\boldsymbol{k}}^{\mathcal{X}}-\boldsymbol{w}^{\mathcal{X}_c}\cdot\boldsymbol{\phi}_{\boldsymbol{k}}^{\mathcal{X}_c}\right|\leq \varepsilon(|\mathcal{X}|)+\varepsilon(|\mathcal{X}_c|)=\mathcal{O}(\varepsilon(|\mathcal{X}_c|)).
\end{equation}

However, since the neural network is difficult to exactly satisfy the aforementioned condition. Thus, suppose that 
\begin{equation}
    \boldsymbol{a}^*=\arg\min_{\boldsymbol{a}} \left|\boldsymbol{a}\cdot\boldsymbol{\phi}_{\boldsymbol{k}}^{\mathcal{X}_c}-\boldsymbol{w}^{\mathcal{X}}\cdot \boldsymbol{\phi}_{\boldsymbol{k}}^{\mathcal{X}}\right|,
\end{equation}
Consequently, if the trained network satisfies $\left|\boldsymbol{a}^*\cdot\boldsymbol{\phi}_{\boldsymbol{k}}^{\mathcal{X}_c}-\boldsymbol{w}^{\mathcal{X}_c}\cdot \boldsymbol{\phi}_{\boldsymbol{k}}^{\mathcal{X}_c}\right|\ll e\left(|\mathcal{X}_c|\right)$, then
\begin{equation}\label{eq: bigo_single}
    \left|z^{a^*}(\boldsymbol{k})-\hat{u}_{\boldsymbol{k}}^{\mathcal{X}}\right|=\left|\boldsymbol{a}^*\cdot\boldsymbol{\phi}_{\boldsymbol{k}}^{\mathcal{X}_c}-\boldsymbol{w}^{\mathcal{X}}\cdot \boldsymbol{\phi}_{\boldsymbol{k}}^{\mathcal{X}}\right|=\mathcal{O}(\varepsilon(|\mathcal{X}_c|)).
\end{equation}
However, since  $\left|\boldsymbol{a}^*\cdot\boldsymbol{\phi}_{\boldsymbol{k}}^{\mathcal{X}_c}-\boldsymbol{w}^{\mathcal{X}_c}\cdot \boldsymbol{\phi}_{\boldsymbol{k}}^{\mathcal{X}_c}\right|=0 \Rightarrow \boldsymbol{a}^*=\boldsymbol{w}^{\mathcal{X}_c} \text{ or } \left(\boldsymbol{a}^*-\boldsymbol{w}^{\mathcal{X}}\right) \perp \boldsymbol{\phi}_{\boldsymbol{k}}^{{\mathcal{X}_c}}$, \cref{eq: bigo_single} cannot be extended to any $\boldsymbol{k}\in \mathcal{K}_w$. 

Suppose  $\Phi^{\mathcal{X}_c}=\left[\boldsymbol{\phi}_{\boldsymbol{k}}^{{\mathcal{X}_c}}\right]_{\boldsymbol{k}\in \mathcal{K}_t}\in \mathbb{R}^{|\mathcal{X}_c|\times|\mathcal{K}_t|}$ is the matrix constructed by vectors. Since $\boldsymbol{a}^*=\boldsymbol{w}^{\mathcal{X}_c} \Leftrightarrow \boldsymbol{a}^*\Phi^{\mathcal{X}_c}-\boldsymbol{w}^{\mathcal{X}_c} \Phi^{\mathcal{X}_c}=0 $ holds when $\operatorname{Rank}\left(\Phi^{\mathcal{X}_c}\right)\geq|\mathcal{X}_c|$. Thus, to satisfy that  $\left\|\boldsymbol{a}^*-\boldsymbol{w}^{\mathcal{X}_c}\right\|_2 \ll \frac{\varepsilon(|\mathcal{X}_c|)}{\|\boldsymbol{\phi}_{\boldsymbol{k}}^{\mathcal{X}_c}\|_2}$, the training dataset $\mathcal{K}_t$ should guarantee that the constructed $\Phi^{\mathcal{X}_c}$ satisfies $\operatorname{Rank}\left(\Phi^{\mathcal{X}_c}\right)\geq|\mathcal{X}_c|$. Then 
\begin{equation}
    \left|\boldsymbol{a}^*\cdot\boldsymbol{\phi}_{\boldsymbol{k}}^{\mathcal{X}_c}-\boldsymbol{w}^{\mathcal{X}_c}\cdot \boldsymbol{\phi}_{\boldsymbol{k}}^{\mathcal{X}_c}\right|\leq \left\|\boldsymbol{a}^*-\boldsymbol{w}^{\mathcal{X}_c}\right\|_2\|\boldsymbol{\phi}_{\boldsymbol{k}}^{\mathcal{X}_c}\|_2 \ll e\left(|\mathcal{X}_c|\right), \ \forall \boldsymbol{k} \in \mathcal{K}_w.
\end{equation}
Consequently, we can claim that
\begin{equation}
    \left|z^{a^*}(\boldsymbol{k})-\hat{u}_{\boldsymbol{k}}^{\mathcal{X}}\right|=\mathcal{O}(\varepsilon(|\mathcal{X}_c|)), \quad \forall \boldsymbol{k}\in \mathcal{K}_w\setminus\mathcal{K}_t.
\end{equation}
\end{proof}

\begin{remark}
Since $\phi_{\boldsymbol{k}}(\boldsymbol{x})=\prod_{i=1}^d\phi_{k_i}(x_i)$, where $\boldsymbol{x}=\{x_i\}_{i=1}^d$ and $\boldsymbol{k}=\{k_i\}_{i=1}^d$. Thus, for high-dimensional problems in Modified SINNs, we first discretize every dimension to $N$ points, denoted by $\boldsymbol{x}_i=\{x_i^j\}_{j=1}^N$ for $i=1,\cdots,d$. If $u$ is separable, then we can construct the matrix $\Phi=\left\{\phi_{k_i}\left(x_i^j\right)\right\}_{i,j=1}^{d,N}\in\mathbb{R}^{d,N}$. Suppose $a\in \mathbb{R}^{N}$ is the simplified expression of learnable parameters that including weights $w$ and coefficients $c$. Let $b=\Phi a$ then every element $b_i$ of $b$ corresponds to the integration of $\phi_{k_i}(x_i)$. Finally, we output $z^a(\boldsymbol{k})=\prod_{i=1}^db_i$. If $u$ is not separable, we can also construct the matrix $\Phi=\left\{\phi_{k_i}\left(x_i^j\right)\right\}_{i,j=1}^{d,N}\in\mathbb{R}^{d,N}$ with a position embedding. In this situation, $a$ is supposed not only learn weights $w$ and coefficients $c$, but also  learn the integration of other dimensions.  Furthermore, to avoid the vanish of production when $d$ is large, one can use $z^a(\boldsymbol{k})=\sum_{i=1}^db_i$.
\end{remark}

\begin{remark}
    \cref{lemma: linear indepen} provides the sufficient condition to construct the  $\Phi^{\mathcal{X}_c}$ with $\operatorname{Rank}\left(\Phi^{\mathcal{X}_c}\right)\geq|\mathcal{X}_c|$. Herein, in high-dimensional problems, $|\mathcal{K}_t|\geq |\mathcal{X}_c|$ is enough to guarantee $\operatorname{Rank}\left(\Phi^{\mathcal{X}_c}\right)\geq|\mathcal{X}_c|$, especially the sampling distribution based on hyperbolic cross makes the sampled $\mathcal{K}_t$ concentrate on the low-frequency part.
\end{remark}
\begin{lemma}\label{lemma: linear indepen}
For arbitrary $\boldsymbol{k_1}\neq \boldsymbol{k_2}, \ \boldsymbol{k}_1=\{k_{1,i}\}_{i=1}^d\in \mathcal{K}_w,\ \boldsymbol{k}_2=\{k_{2,i}\}_{i=1}^d\in \mathcal{K}_w$, 
\begin{equation*}
    \boldsymbol{\phi}_{\boldsymbol{k}_1}\cdot \boldsymbol{\phi}_{\boldsymbol{k}_2}\neq0 \Leftrightarrow \quad k_{1,i}\equiv k_{2,i}\left(\text{mod } N\right) \quad \forall i=1,\cdots d,
\end{equation*}
where $N$ is a constant that depends on $|\mathcal{X}_c|$ and $\phi$.
\end{lemma}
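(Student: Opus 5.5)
The plan is to reduce the lemma to a one-dimensional discrete orthogonality (aliasing) statement and then use the tensor-product structure. I take $\mathcal{X}_c$ to be a tensor grid $\mathcal{X}_c=\mathcal{X}^{(1)}\times\cdots\times\mathcal{X}^{(d)}$ with $M$ nodes per coordinate, so $|\mathcal{X}_c|=M^d$. I read $\boldsymbol{\phi}_{\boldsymbol{k}}$ as the vector of values $\phi_{\boldsymbol{k}}(\boldsymbol{x})=\prod_{i}\phi_{k_i}(x_i)$ over $\mathcal{X}_c$. For complex bases I interpret the dot product as the Hermitian one, and I include the quadrature weights $\omega(\boldsymbol{x})$ if the basis requires them. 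The inner product then factorizes:
\begin{equation*}
\boldsymbol{\phi}_{\boldsymbol{k}_1}\cdot\boldsymbol{\phi}_{\boldsymbol{k}_2}=\prod_{i=1}^d S(k_{1,i},k_{2,i}),\qquad S(k,l):=\sum_{x\in\mathcal{X}^{(i)}}\omega_i(x)\,\phi_k(x)\overline{\phi_l(x)} .
\end{equation*}
A product vanishes exactly when some factor vanishes. So the lemma follows once I show, for each coordinate, that $S(k,l)\neq 0$ if and only if $k\equiv l \pmod N$ for a suitable $N$.

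For the Fourier basis on equispaced nodes $x_j=2\pi j/M$, the factor is a geometric sum, $S(k,l)=\sum_{j=0}^{M-1}e^{2\pi\mathrm{i}(k-l)j/M}$. This equals $M$ when $M\mid(k-l)$ and $0$ otherwise, which gives the claim with $N=M$.

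For the Chebyshev basis at Gauss or Gauss--Lobatto nodes $x_j=\cos\theta_j$, I would write $T_k(x_j)=\cos(k\theta_j)$. The product-to-sum formula gives $\cos(k\theta)\cos(l\theta)=\tfrac12[\cos((k+l)\theta)+\cos((k-l)\theta)]$. With the Lobatto weights (halved at the endpoints), this turns $S(k,l)$ into two trigonometric sums of the same geometric type, each vanishing unless $k+l$ or $k-l$ is a multiple of $2M$ (respectively $2(M-1)$ for Lobatto). This yields the aliasing condition with $N=2M$ (or $2(M-1)$), understood up to the reflection $k\mapsto -k$. Since the indices are nonnegative and bounded in $\mathcal{K}_w$, I would absorb the reflection into the congruence, i.e.\ regard $k$ and $-k$ as the same residue class.

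For the ``$\Leftarrow$'' direction, I check that each factor is nonzero when the congruence holds. In the Fourier case the factor equals $M$. In the Chebyshev case it is a positive multiple of $M$, because the two cosine sums cannot cancel (each is nonnegative at aliasing indices). Multiplying over the coordinates then gives a nonzero product.

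The main obstacle is making the statement precise for non-Fourier bases. The reflection aliasing $k\equiv -l$ for cosine/Chebyshev is not literally the congruence $k\equiv l \pmod N$, and the claim also depends on using the correct quadrature weights. My first attempt would fix $N$ as the alias period of the node set and prove the Chebyshev case via the cosine substitution above, stating the reflection convention explicitly. For general orthogonal polynomials (Legendre, Hermite) without a closed-form aliasing structure, I would not expect an exact ``iff''. There I would only claim the exactness of Gauss quadrature, which gives $S(k,l)=0$ for $k\neq l$ whenever $k+l\le 2M-1$.
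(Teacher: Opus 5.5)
The paper states this lemma with no proof, so there is no authors' route to compare against and your argument has to stand on its own. Its core is the natural proof: factorize the inner product over a tensor grid into one-dimensional sums, then read off discrete orthogonality (aliasing) in each coordinate. For the complex Fourier basis on an equispaced tensor grid with the Hermitian product it is correct and complete, with $N=M$. Reading $\boldsymbol{\phi}_{\boldsymbol{k}}$ as the vector of tensor-product values, as in the proof of the basis-embedding theorem, is also the reading that makes the ``for all $i$'' form true. With the main text's $d\times|\mathcal{X}_c|$ matrix and a Frobenius product, the one-dimensional factors would be summed rather than multiplied, and the condition would become ``for some $i$''. You were also right not to claim an exact equivalence for Legendre or Hermite: the statement as written is not true for general $\phi$.

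The step that fails is your ``$\Leftarrow$'' direction for Chebyshev--Gauss nodes.
\begin{itemize}
\item \emph{The cosine sum can be negative.} With $\theta_j=(2j+1)\pi/(2M)$ one finds $\sum_{j=0}^{M-1}\cos(m\theta_j)=M(-1)^{m/(2M)}$ when $2M\mid m$, and $0$ otherwise. So an aliased cosine sum can equal $-M$, and the two terms of $S(k,l)$ cancel exactly when $k\equiv l\equiv M\pmod{2M}$.
\item \emph{Concrete counterexample.} The cancellation is just the fact that the Gauss nodes are the roots of $T_M$, so $\boldsymbol{\phi}_{\boldsymbol{k}}$ is the zero vector whenever some $k_i\equiv M\pmod{2M}$. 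Take $\boldsymbol{k}_1=(M,0,\dots,0)$ and $\boldsymbol{k}_2=(3M,0,\dots,0)$. They are congruent mod $2M$ in every coordinate, with or without your reflection convention, yet $\boldsymbol{\phi}_{\boldsymbol{k}_1}\cdot\boldsymbol{\phi}_{\boldsymbol{k}_2}=0$.
\item \emph{Where nonnegativity does hold.} Your argument is valid only for the Gauss--Lobatto sums with halved endpoint weights. These take values in $\{0,M-1\}$, so the factor is a positive multiple of $(M-1)/2$, not of $M$.
\item \emph{Fix.} Either restrict the Chebyshev claim to Lobatto nodes, which are the ones the paper actually uses, or exclude the residue class $M\pmod{2M}$ for Gauss nodes.
\end{itemize}

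The Lobatto conclusion also depends on those weights. With the plain dot product, the endpoint terms add $\tfrac12(1+(-1)^m)$ to each cosine sum, which makes $S(k,l)\neq0$ for every pair with $k\equiv l\pmod 2$. Together with the reflection $k\equiv -l$, this means that outside the Fourier case you are proving a modified lemma: weighted product, congruence up to sign, and restricted admissible residues. That change is unavoidable, but you should state it explicitly as a change to the hypotheses rather than absorb it into notation.
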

\section{Convergence Analysis for SINNs in High-dimensional PDEs}\label[appsec]{appendix: convergence}
Since the additional blocks can be regarded as multiplicative reparameterizations on $\hat{u}^\theta$ and are uniformly bounded, Modified SINNs do not violate the error convergence theorem for SINNs in \cite{yu2025spectral}. However, the error convergence theorem in~\cite{yu2025spectral} is based on the assumption of one-dimensional cases. Here, we extend the error convergence theorem to high-dimensional problems.

Same as \cite{yu2025spectral}, since temporal error is much smaller than spatial error, we ignore the temporal error in this analysis. For SINNs, suppose the analytic solution is $u^*(\boldsymbol{x})$ and the truncated expansion of the solution is $u^*_N(\boldsymbol{x}) = \sum_{\boldsymbol{k} \in \mathcal{K}_N} \hat{u}_{\boldsymbol{k}}^*\phi_{\boldsymbol{k}}(\boldsymbol{x})$. Let $\hat{u}^\theta(\boldsymbol{k},t)$ be the output of SINNs and $u^\theta(\boldsymbol{x}) = \sum_{\boldsymbol{k} \in \mathcal{K}_N} \hat{u}^\theta(\boldsymbol{k},t)\phi_{\boldsymbol{k}}(\boldsymbol{x})$. Then, define $\theta^*_N$ is the parameters after training with $\mathcal{K}_N$:
\begin{equation}
    \theta^*_N := \arg\min_\theta \sum_{\boldsymbol{k}\in\mathcal{K}_N} \mathcal{L}\left[u^\theta(\boldsymbol{k})\right]_\Omega,
\end{equation} 
Then, we have
\begin{equation}
  \left\| u^*(\boldsymbol{x}) - \hat{u}^{\theta^*_N}(\boldsymbol{x}) \right\|_\Omega\leq \left\| u^*(\boldsymbol{x}) - u^*_N(\boldsymbol{x}) \right\|_\Omega + \left\| u^*_N(\boldsymbol{x}) - u^{\theta^*_N}(\boldsymbol{x}) \right\|_\Omega.
\end{equation}
Based on universal approximation theorem \cite{hornik1989multilayer}, the second term can be made arbitrarily small with sufficiently large network capacity. Thus, the error convergence of SINNs is determined by the first term, which is the error of spectral expansion. Since the spectral expansion depends on the methods for generating the index set $\mathcal{K}_N$, the error convergence of SINNs is determined by the method for generating $\mathcal{K}_N$. If the index set $\mathcal{K}_N$ is generated by methods which have spectral convergence, the error convergence of SINNs is also spectral.
\section{Derivatives of Chebyshev Polynomials}\label[appsec]{appendix: derivative chebyshev}
There are many accurate and efficient methods for computing derivatives in Chebyshev spectral methods. However, since this paper focuses on sparse coefficients and avoids transforming them back to the physical domain frequently, we introduce an approach that computes derivatives directly in the spectral domain. Notably, although previous works~\cite{liu2011efficient,oh2019efficient} have derived schemes for the derivatives of Chebyshev polynomials in the spectral domain, these studies primarily address dense coefficients and low-dimensional PDEs. 

Suppose that $D\in \mathbb{R}^{N_x\times N_x}$ is the Chebyshev differential matrix, the function $u(x)=\sum_{k=0}^{N_x-1}c_kT_k(x) \in L^2_w[-1,1]$, and $\boldsymbol{c}=\{c_i\}_{i=0}^{N_x-1}$. If $\{x_i\}_{i=1}^{N_x}$ is the Chebyshev–Lobatto nodes, suppose $\boldsymbol{u}=\{u(x_i)\}_{i=1}^{N_x}$, and $T=\{t_{ij}\}\in \mathbb{R}^{N_x\times N_x}$ where $t_{ij}=T_i(x_j)$. Then 
\begin{equation}
\begin{aligned}
        & \boldsymbol{u}=T\boldsymbol{c}, \\
        & \boldsymbol{u}^\prime=DT\boldsymbol{c},\\
        & \vdots\\
        & \boldsymbol{u}^{(n)}=D^{n}T\boldsymbol{c},\\
\end{aligned}
\end{equation}
Based on Chebyshev–Gauss quadrature, one can obtain that, for $B=\frac{2}{N_x}T^T\operatorname{diag}\left(\{w_i\}_{i=0}^{N_x-1}\right)$ where $w_0=w_{N_x-1}=1/2, \ w_i=1,\ \forall i=1,\cdots, N_x-2$, $TB=I$ where $I$ is identity matrix. Suppose $D_i=BD^iT$, then 
\begin{equation}
\begin{aligned}
        & TD_1\boldsymbol{c}=TBDT\boldsymbol{c}=DT\boldsymbol{c}=\boldsymbol{u}^\prime,\\
        & \vdots\\
        & TD_n\boldsymbol{c}=TBD^nT\boldsymbol{c}=D^nT\boldsymbol{c}=\boldsymbol{u}^{(n)}.\\
\end{aligned}
\end{equation}
If $u^{(n)}\in L^2_w[-1,1]$, by \cref{lemma: uniqueness of coefficients}, $\boldsymbol{c}_n=D_n\boldsymbol{c}$ is the vector of Chebyshev coefficients of the $n$th-order derivative of $u$. Then for multi-variable function $u(\boldsymbol{x})$, suppose the corresponding function tensor is $\mathcal{U}$ and the coefficient tensor is $\mathcal{C}$, then
\begin{equation}
    \partial_\alpha^n\mathcal{C}=\mathcal{C}\times_\alpha D_n,
\end{equation}
where $\partial_\alpha^n$ is the $n$-th derivative of the $\alpha$-th variable and $\times_\alpha$ is the contraction operation along the $\alpha$-th dimension.
\begin{lemma}\label{lemma: uniqueness of coefficients}\cite{yosida1980functional}
Let $(L_w^2([-1,1]),\langle\cdot,\cdot\rangle_w)$ be the weighted $L^2$--space with weight $w(x)>0$ a.e.\ on $[-1,1]$, and let $\{\phi_i\}_{i\in I}$ be an orthonormal basis of $L_w^2([-1,1])$. Then for every $u\in L_w^2([-1,1])$, the coefficients $\alpha_i = \langle u,\phi_i\rangle_w, \ i\in I,$ are uniquely determined.
\end{lemma}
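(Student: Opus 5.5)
The statement is \cref{lemma: uniqueness of coefficients}: in $L_w^2([-1,1])$ with an orthonormal basis $\{\phi_i\}_{i\in I}$, the coefficients $\alpha_i=\langle u,\phi_i\rangle_w$ are uniquely determined by $u$. I read this in its substantive form. If $u=\sum_{i\in I}\beta_i\phi_i$ is any expansion converging in the $L_w^2$ norm, then necessarily $\beta_i=\langle u,\phi_i\rangle_w$ for every $i$. Equivalently, two square-summable coefficient families that produce the same function coincide. The plan is to use only the Hilbert-space structure: continuity of the inner product, orthonormality, and, for the second formulation, completeness of the basis.

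\emph{Step 1: the space is a Hilbert space.} Since $w>0$ a.e., $\langle f,g\rangle_w=\int_{-1}^1 fg\,w\,\mathrm{d}x$ is positive definite on equivalence classes of functions. Completeness follows from the standard Riesz--Fischer argument applied to the measure $w\,\mathrm{d}x$, so I would simply cite this. The first conclusion is that $\alpha_i$ is well defined: by Cauchy--Schwarz, $u\phi_i w\in L^1$, so $\alpha_i$ is a single number that depends only on the class of $u$.

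\emph{Step 2: recover the coefficients from the expansion.} Suppose $u=\sum_i\beta_i\phi_i$ converges in norm, with partial sums $S_F=\sum_{i\in F}\beta_i\phi_i$ over finite sets $F\subset I$. The map $v\mapsto\langle v,\phi_j\rangle_w$ is continuous, since $|\langle v,\phi_j\rangle_w|\le\|v\|_w$. Therefore $\langle u,\phi_j\rangle_w=\lim_F\langle S_F,\phi_j\rangle_w$. Orthonormality gives $\langle S_F,\phi_j\rangle_w=\beta_j$ as soon as $F\ni j$. Hence $\beta_j=\alpha_j$.

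\emph{Step 3: uniqueness of the synthesized function.} Suppose two square-summable families $\{\beta_i\}$ and $\{\gamma_i\}$ both synthesize $u$. Their difference $\delta_i=\beta_i-\gamma_i$ satisfies $\sum_i\delta_i\phi_i=0$. By Step 2 applied to the zero function, every $\delta_j=\langle 0,\phi_j\rangle_w=0$. Conversely, if $u,v$ share all coefficients, then $u-v\perp\phi_i$ for every $i$. Completeness of the orthonormal basis (Parseval: $\|f\|_w^2=\sum_i|\langle f,\phi_i\rangle_w|^2$) then forces $u=v$ in $L_w^2$, so the correspondence between functions and coefficient families is a bijection.

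The only delicate point is Step 2 when $I$ is infinite. Convergence there must be understood as unconditional, i.e.\ a net over finite subsets of $I$, and the interchange of limit and inner product is justified precisely by the continuity bound. Everything else is routine. In the application to Chebyshev polynomials with $w=(1-x^2)^{-1/2}$, the normalized $T_k$ form such a basis, so the $\boldsymbol{c}_n=D_n\boldsymbol{c}$ computed in the paper are the true Chebyshev coefficients of $u^{(n)}$.
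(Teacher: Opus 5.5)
Your proposal is correct. The paper gives no proof of its own and only cites Yosida's \emph{Functional Analysis}, and your argument is the standard Hilbert-space proof of that fact: Step 2 uses continuity of $v\mapsto\langle v,\phi_j\rangle_w$ and orthonormality to force $\beta_j=\langle u,\phi_j\rangle_w$ for any norm-convergent expansion, and Parseval gives injectivity of $u\mapsto(\alpha_i)_{i\in I}$.

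One caution on your closing remark. The lemma alone does not show that $D_n\boldsymbol{c}$ are the Chebyshev coefficients of $u^{(n)}$. You first need $\sum_k (D_n\boldsymbol{c})_k T_k=u^{(n)}$ as functions. This holds because both sides are polynomials of degree less than $N_x$ that agree at the $N_x$ nodes, by $TD_n\boldsymbol{c}=\boldsymbol{u}^{(n)}$. Only then does uniqueness of coefficients apply.
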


Furthermore, by tensor operation laws:
\begin{equation}\label{eq: tensor law}
\begin{aligned}
        & \partial_\alpha^n\mathcal{C}=\mathcal{C}\times_\alpha D_n \\
        & \Leftrightarrow \operatorname{vec}\left(\partial_\alpha^n\mathcal{C}\right) = M_{D_n}^\alpha \operatorname{vec}(\mathcal{C}),\\
        & \text{where}\quad M_{D_n}^\alpha =\underbrace{I \otimes \cdots \otimes I}_{\alpha-1}
\;\otimes\;
D_n
\;\otimes\;
\underbrace{I \otimes \cdots \otimes I}_{d-\alpha} \quad\text{is a matrix}.
\end{aligned}
\end{equation}
Thus, constructing the matrix is the key operation in high-dimensional Chebyshev spectral methods.

Notably, to obtain a more accurate $D_i$, one can use $D_i=BD^{(i)}T$, where $D^{(i)}$ is the $i$-th order differential matrix generated directly.
\subsection{Construction of Sparse Differentiation Matrix.}
Obviously, if $\mathcal{C}$ is a $d$-dimensional tensor, then the complexity of a single dimension contraction operation is $\mathcal{O}(N_x^{d+1})$. Thus to avoid the CoD, we derive the sparse differentiation matrix for sparse coefficients, \textit{i.e.}, the matrix $M_D$, similar to the notation in \cref{eq: tensor law}.

Let $\mathcal{K}=\{\boldsymbol{k}_{\boldsymbol{i}}\}$ be a
sparse multi-index set with $\#\mathcal{K}=N_{\text{valid}}$, where $\boldsymbol{k}_{\boldsymbol{i}}=(k_{i_1},\dots,k_{i_d})$.
We consider the tensor-product Chebyshev expansion
\begin{equation}
u(\boldsymbol{x})=\sum_{\boldsymbol{k}_{\boldsymbol{i}}\in\mathcal{K}} c_{\boldsymbol{k}_{\boldsymbol{i}}}
\prod_{j=1}^d T_{k_{i_j}}(x_{i_j}):=T\boldsymbol{c}.
\end{equation}

Since $D\in\mathbb{R}^{N_x\times N_x}$ is the one-dimensional Chebyshev
differentiation matrix in coefficient space. For differentiation with respect to the $\alpha$-th dimension,
($\alpha\in\{1,\dots,d\}$), we construct the sparse spectral differentiation matrix
$M_{\alpha}=\left\{m^{\alpha}_{ij}\right\}\in\mathbb{R}^{N_{\text{valid}}\times N_{\text{valid}}}$ by
\begin{equation}
m^{\alpha}_{ij}
=
\begin{cases}
D_{k_{i_\alpha},\;k_{j_\alpha}},
& \text{if } k_{i_\ell} = k_{j_\ell}
\ \text{for all } \ell\neq\alpha,\\[6pt]
0, & \text{otherwise}.
\end{cases}    
\end{equation}

This construction corresponds to restricting the tensor-product operator
$I\otimes\cdots\otimes D_c\otimes\cdots\otimes I$ to the sparse index set
$\mathcal{K}$. Consequently, suppose the Chebyshev coefficients of the partial derivative
$\partial_{\alpha}^{(n)}u$ is $\partial_{\alpha}^{(n)}\boldsymbol{c}\in\mathbb{R}^{N_{\textit{valid}}}$, then $\partial_{\alpha}^{(n)}\boldsymbol{c}$ is given by
\begin{equation}
\partial_{\alpha}^{(n)}\boldsymbol{c}
=
M^n_{\alpha}\boldsymbol{c}.
\end{equation}

Thus, 
\begin{equation}
    \partial_{\alpha}^{(n)}u = TM_{\alpha}^n\boldsymbol{c}.
\end{equation}

Since the construction of $M_{\alpha}$ does not compute the full tensor operator but directly computes valid sparse index mappings, it is computationally efficient. The construction cost of $M_{\alpha}$ is $\mathcal{O}(N_{x}^3+N_x|\mathcal{K}|)$. This complexity is not explicitly exponential in $d$; instead, it is controlled by $|\mathcal{K}|$. In other words, regardless of how large $d$ is, the construction remains efficient as long as the spectral index set is sparse.  
\section{Experimental Details for Approximating Missing Coefficients}\label[appsec]{appendix: miss coef}
In this section, we provide the full experimental results for approximating missing coefficients with different activation functions and sizes of MLP in \cref{section: pred_miss_coef}. Then we provide a demonstration in \cref{appendix: complex_fun_demonstration} to show the performance of SINNs in approximating missing coefficients for complex functions. In this experiment, we first generate $\hat{f}_k=e^{|k|}a_k, \ a_k\sim \mathcal{N}(0,1)$, for $k\neq0$ and $\hat{f}_0=1$ and then $\hat{f}_{\boldsymbol{k}}=\prod_{i=1}^d\hat{f}_{k_i}$. To avoid CoD, we only calculate and use the valid $\hat{f}_{\boldsymbol{k}}$ and their corresponding index set $\mathcal{K}$. Therefore, given the input dataset $\mathcal{K}$, the corresponding output can be represented as the set of input-output pairs $\left\{(\boldsymbol{k},\hat{f}_{\boldsymbol{k}})| \boldsymbol{k}\in\mathcal{K}\right\}$.

First, the experiments with $s=0.9$ are provided in \cref{table: full_miss_coef_fourier} for Fourier and \cref{table: full_miss_coef_cheby} for Chebyshev with different activation functions and sizes of MLP. For SINNs with Fourier basis, for $d=2$, the $5 \times 100$ MLP with the GELU activation function minimizes the error; for $d=3$, the $10 \times 50$ MLP with the tanh activation function is optimal; for $d=6$, the $5 \times 100$ MLP with the tanh activation function achieves the optimal error; and for $d=8$, the $5 \times 50$ MLP with the tanh activation function yields the lowest error. For SINNs with Chebyshev basis, for $d=2$, the $10 \times 100$ MLP with the GELU activation function minimizes the error; for $d=3$, the $5 \times 50$ MLP with tanh activation is optimal; for $d=5$, the $10 \times 50$ MLP and tanh activation yield the lowest error; and for $d=6$, the $5 \times 50$ MLP with tanh activation achieves the optimal error. We also provide the training dynamics of the optimal models in \cref{fig: fourier_loss_error_u} for Fourier and \cref{fig: cheby_loss_error_u} for Chebyshev. Additionally, we provide a non-separable benchmark with the solution $u(\boldsymbol{x}) = \left( \sum_{i=1}^{d} \frac{1}{d} x_i \right)^2 + \sin\left( \sum_{i=1}^{d} \frac{1}{d} x_i \right), \quad \boldsymbol{x} \in [-\pi, \pi]^d$ in \cref{table: missing_error_non_separable} to further evaluate the performance of SINNs. We also provide a comparison for approximating the missing coefficients that are sampled from a distribution with higher density over high-frequency components in \cref{table: missing_error_non_separable_low_frequency}.

In general, without any prior knowledge of either the function or the unknown coefficients, knowing 90\% of the coefficients in the spectral domain makes it impossible to determine the remaining 10\%. However, the potential low-dimensional structure of physical functions in the spectral domain makes the reconstruction possible and feasible. For example, in spectral methods, with prior knowledge of sparsity, compressed sensing can predict the whole coefficient matrix from randomly sampled frequencies with a much smaller number of samples~\cite{tropp2007signal,figueiredo2008gradient}; spectral extrapolation can recover unobserved frequency components from a subset of known spectral coefficients~\cite{schmidt1986multiple,candes2014towards}. Uncovering low-dimensional structures embedded in complex data is a core capability of machine learning. Consequently, SINNs are expected to leverage this capability to approximate such structures and thereby predict missing information. Furthermore, the modified parts that integrate prior information enhance approximation accuracy. 
\begin{table}[ht]
  \caption{Predicting missing coefficients for Fourier with $s=0.9$.}
  \label{table: full_miss_coef_fourier}
  \begin{center}
      \begin{sc}
        \begin{tabular}{llcccc}
          \toprule
          Dim &Size        & tanh & gelu & silu  & $|\mathcal{X}_c|$ \\
          \midrule
          \multirow{4}{*}{2}    & $5\times50$ & $6.21\text{e-}4 \pm 4.00\text{e-}6$   & $6.32\text{e-}4 \pm 2.73\text{e-}4$ & $4.58\text{e-}4 \pm 1.18\text{e-}4$ & 8\\
                                & $5\times100$ & $6.11\text{e-}4 \pm 2.60\text{e-}5$  & \cellcolor{blue!25}$4.21\text{e-}4 \pm 8.00\text{e-}5$ & $4.62\text{e-}4 \pm 9.70\text{e-}5$ & 8\\
                                & $10\times50$ & $6.28\text{e-}4 \pm 1.90\text{e-}5$  & $4.98\text{e-}4 \pm 1.67\text{e-}4$ & $6.33\text{e-}4 \pm 2.30\text{e-}5$ & 8\\
                                & $10\times100$ & $6.26\text{e-}4 \pm 1.80\text{e-}5$ & $6.40\text{e-}4 \pm 2.00\text{e-}6$ & $6.41\text{e-}4 \pm 4.00\text{e-}6$ & 8\\\midrule
          \multirow{4}{*}{3}    & $5\times50$ & $1.10\text{e-}4 \pm 2.70\text{e-}5$   & $1.34\text{e-}4 \pm 1.90\text{e-}5$ & $1.04\text{e-}4 \pm 3.10\text{e-}5$ & 12\\
                                & $5\times100$ & $1.52\text{e-}4 \pm 4.70\text{e-}5$  & $1.60\text{e-}4 \pm 9.00\text{e-}6$ & $1.57\text{e-}4 \pm 3.90\text{e-}5$ & 12\\
                                & $10\times50$ & \cellcolor{blue!25}$5.60\text{e-}5 \pm 1.40\text{e-}5$  & $9.17\text{e-}3 \pm 1.29\text{e-}2$ & $3.33\text{e-}1 \pm 4.71\text{e-}1$ & 12\\
                                & $10\times100$ & $1.13\text{e-}4 \pm 4.10\text{e-}5$ & $1.48\text{e-}4 \pm 1.40\text{e-}5$ & $1.38\text{e-}4 \pm 4.20\text{e-}5$ & 12\\\midrule
          \multirow{4}{*}{6}    & $5\times50$ & $> 1$   & $> 1$ & $6.68\text{e-}1 \pm 4.70\text{e-}1$ & 24\\
                                & $5\times100$ & \cellcolor{blue!25}$2.40\text{e-}2 \pm 3.23\text{e-}2$  & $6.67\text{e-}1 \pm 4.71\text{e-}1$ & $3.78\text{e-}1 \pm 4.43\text{e-}1$ & 24\\
                                & $10\times50$ & $6.67\text{e-}1 \pm 4.71\text{e-}1$  & $6.67\text{e-}1 \pm 4.71\text{e-}1$ & $6.67\text{e-}1 \pm 4.71\text{e-}1$ & 24\\
                                & $10\times100$ & $7.04\text{e-}1 \pm 4.19\text{e-}1$ & $> 1$ & $> 1$ & 24\\\midrule
          \multirow{4}{*}{8}    & $5\times50$ & \cellcolor{blue!25}$3.92\text{e-}3 \pm 2.42\text{e-}4$   & $3.36\text{e-}1 \pm 4.69\text{e-}1$  & $6.68\text{e-}1 \pm 4.69\text{e-}1$ & 32\\
                                & $5\times100$ & $> 1$  & $> 1$  & $> 1$ & 32\\
                                & $10\times50$ & $> 1$  & $6.67\text{e-}1 \pm 4.71\text{e-}1$ & $> 1$ & 32\\
                                & $10\times100$ & $7.83\text{e-}1 \pm 3.07\text{e-}1$ & $> 1$  & $> 1$ & 32\\
          \bottomrule
        \end{tabular}
      \end{sc}
  \end{center}
  \vskip -0.1in
\end{table}

\begin{table}[ht]
  \caption{Predicting missing coefficients for Chebyshev with $s=0.9$.}
  \label{table: full_miss_coef_cheby}
  \begin{center}
      \begin{sc}
        \begin{tabular}{llcccc}
          \toprule
          Dim &Size        & tanh & gelu & silu  & $|\mathcal{X}_c|$ \\
          \midrule
          \multirow{4}{*}{2}    & $5\times50$ & $5.79\text{e-}4 \pm 4.01\text{e-}4$   & $5.87\text{e-}4 \pm 3.60\text{e-}4$ & $4.79\text{e-}4 \pm 2.65\text{e-}4$ & 8\\
                                & $5\times100$ & $2.96\text{e-}4 \pm 3.80\text{e-}4$  & $1.80\text{e-}3 \pm 2.23\text{e-}3$ & $1.35\text{e-}3 \pm 1.54\text{e-}3$ & 8\\
                                & $10\times50$ & $7.14\text{e-}4 \pm 4.60\text{e-}5$  & $1.05\text{e-}3 \pm 1.43\text{e-}3$ & $1.91\text{e-}4 \pm 1.27\text{e-}4$ & 8\\
                                & $10\times100$ & $6.57\text{e-}4 \pm 3.90\text{e-}5$ & \cellcolor{blue!25}$5.50\text{e-}5 \pm 5.30\text{e-}5$ & $5.70\text{e-}5 \pm 6.20\text{e-}5$ & 8\\\midrule
          \multirow{4}{*}{3}    & $5\times50$ & \cellcolor{blue!25}$3.60\text{e-}5 \pm 1.20\text{e-}5$   & $1.28\text{e-}2 \pm 1.75\text{e-}2$ & $2.25\text{e-}3 \pm 2.30\text{e-}3$ & 12\\
                                & $5\times100$ & $2.47\text{e-}4 \pm 1.61\text{e-}4$  & $3.04\text{e-}2 \pm 4.09\text{e-}2$ & $6.45\text{e-}4 \pm 4.27\text{e-}4$ & 12\\
                                & $10\times50$ & $3.77\text{e-}4 \pm 8.90\text{e-}5$  & $3.33\text{e-}1 \pm 4.71\text{e-}1$ & $1.59\text{e-}4 \pm 6.90\text{e-}5$ & 12\\
                                & $10\times100$ & $3.34\text{e-}1 \pm 4.71\text{e-}1$ & $3.34\text{e-}1 \pm 4.71\text{e-}1$ & $3.33\text{e-}1 \pm 4.71\text{e-}1$ & 12\\\midrule
          \multirow{4}{*}{5}    & $5\times50$ & $3.33\text{e-}1 \pm 4.71\text{e-}1$   & $3.26\text{e-}4 \pm 9.10\text{e-}5$ & $6.03\text{e-}4 \pm 2.29\text{e-}4$ & 20\\
                                & $5\times100$ & $1.03\text{e-}1 \pm 1.42\text{e-}1$  & $6.68\text{e-}1 \pm 4.69\text{e-}1$ & $4.20\text{e-}1 \pm 4.23\text{e-}1$ & 20\\
                                & $10\times50$ & \cellcolor{blue!25}$3.92\text{e-}4 \pm 5.50\text{e-}5$  & $4.35\text{e-}4 \pm 1.40\text{e-}4$ & $3.12\text{e-}2 \pm 4.31\text{e-}2$ & 20\\
                                & $10\times100$ & $1.95\text{e-}1 \pm 2.71\text{e-}1$ & $6.68\text{e-}1 \pm 4.63\text{e-}1$ & $9.96\text{e-}1 \pm 1.18\text{e-}3$ & 20\\\midrule
          \multirow{4}{*}{6}    & $5\times50$ & \cellcolor{blue!25}$8.88\text{e-}3 \pm 0.00\text{e-}2$   & $> 1$ & $3.42\text{e-}1 \pm 4.66\text{e-}1$ & 72\\
                                & $5\times100$ & $3.61\text{e-}1 \pm 4.51\text{e-}1$  & $2.09\text{e-}1 \pm 1.88\text{e-}1$ & $6.51\text{e-}1 \pm 4.56\text{e-}1$ & 72\\
                                & $10\times50$ & $6.74\text{e-}1 \pm 4.61\text{e-}1$  & $6.75\text{e-}1 \pm 4.60\text{e-}1$ & $> 1$ & 72\\
                                & $10\times100$ & $4.05\text{e-}1 \pm 4.21\text{e-}1$ & $4.33\text{e-}1 \pm 4.15\text{e-}1$ & $3.79\text{e-}1 \pm 4.41\text{e-}1$ & 72\\
          \bottomrule
        \end{tabular}
      \end{sc}
  \end{center}
  \vskip -0.1in
\end{table}

\begin{table}[ht]
  \caption{Prediction missing errors for sine functions.}
  \label{table: missing_error_non_separable}
  \begin{center}
      \begin{sc}
        \begin{tabular}{llccc}
          \toprule
          DIM & $s$ & Error & Error (Missing) & $|\mathcal{X}_c|$ \\
          \midrule
          \multirow{3}{*}{2}  & 1.0 & $3.41\text{e-}4 \pm 4.40\text{e-}5$ & -- & 900 \\
           & 0.9 & $6.82\text{e-}2 \pm 1.13\text{e-}2$ & $5.62\text{e-}1 \pm 1.47\text{e-}1$ & 900 \\
           & 0.8 & $1.67\text{e-}1 \pm 2.65\text{e-}2$ & $6.24\text{e-}1 \pm 1.20\text{e-}1$ & 900 \\
          \midrule
          \multirow{3}{*}{3}  & 1.0 & $4.46\text{e-}3 \pm 9.76\text{e-}4$ & -- & 27000 \\
           & 0.9 & $4.99\text{e-}2 \pm 1.95\text{e-}2$ & $2.34\text{e-}1 \pm 1.06\text{e-}1$ & 27000 \\
           & 0.8 & $7.72\text{e-}2 \pm 1.34\text{e-}2$ & $3.29\text{e-}1 \pm 5.52\text{e-}2$ & 27000 \\
          \midrule
          \multirow{3}{*}{5}  & 1.0 & $6.93\text{e-}4 \pm 1.47\text{e-}4$ & -- & 7776 \\
           & 0.9 & $4.76\text{e-}2 \pm 5.13\text{e-}2$ & $1.80\text{e-}1 \pm 1.92\text{e-}1$ & 7776 \\
           & 0.8 & $8.97\text{e-}2 \pm 1.14\text{e-}1$ & $2.43\text{e-}1 \pm 3.09\text{e-}1$ & 7776 \\
          \midrule
          \multirow{3}{*}{6}  & 1.0 & $2.54\text{e-}3 \pm 4.25\text{e-}4$ & -- & 46656 \\
           & 0.9 & $7.54\text{e-}3 \pm 4.48\text{e-}3$ & $2.56\text{e-}2 \pm 1.55\text{e-}2$ & 46656 \\
           & 0.8 & $2.04\text{e-}2 \pm 1.30\text{e-}2$ & $5.49\text{e-}2 \pm 3.82\text{e-}2$ & 46656 \\
          \bottomrule
        \end{tabular}
      \end{sc}
  \end{center}
  \vskip -0.1in
\end{table}

\begin{table}[ht]
  \caption{Prediction missing errors with sampling from a frequency-based distribution for sine funcions.}
  \label{table: missing_error_non_separable_low_frequency}
  \begin{center}
      \begin{sc}
        \begin{tabular}{llccc}
          \toprule
          DIM & $s$ & Error & Error (Missing) & $|\mathcal{X}_c|$ \\
          \midrule
          \multirow{3}{*}{2}  & 1.0 & $6.98\text{e-}4 \pm 6.52\text{e-}5$ & -- & 900 \\
           & 0.9 & $1.07\text{e-}1 \pm 6.76\text{e-}2$ & $5.89\text{e-}1 \pm 1.12\text{e-}1$ & 900 \\
           & 0.8 & $1.86\text{e-}1 \pm 1.45\text{e-}1$ & $6.52\text{e-}1 \pm 2.78\text{e-}1$ & 900 \\
          \midrule
          \multirow{3}{*}{3}  & 1.0 & $5.43\text{e-}3 \pm 5.21\text{e-}4$ & -- & 27000 \\
           & 0.9 & $1.99\text{e-}2 \pm 1.48\text{e-}2$ & $1.55\text{e-}1 \pm 9.99\text{e-}2$ & 27000 \\
           & 0.8 & $5.74\text{e-}2 \pm 3.65\text{e-}2$ & $4.47\text{e-}1 \pm 3.64\text{e-}1$ & 27000 \\
          \midrule
          \multirow{3}{*}{5}  & 1.0 & $1.01\text{e-}3\pm2.16\text{e-}4$ & -- & 7776 \\
           & 0.9 & $4.08\text{e-}2\pm3.33\text{e-}2$ & $1.34\text{e-}1\pm1.04\text{e-}1$ & 7776 \\
           & 0.8 & $4.41\text{e-}2\pm2.28\text{e-}2$ & $1.10\text{e-}1\pm5.70\text{e-}2$ & 7776 \\
          \midrule
          \multirow{3}{*}{6}  & 1.0 & $2.00\text{e-}1\pm3.42\text{e-}1$ & -- & 46656 \\
           & 0.9 & $7.19\text{e-}3\pm3.01\text{e-}3$ & $2.99\text{e-}2\pm1.48\text{e-}2$ & 46656 \\
           & 0.8 & $1.24\text{e-}2\pm7.02\text{e-}3$ & $3.30\text{e-}2\pm1.88\text{e-}2$ & 46656 \\
          \bottomrule
        \end{tabular}
      \end{sc}
  \end{center}
  \vskip -0.1in
\end{table}

\begin{figure}[ht]
  \centering
  \begin{subfigure}[b]{0.23\linewidth}\centering
    \includegraphics[width=\linewidth]{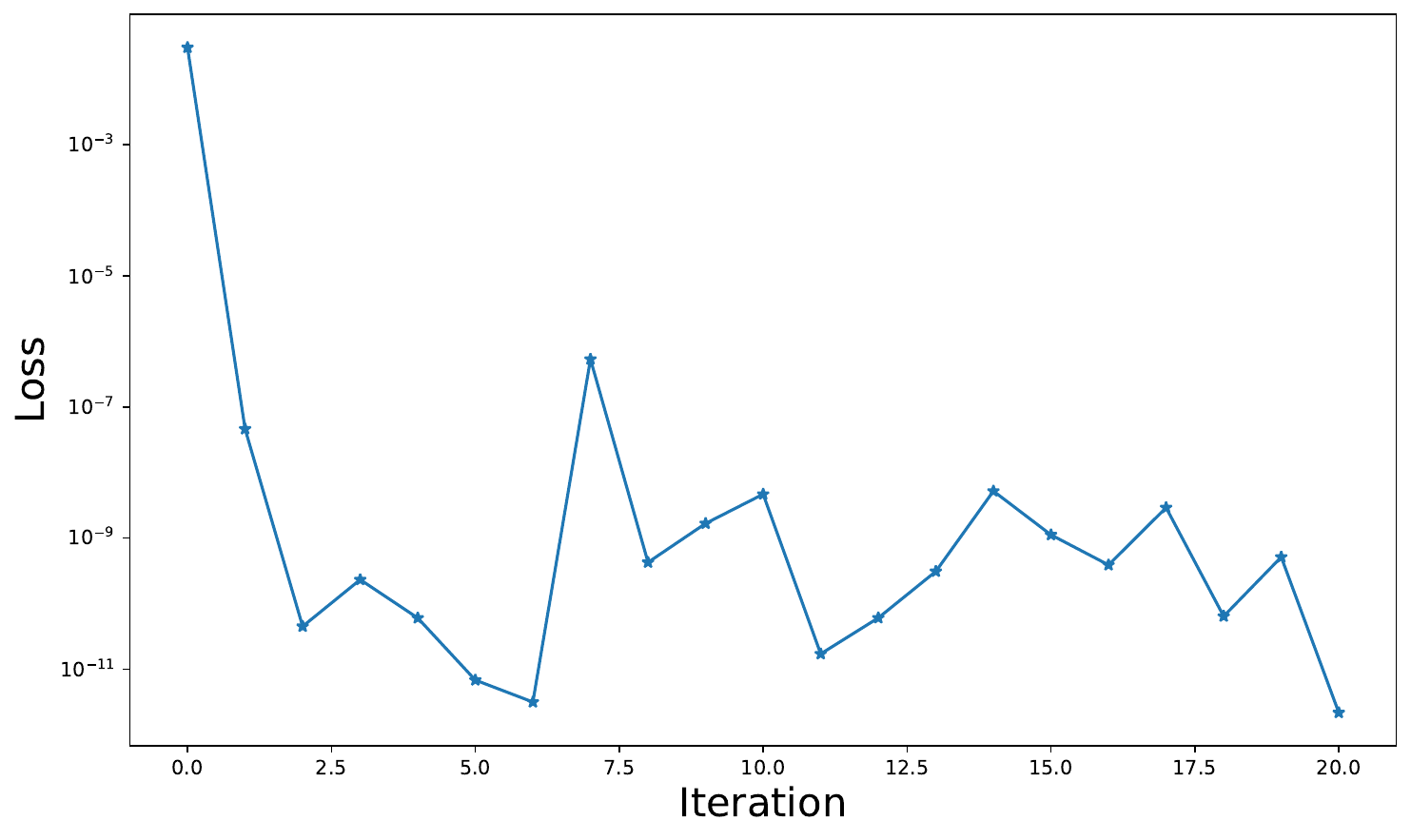}
    \caption{$d=2$ loss}\label{cheby_2_loss}
  \end{subfigure}\hfill
  \begin{subfigure}[b]{0.23\linewidth}\centering
    \includegraphics[width=\linewidth]{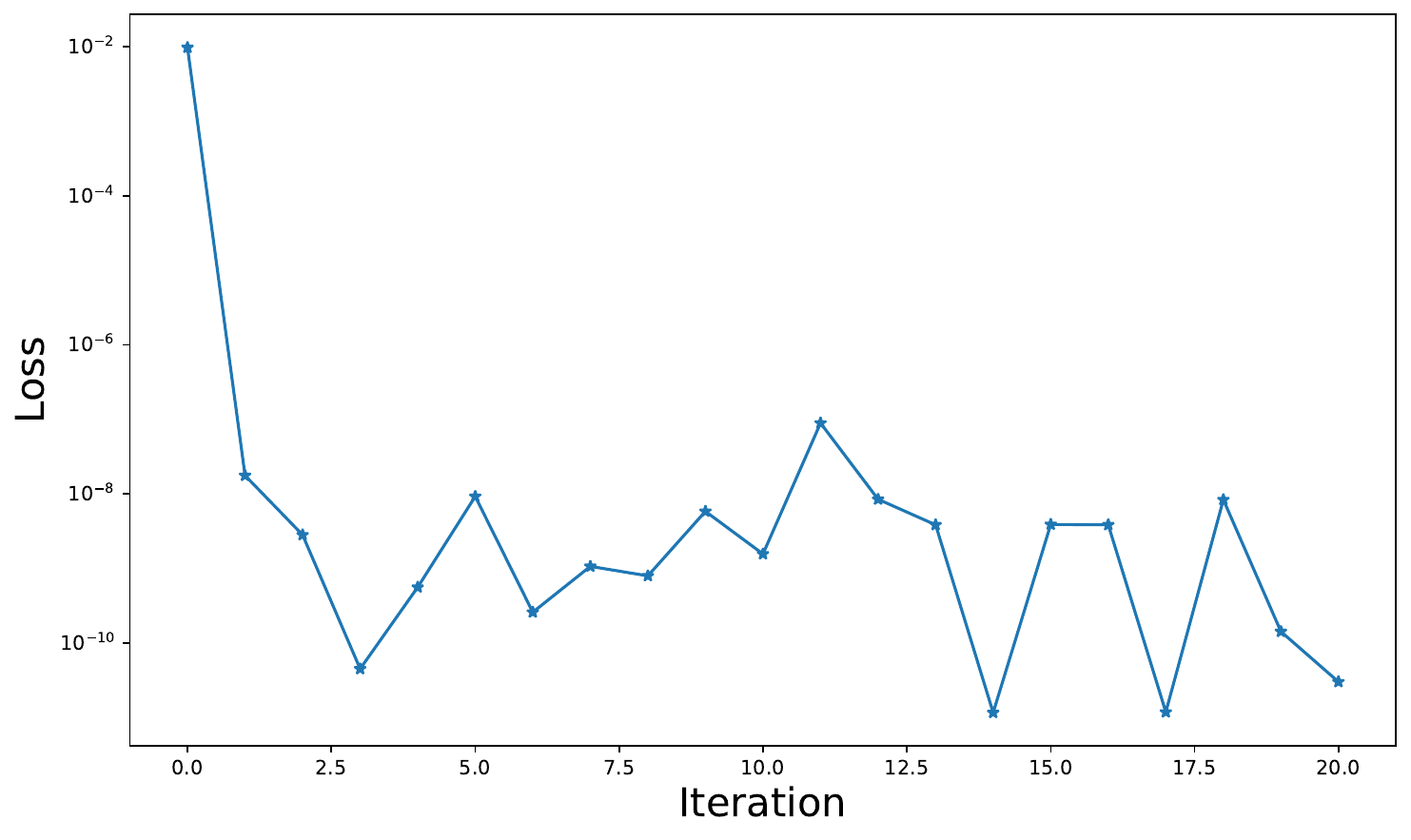}
    \caption{$d=3$ loss}\label{cheby_3_loss}
  \end{subfigure}\hfill
  \begin{subfigure}[b]{0.23\linewidth}\centering
    \includegraphics[width=\linewidth]{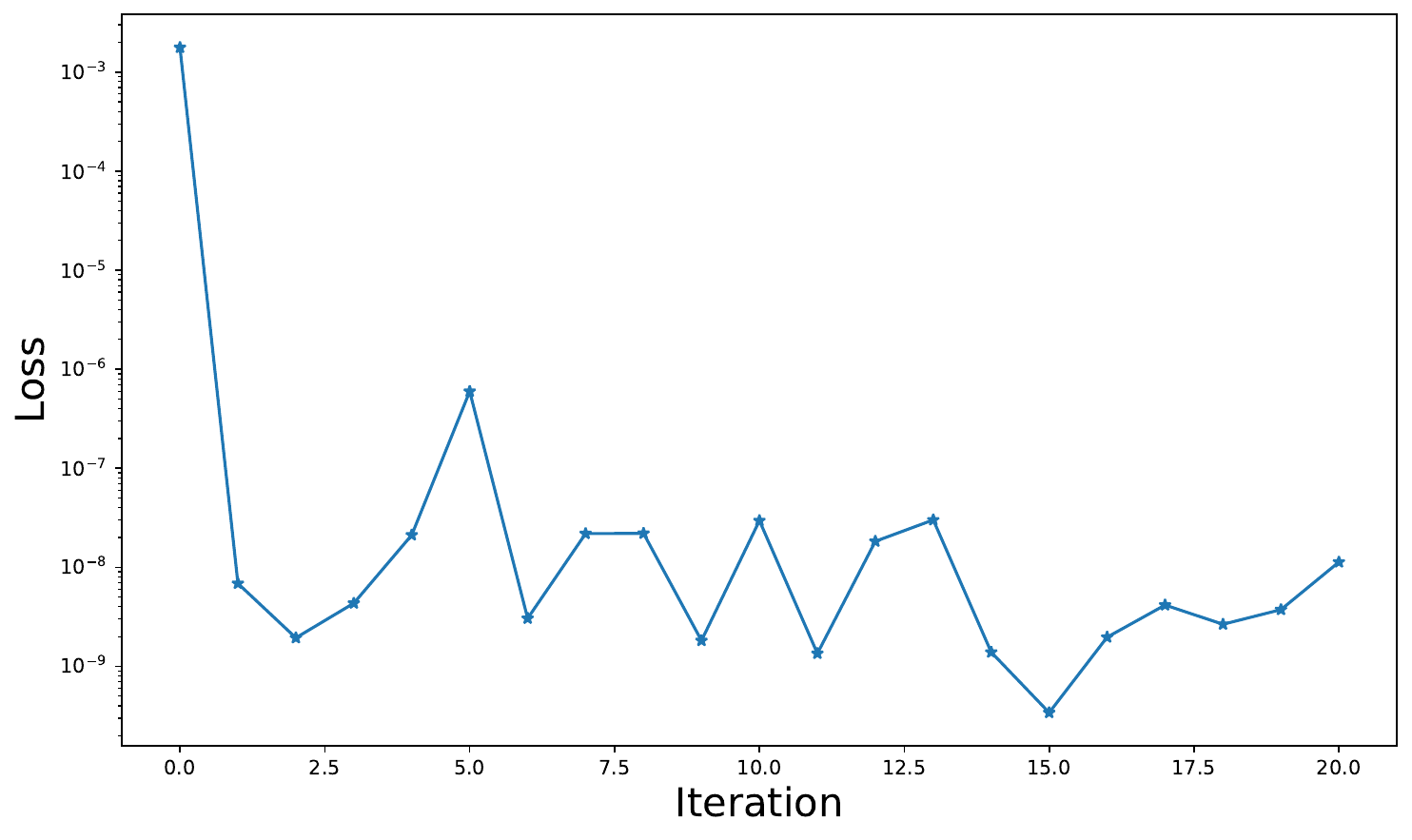}
    \caption{$d=5$ loss}\label{cheby_5_loss}
  \end{subfigure}\hfill
  \begin{subfigure}[b]{0.23\linewidth}\centering
    \includegraphics[width=\linewidth]{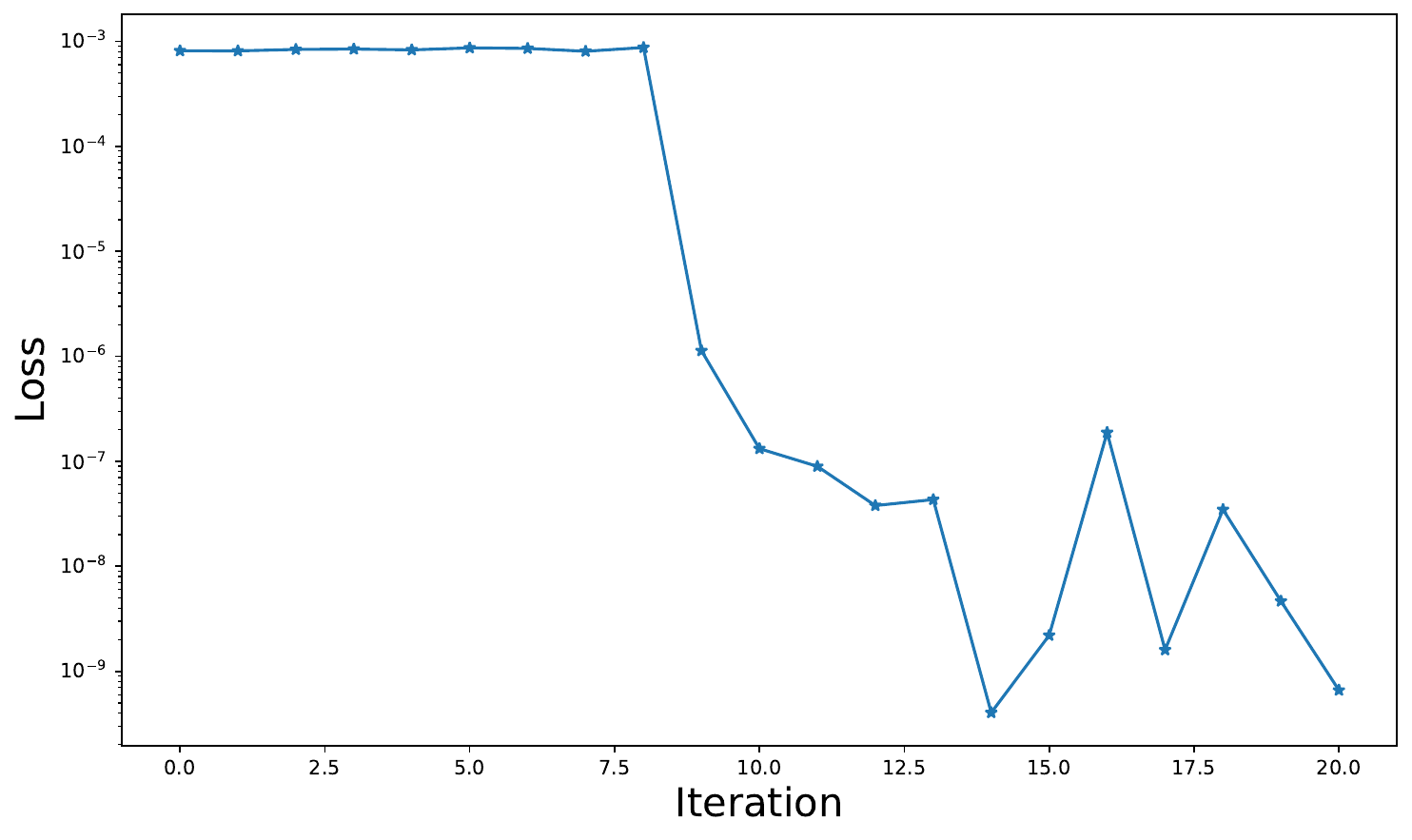}
    \caption{$d=6$ loss}\label{cheby_6_loss}
  \end{subfigure}

  \begin{subfigure}[b]{0.23\linewidth}\centering
    \includegraphics[width=\linewidth]{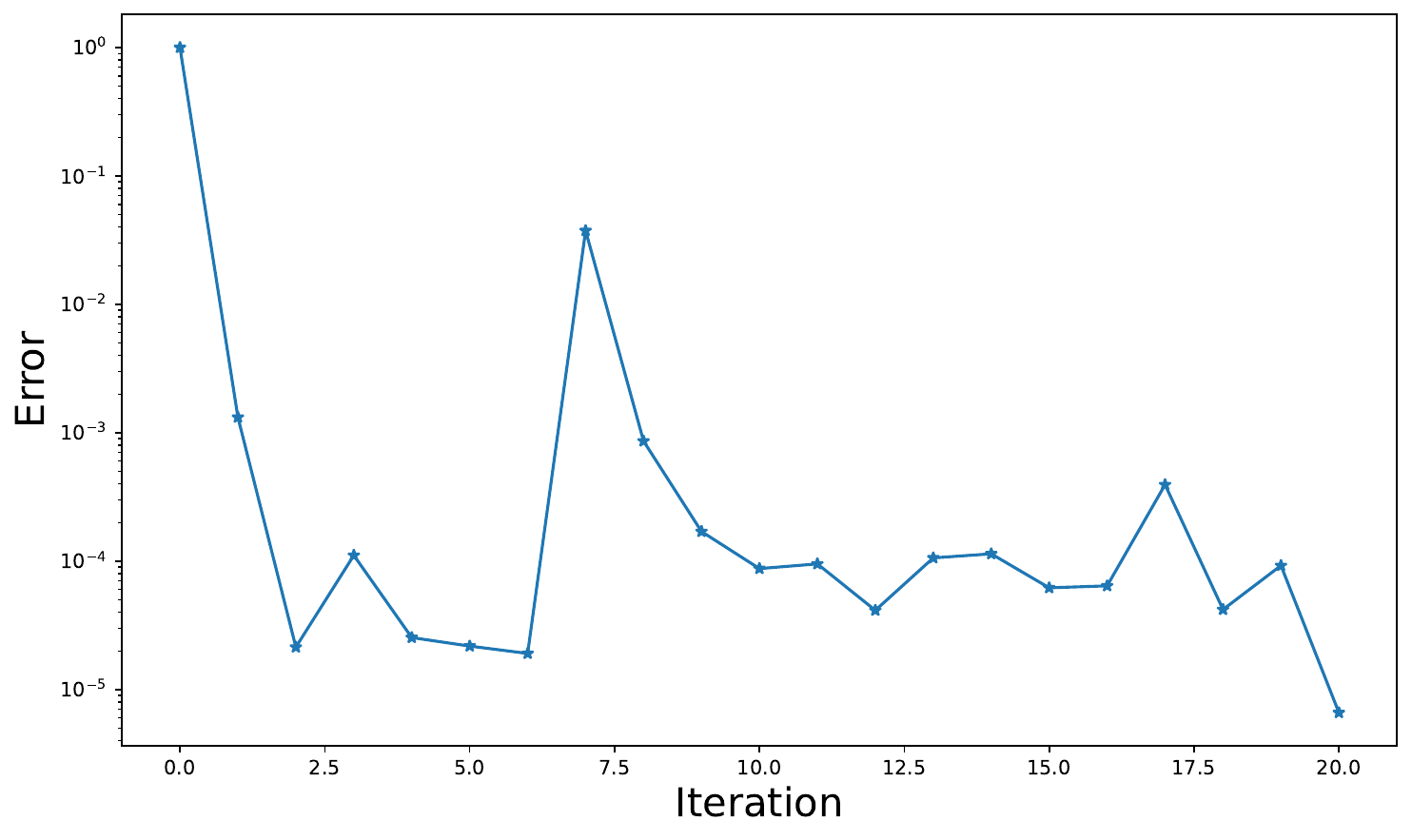}
    \caption{$d=2$ error}\label{cheby_2_error_u}
  \end{subfigure}\hfill
  \begin{subfigure}[b]{0.23\linewidth}\centering
    \includegraphics[width=\linewidth]{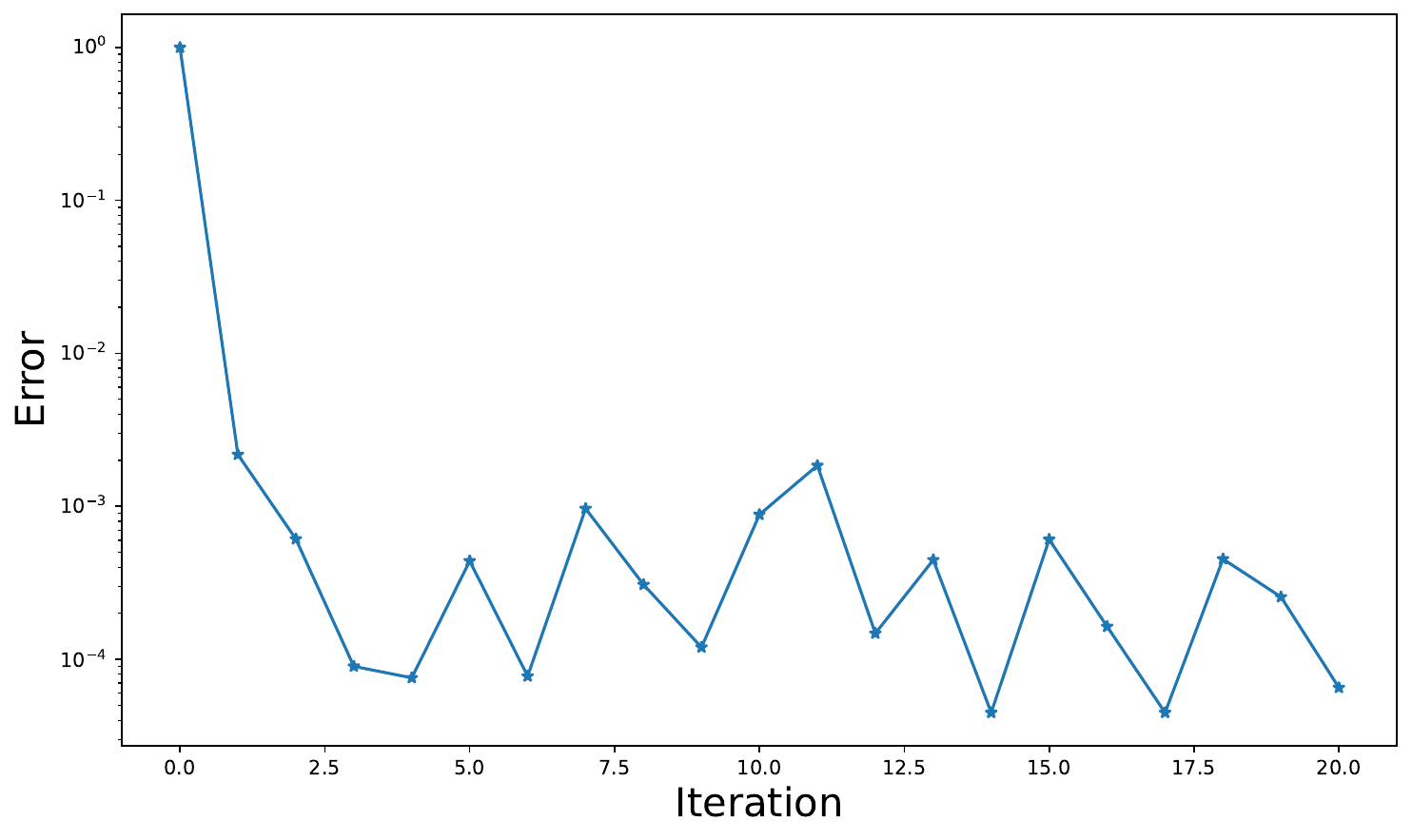}
    \caption{$d=3$ error}\label{cheby_3_error_u}
  \end{subfigure}\hfill
  \begin{subfigure}[b]{0.23\linewidth}\centering
    \includegraphics[width=\linewidth]{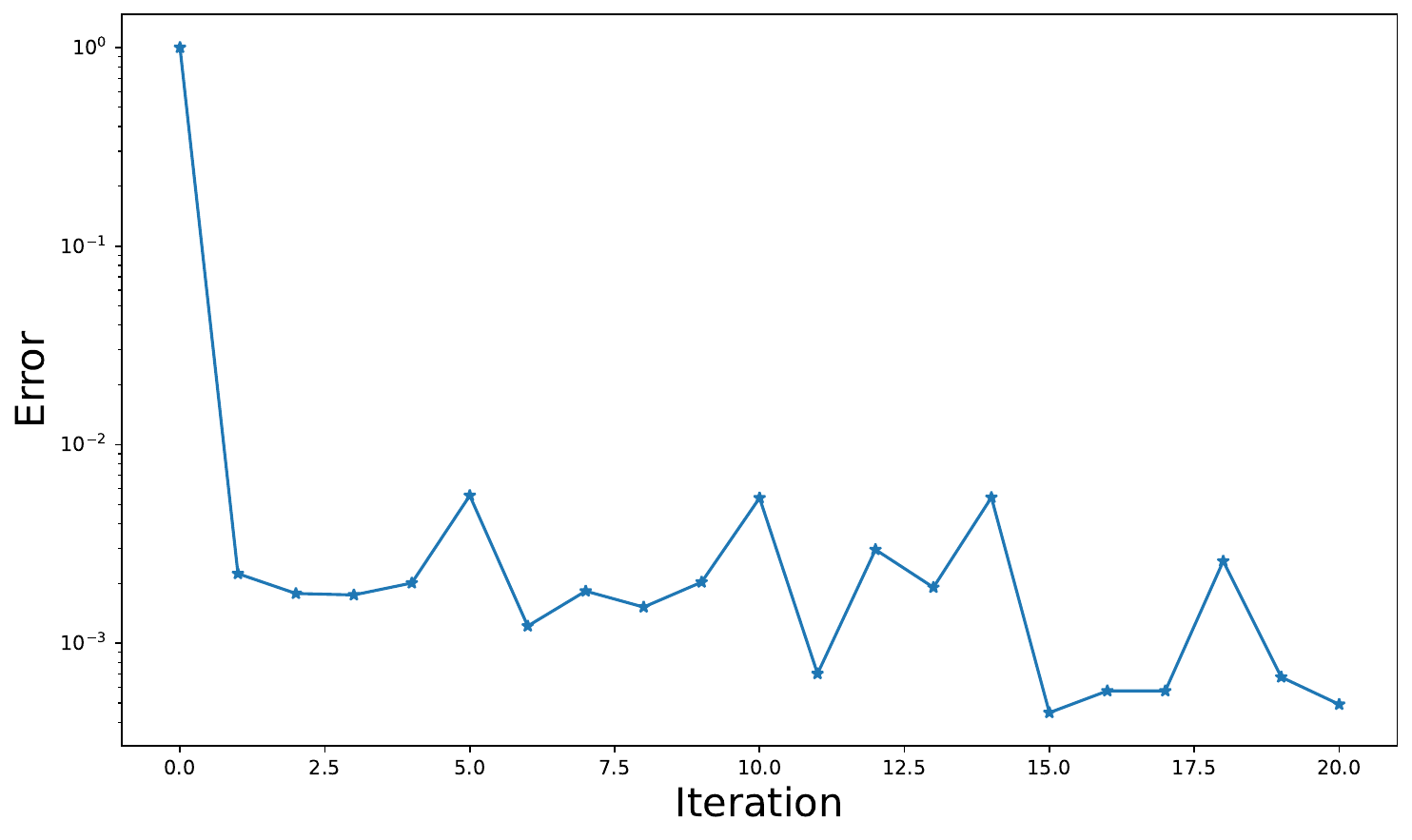}
    \caption{$d=5$ error}\label{cheby_5_error_u}
  \end{subfigure}\hfill
  \begin{subfigure}[b]{0.23\linewidth}\centering
    \includegraphics[width=\linewidth]{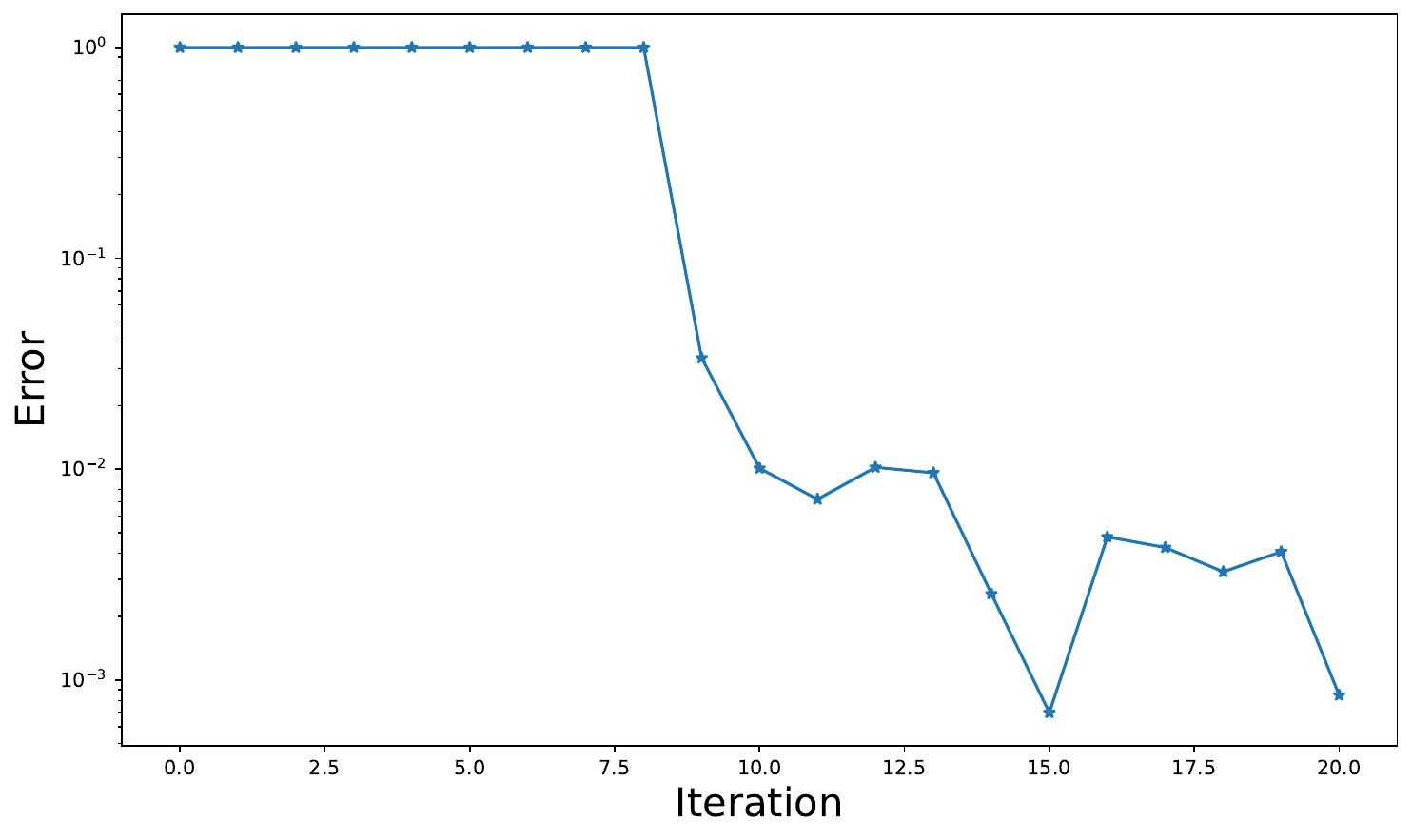}
    \caption{$d=6$ error}\label{cheby_6_error_u}
  \end{subfigure}

  \caption{Training dynamics for Chebyshev approximation. The first row shows training loss and the second row shows relative errors of $u$.}
  \label{fig: cheby_loss_error_u}
\end{figure}

\begin{figure}[ht]
  \centering
  \begin{subfigure}[b]{0.23\linewidth}\centering
    \includegraphics[width=\linewidth]{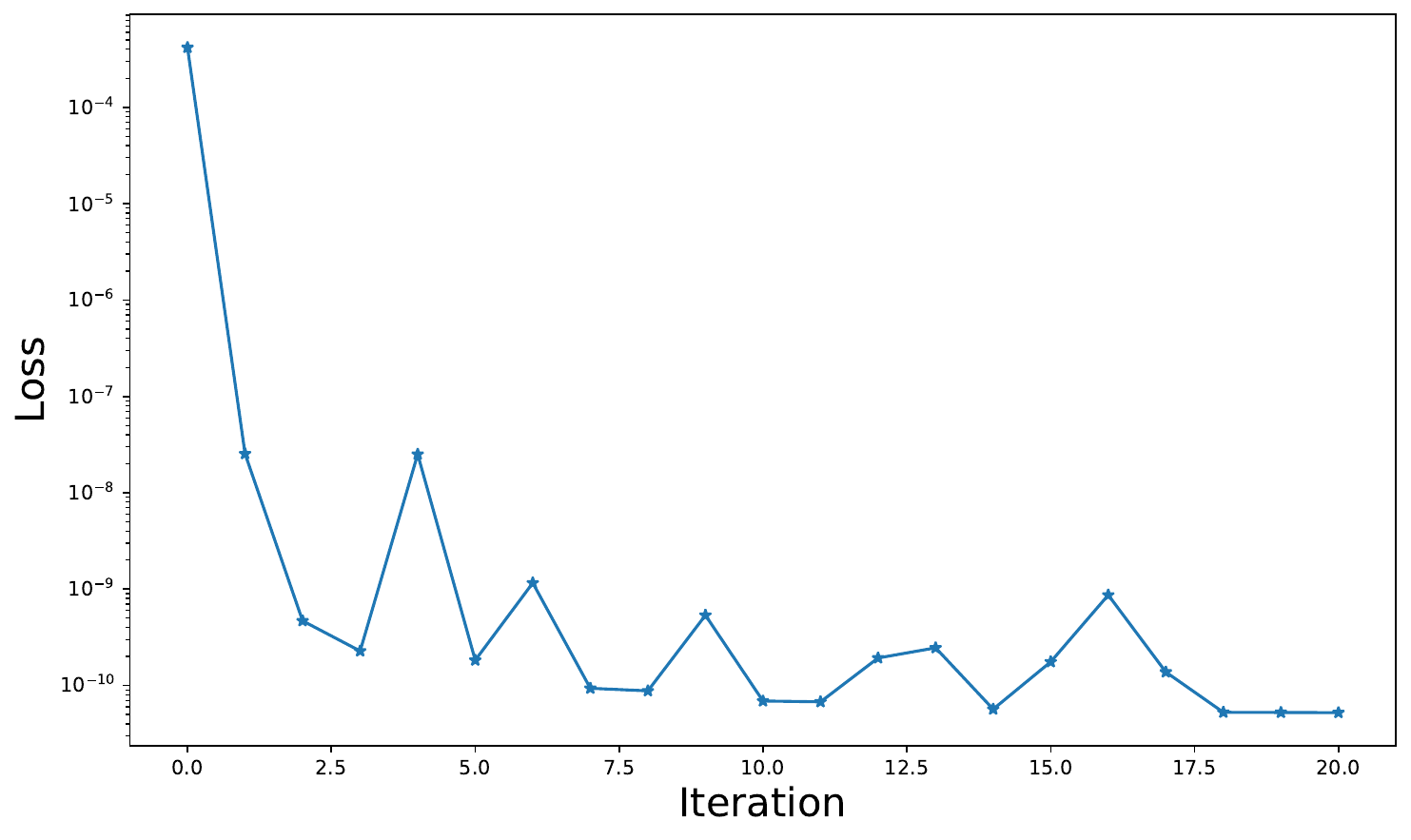}
    \caption{$d=2$ loss}\label{fourier_2_loss}
  \end{subfigure}\hfill
  \begin{subfigure}[b]{0.23\linewidth}\centering
    \includegraphics[width=\linewidth]{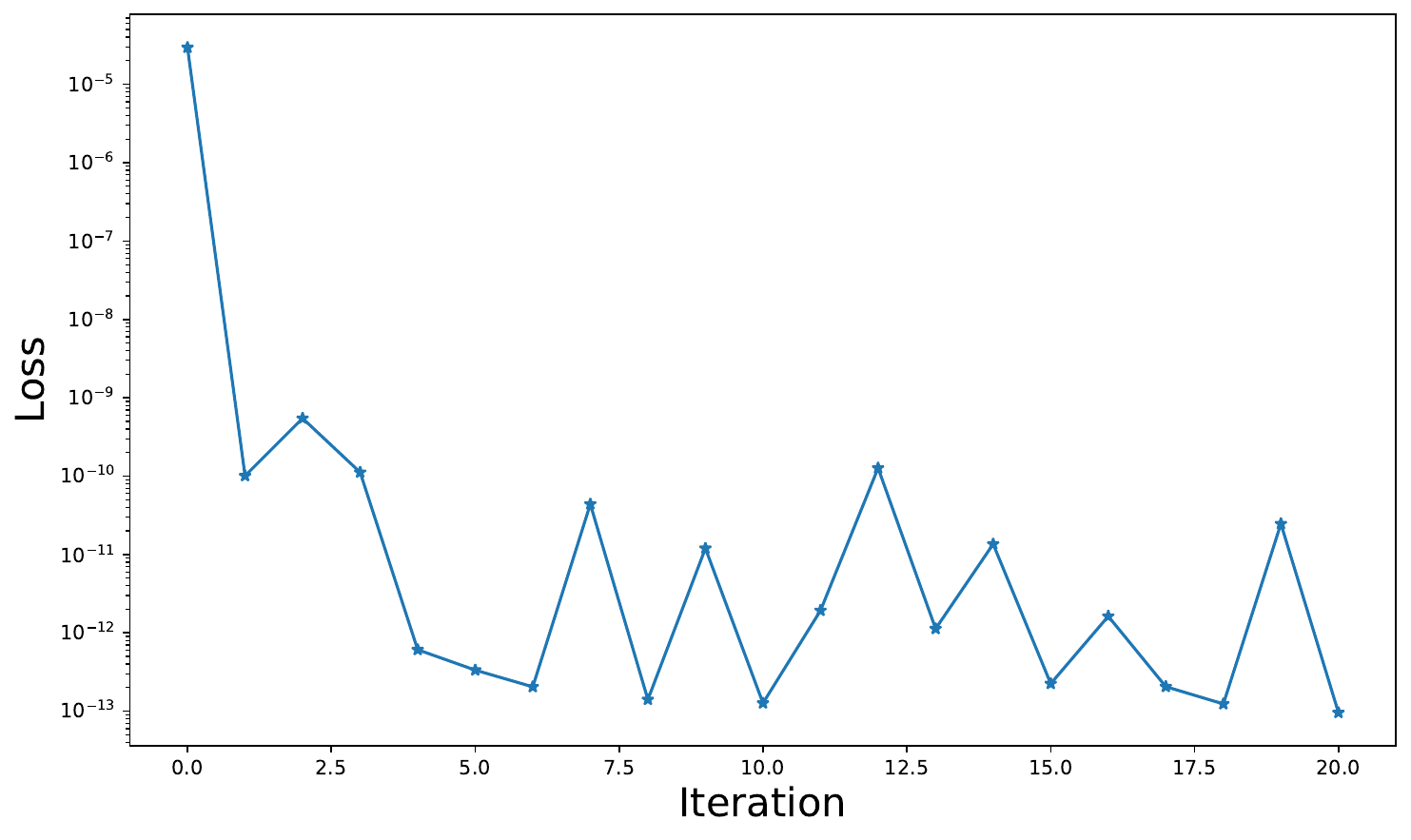}
    \caption{$d=3$ loss}\label{fourier_3_loss}
  \end{subfigure}\hfill
  \begin{subfigure}[b]{0.23\linewidth}\centering
    \includegraphics[width=\linewidth]{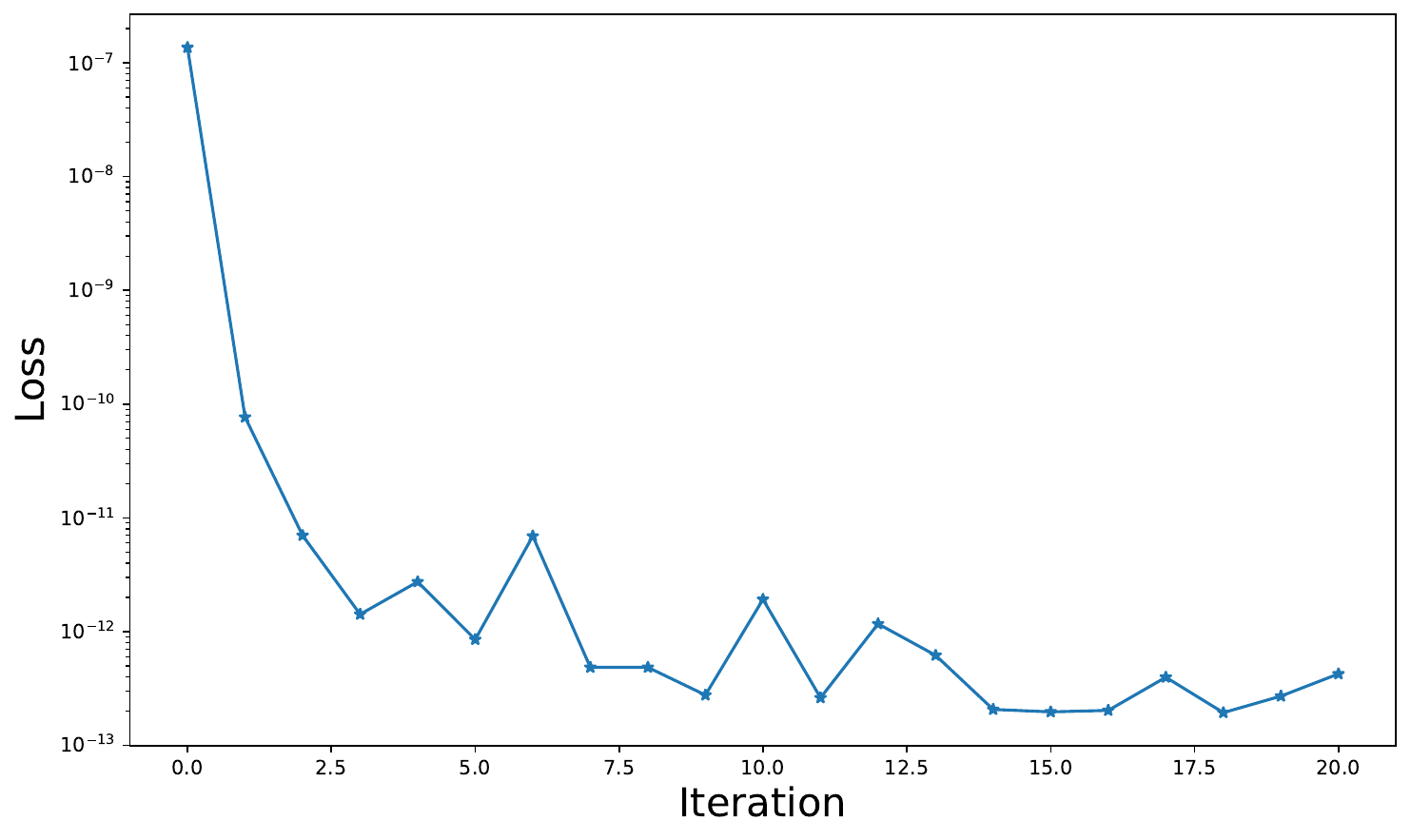}
    \caption{$d=6$ loss}\label{fourier_6_loss}
  \end{subfigure}\hfill
  \begin{subfigure}[b]{0.23\linewidth}\centering
    \includegraphics[width=\linewidth]{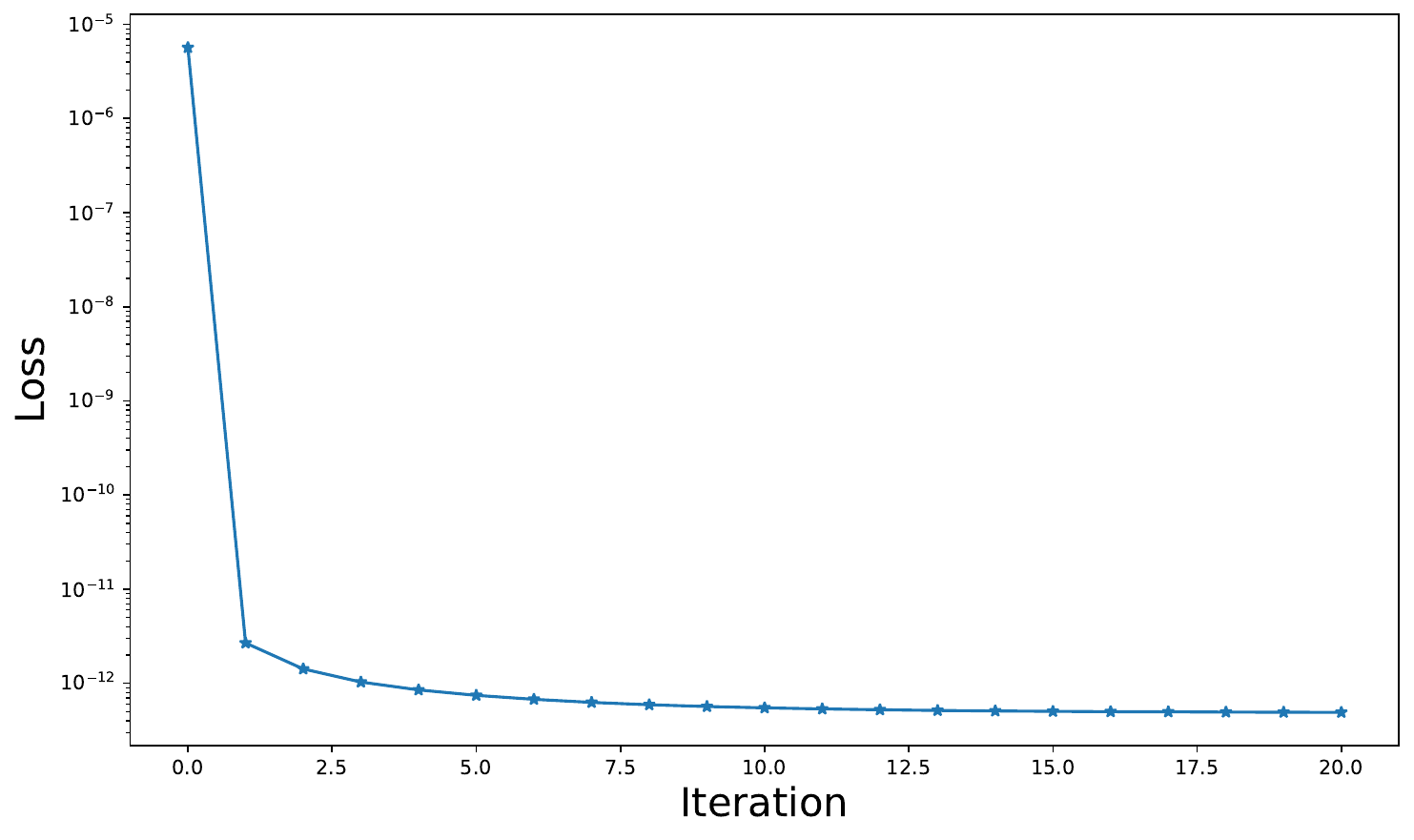}
    \caption{$d=8$ loss}\label{fourier_8_loss}
  \end{subfigure}

  \begin{subfigure}[b]{0.23\linewidth}\centering
    \includegraphics[width=\linewidth]{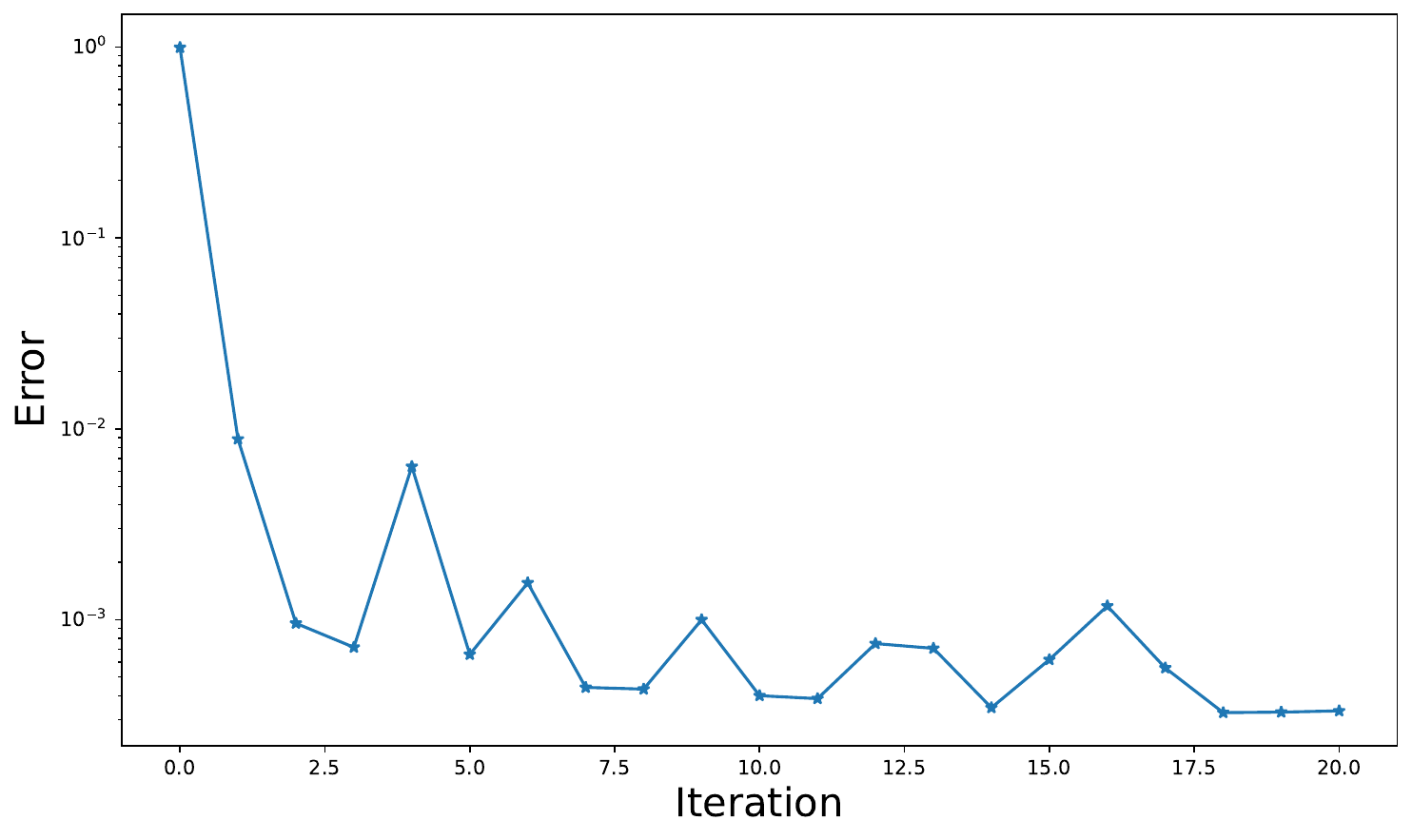}
    \caption{$d=2$ error}\label{fourier_2_error_u}
  \end{subfigure}\hfill
  \begin{subfigure}[b]{0.23\linewidth}\centering
    \includegraphics[width=\linewidth]{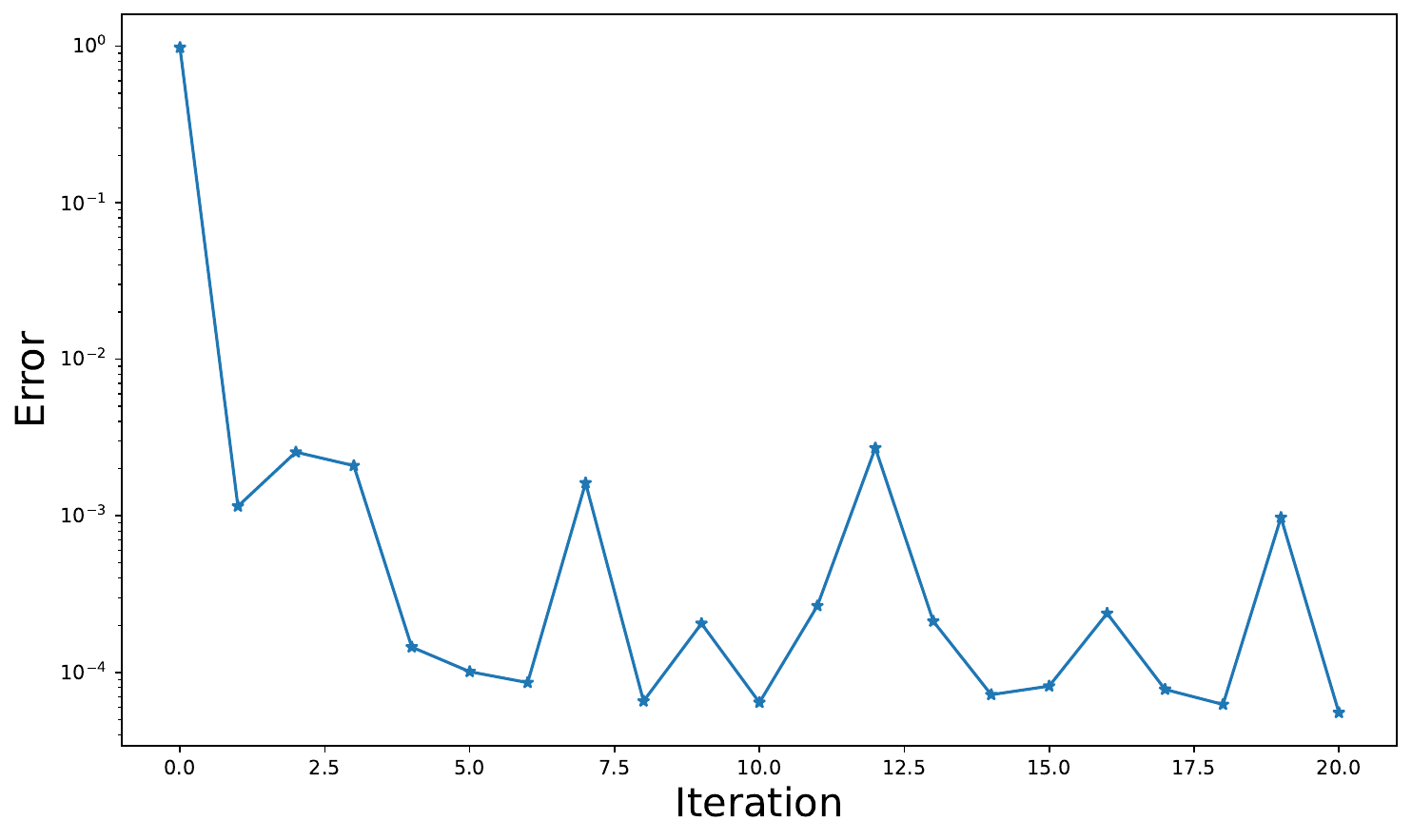}
    \caption{$d=3$ error}\label{fourier_3_error_u}
  \end{subfigure}\hfill
  \begin{subfigure}[b]{0.23\linewidth}\centering
    \includegraphics[width=\linewidth]{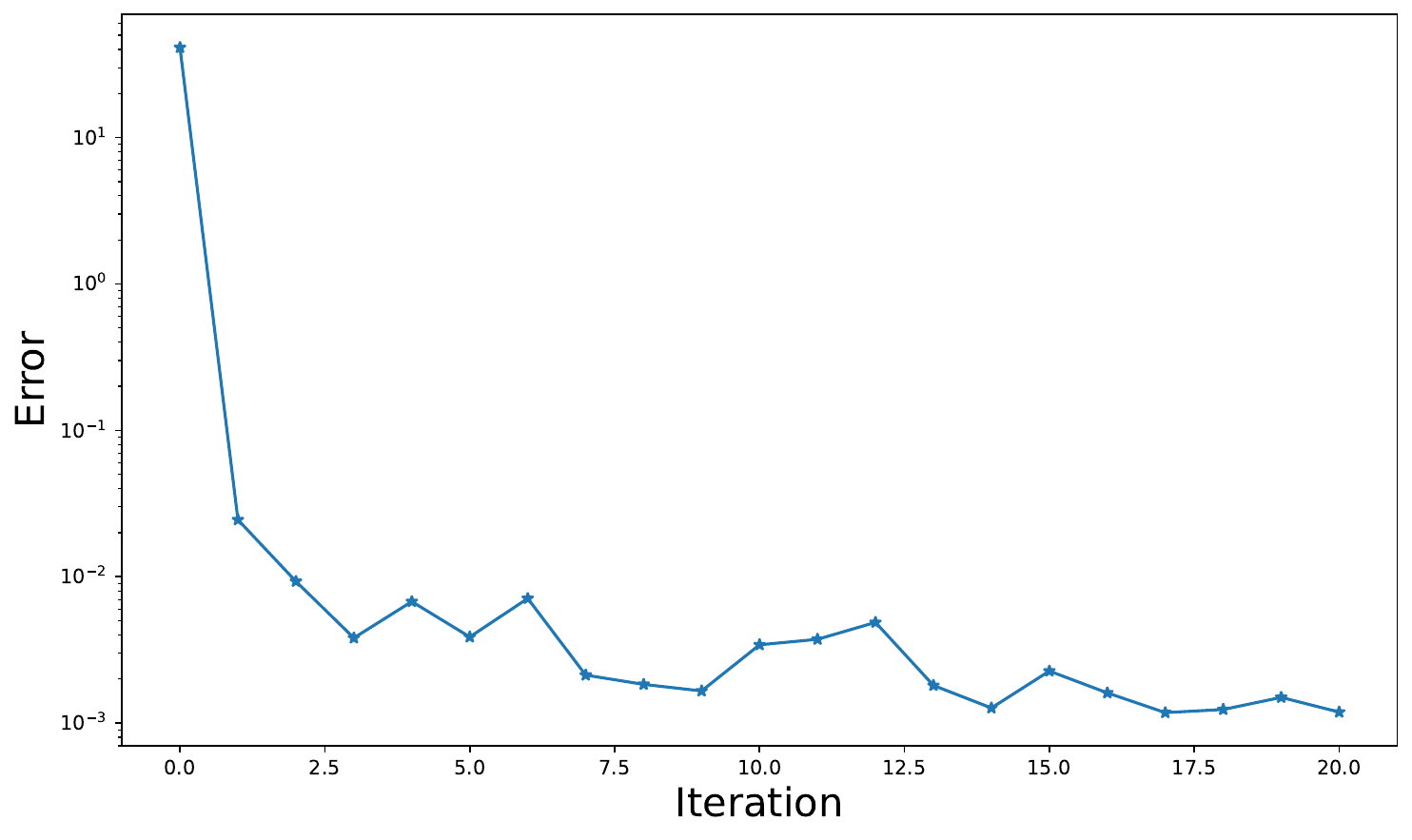}
    \caption{$d=6$ error}\label{fourier_6_error_u}
  \end{subfigure}\hfill
  \begin{subfigure}[b]{0.23\linewidth}\centering
    \includegraphics[width=\linewidth]{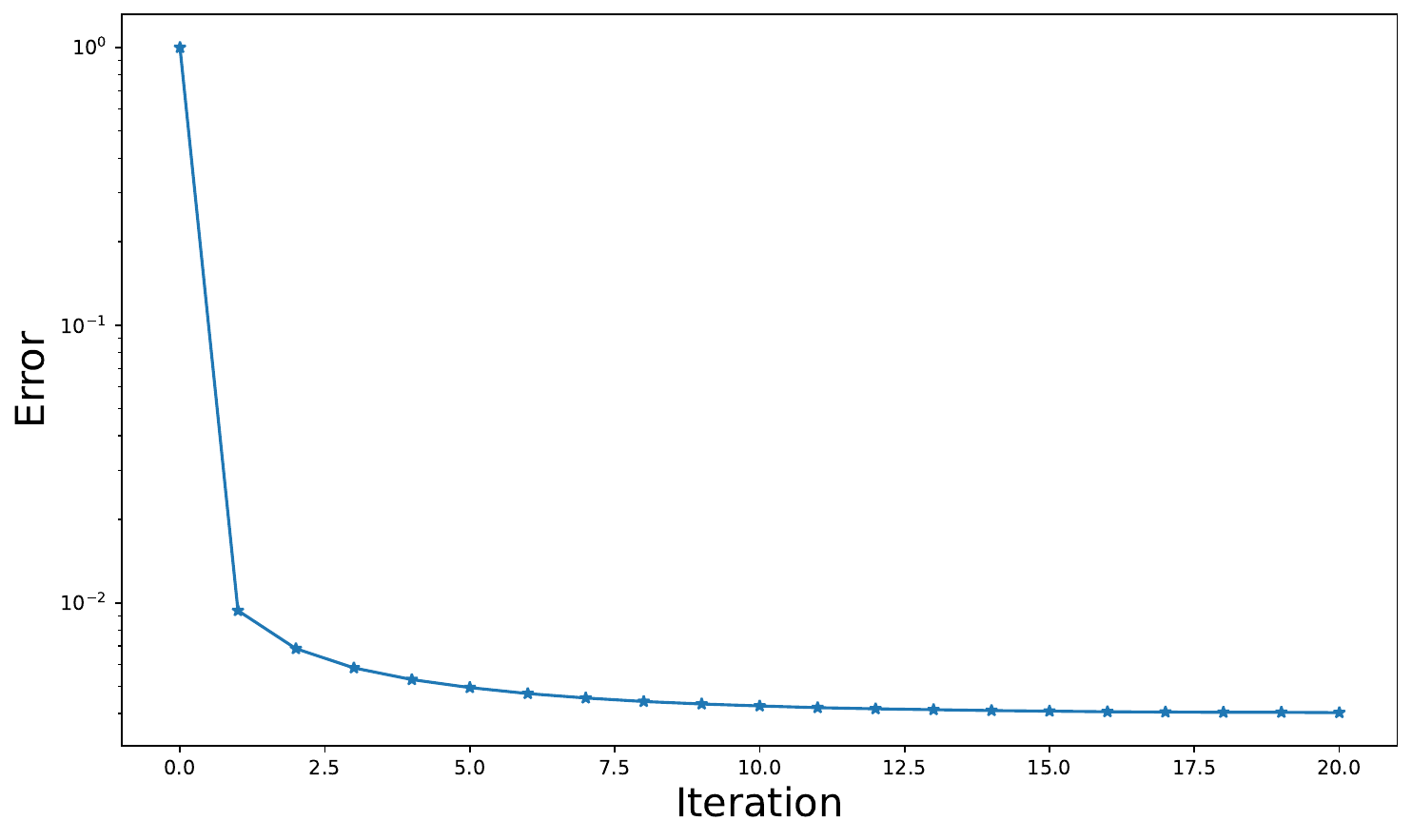}
    \caption{$d=8$ error}\label{fourier_8_error_u}
  \end{subfigure}

  \caption{Training dynamics for Fourier approximation. The first row shows training loss and the second row shows relative errors of $u$.}
  \label{fig: fourier_loss_error_u}
\end{figure}

\subsection{Complex Function Demonstration}\label[appsec]{appendix: complex_fun_demonstration}
In this section, the SINN is trained to approximate the Chebyshev coefficient matrix $C=\{c_{ij}\}_{i,j=1}^{50}$ corresponding to the function $u(x,y)= e^{-5 \left( x^2 + y^2 \right)} \sin(5x) \sin(3y) + 0.2 \cos(10xy), \ x,y \in [0,1]$ where only 90\% of the coefficients are randomly sampled from the matrix $C$ and used for training. We depict the performance in \cref{fig: interpolation_cheby_pred} by the full distribution of $C$ and the masked distribution of $C$. \cref{fig: interpolation_cheby_pred} reveals that the approximation error primarily stems from discrepancies in the magnitude of the coefficients, whereas the errors associated with the masked elements can be approximated with high accuracy.

From the basis embedding perspective, since the Chebyshev spectral method employs Chebyshev-Gauss-Lobatto quadrature, based on \cref{lemma: cgl_quadrature}, it follows that in \cref{theorem: basis_embedding}, $\varepsilon(|\mathcal{X}|)=\frac{C^2}{R^{2|\mathcal{X}|}}$. Consequently, the error $|z^a(\boldsymbol{k})-\hat{u}_{\boldsymbol{k}}^{\mathcal{X}}|$ converges exponentially. Herein, in 2D Chebyshev SINNs, the basis embedding module is feasible and efficient to accurately approximate $\hat{u}_{\boldsymbol{k}}^{\mathcal{X}}$. 

\begin{lemma}\cite{Gautschi1983}\label{lemma: cgl_quadrature}
If $f: \mathbb{R}\rightarrow\mathbb{R}$ is analytic, then the error of Chebyshev-Gauss quadrature:
    \begin{equation}
        \left|\int_{-1}^1 f(x)d(x)-\sum_{k=0}^{n-1} \omega(x_k)f(x_k)\right|\leq \frac{C}{R^n}.
    \end{equation}
    where $C,R$ are constant and depend on $f$.
\end{lemma}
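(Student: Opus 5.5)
The plan is to prove \cref{lemma: cgl_quadrature} by expanding $f$ in its Chebyshev series and computing exactly how the quadrature acts on each $T_k$. I read the integral as the Chebyshev-weighted one, $\int_{-1}^1 f(x)\,(1-x^2)^{-1/2}\,\mathrm{d}x$, which is the integral that Chebyshev--Gauss quadrature is built for. The rule uses nodes $x_j=\cos\bigl((2j+1)\pi/(2n)\bigr)$, $j=0,\dots,n-1$, with equal weights $\omega(x_j)=\pi/n$.

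First, since $f$ is analytic on $[-1,1]$, it extends holomorphically to the interior of some Bernstein ellipse $E_\rho$ with $\rho>1$. By the one-dimensional case of \cref{theorem: coef decay for cheby}, $f=\sum_{k\ge 0}c_kT_k$ with $|c_k|\le C\rho^{-k}$. This series converges absolutely and uniformly on $[-1,1]$. Uniform convergence lets me exchange the sum with both the integral and the finite quadrature sum. The error therefore equals $\sum_k c_k E_n(T_k)$, where $E_n(T_k)$ is the quadrature error on the single polynomial $T_k$.

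Second, I compute $E_n(T_k)$ exactly. Orthogonality gives $\int T_k\,w=\pi\delta_{k0}$. Using $T_k(\cos\theta)=\cos(k\theta)$, the quadrature sum is
\begin{equation*}
\frac{\pi}{n}\sum_{j=0}^{n-1}\cos\!\Bigl(\frac{k(2j+1)\pi}{2n}\Bigr).
\end{equation*}
I evaluate this as the real part of a geometric sum in $e^{\mathrm{i}k\pi/n}$.
\begin{itemize}
\item If $k$ is not a multiple of $2n$, the sum is $0$.
\item If $k=2nm$ with $m\ge 1$, every term equals $(-1)^m$, so the sum is $\pi(-1)^m$.
\end{itemize}
Hence $E_n(T_k)=0$ unless $k=2nm$ with $m\ge 1$, and $|E_n(T_{2nm})|=\pi$. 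This aliasing identity is the step I expect to need the most care, especially the case split on $k \bmod 2n$ and the handling of $k=0$. As a consistency check, it recovers the classical fact that the rule is exact for all polynomials of degree at most $2n-1$.

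Finally, I sum the geometric tail:
\begin{equation*}
\bigl|\text{error}\bigr|\le \pi\sum_{m\ge1}|c_{2nm}|\le \pi C\sum_{m\ge 1}\rho^{-2nm}=\frac{\pi C\rho^{-2n}}{1-\rho^{-2n}}\le \frac{\pi C}{1-\rho^{-2}}\,\rho^{-2n}.
\end{equation*}
This gives the claim with $R=\rho^2$ and constant $\pi C/(1-\rho^{-2})$, both depending only on $f$.

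If the unweighted integral were intended instead, I would apply the same argument to $f(\cos\theta)\lvert\sin\theta\rvert$ via the substitution $x=\cos\theta$. That function is not analytic, so the geometric rate would degrade. For this reason I commit to the weighted reading, which is the setting of Chebyshev--Gauss quadrature.
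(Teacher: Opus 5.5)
Your proof is correct. The paper itself gives no proof of this lemma; it only cites Gautschi (1983). The classical argument in that literature is the contour-integral one. For $f$ holomorphic inside a Bernstein ellipse $E_\rho$, the Gauss remainder is written as $\frac{1}{2\pi\mathrm{i}}\oint_{E_\rho}K_n(z)f(z)\,\mathrm{d}z$ with kernel $K_n(z)=\varrho_n(z)/T_n(z)$, where $\varrho_n(z)=\int_{-1}^1 T_n(t)(z-t)^{-1}(1-t^2)^{-1/2}\,\mathrm{d}t$. One then shows $\max_{E_\rho}|K_n|=\mathcal{O}(\rho^{-2n})$.

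You instead expand $f$ in its Chebyshev series and use the exact aliasing identity: the error on $T_k$ vanishes unless $k=2nm$ with $m\ge1$, where it equals $\pi(-1)^{m+1}$. Hence the total error is exactly $\pi\sum_{m\ge1}(-1)^{m+1}c_{2nm}$, and the decay $|c_k|\le C\rho^{-k}$ gives the same rate $R=\rho^2$.

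The two routes buy different things:
\begin{itemize}
\item Yours is more elementary and reuses the paper's own Chebyshev decay theorem. It also yields explicit constants and exactness up to degree $2n-1$ for free.
\item The kernel method extends to Gauss rules for arbitrary weights, where no closed-form discrete orthogonality is available.
\end{itemize}

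Three small remarks.
\begin{itemize}
\item To get $|c_k|\le C\rho^{-k}$ you need $f$ bounded on the region enclosed by $E_\rho$. So take $\rho$ slightly smaller than the largest ellipse of holomorphy.
\item Your weighted reading is forced, not just convenient. With weights $\pi/n$ and the unweighted integral, $f\equiv1$ gives error $\pi-2$ for every $n$, so the statement would be false.
\item The paper actually applies the lemma to Chebyshev--Gauss--Lobatto quadrature. Your aliasing computation carries over verbatim to nodes $\cos(j\pi/N)$, $j=0,\dots,N$, with weights $\pi/N$ halved at the endpoints. There the error on $T_k$ vanishes unless $k=2Nm$ with $m\ge1$, where it equals $-\pi$, giving again $\mathcal{O}(\rho^{-2N})$.
\end{itemize}
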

\section{PDEs for Experiments}\label[appsec]{appendix: pde_experiments}
This section provides details of PDEs used in the experiments presented in \cref{section: experiments}. Specifically, \cref{appendix: middle-dim} describes the middle-dimensional PDEs employed in \cref{section: sgsm_vs_sinn}. The high-dimensional Poisson problems used to compare SGSM and SINNs in \cref{section: sgsm_vs_sinn} are detailed in \cref{appendix: high-dim}. Finally, \cref{appendix: Schrodinger equation} presents the time-dependent Schrödinger equations considered in the hybrid SGSM-SINN experiments of \cref{section: sgsm_+_sinn}.

Across all experiments, the spectral coefficients used as ground truth are constructed either by (i) generating analytical coefficients from a normal distribution, (ii) obtaining coefficients along a selected subset of coordinate directions, or (iii) inducing them via separable functions. We argue that, under limited computational resources, high-dimensional spectral transforms inevitably introduce non-negligible numerical errors in the coefficients. Such errors make comparisons between different methods unfair. However, since real problems are usually given in physical space, further discussion of this issue is provided in \cref{appendix: pde_physical}. Furthermore, to avoid CoD, we only calculate and use the valid $\hat{f}_{\boldsymbol{k}}$ and their corresponding index set $\mathcal{K}$.

\subsection{Middle-dimensional Equations}\label[appsec]{appendix: middle-dim}
The experiments for middle-dimensional PDEs are all generated from analytical coefficients which are directly generated by random sampling. Since the functions used in middle-dimensional PDEs are real function, after generating the Fourier coefficients $\hat{u}_{\boldsymbol{k}}$, we set $\hat{u}_{-\boldsymbol{k}}=\hat{u}_{\boldsymbol{k}}$ to guarantee the reality of the function. 
\subsubsection{Steady PDEs with Analytic Coefficients}\label[appsec]{appendix: steady_pde_analytic}
For steady PDEs, the only function that needs to be constructed is the right-hand-side term, as periodic boundary conditions are used for Poisson equations and Dirichlet boundary conditions are used for steady convection equations. 

\paragraph{Poisson's equations.}
We consider the Poisson's equation with periodic boundary condition:
\begin{equation}\label{eq: high_poisson}
    \Delta u(\boldsymbol{x}) =f(\boldsymbol{x}), \quad \boldsymbol{x}\in [0,2\pi]^d.
\end{equation}
We construct $f(\boldsymbol{x})=\prod_{i=1}^d \tilde{f}(x_i), \ \boldsymbol{x} = (x_1.\cdots, x_d)\in \mathbb{R}^d$, where $\tilde{f}(x)=\sum_k  \operatorname{Re}\left(\hat{f}_ke^{\mathrm{i}kx}\right),\ \hat{f}_k\in\mathbb{R}$. Suppose $\boldsymbol{k}=(k_1,\cdots, k_d)$ is the Fourier index vector in the $d$ dimensional spectral space, the discrete multi-dimensional Fourier coefficient can be expressed exactly $\hat{f}_{\boldsymbol{k}}=\prod_{i=1}^d \hat{f}_{k_i}$. Consequently, the solution $u(\boldsymbol{x})=\sum_{\boldsymbol{k}}\operatorname{Re}\left(\hat{u}_{\boldsymbol{k}}e^{\mathrm{i}\boldsymbol{k}\cdot\boldsymbol{x}}\right)$, where $\hat{u}_{\boldsymbol{k}}=-\hat{f}_{\boldsymbol{k}}/\|\boldsymbol{k}\|_2^2$.  We randomly generate $\hat{f}_{k_i}=e^{-|k|}a_{k_i}, \ a_{k_i}\sim \mathcal{N}(0,1), \text{ for } k_i\neq 0$ and $\hat{f}_0=0$, to guarantee the coefficient decay condition (analytic function) and avoid the singularity of $\hat{u}_{\boldsymbol{k}}$.

\paragraph{Convection equations.}
Here we consider the convection equations with Dirichlet boundary condition:
\begin{equation}
    \sum_{i=1}^du_{x_i}(\boldsymbol{x})=f(\boldsymbol{x}), \quad \boldsymbol{x}\in[-1,1]^d.
\end{equation}
where $f(\boldsymbol{x})=\prod_{i=1}^d\tilde{f}(x_i)$, and $\tilde{f}(x)=\sum_{k\in\mathcal{K}} \hat{f}_k T_k(x)$, $\hat{f}\in \mathbb{R}$. We randomly generate $\hat{f}_{k}=e^{-|k|}a_{k}, \ a_{k}\sim \mathcal{N}(0,1), \text{ for } k\neq 0$ to guarantee the coefficient decay condition (analytic function). we also set $\hat{f}_0=0$ since it represents the constant term. To avoid CoD, we only calculate and use the valid $\hat{f}_{\boldsymbol{k}}$ and their corresponding index set $\mathcal{K}$. Suppose $\boldsymbol{c}_f$ is the vector of  $\hat{f}_{k}$ for $k\in\mathcal{K}$, $\boldsymbol{c}_u$ is the vector of  $\hat{u}_{k}$ for $k\in\mathcal{K}$ ,  and the differential matrix $M$ is constructed by \cref{appendix: derivative chebyshev},  then we can get $c_u=M^{-1}c_f$, and then calculate $u(\boldsymbol{x})=\sum_{k\in\mathcal{K}} \hat{u}_kT_{\boldsymbol{k}}(\boldsymbol{x})$. Thus the Dirichlet boundary condition is 
\begin{equation}
    u(\boldsymbol{x})=\sum_{k\in\mathcal{K}} \hat{u}_kT_{\boldsymbol{k}}(\boldsymbol{x}), \quad \boldsymbol{x} \in \partial [-1,1]^d.
\end{equation}

\subsubsection{Time-dependent PDEs with Analytic Coefficients}\label[appsec]{appendix: t_pde_analytic}
\paragraph{Heat equations.}
Consider the heat equation with periodic boundary condition:
\begin{equation}\label{eq: high_heat}
    u_t(\boldsymbol{x},t)=\Delta u(\boldsymbol{x},t), \quad \left(\boldsymbol{x},t\right)\in[0,2\pi]^d\times[0,0.1].
\end{equation}
Here we consider a separable function $u(\boldsymbol{x},0)=\prod_{i=1}^d g(x_i)$ as the initial condition, where $g(x)=\sum_k \operatorname{Re} \left(\hat{u}_{k,0}e^{\mathrm{i}kx}\right), \ \hat{u}_{k,0}\in \mathbb{C}$. Then the Fourier coefficient can be exactly expressed by $\hat{u}_{\boldsymbol{k},0}=2^{-d}\prod_{i=1}^d \hat{u}_{k_i,0}$. Consequently, the solution $u(\boldsymbol{x},t)=\sum_{\boldsymbol{k}}\operatorname{Re}\left(\hat{u}_{\boldsymbol{k},t}e^{\mathrm{i}\boldsymbol{k}\cdot\boldsymbol{x}}\right)$, where $\hat{u}_{\boldsymbol{k},t}=\prod_{i=1}^d \hat{u}_{k_i,0}e^{-\|\boldsymbol{k}\|_2^2t}$. We randomly generate $\hat{u}_{k,0}=e^{-|{k}|}(a_{k}+b_{k}\mathrm{i}), \ a_{k},b_{k}\sim \mathcal{N}(0,1)\text{ for } k\neq 0$ and $\hat{u}_{0,0}=0$ to avoid the singularity. 
\paragraph{Convection equations.}
Consider the convection equations with Dirichlet boundary condition:
\begin{equation}
    \sum_{i=1}^du_{x_i}(\boldsymbol{x},t)=u_t(\boldsymbol{x},t), \quad \left(\boldsymbol{x},t\right)\in[-1,1]^d\times[0,0.1].
\end{equation}

The initial condition is $u(\boldsymbol{x},0)=\prod_{i=1}^d g(x_i)$, where $g(x)=\sum_k  \hat{u}_{k,0}T_k(x), \ \hat{u}_{k,0}\in \mathbb{R}$. Consequently, the solution $u(\boldsymbol{x},t)=\prod_{i=1}^d g(x_i+t)$. We randomly generate $\hat{u}_{k,0}=e^{-|k|}a_{k}, \ a_{k}\sim \mathcal{N}(0,1), \text{ for } k\neq 0$ to guarantee the coefficient decay condition (analytic function). we also set $\hat{f}_0=0$ since it represents the constant term.

\begin{table}[ht]
  \caption{PINNs and DRMs on middle-dimensional PDEs.}
  \label{table: pinn_drm_middle_dim}
  \begin{center}
      \begin{sc}
        \begin{tabular}{llcc}
          \toprule
          Equation & DIM & PINN & DRM \\
          \midrule
          \multirow{4}{*}{Poisson} & 2 & $1.10\text{e-}3\pm2.08\text{e-}5$ & $3.47\text{e-}3 \pm 1.76\text{e-}4$ \\
          & 3 & $1.18\text{e-}2\pm2.70\text{e-}3$ & $9.82\text{e-}3 \pm 6.52\text{e-}4$ \\
           & 6 & $7.30\text{e-}1\pm2.91\text{e-}1$ & $1.13\text{e-}1 \pm 2.38\text{e-}2$ \\
           & 8 & $>1$              & $3.99\text{e-}1 \pm 1.59\text{e-}2$ \\
          \midrule
          \multirow{2}{*}{Heat}    & 2 & $2.78\text{e-}2\pm1.08\text{e-}2$ & $2.54\text{e-}3\pm1.35\text{e-}4$ \\
           & 3 & $3.80\text{e-}2\pm1.50\text{e-}3$ & $4.53\text{e-}3\pm1.46\text{e-}4$ \\
           & 6 & $>1$              & $>1$ \\
          \bottomrule
        \end{tabular}
      \end{sc}
  \end{center}
  \vskip -0.1in
\end{table}

\subsection{High-dimensional Equations}\label[appsec]{appendix: high-dim}
\paragraph{Poisson's equations.}For the Poisson's equation considered in \cref{section: sgsm_vs_sinn}, we first define the right-hand side
\begin{equation}
f_p(\boldsymbol{x}) = \frac{1}{\pi^2} + \frac{1}{130d}\sum_{i=1}^d \cosh\bigl(2(x_i - \pi)\bigr),
\qquad \boldsymbol{x} \in [0,2\pi]^d .
\end{equation}
However, the Fourier spectral method requires the forcing term to be periodic and to satisfy the zero-mean condition
\begin{equation}
\int_{[0,2\pi]^d} f_p(\boldsymbol{x})\,\mathrm{d}\boldsymbol{x} = 0 .
\end{equation}
To meet these requirements, we filter the valid Fourier coefficients $\hat{f}_{\boldsymbol{k}}$ and directly set the zero mode $\hat{f}_{\boldsymbol{0}} = 0$. The filtered function is then reconstructed as
$f(\boldsymbol{x}) = \sum_{\boldsymbol{k}\in\mathcal{K}} \operatorname{Re}\!\left( \hat{f}_{\boldsymbol{k}} e^{\mathrm{i}\boldsymbol{k}\cdot\boldsymbol{x}} \right),
$
and the corresponding solution is given by
$
u(\boldsymbol{x}) = \sum_{\boldsymbol{k}\in\mathcal{K}} \operatorname{Re}\!\left( \hat{u}_{\boldsymbol{k}} e^{\mathrm{i}\boldsymbol{k}\cdot\boldsymbol{x}} \right),$ where $
\hat{u}_{\boldsymbol{k}} = -\hat{f}_{\boldsymbol{k}}/\|\boldsymbol{k}\|_2^2.
$
Since the relative error between $f$ and $f_p$ is less than $10^{-7}$, we can regard $f$ as an accurate periodic approximation of $f_p$.

Furthermore, to avoid an unsolvable solution for PINNs (due to the complexity), we introduce a set of active dimensions $\mathcal{D} \subset \{1,\ldots,d\}$ and redefine
\begin{equation}
f_p(\boldsymbol{x}) = \frac{1}{\pi^2} + \frac{1}{130|\mathcal{D}|}\sum_{i\in\mathcal{D}} \cosh\bigl(2(x_i - \pi)\bigr),
\qquad \boldsymbol{x} \in [0,2\pi]^d .
\end{equation}

Additionally, we conducted a more challenging benchmark by randomly generating the index set $\mathcal{K}$ and the corresponding coefficients $c$. First, we randomly generate the index set $\mathcal{K}$ with a given total number $N_{\text{valid}}$ based on a distribution with higher density over low-frequency components (excluding $\boldsymbol{k}=\boldsymbol{0}$). Then we generate $\hat{f}_{\boldsymbol{k}}=e^{-\|\boldsymbol{k}\|_1}a_{\boldsymbol{k}}, \ a_{\boldsymbol{k}}\sim \mathcal{N}(0,1)$, to guarantee the coefficient decay condition. Then $f(\boldsymbol{x})=\sum_{\boldsymbol{k}\in \mathcal{K}}\operatorname{Re}\left(\hat{f}_{\boldsymbol{k}}e^{\mathrm{i}\boldsymbol{k}\cdot\boldsymbol{x}}\right)$ and $u(\boldsymbol{x})=\sum_{\boldsymbol{k}\in \mathcal{K}}\operatorname{Re}\left(\hat{u}_{\boldsymbol{k}}e^{\mathrm{i}\boldsymbol{k}\cdot\boldsymbol{x}}\right)$ where $\hat{u}_{\boldsymbol{k}}=-\hat{f}_{\boldsymbol{k}}/\|\boldsymbol{k}\|_2^2$.

\paragraph{Heat equations.}For the heat equation considered in \cref{section: sgsm_vs_sinn}, we construct the initial condition as the solution of the Poisson's equation described above with a set of active dimensions $\mathcal{D}$, i.e., $u(\boldsymbol{x},0)=\sum_{\boldsymbol{k}\in\mathcal{K}} \operatorname{Re}\left(\hat{u}_{\boldsymbol{k}}e^{\mathrm{i}\boldsymbol{k}\cdot\boldsymbol{x}}\right)$. Then the solution is given by $u(\boldsymbol{x},t)=\sum_{\boldsymbol{k}\in\mathcal{K}} \operatorname{Re}\left(\hat{u}_{\boldsymbol{k}}e^{-\|\boldsymbol{k}\|_2^2t}e^{\mathrm{i}\boldsymbol{k}\cdot\boldsymbol{x}}\right)$.
\subsection{Schrödinger Equations}\label[appsec]{appendix: Schrodinger equation}
The time-dependent Schrödinger equation (TDSE) governing the quantum dynamics of the system is given by
\begin{equation}
i\hbar \frac{\partial \Psi(\boldsymbol{x},t)}{\partial t}
= \hat{H}\Psi(\boldsymbol{x},t),
\quad \boldsymbol{x} \in [\SI{-15}{\angstrom}, \SI{15}{\angstrom}],\;
t \in [0, \SI{0.1}{\femto\second}],
\end{equation}
where $\Psi(\boldsymbol{x},t)$ denotes the complex-valued wave function of a single quantum particle.

The Hamiltonian operator $\hat{H}$ corresponding to a $d$-dimensional harmonic oscillator potential is defined as
\begin{equation}
\hat{H}
= -\frac{\hbar^2}{2m_e}\nabla^2
+ 0.1\, k\, \boldsymbol{x}^2,
\end{equation}
where $m_e$ is the electron mass, $\hbar$ is the reduced Planck constant, and $k$ denotes the stiffness constant of the harmonic potential.

We solve the TDSE numerically using \texttt{QMsolve} \cite{de2024qmsolve}, with $N_t = 3200$ temporal discretization points and $N_x = 100$ spatial grid points. The initial condition is constructed using the same approach in \cref{appendix: t_pde_analytic}, followed by normalization to ensure unit probability density.

\subsection{PDEs in Physical Space}\label[appsec]{appendix: pde_physical}
In physical PDEs, the physical function is typically given, which means that applying spectral methods requires applying the spectral transform as the first step. However, in high-dimensional problems, directly computing the coefficients through such transforms becomes difficult due to CoD.

There are several approaches for computing sparse coefficients of high-dimensional functions using sparse grids. For example, \cite{piazzola2022sparse} identifies key collocation points to compute coefficients, \cite{gross2022sparse} computes coefficients on rank-1 lattices, and \cite{shen2012efficient} employs hyperbolic cross grids. However, these methods introduce non-negligible errors when the computational scale is not large enough. We argue that such transformation-induced errors would lead to an unfair evaluation of SINNs; therefore, in \cref{section: pred_miss_coef,section: sgsm_vs_sinn,section: ablation}, we use analytic coefficients directly; in \cref{section: exp_high_pde} we choose several dimensions to construct the solution; and in \cref{section: sgsm_vs_sinn}, we use separable solutions. Since the aforementioned approaches can produce high-accuracy coefficients when sufficient computational resources are provided, it is reasonable to directly employ analytic coefficients in our experiments.

As for the inverse transforms, the number of required numerical evaluations remains at a reasonable scale in high-dimensional problems. The computation cost of each value via \cref{eq: truncted_inverse_transform} is $\mathcal{O}\left(dN_{\text{valid}}\right)$, where $N_{\text{valid}}$ is the number of coefficients used in the calculation and $d$ is the dimensionality. Validating the accuracy of a model for high-dimensional PDEs on a finite set of points (typically $\sim 10^3$) is a widely recognized assessment methodology in PINNs~\cite{cen2024deep,shi2024stochastic,yu2026sinc}. Since we adopt the same strategy, the inverse transforms in SINNs do not suffer from CoD.

\section{2D Navier-Stokes Equations with Missed Coefficients}\label[appsec]{appendix: 2d_ns}
Here we consider the incompressible 2D Navier-Stokes equation:
\begin{equation}
\begin{aligned}
        & \frac{\partial u}{\partial x} + \frac{\partial v}{\partial y} = 0, \\
        & \frac{\partial u}{\partial t} + u\frac{\partial u}{\partial x} + v\frac{\partial u}{\partial y} = -\frac{1}{\rho}\frac{\partial p}{\partial x} + \nu\left(\frac{\partial^2 u}{\partial x^2} + \frac{\partial^2 u}{\partial y^2}\right) , \\
        & \frac{\partial v}{\partial t} + u\frac{\partial v}{\partial x} + v\frac{\partial v}{\partial y} = -\frac{1}{\rho}\frac{\partial p}{\partial y} + \nu\left(\frac{\partial^2 v}{\partial x^2} + \frac{\partial^2 v}{\partial y^2}\right).
\end{aligned}
    \label{eq:2d_ns}
\end{equation}
We set the density $\rho=1$, Reynolds number $\text{Re}=100$, computational domain $[0,2\pi]^2\times [0,1]$, and viscosity $\nu=2\pi/\text{Re}$. We set the index set $\mathcal{K} = [-25,24]^2 \cap \mathbb{Z}^2$, \textit{i.e.}, $ |\mathcal{K}|=2500$. For the Fourier spectral methods, we use a second-order Adams-Bashforth scheme for temporal discretization with $\Delta t=10^{-4}$. The boundary condition is periodic because we use a Fourier basis. The initial condition is first generated randomly, after which the solution is evolved using a spectral method up to a fixed time $t=t_0$. The resulting solution at $t = t_0$ is then taken as the initial condition in this numerical experiment, ensuring that the conservation properties of the Navier--Stokes equations are satisfied. We handle the aliasing error by the 2/3 rule \cite{orszag1971elimination} for both spectral methods and SINN.

The results are demonstrated in \cref{fig:ns_missing}. If $s<1$, we mask $(1-s)\times|\mathcal{K}|$ coefficients based on uniform random sampling. The masked coefficients are the same for both spectral methods and SINN to ensure a fair comparison. The results indicate that, when $s<1$, SINNs can achieve better accuracy due to their approximation capability even when the PDEs are nonlinear.  
\begin{figure}[ht]
  \centering
  \begin{subfigure}[b]{0.33\linewidth}\centering
    \includegraphics[width=\linewidth]{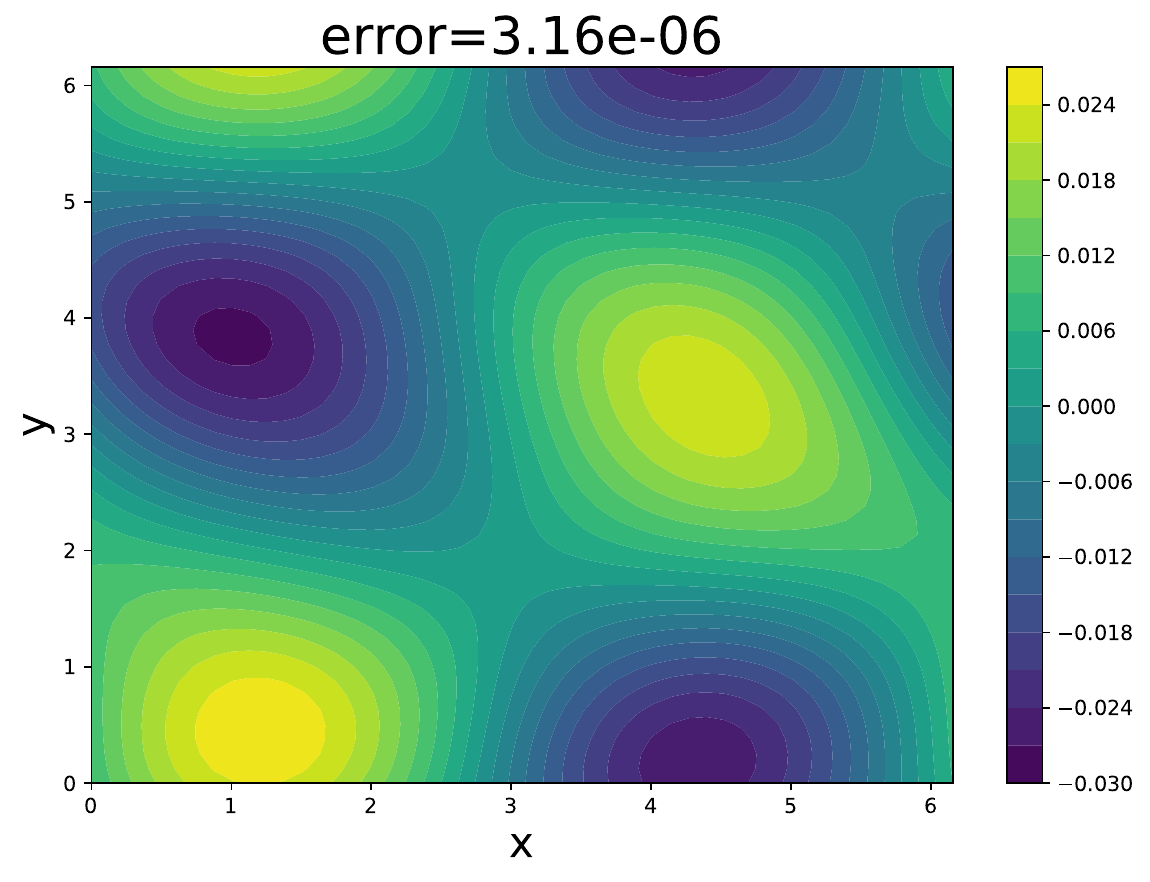}
    \caption{$u \quad s=1$ spectral methods}\label{s1_sgsm_u}
  \end{subfigure}\hfill
\begin{subfigure}[b]{0.33\linewidth}\centering
    \includegraphics[width=\linewidth]{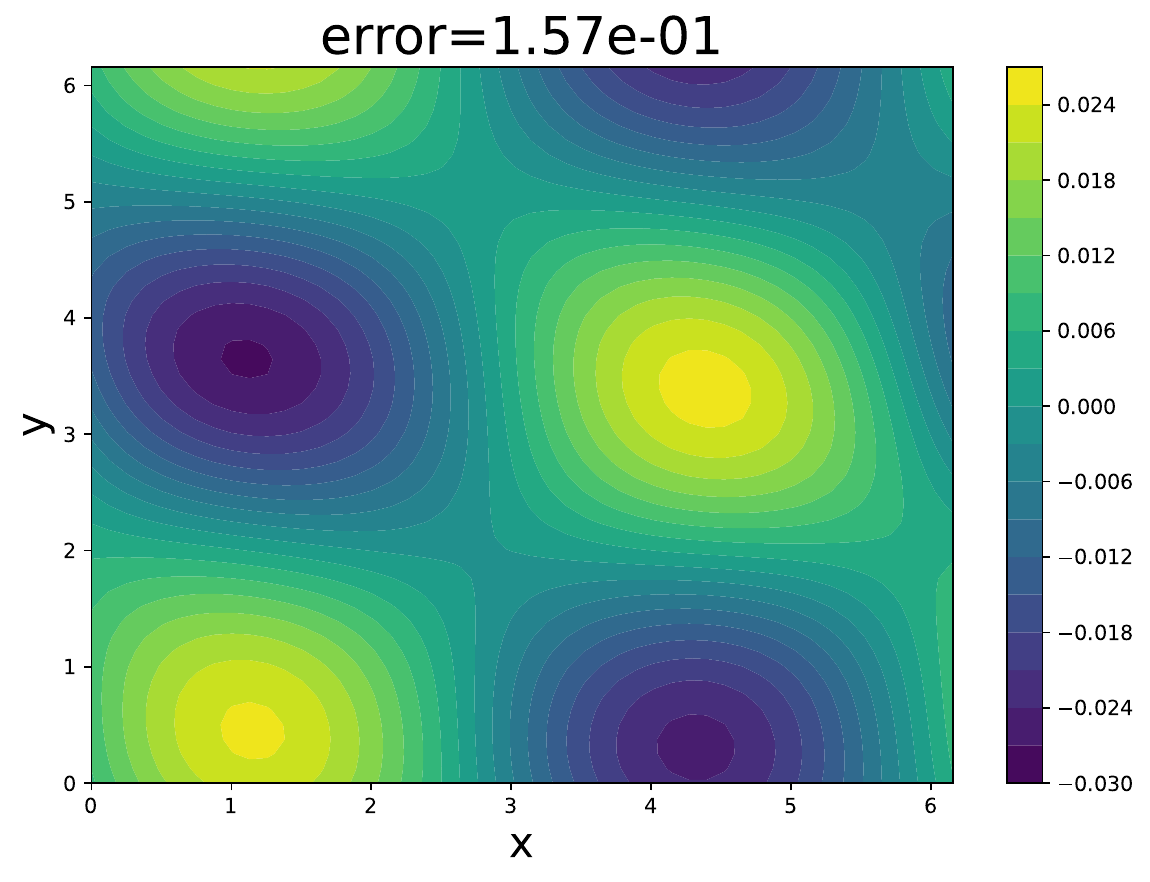}
    \caption{$u \quad s=0.9$ spectral methods}\label{s09_sgsm_u}
  \end{subfigure}\hfill
    \begin{subfigure}[b]{0.33\linewidth}\centering
    \includegraphics[width=\linewidth]{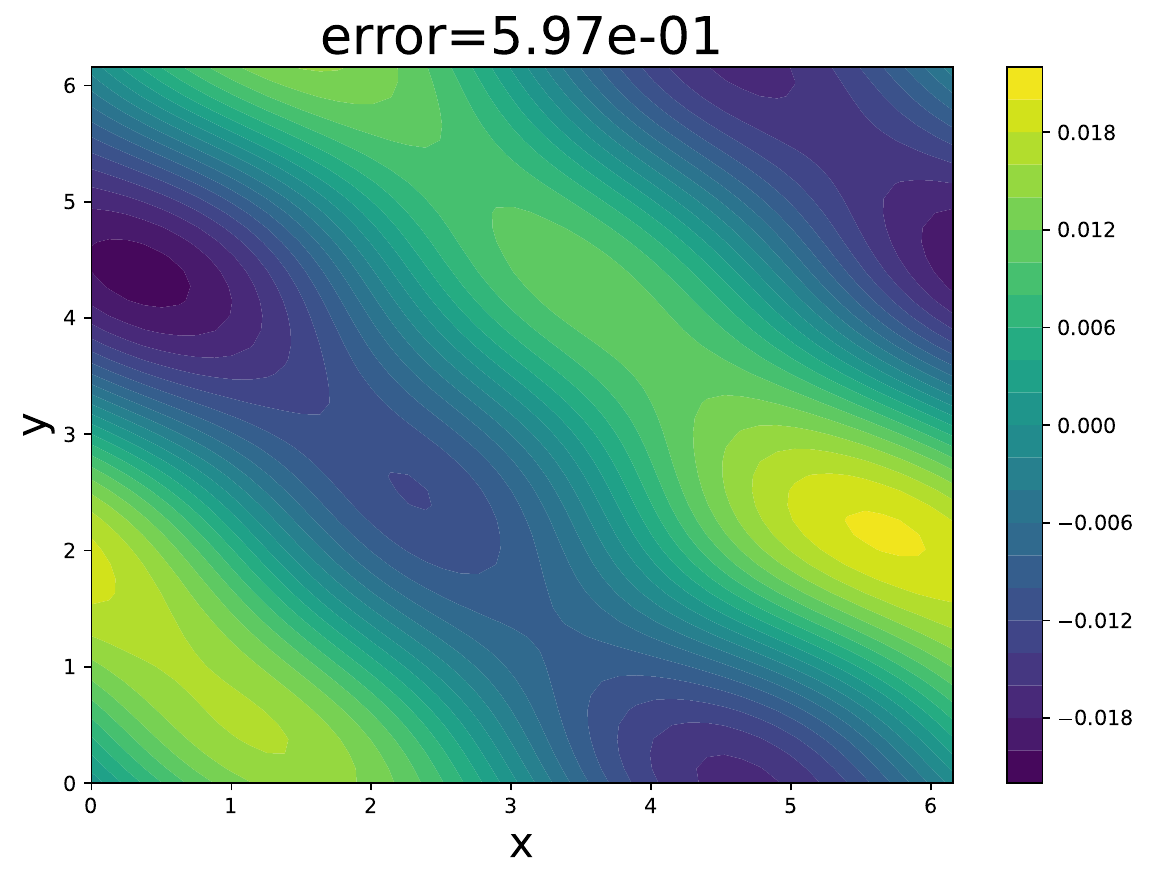}
    \caption{$u \quad s=0.8$ spectral methods}\label{s08_sgsm_u}
  \end{subfigure}\hfill
  \begin{subfigure}[b]{0.33\linewidth}\centering
    \includegraphics[width=\linewidth]{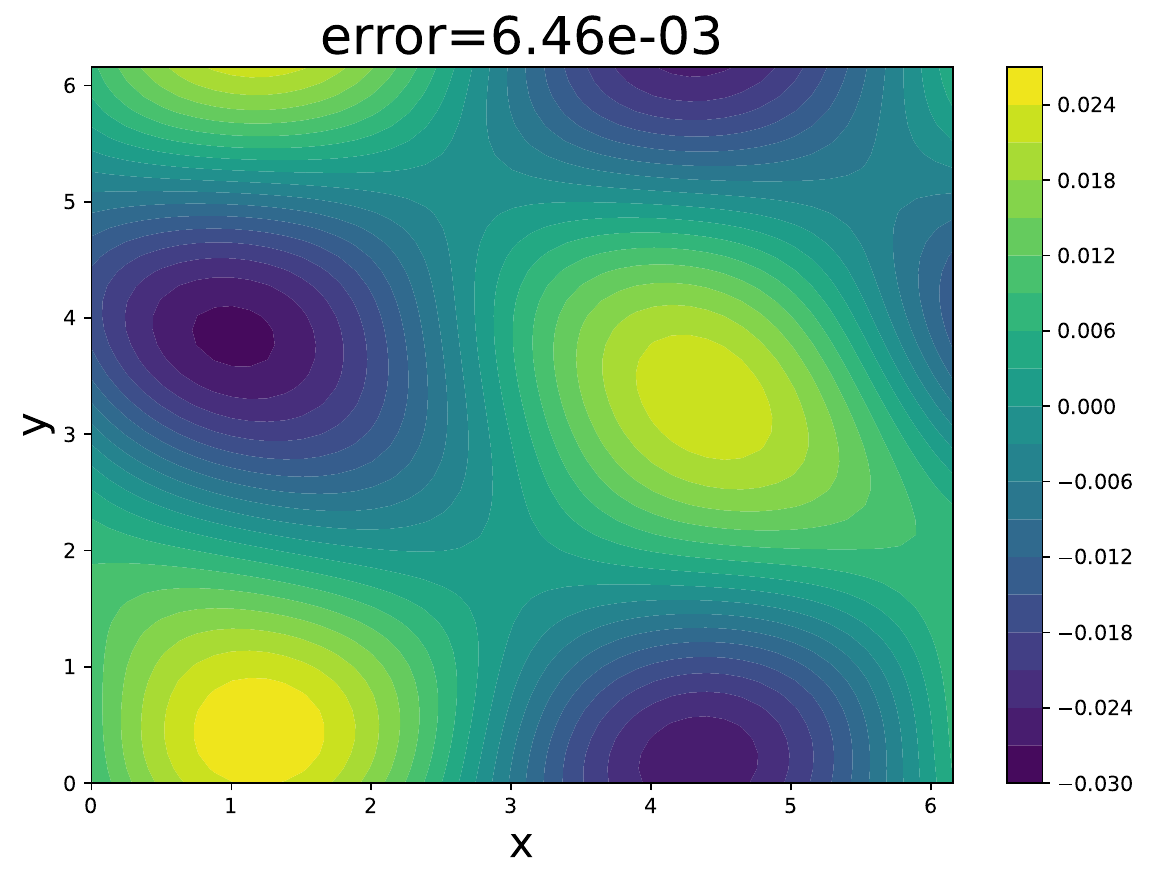}
    \caption{$u \quad s=1$ SINN}\label{s1_sinn_u}
  \end{subfigure}\hfill
  \begin{subfigure}[b]{0.33\linewidth}\centering
    \includegraphics[width=\linewidth]{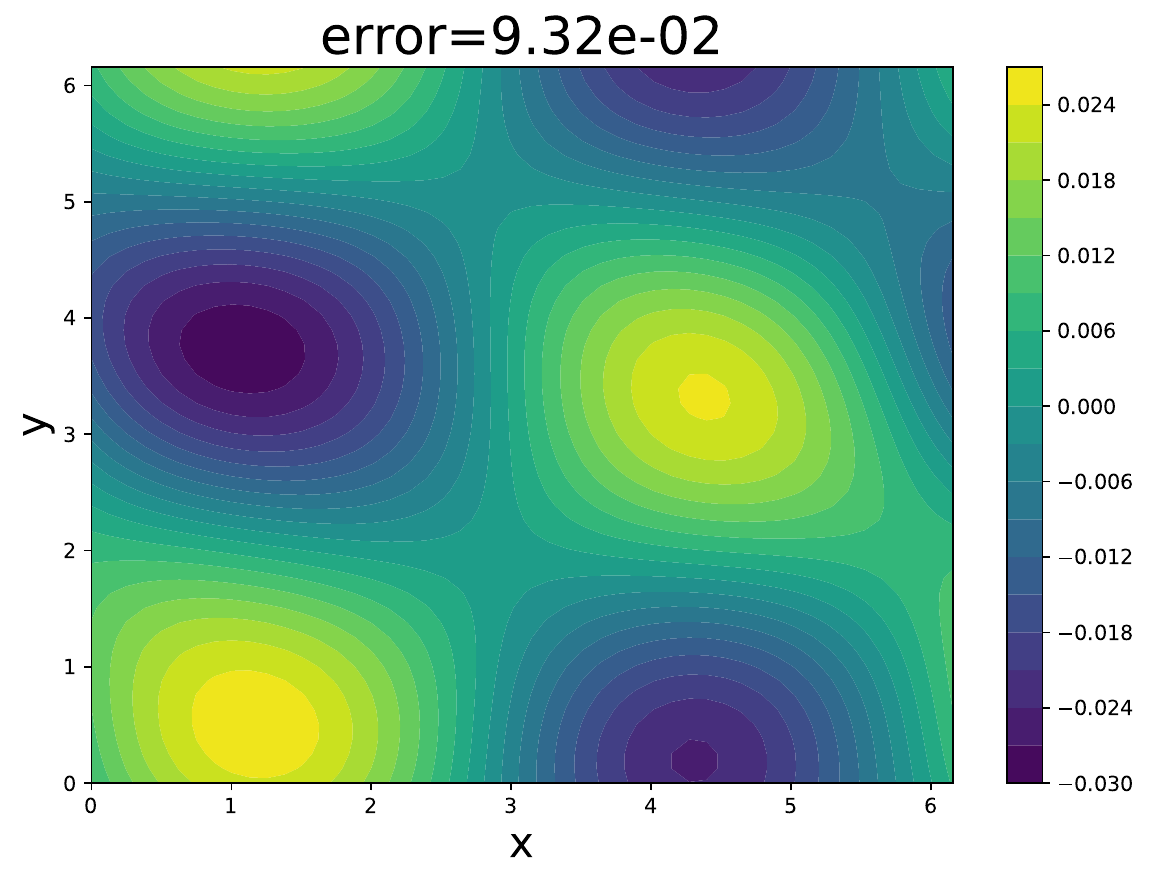}
    \caption{$u \quad s=0.9$ SINN}\label{s09_sinn_u}
  \end{subfigure}\hfill
  \begin{subfigure}[b]{0.33\linewidth}\centering
    \includegraphics[width=\linewidth]{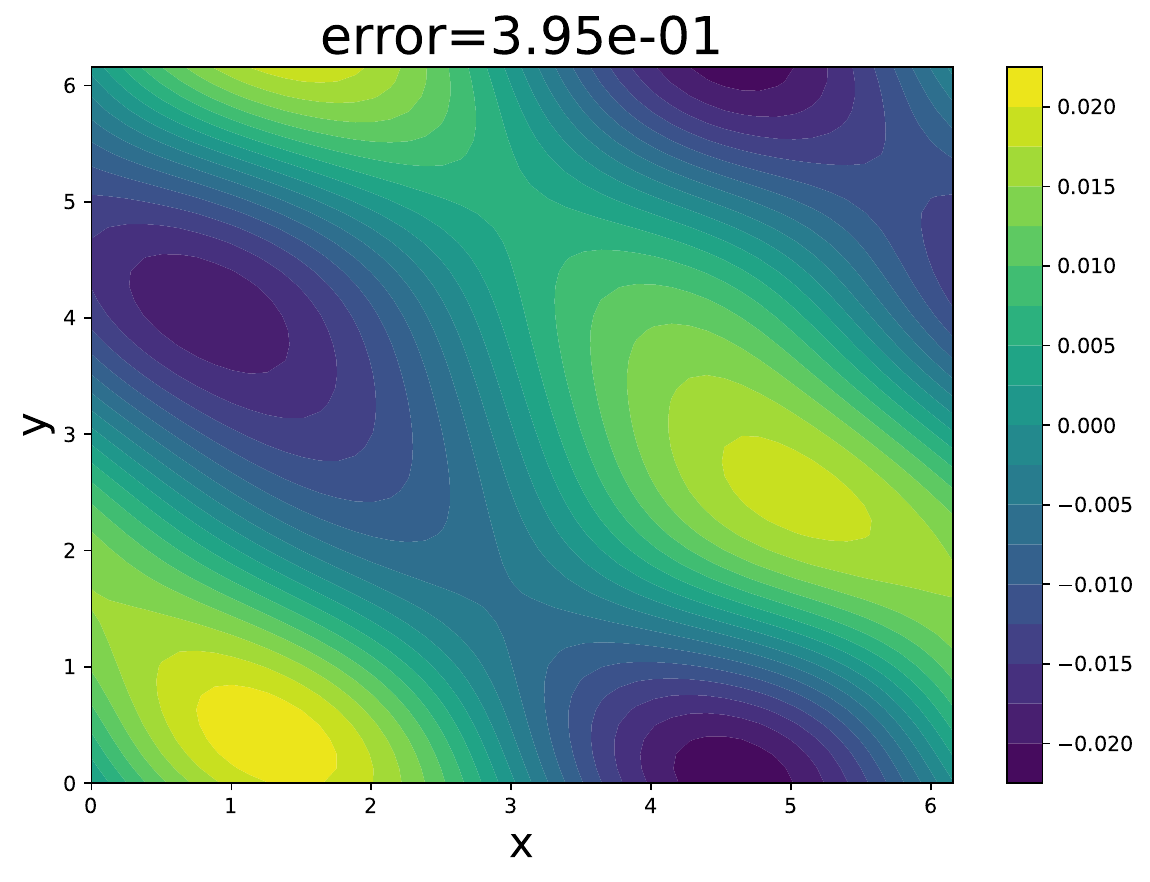}
    \caption{$u \quad s=0.8$ SINN}\label{s08_sinn_u}
  \end{subfigure}
  \begin{subfigure}[b]{0.33\linewidth}\centering
    \includegraphics[width=\linewidth]{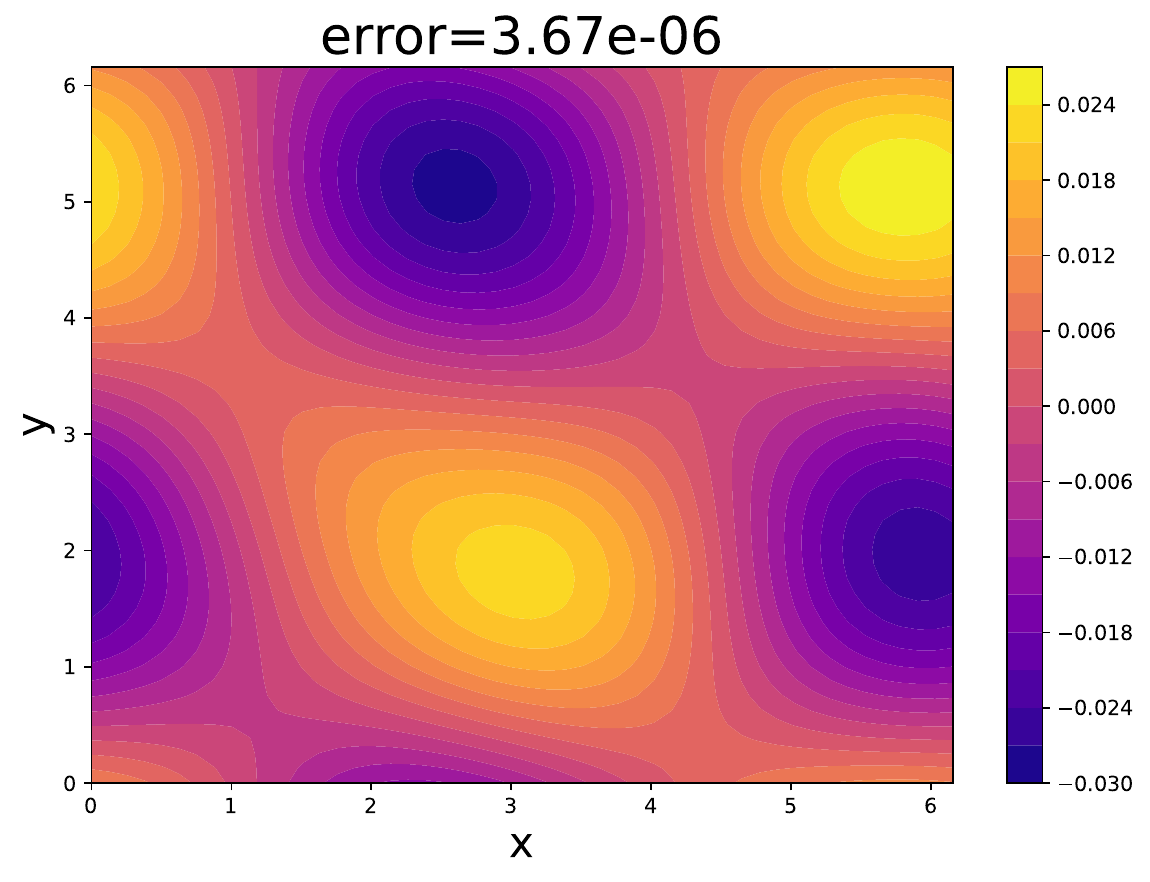}
    \caption{$v \quad s=1$ spectral methods}\label{s1_sgsm_v}
  \end{subfigure}\hfill
\begin{subfigure}[b]{0.33\linewidth}\centering
    \includegraphics[width=\linewidth]{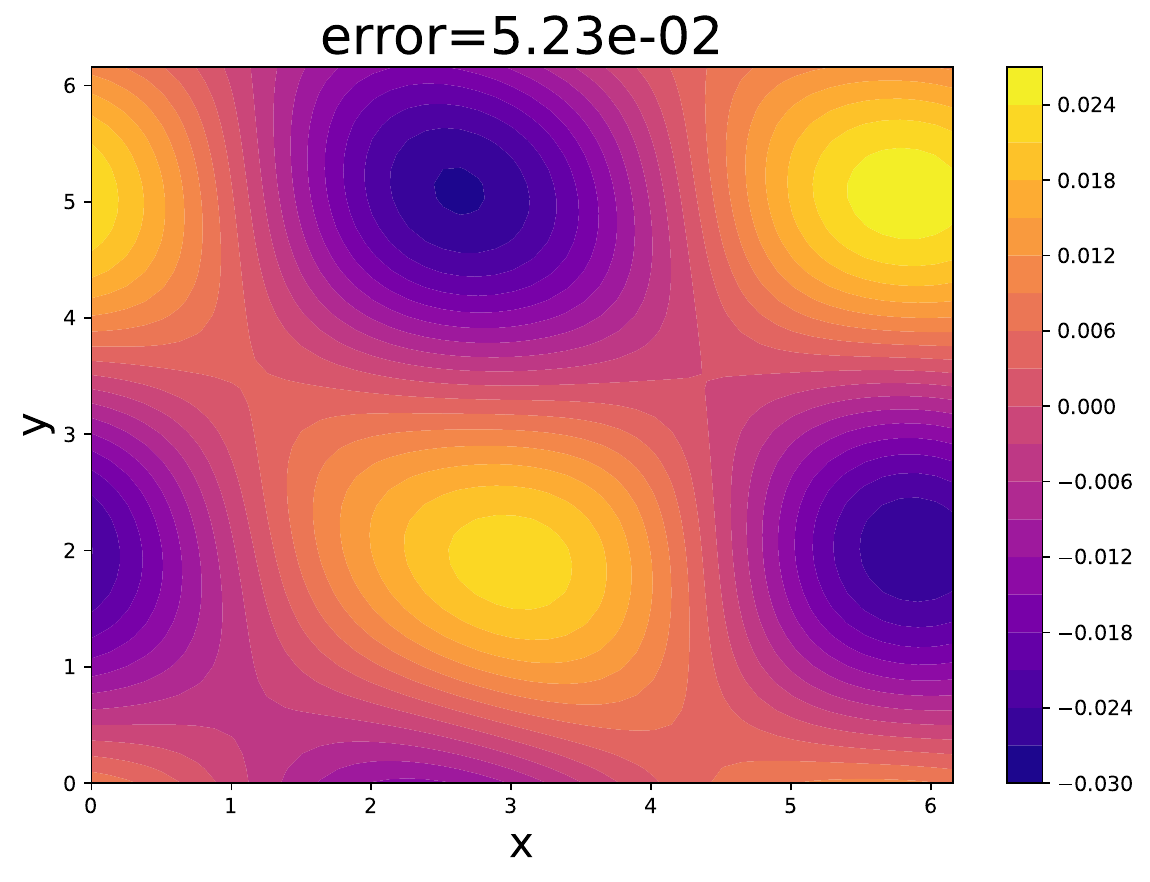}
    \caption{$v \quad s=0.9$ spectral methods}\label{s09_sgsm_v}
  \end{subfigure}\hfill
    \begin{subfigure}[b]{0.33\linewidth}\centering
    \includegraphics[width=\linewidth]{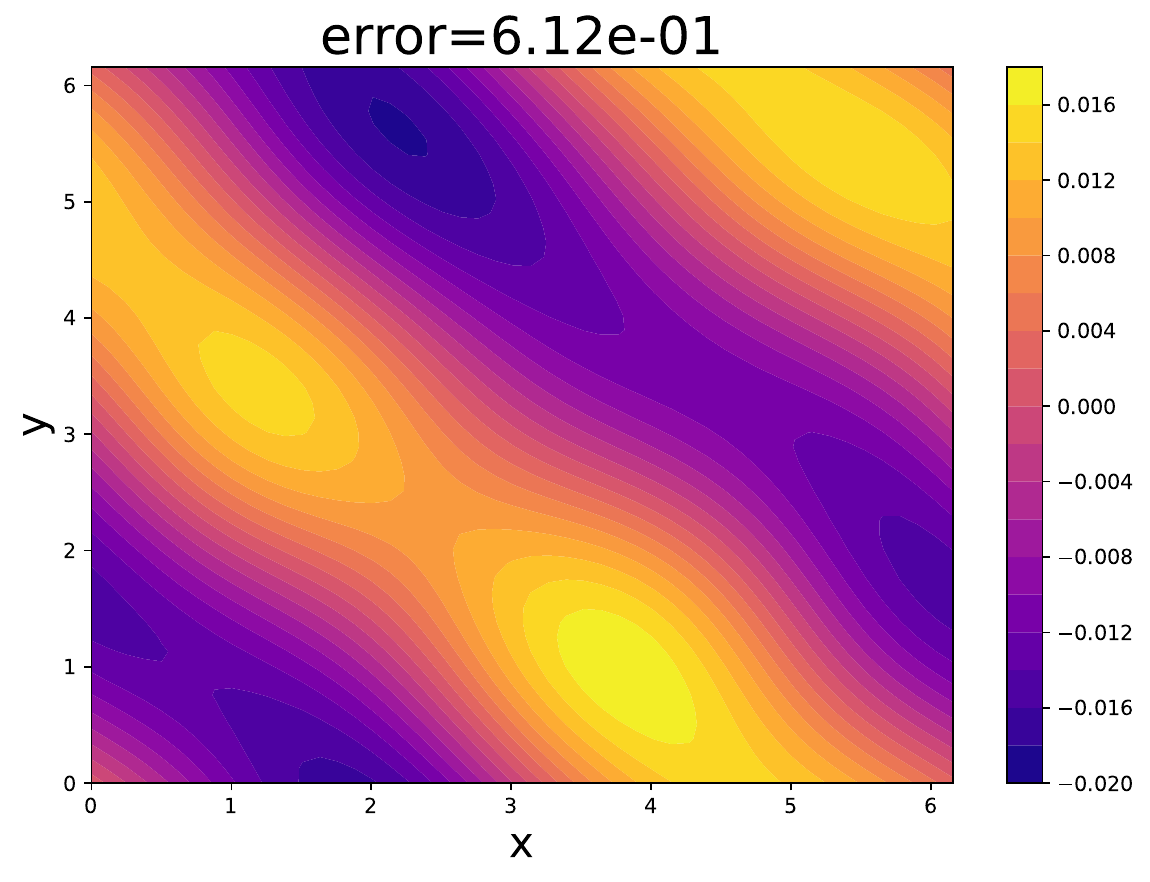}
    \caption{$v \quad s=0.8$ spectral methods}\label{s08_sgsm_v}
  \end{subfigure}\hfill
    \begin{subfigure}[b]{0.33\linewidth}\centering
    \includegraphics[width=\linewidth]{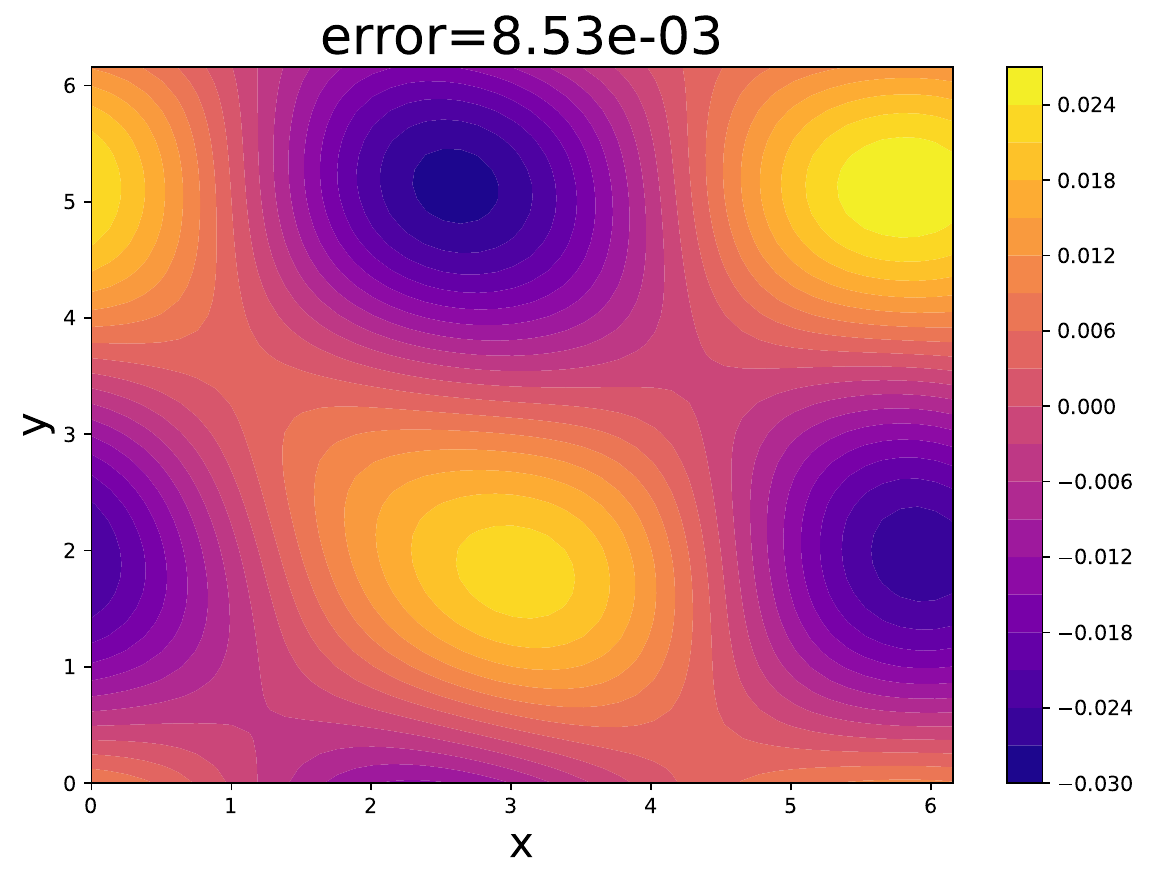}
    \caption{$v \quad s=1$ SINN}\label{s1_sinn_v}
  \end{subfigure}\hfill
  \begin{subfigure}[b]{0.33\linewidth}\centering
    \includegraphics[width=\linewidth]{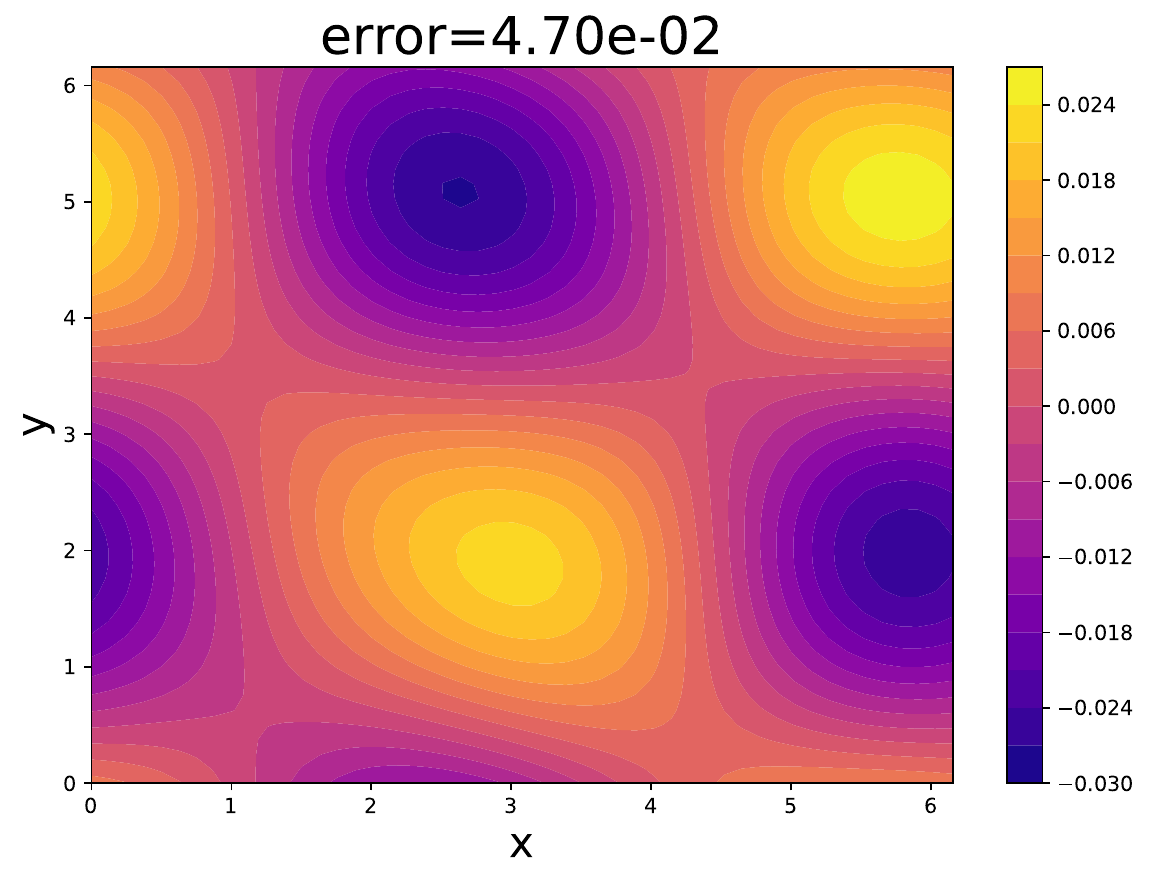}
    \caption{$v \quad s=0.9$ SINN}\label{s09_sinn_v}
  \end{subfigure}\hfill
  \begin{subfigure}[b]{0.33\linewidth}\centering
    \includegraphics[width=\linewidth]{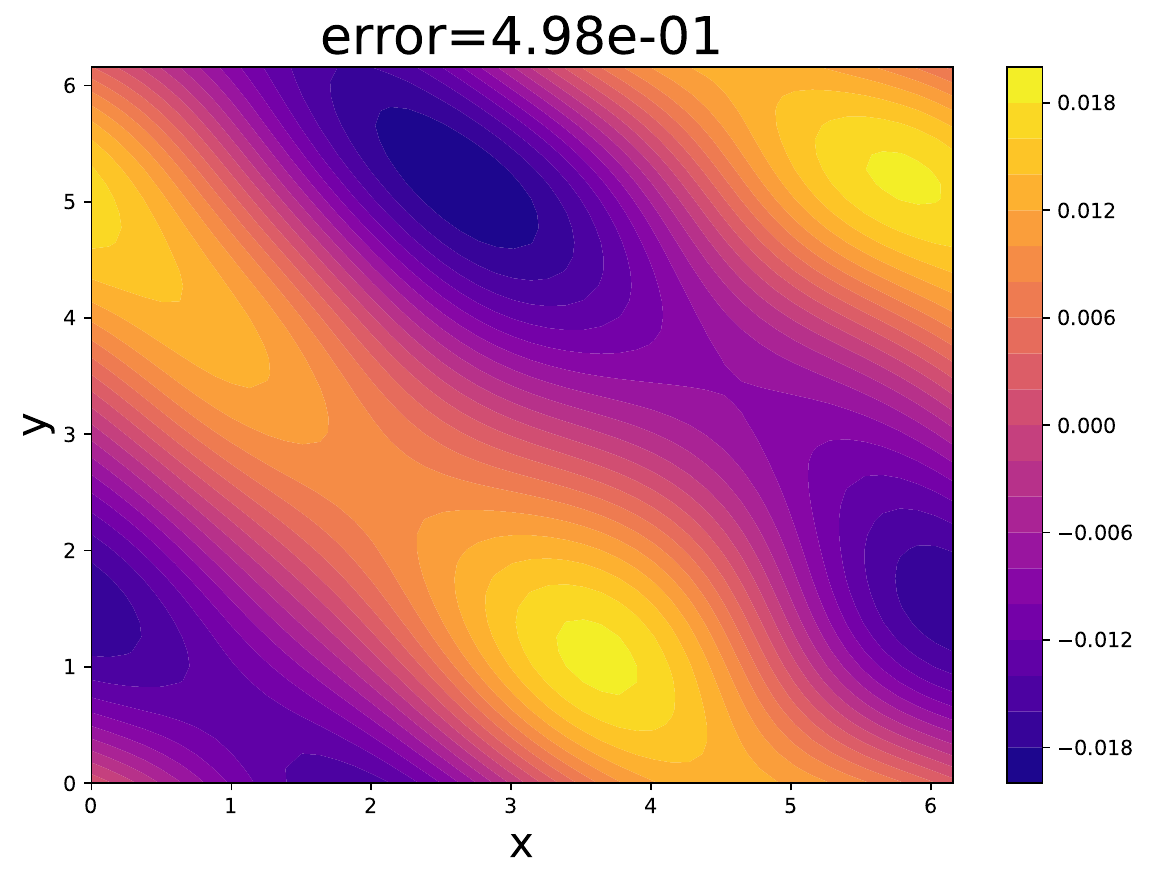}
    \caption{$v \quad s=0.8$ SINN}\label{s08_sinn_v}
    \end{subfigure}
  \caption{The solution $\boldsymbol{u}=(u,v)$ of 2D Navier-Stokes equations. $s=1$ corresponds to the case where all spectral coefficients are known, $s<1$ means there are $(1-s)$ coefficients missing. The top caption of every subfigure is the relative L2 error.} 
  \label{fig:ns_missing} 
\end{figure}
\section{Models for High-dimensional Equations}\label[appsec]{appendix: models for high equations}
Since the network architecture employed has been discussed above, we only present the loss function here. Suppose $\theta$ is the trainable parameters of the network, $g(\boldsymbol{x})$ is the boundary condition function, $h(\boldsymbol{x})$ is the initial condition function, and $u^\theta$ means $u$ is the output of network. We also provide the training dynamics of PINNs, DRMs and SINNs in \cref{fig:pinn_loss_error_u}, \cref{fig:drm_loss_error_u} and \cref{fig:sinn_loss_error_u} respectively for Poisson's equations.
\begin{figure}[ht]
  \centering
  \begin{subfigure}[b]{0.16\linewidth}\centering
    \includegraphics[width=\linewidth]{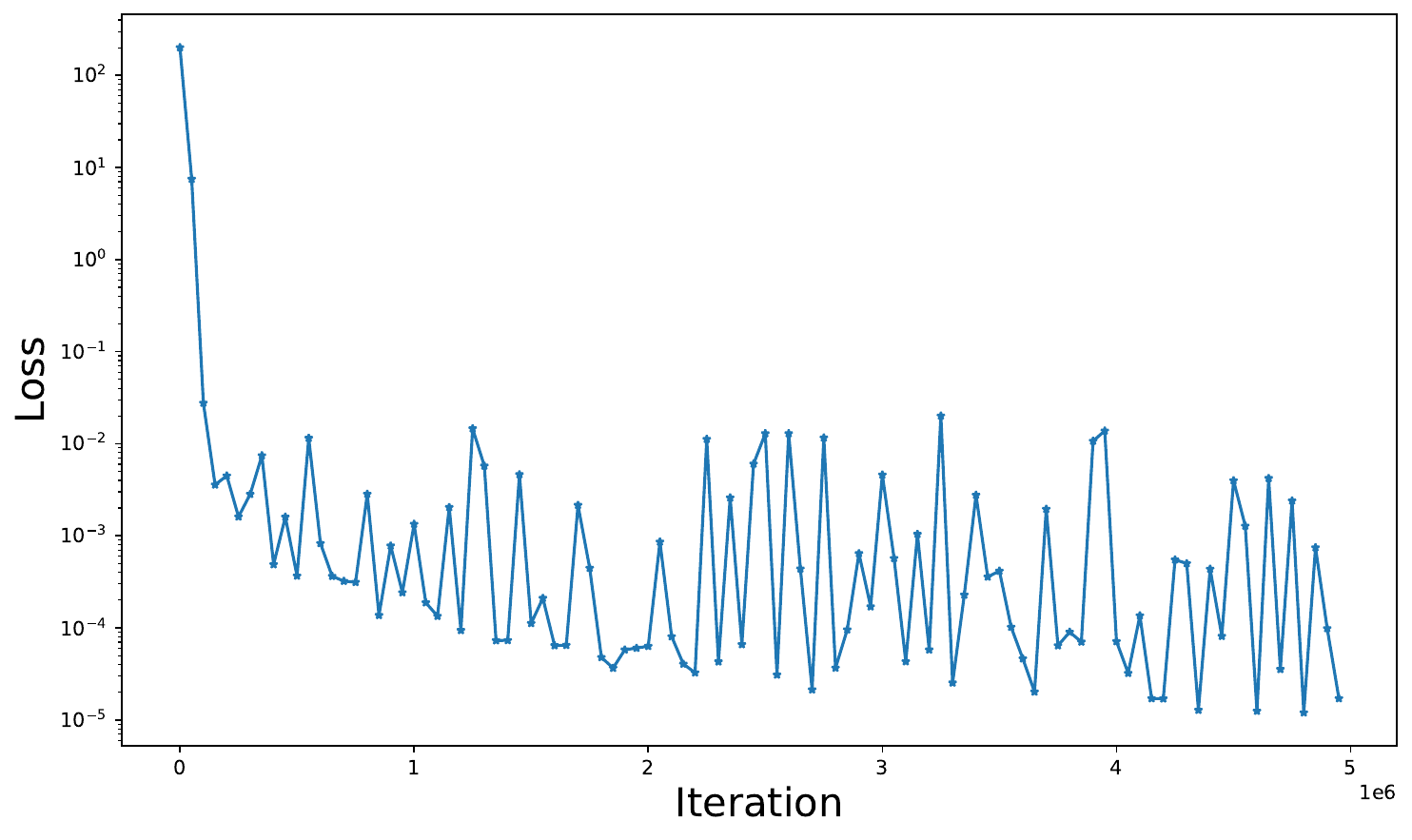}
    \caption{$d=2$ loss}\label{pinn_2_loss}
  \end{subfigure}\hfill
  \begin{subfigure}[b]{0.16\linewidth}\centering
    \includegraphics[width=\linewidth]{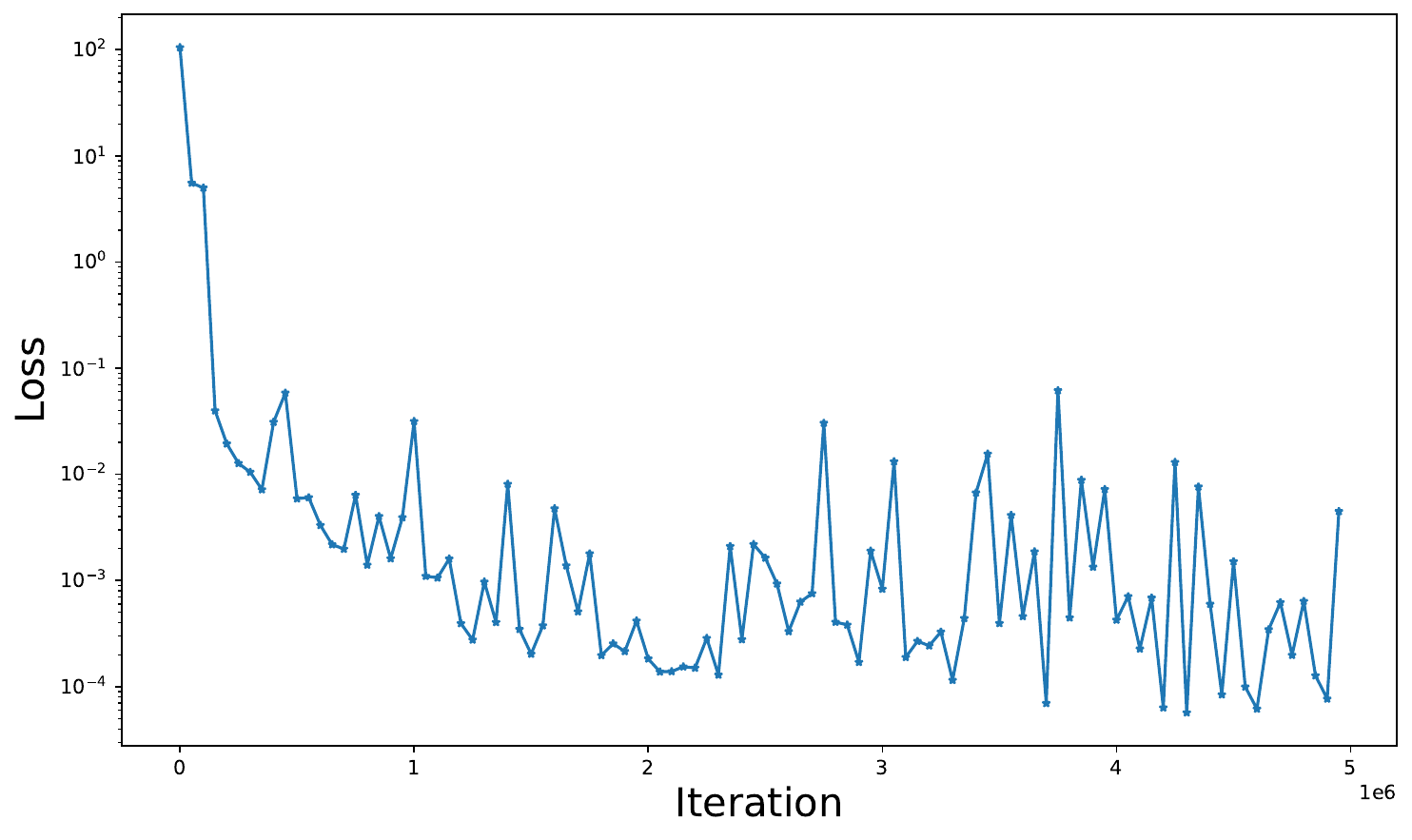}
    \caption{$d=5$ loss}\label{pinn_5_loss}
  \end{subfigure}\hfill
  \begin{subfigure}[b]{0.16\linewidth}\centering
    \includegraphics[width=\linewidth]{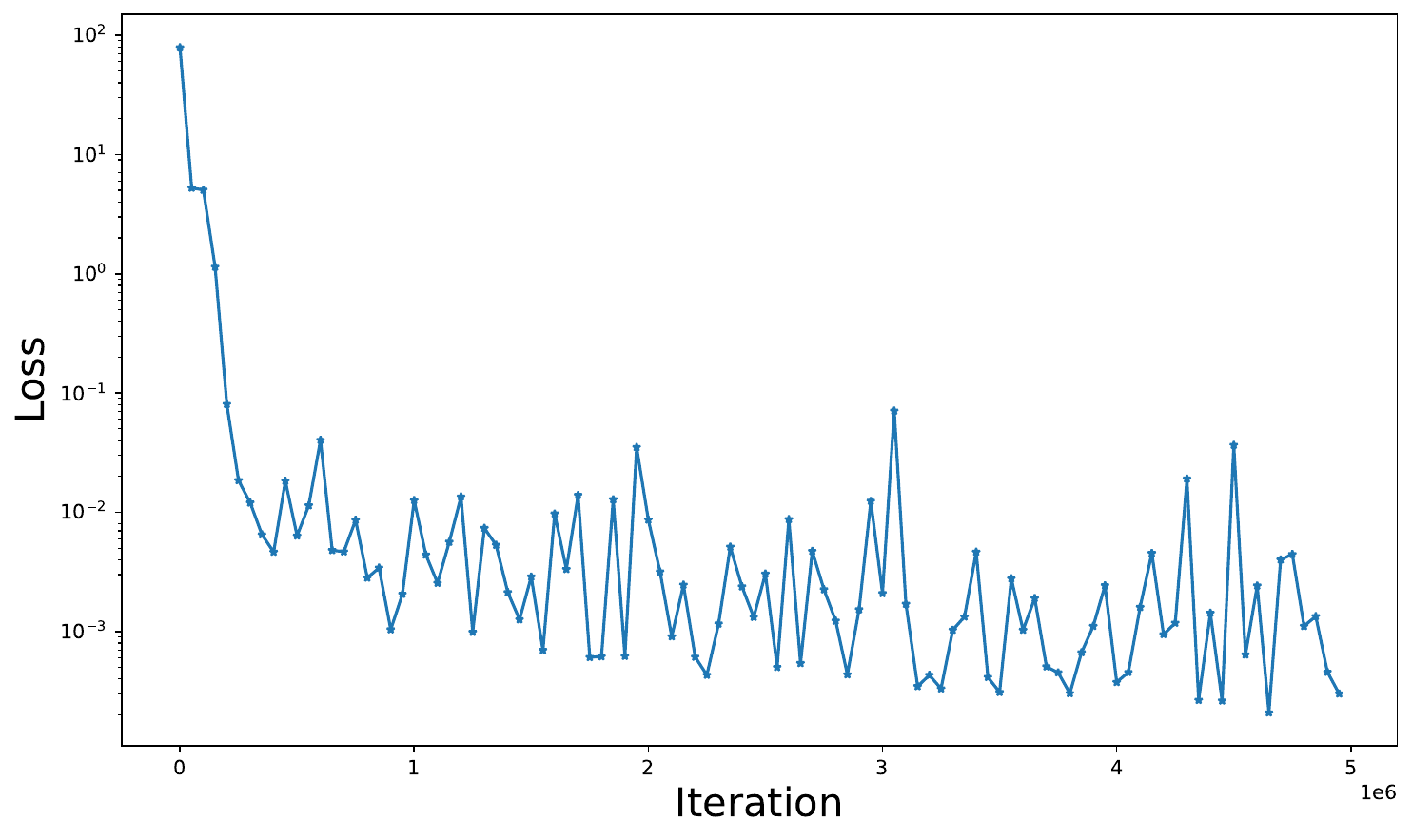}
    \caption{$d=10$ loss}\label{pinn_10_loss}
  \end{subfigure}\hfill
  \begin{subfigure}[b]{0.16\linewidth}\centering
    \includegraphics[width=\linewidth]{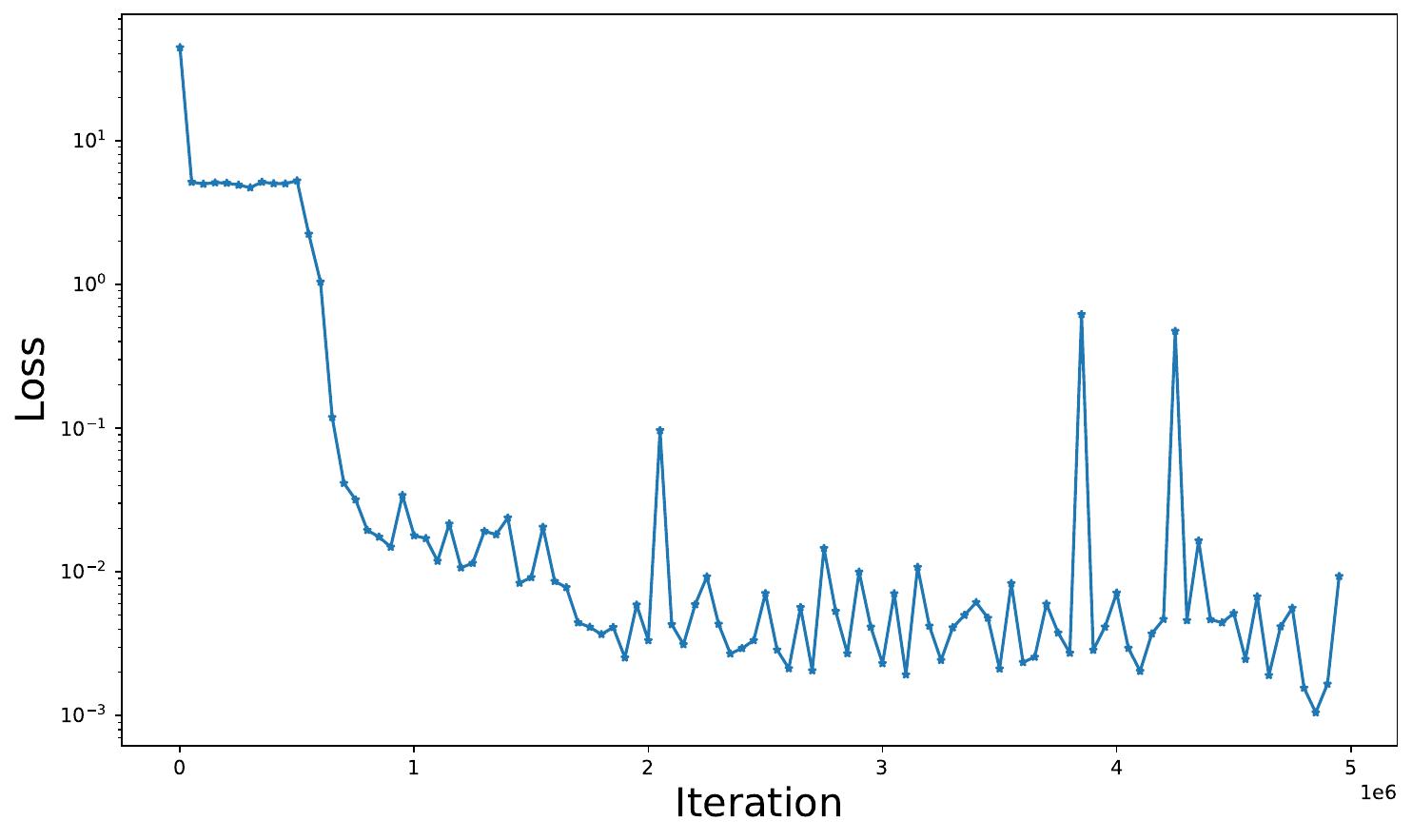}
    \caption{$d=30$ loss}\label{pinn_30_loss}
  \end{subfigure}\hfill
  \begin{subfigure}[b]{0.16\linewidth}\centering
    \includegraphics[width=\linewidth]{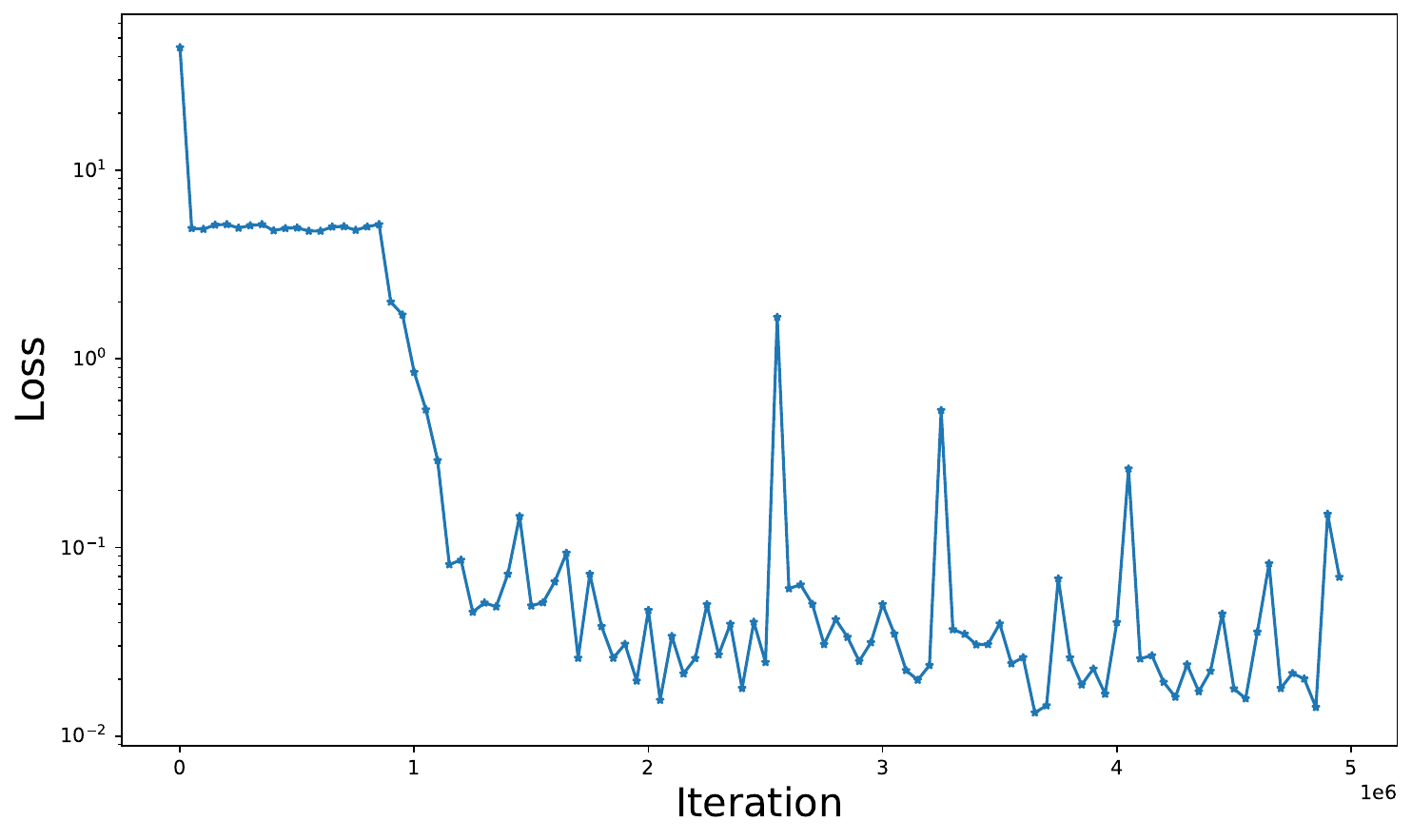}
    \caption{$d=50$ loss}\label{pinn_50_loss}
  \end{subfigure}\hfill
  \begin{subfigure}[b]{0.16\linewidth}\centering
    \includegraphics[width=\linewidth]{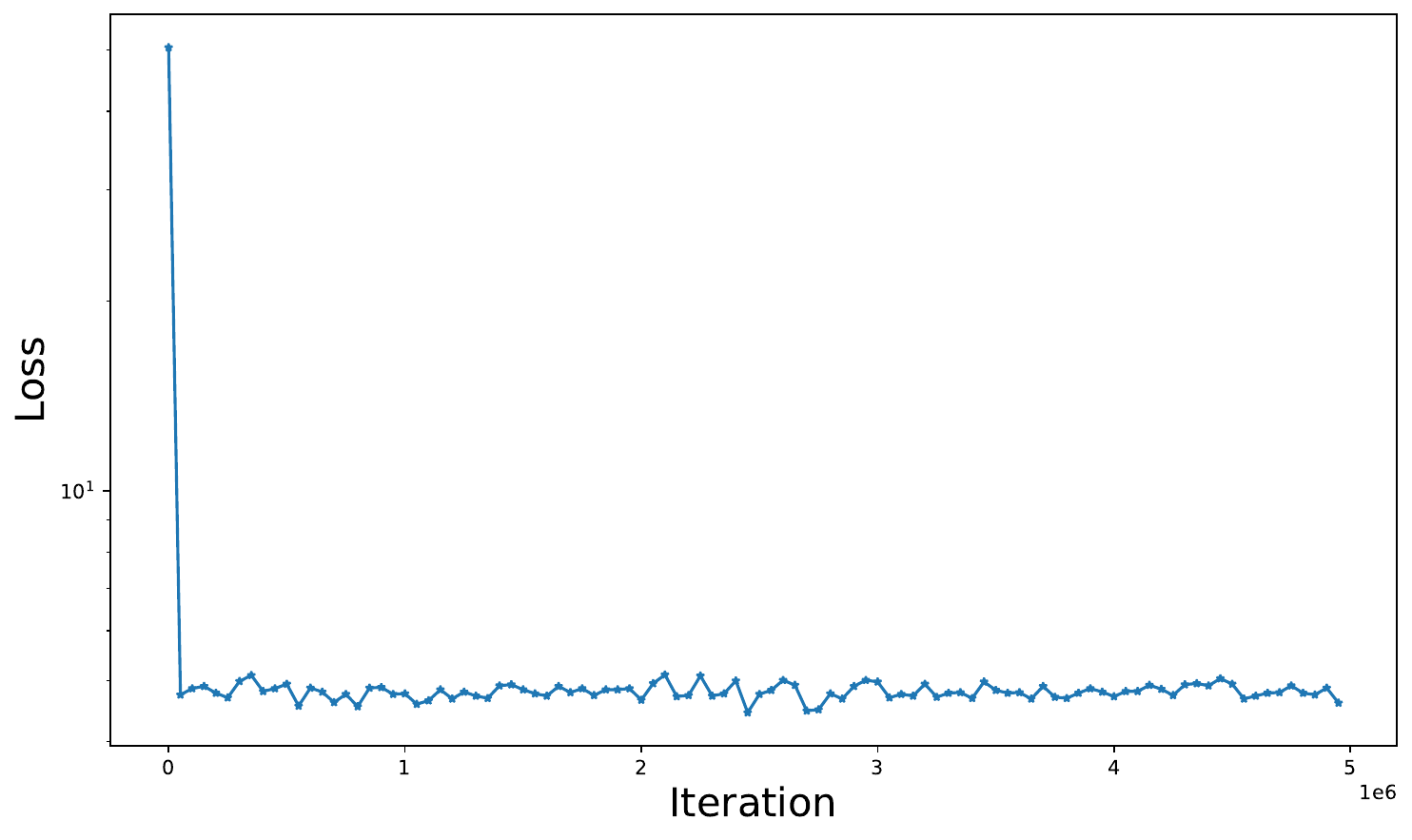}
    \caption{$d=100$ loss}\label{pinn_100_loss}
  \end{subfigure}

  \begin{subfigure}[b]{0.16\linewidth}\centering
    \includegraphics[width=\linewidth]{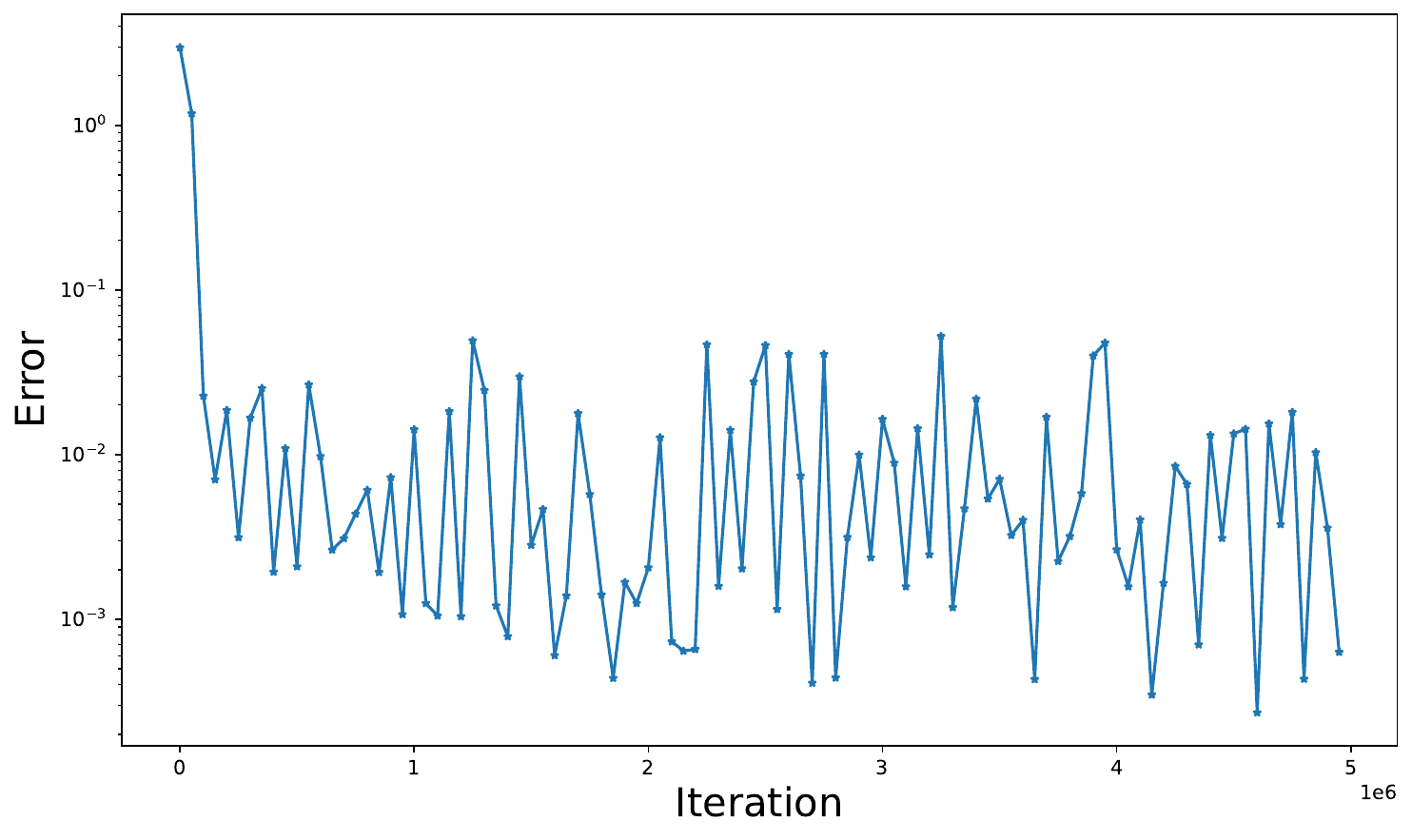}
    \caption{$d=2$ error}\label{pinn_2_error_u}
  \end{subfigure}\hfill
  \begin{subfigure}[b]{0.16\linewidth}\centering
    \includegraphics[width=\linewidth]{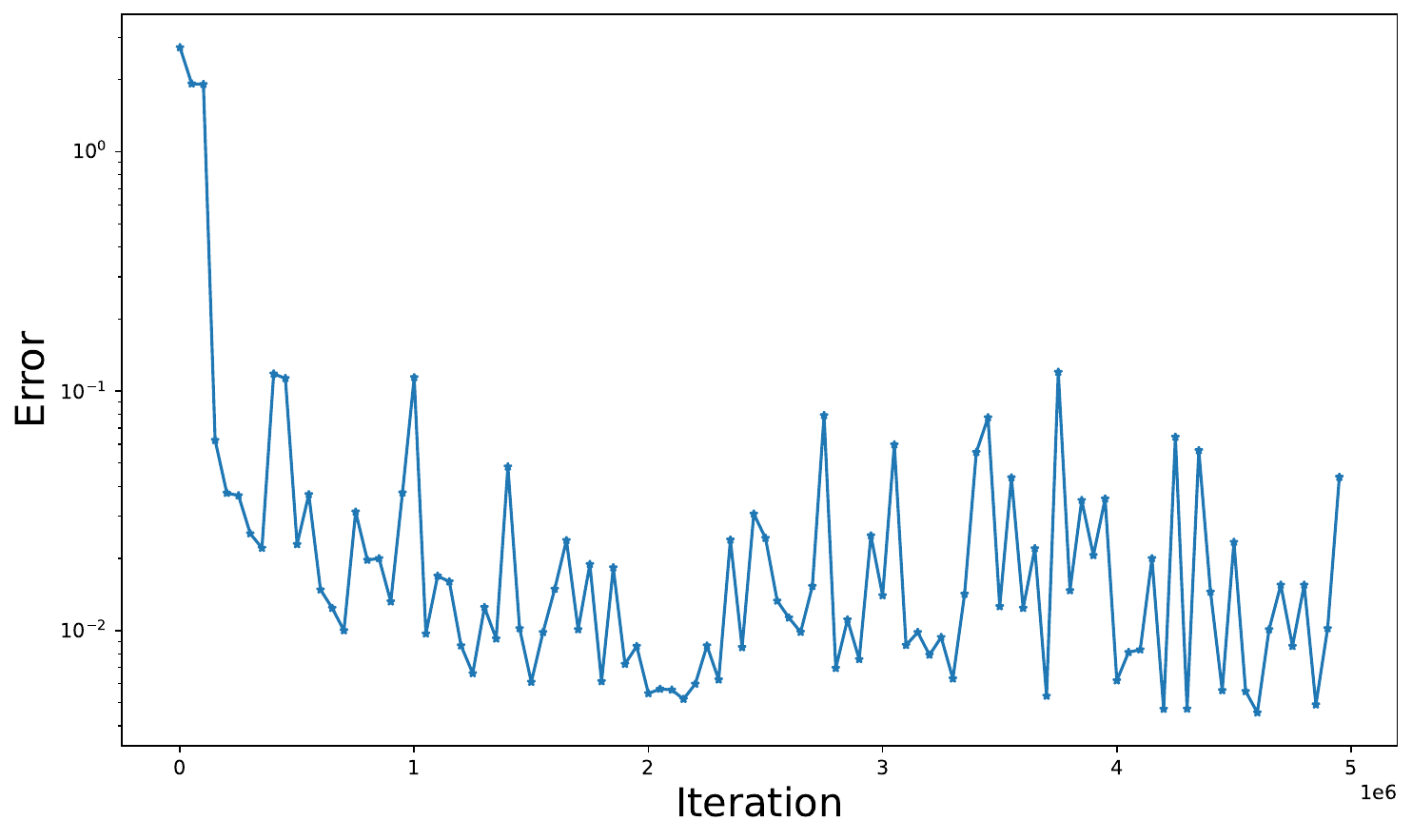}
    \caption{$d=5$ error}\label{pinn_5_error_u}
  \end{subfigure}\hfill
  \begin{subfigure}[b]{0.16\linewidth}\centering
    \includegraphics[width=\linewidth]{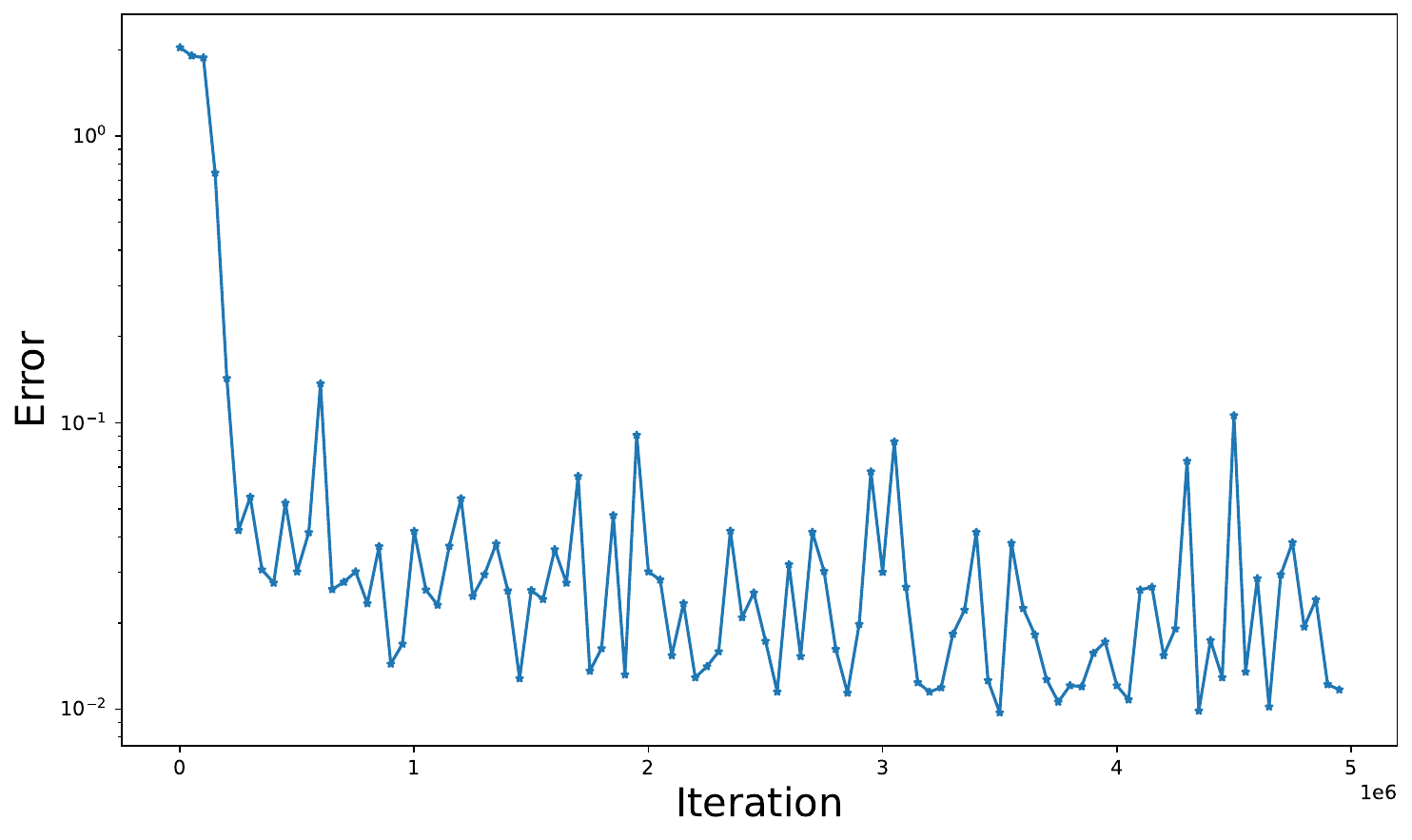}
    \caption{$d=10$ error}\label{pinn_10_error_u}
  \end{subfigure}\hfill
  \begin{subfigure}[b]{0.16\linewidth}\centering
    \includegraphics[width=\linewidth]{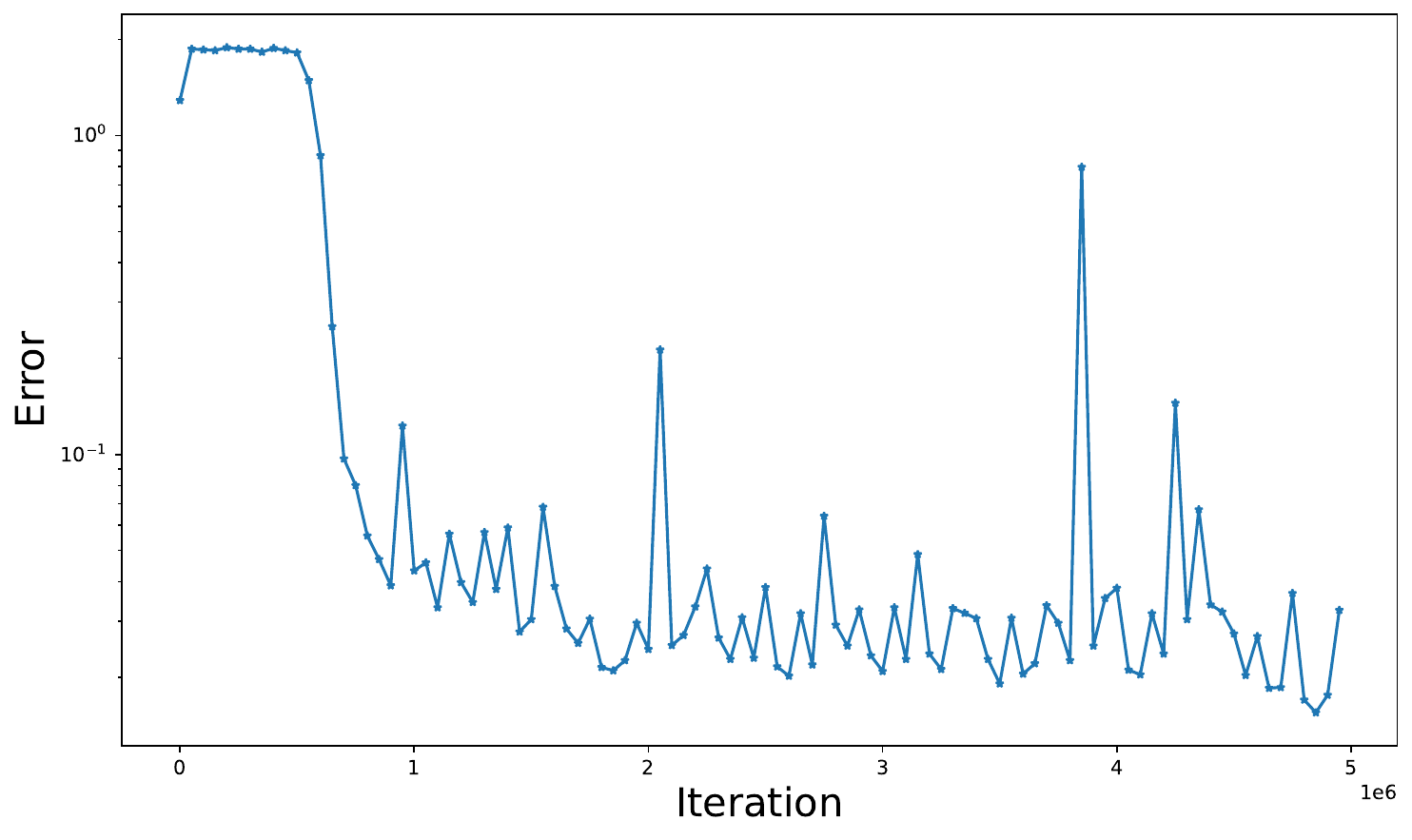}
    \caption{$d=30$ error}\label{pinn_30_error_u}
  \end{subfigure}\hfill
  \begin{subfigure}[b]{0.16\linewidth}\centering
    \includegraphics[width=\linewidth]{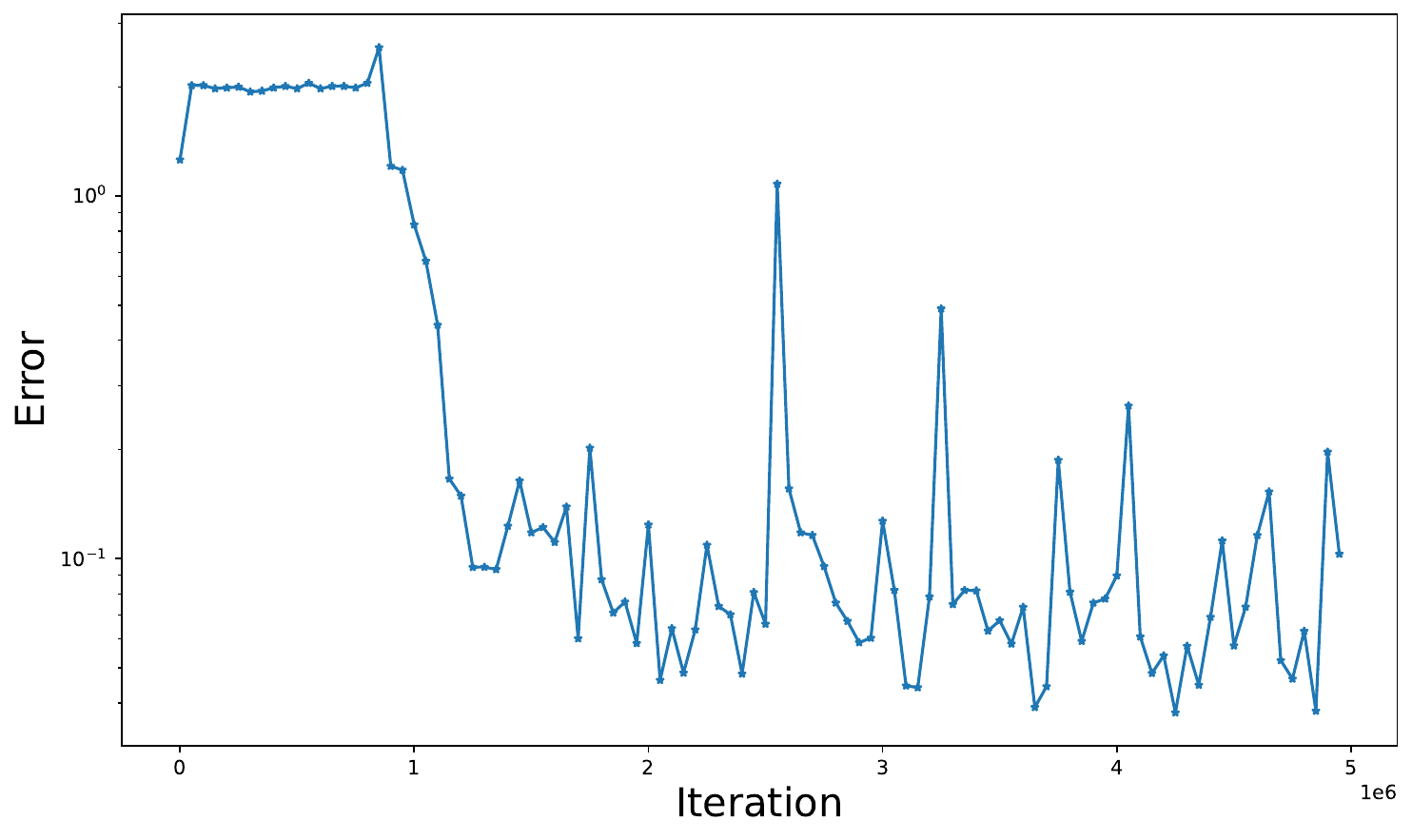}
    \caption{$d=50$ error}\label{pinn_50_error_u}
  \end{subfigure}\hfill
  \begin{subfigure}[b]{0.16\linewidth}\centering
    \includegraphics[width=\linewidth]{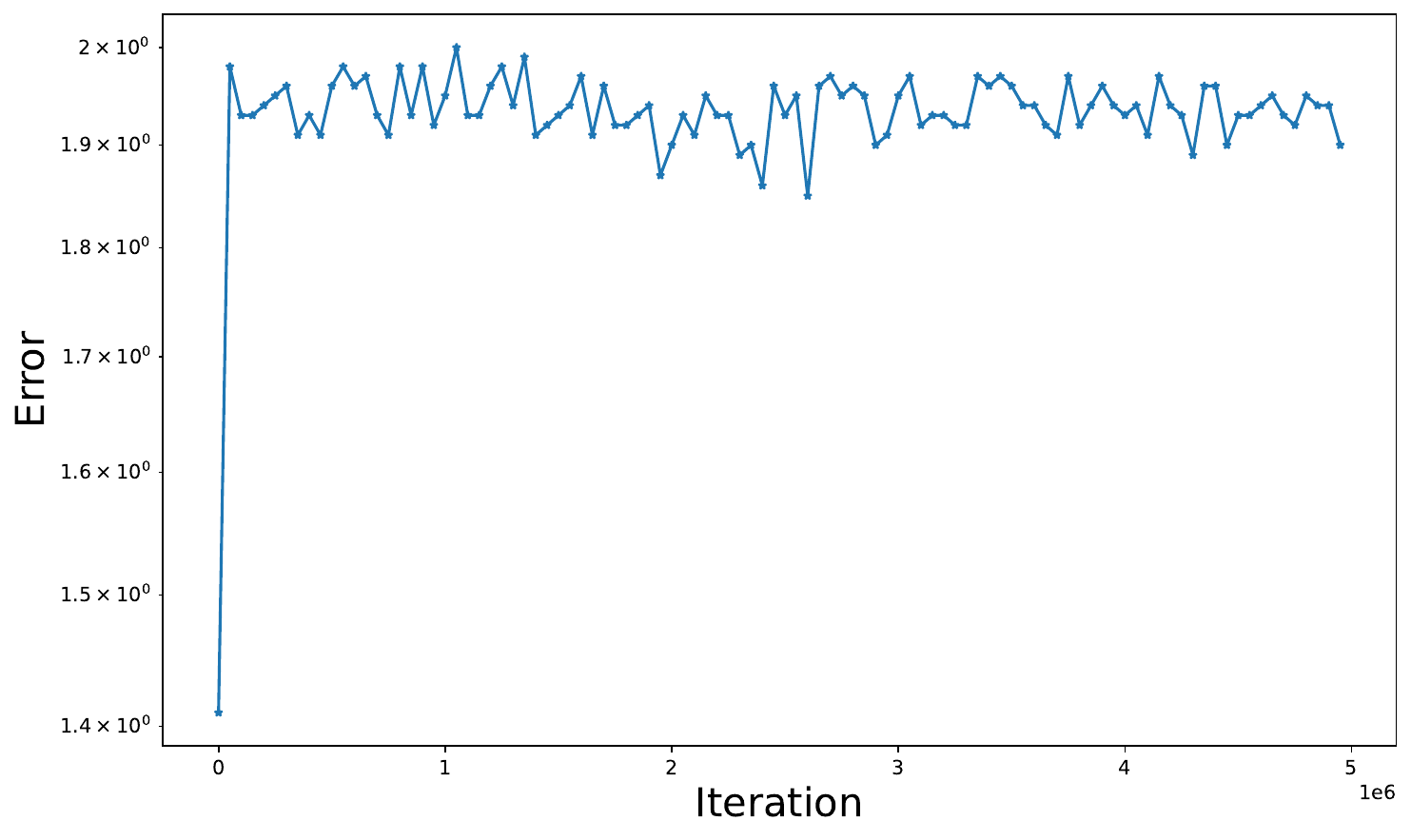}
    \caption{$d=100$ error}\label{pinn_100_error_u}
  \end{subfigure}
  \caption{Training dynamics of PINN performance for high-dimensional Poisson's equations. The first row reports loss curves and the second row reports relative errors of $u$.}
  \label{fig:pinn_loss_error_u}
\end{figure}

\begin{figure}[ht]
  \centering
  \begin{subfigure}[b]{0.16\linewidth}\centering
    \includegraphics[width=\linewidth]{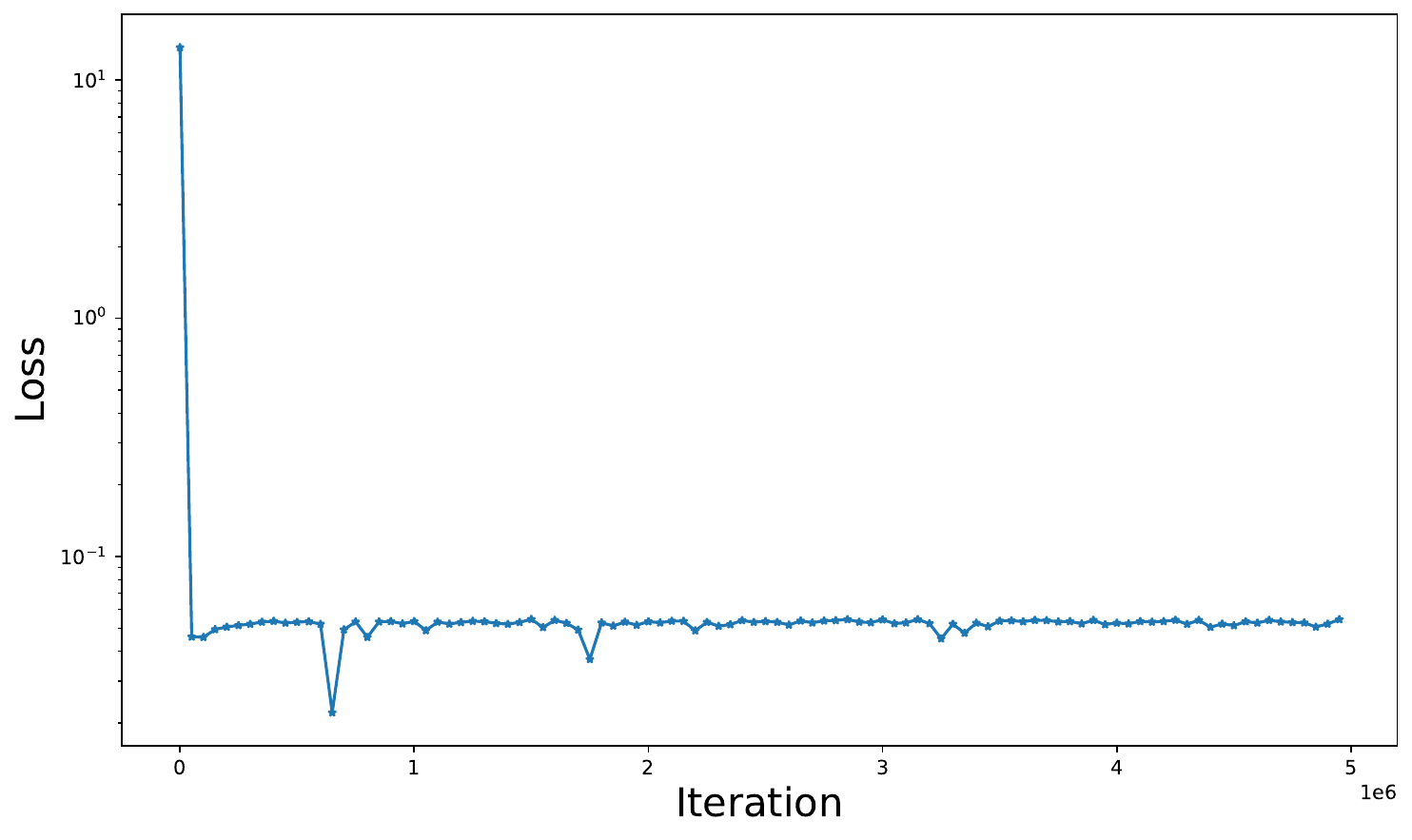}
    \caption{$d=2$ loss}\label{drm_2_loss}
  \end{subfigure}\hfill
  \begin{subfigure}[b]{0.16\linewidth}\centering
    \includegraphics[width=\linewidth]{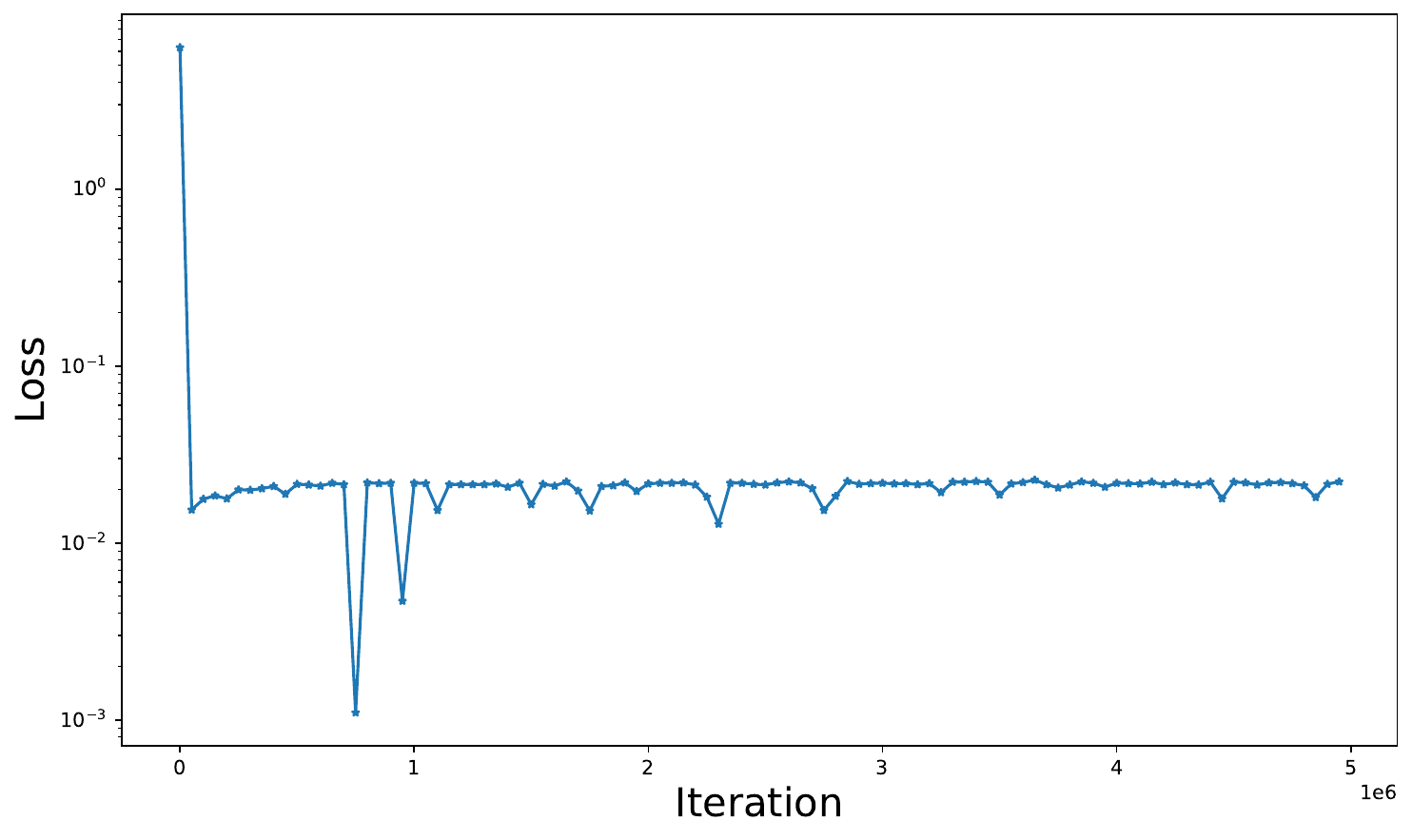}
    \caption{$d=5$ loss}\label{drm_5_loss}
  \end{subfigure}\hfill
  \begin{subfigure}[b]{0.16\linewidth}\centering
    \includegraphics[width=\linewidth]{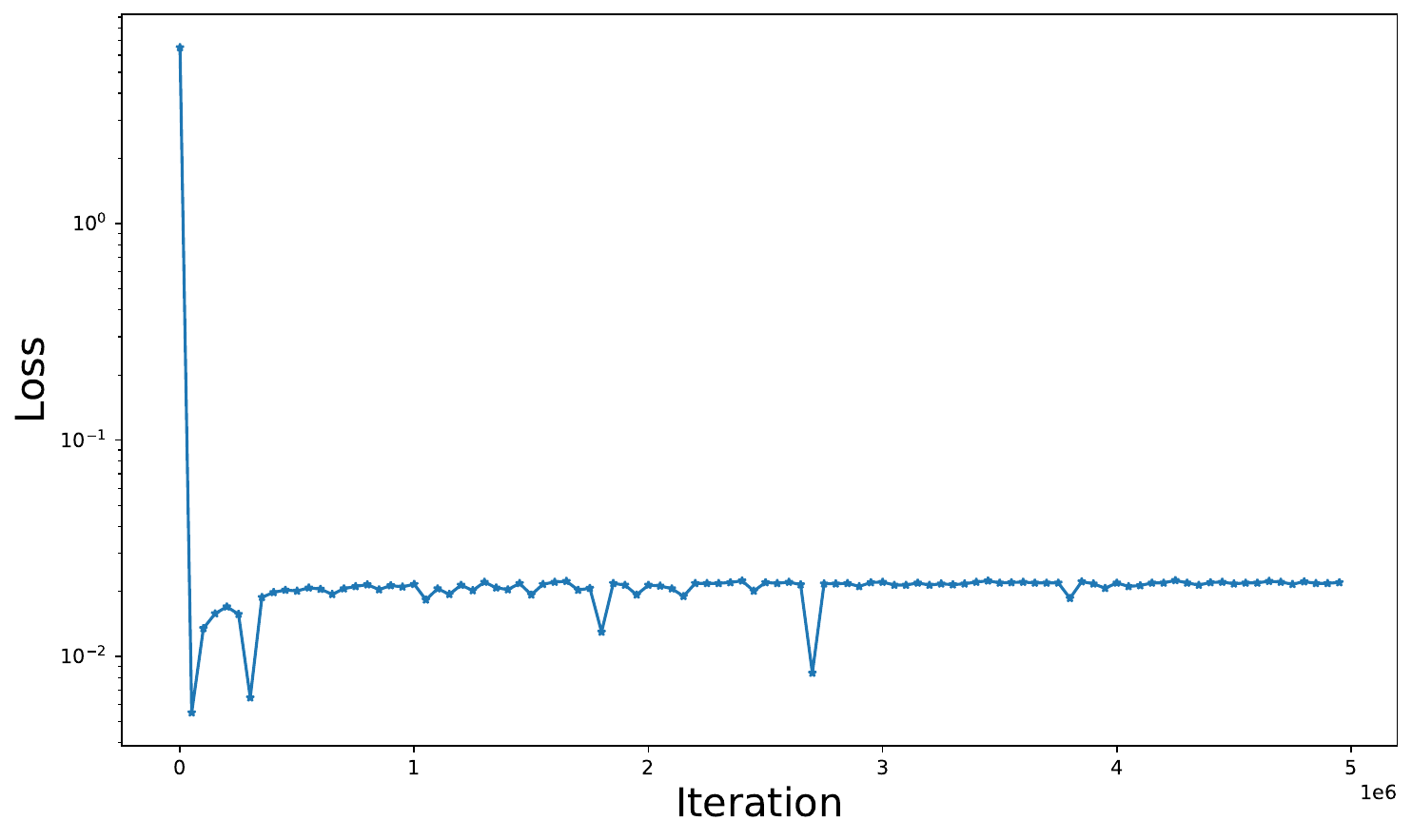}
    \caption{$d=10$ loss}\label{drm_10_loss}
  \end{subfigure}\hfill
  \begin{subfigure}[b]{0.16\linewidth}\centering
    \includegraphics[width=\linewidth]{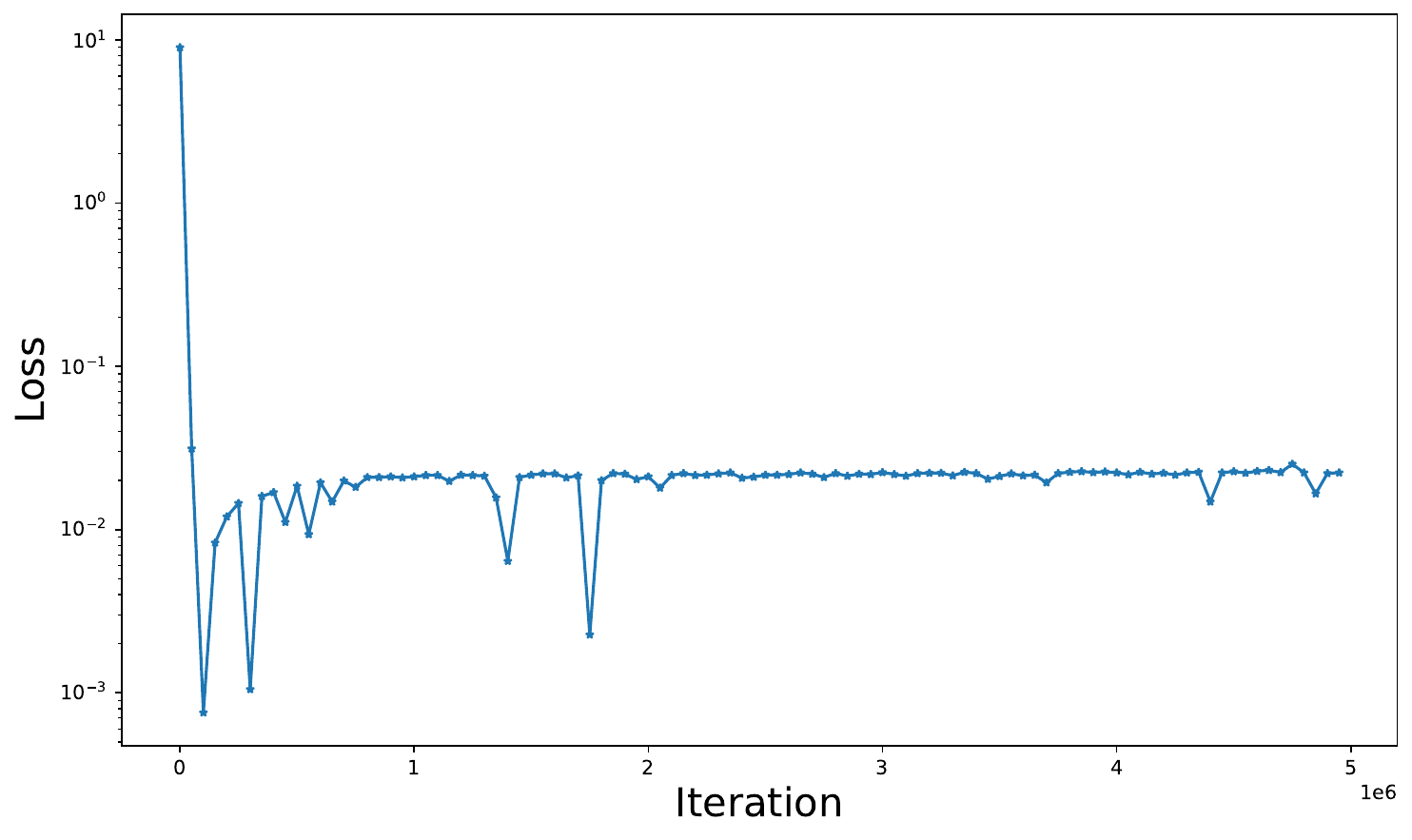}
    \caption{$d=30$ loss}\label{drm_30_loss}
  \end{subfigure}\hfill
  \begin{subfigure}[b]{0.16\linewidth}\centering
    \includegraphics[width=\linewidth]{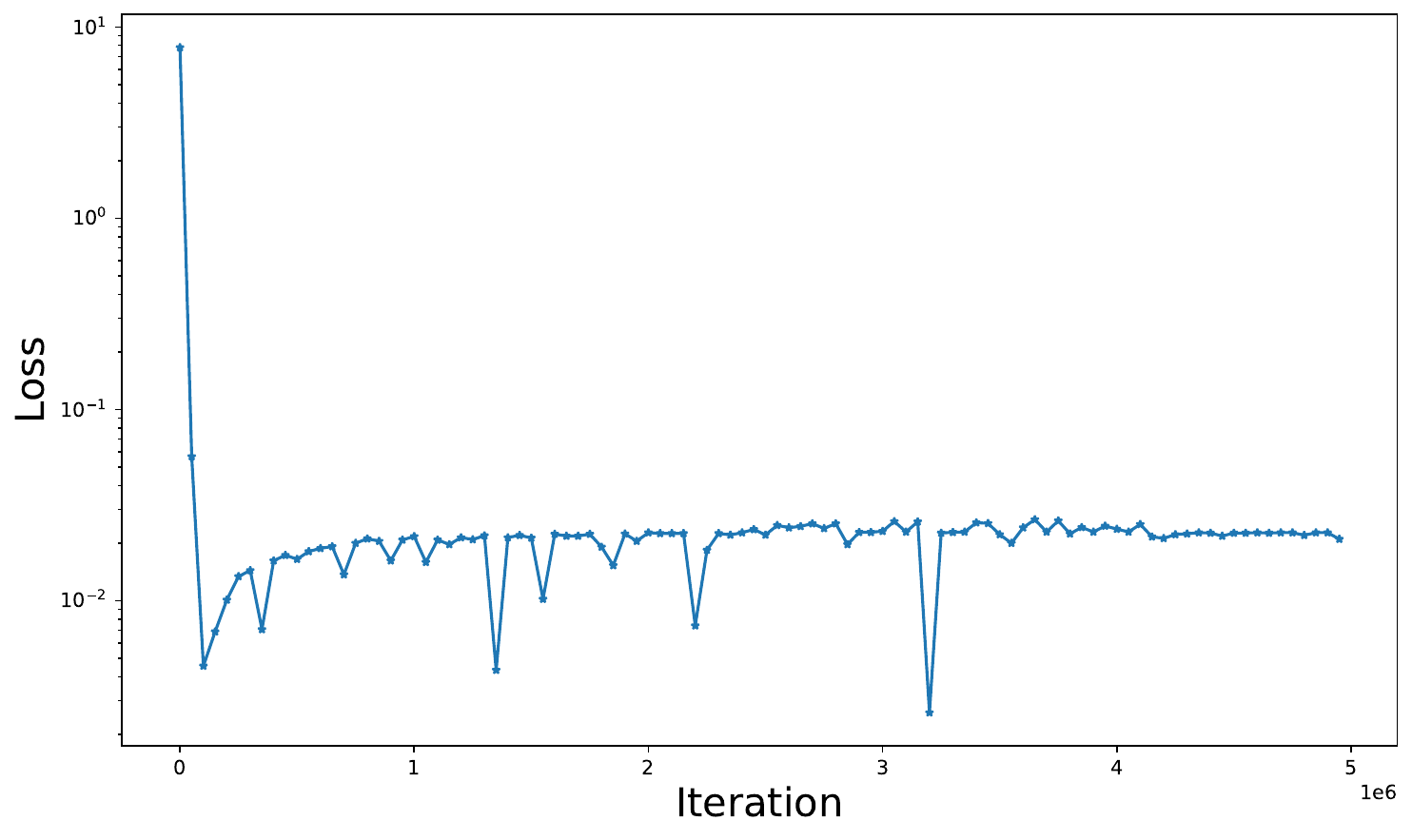}
    \caption{$d=50$ loss}\label{drm_50_loss}
  \end{subfigure}\hfill
  \begin{subfigure}[b]{0.16\linewidth}\centering
    \includegraphics[width=\linewidth]{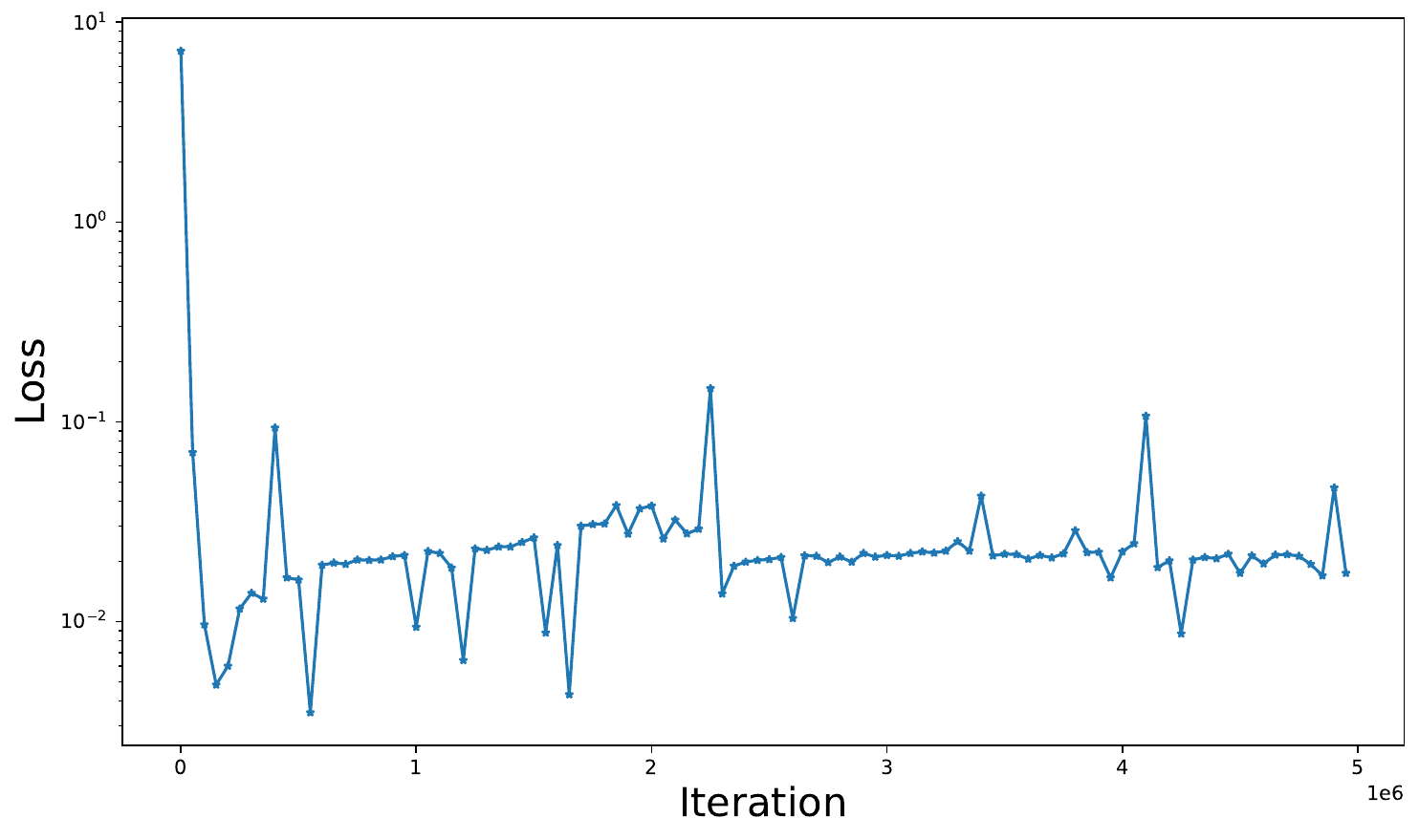}
    \caption{$d=100$ loss}\label{drm_100_loss}
  \end{subfigure}

  \begin{subfigure}[b]{0.16\linewidth}\centering
    \includegraphics[width=\linewidth]{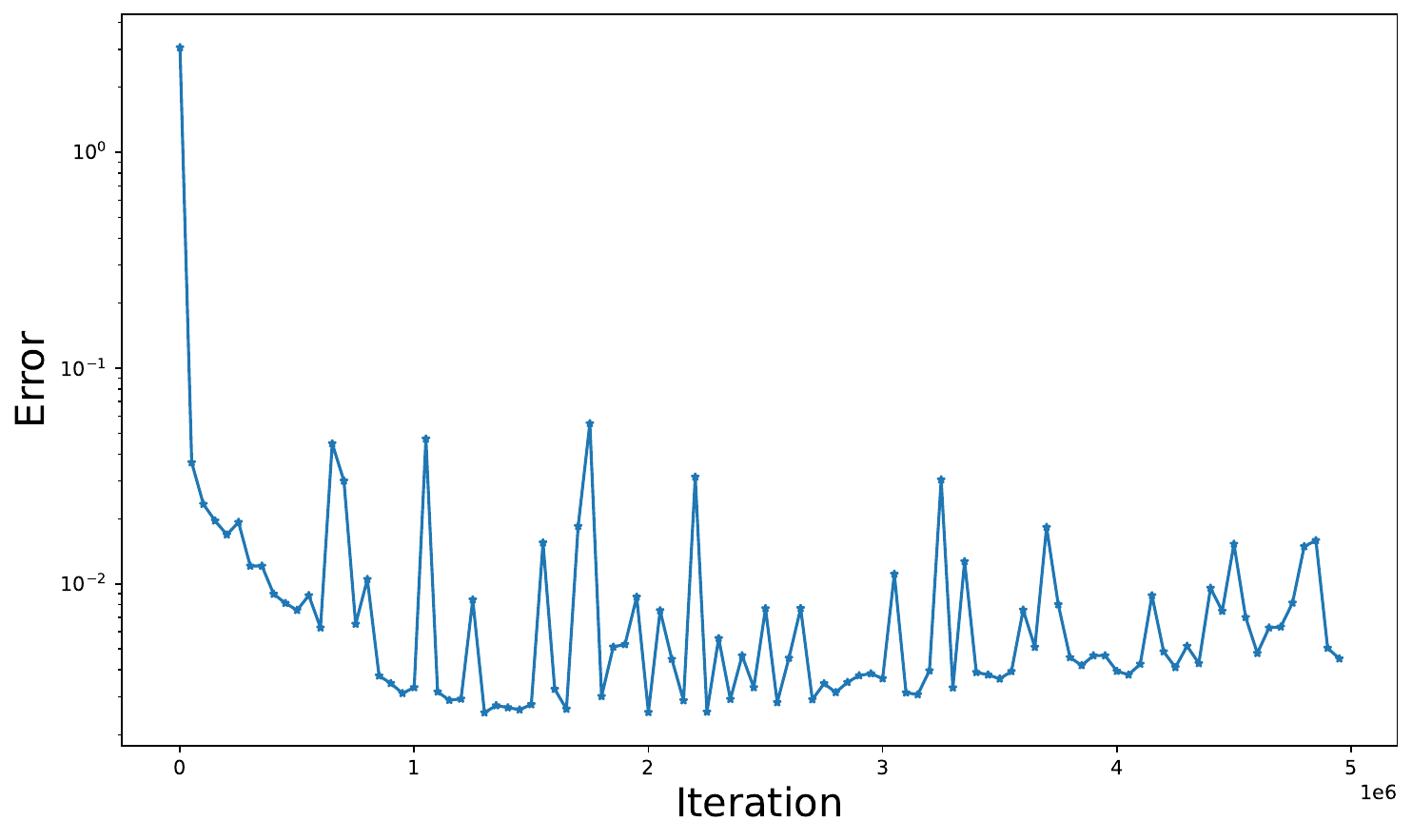}
    \caption{$d=2$ error}\label{drm_2_error_u}
  \end{subfigure}\hfill
  \begin{subfigure}[b]{0.16\linewidth}\centering
    \includegraphics[width=\linewidth]{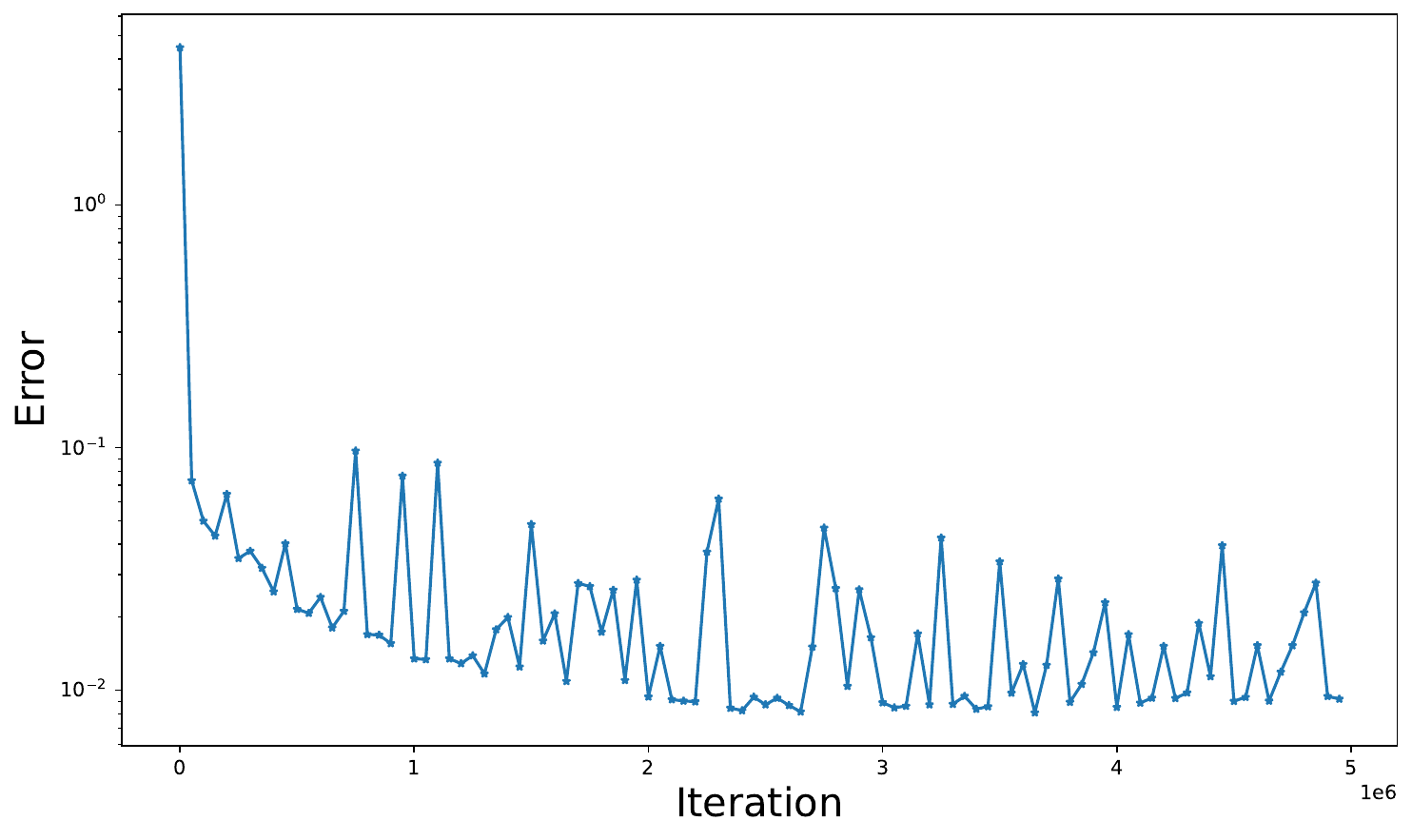}
    \caption{$d=5$ error}\label{drm_5_error_u}
  \end{subfigure}\hfill
  \begin{subfigure}[b]{0.16\linewidth}\centering
    \includegraphics[width=\linewidth]{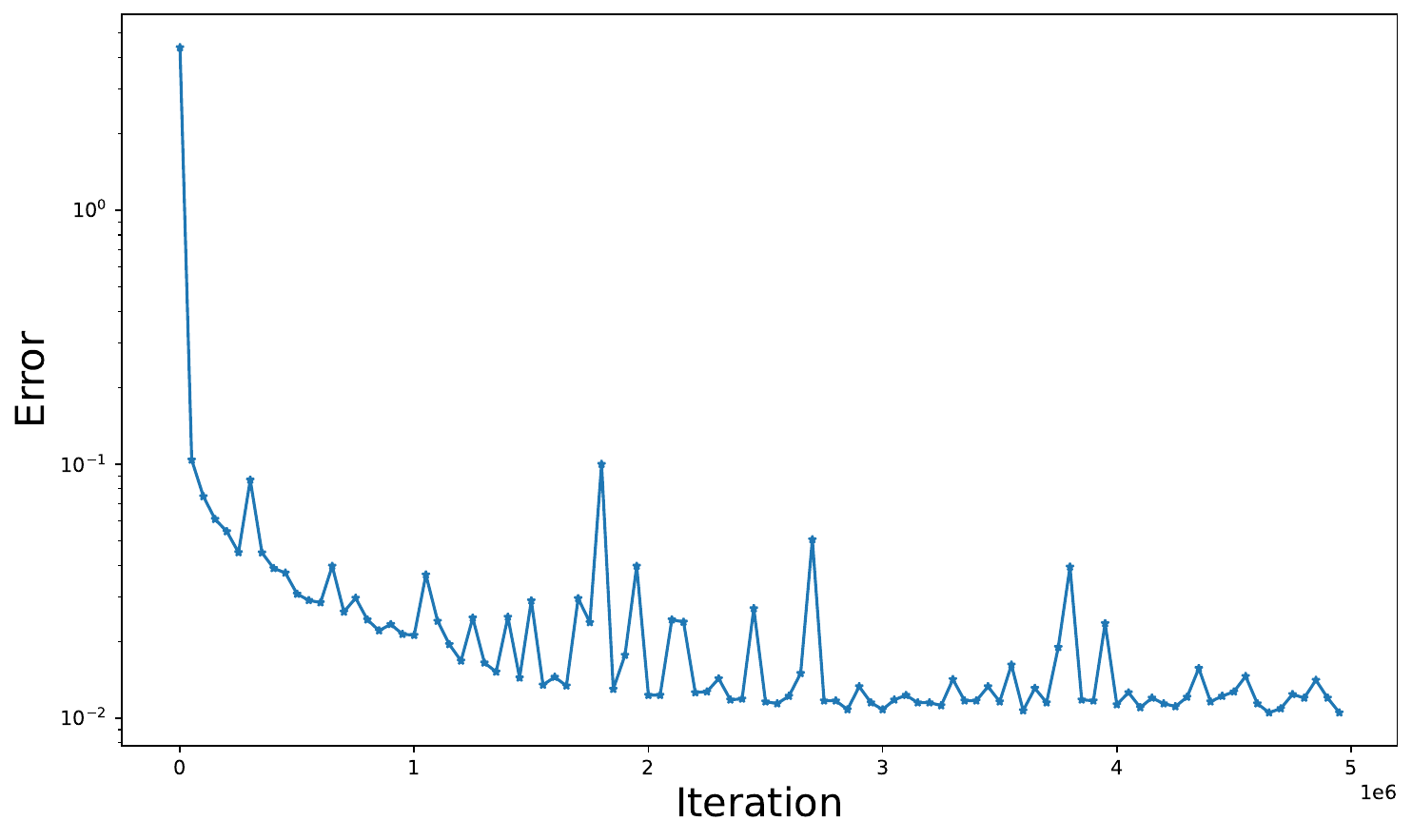}
    \caption{$d=10$ error}\label{drm_10_error_u}
  \end{subfigure}\hfill
  \begin{subfigure}[b]{0.16\linewidth}\centering
    \includegraphics[width=\linewidth]{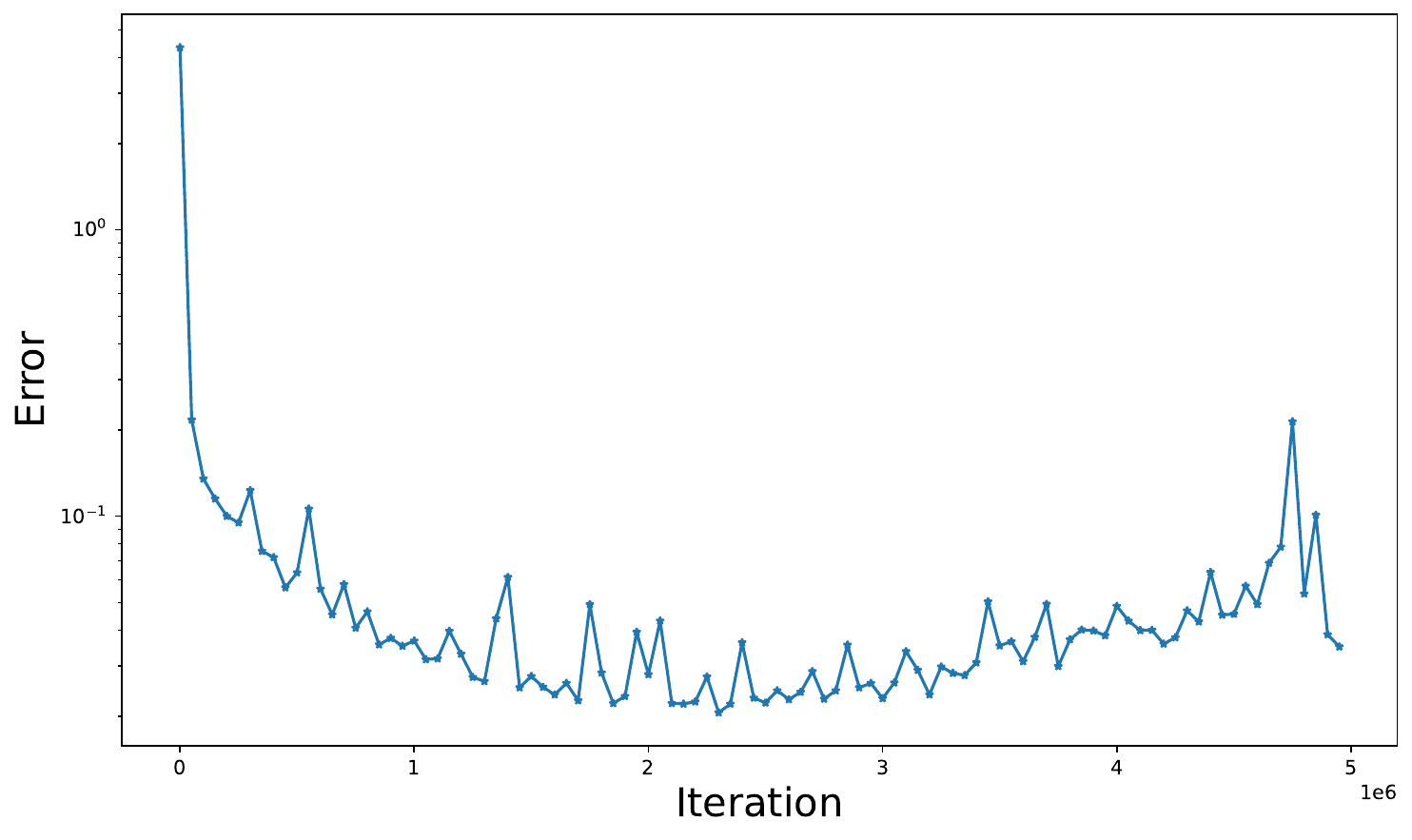}
    \caption{$d=30$ error}\label{drm_30_error_u}
  \end{subfigure}\hfill
  \begin{subfigure}[b]{0.16\linewidth}\centering
    \includegraphics[width=\linewidth]{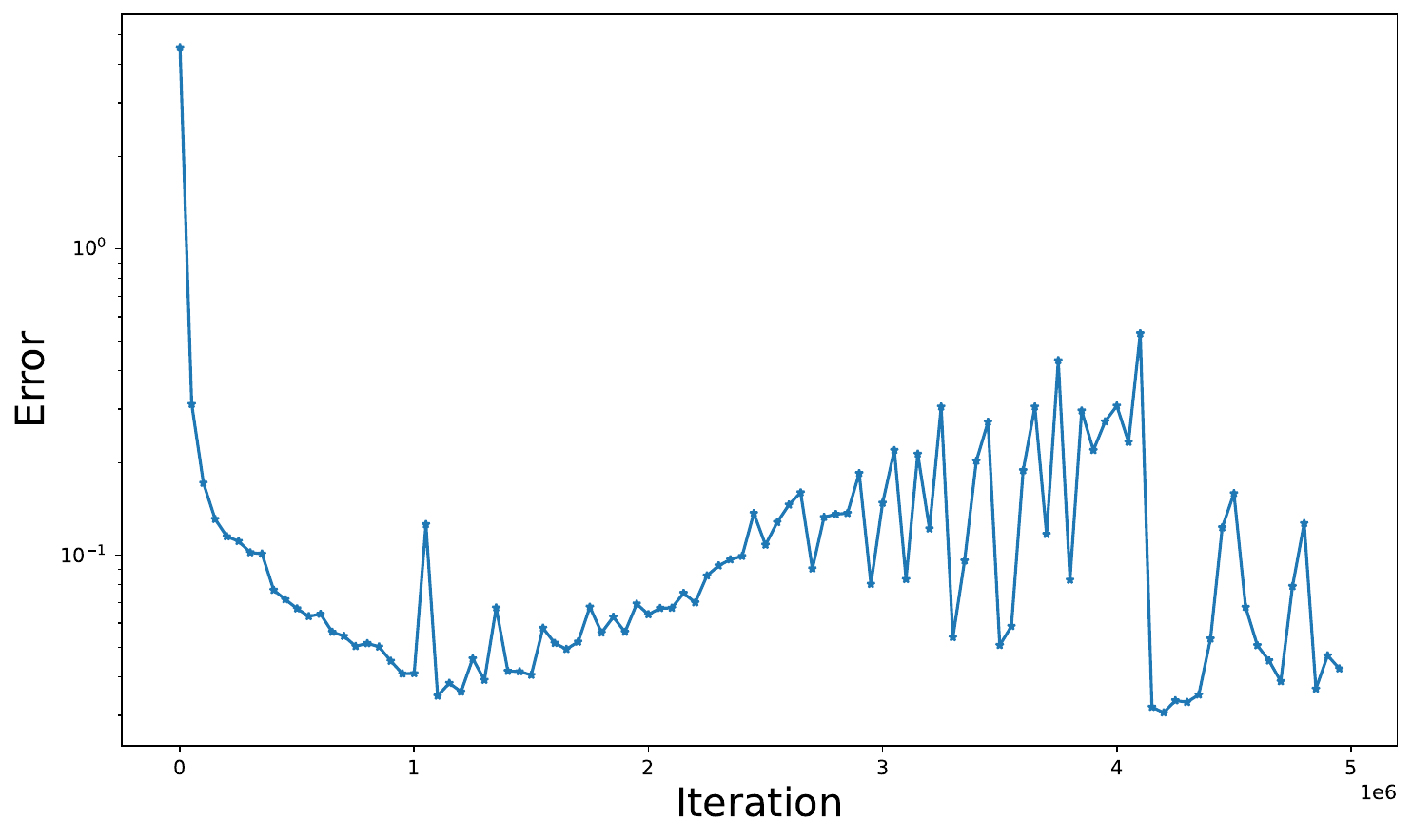}
    \caption{$d=50$ error}\label{drm_50_error_u}
  \end{subfigure}\hfill
  \begin{subfigure}[b]{0.16\linewidth}\centering
    \includegraphics[width=\linewidth]{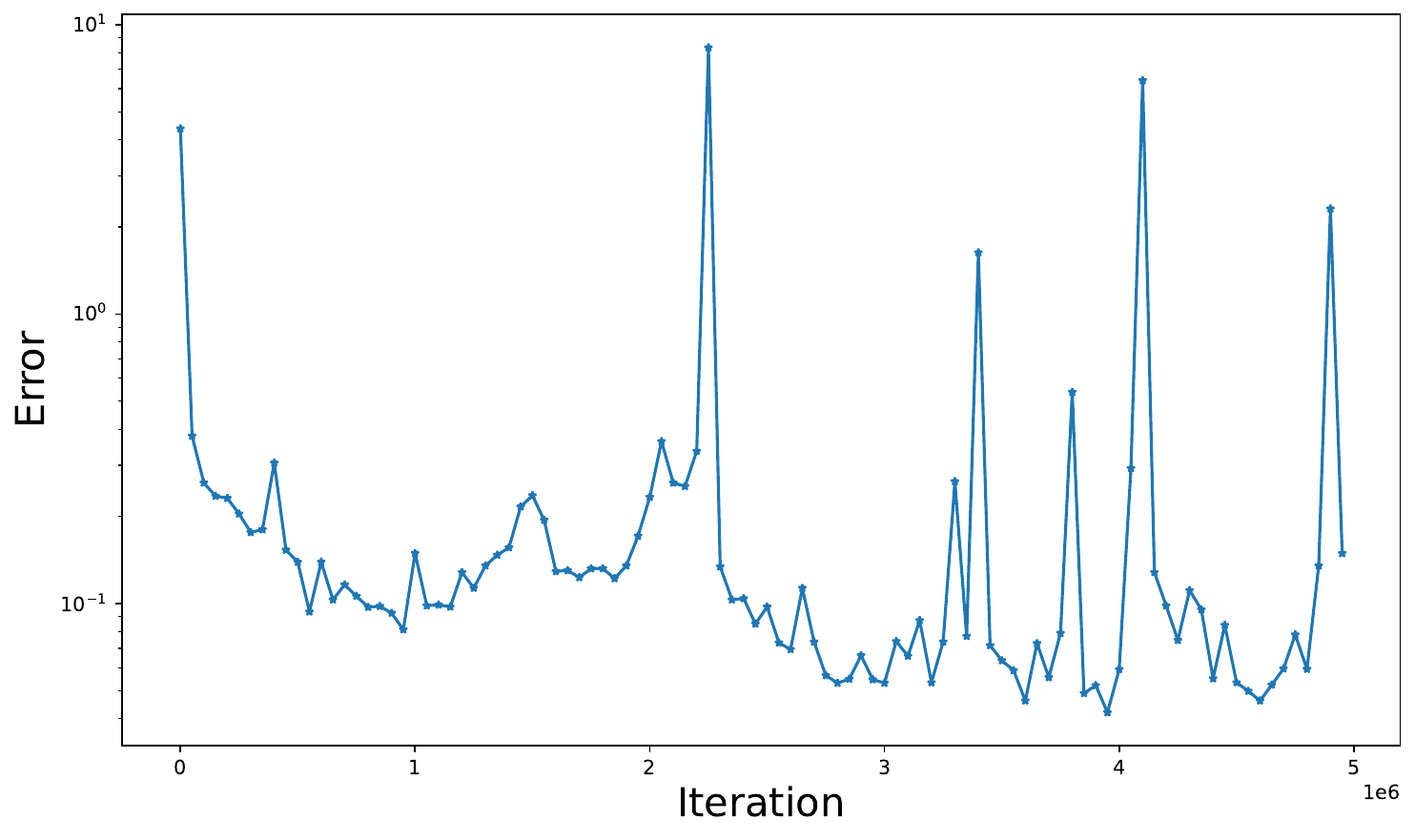}
    \caption{$d=100$ error}\label{drm_100_error_u}
  \end{subfigure}
  \caption{Training dynamics of DRM performance for high-dimensional Poisson's equations. The first row reports loss curves and the second row reports relative errors of $u$.}
  \label{fig:drm_loss_error_u}
\end{figure}

\begin{figure}[ht]
  \centering
  \begin{subfigure}[b]{0.16\linewidth}\centering
    \includegraphics[width=\linewidth]{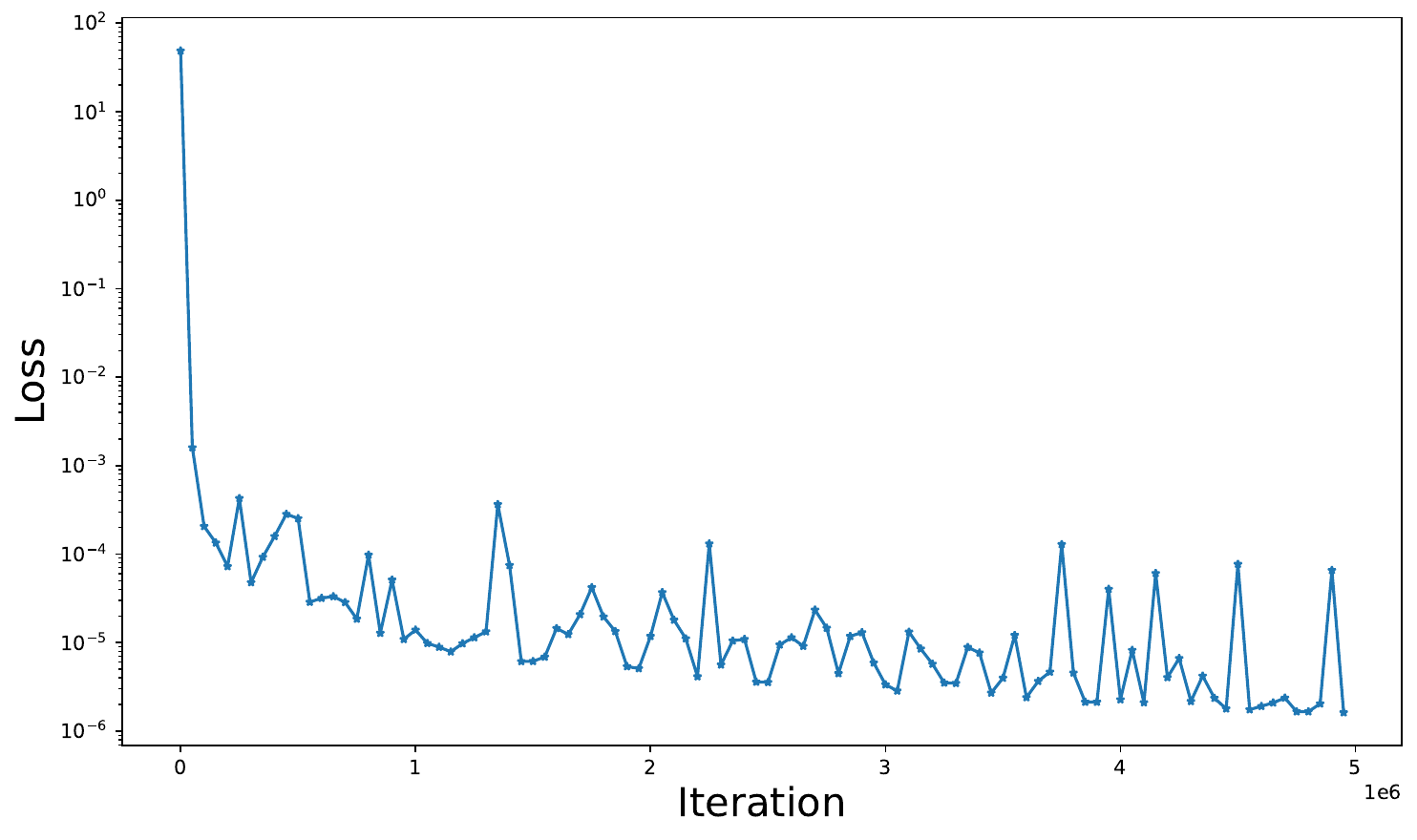}
    \caption{$d=2$ loss}\label{sinn_2_loss}
  \end{subfigure}\hfill
  \begin{subfigure}[b]{0.16\linewidth}\centering
    \includegraphics[width=\linewidth]{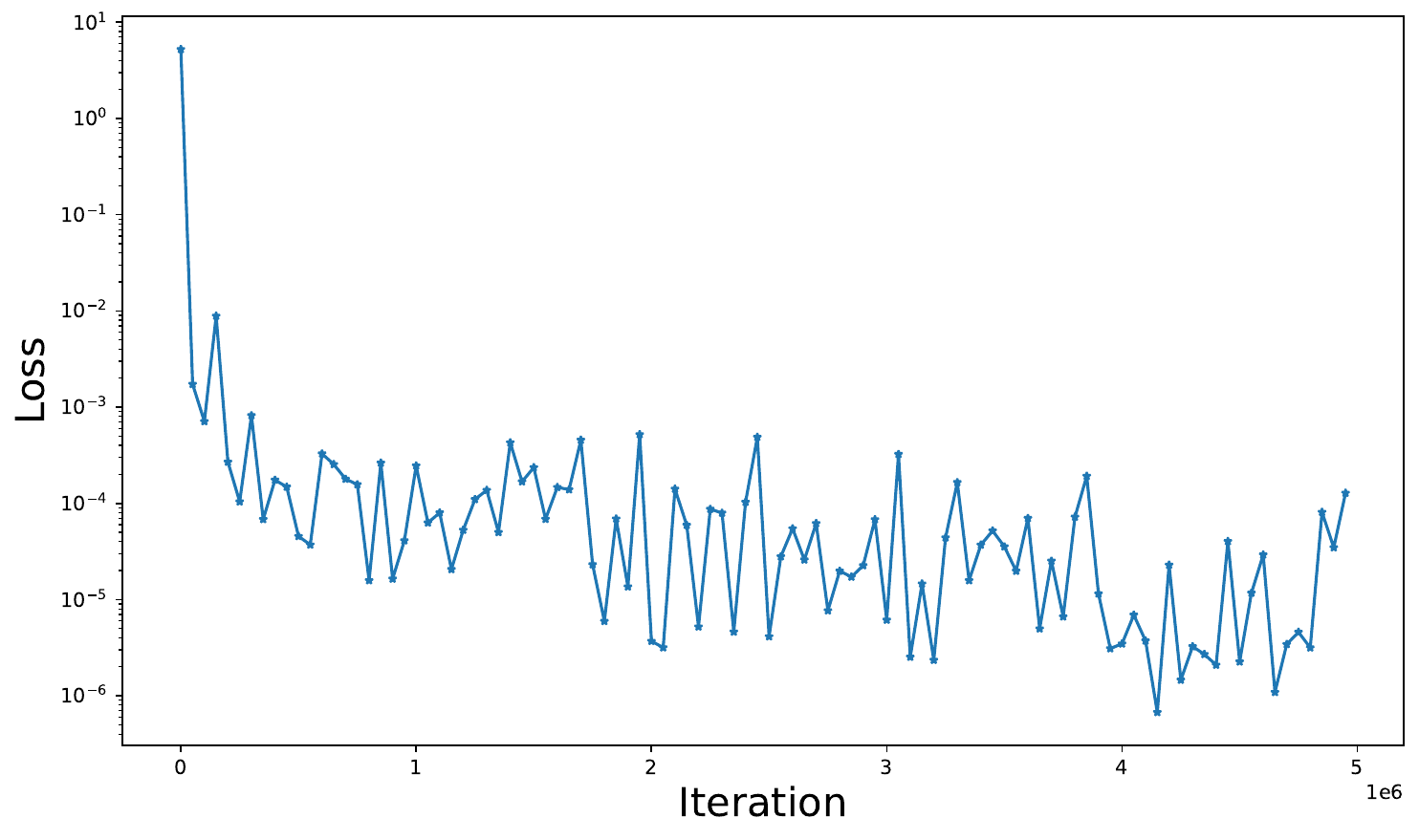}
    \caption{$d=5$ loss}\label{sinn_5_loss}
  \end{subfigure}\hfill
  \begin{subfigure}[b]{0.16\linewidth}\centering
    \includegraphics[width=\linewidth]{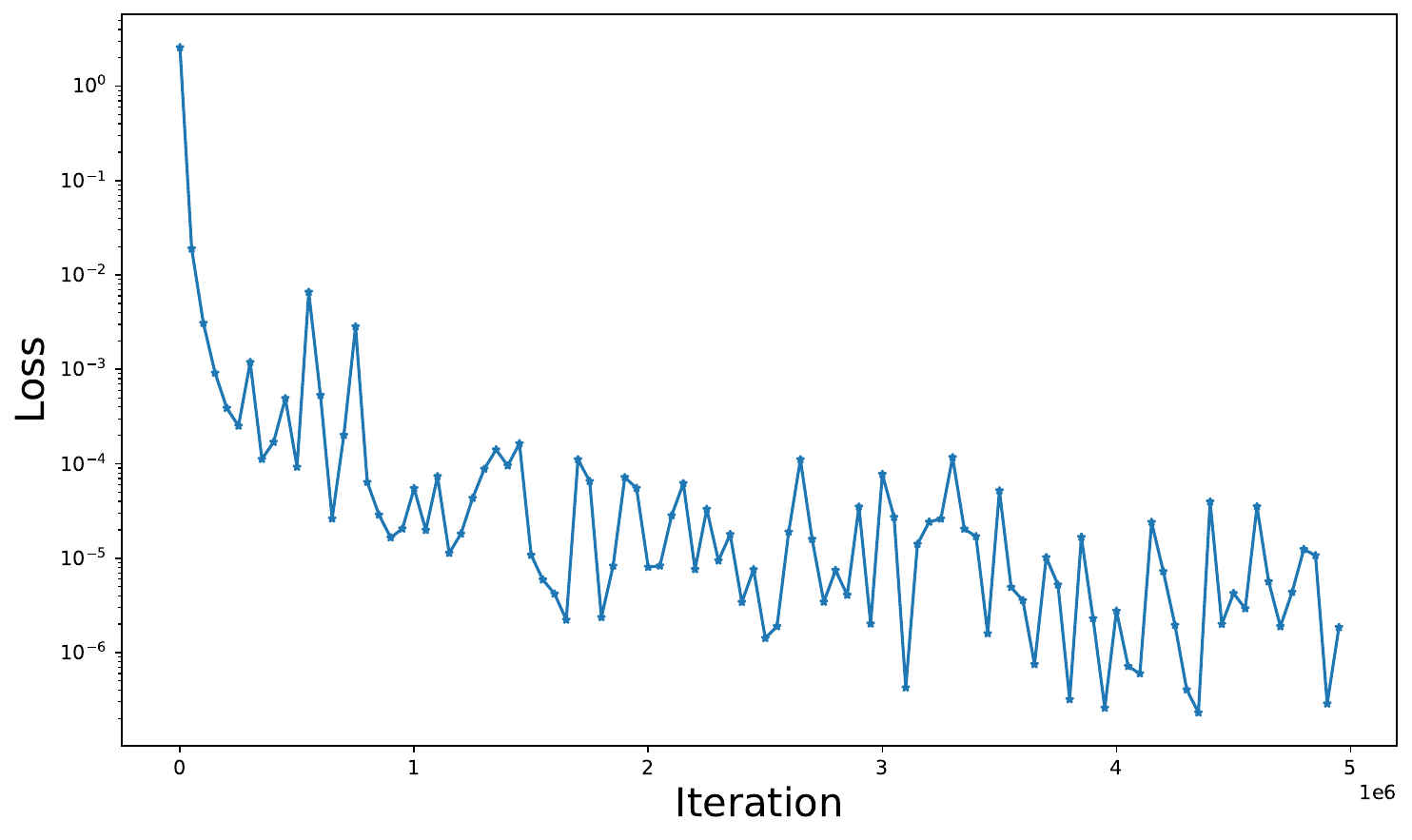}
    \caption{$d=10$ loss}\label{sinn_10_loss}
  \end{subfigure}\hfill
  \begin{subfigure}[b]{0.16\linewidth}\centering
    \includegraphics[width=\linewidth]{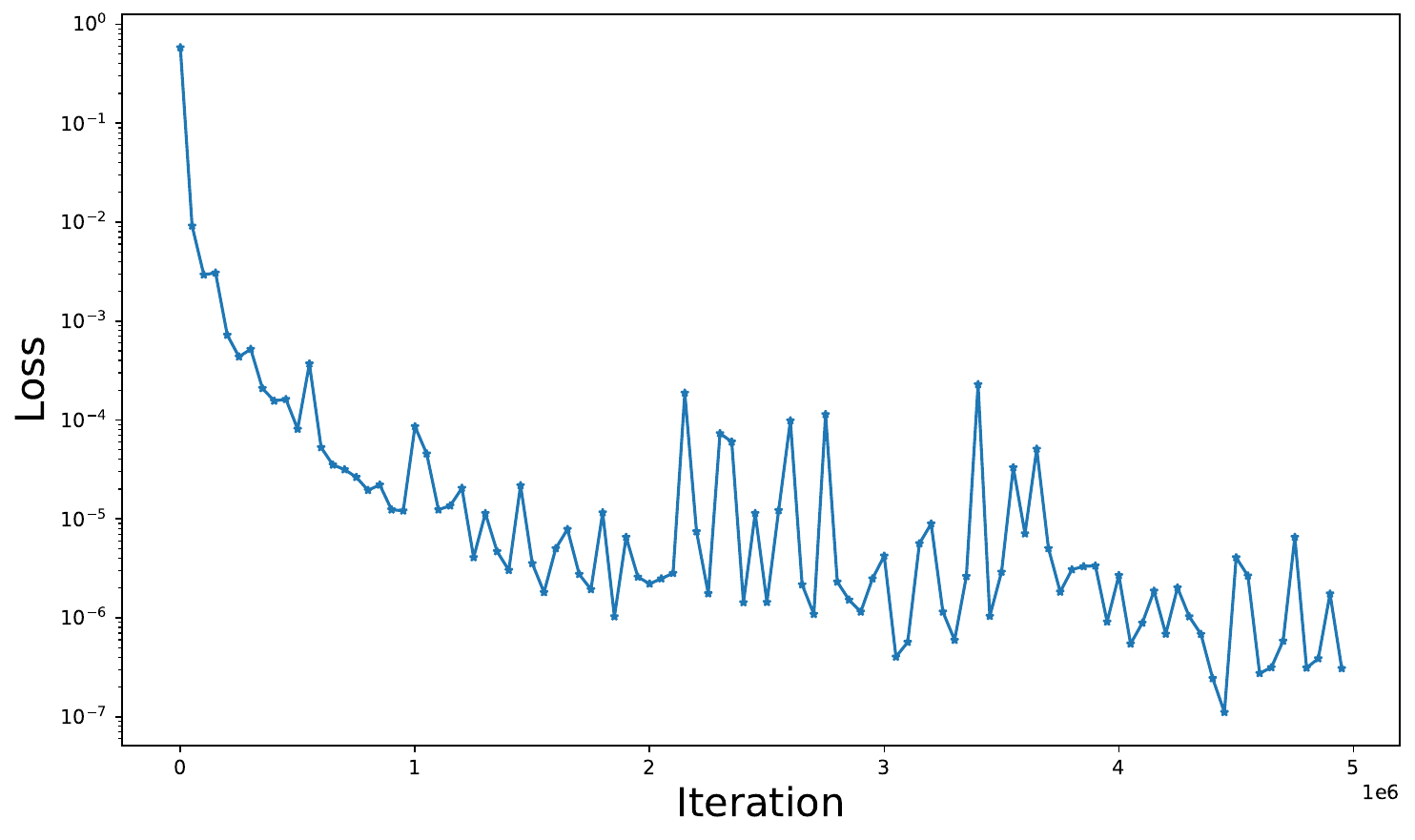}
    \caption{$d=30$ loss}\label{sinn_30_loss}
  \end{subfigure}\hfill
  \begin{subfigure}[b]{0.16\linewidth}\centering
    \includegraphics[width=\linewidth]{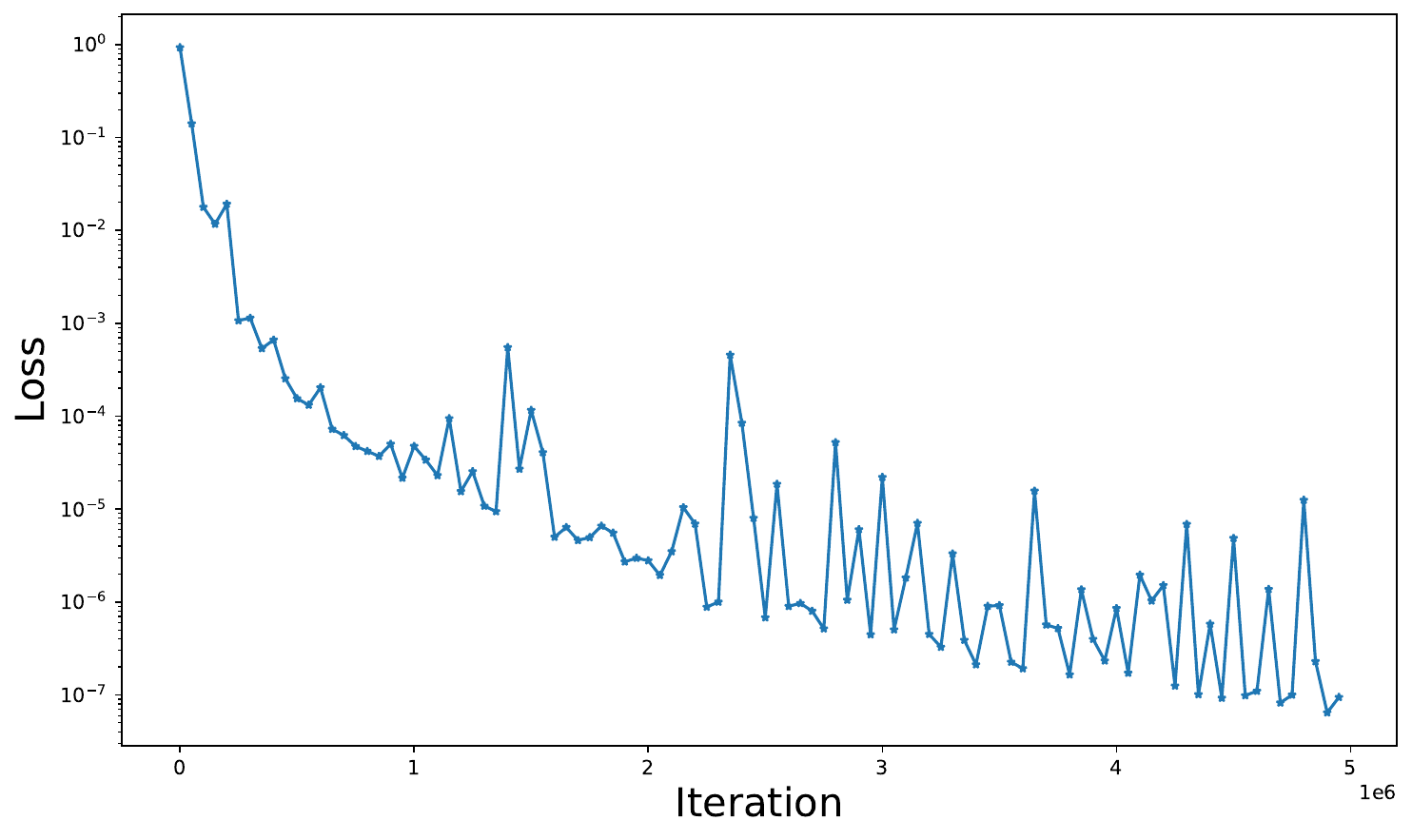}
    \caption{$d=50$ loss}\label{sinn_50_loss}
  \end{subfigure}\hfill
  \begin{subfigure}[b]{0.16\linewidth}\centering
    \includegraphics[width=\linewidth]{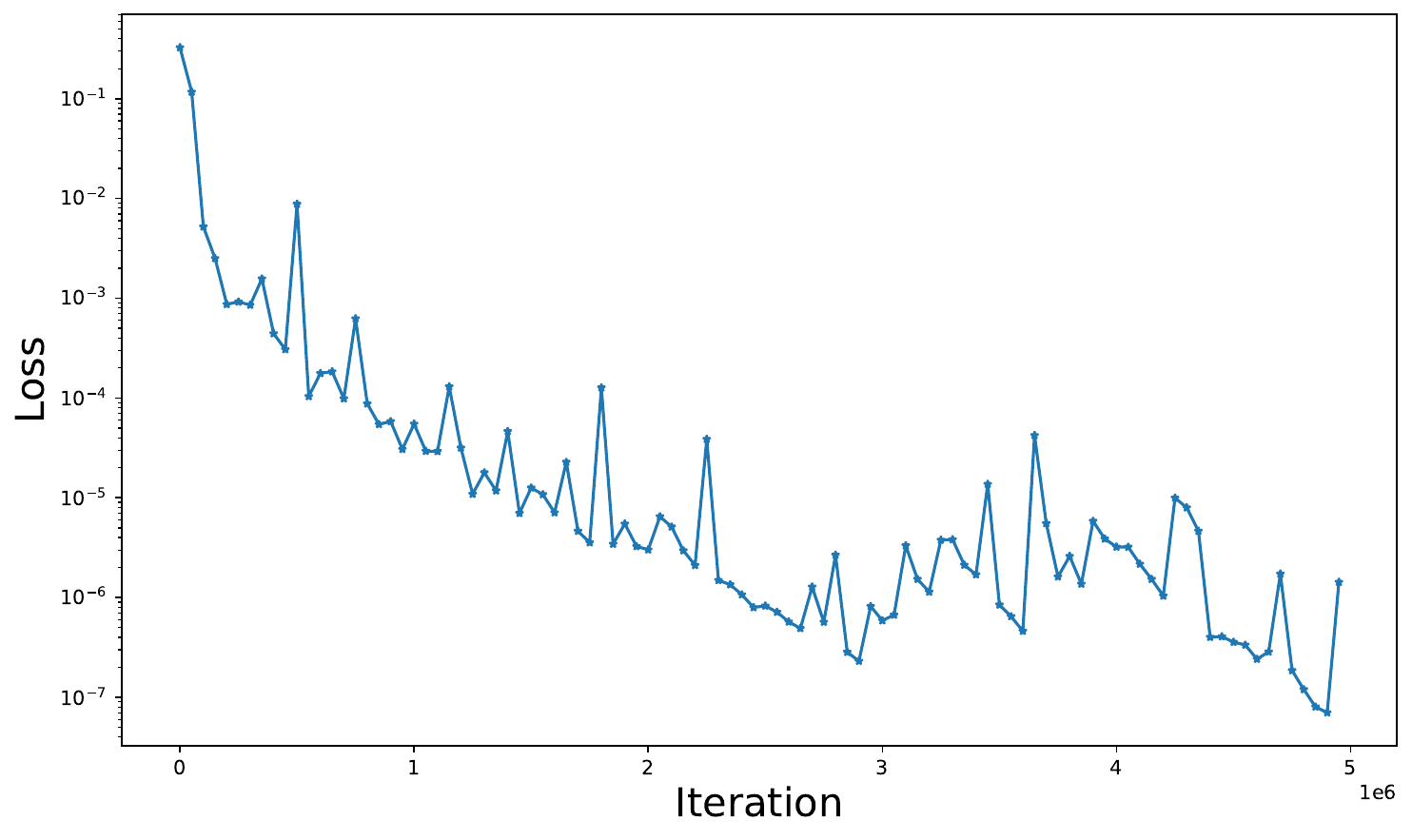}
    \caption{$d=100$ loss}\label{sinn_100_loss}
  \end{subfigure}

  \begin{subfigure}[b]{0.16\linewidth}\centering
    \includegraphics[width=\linewidth]{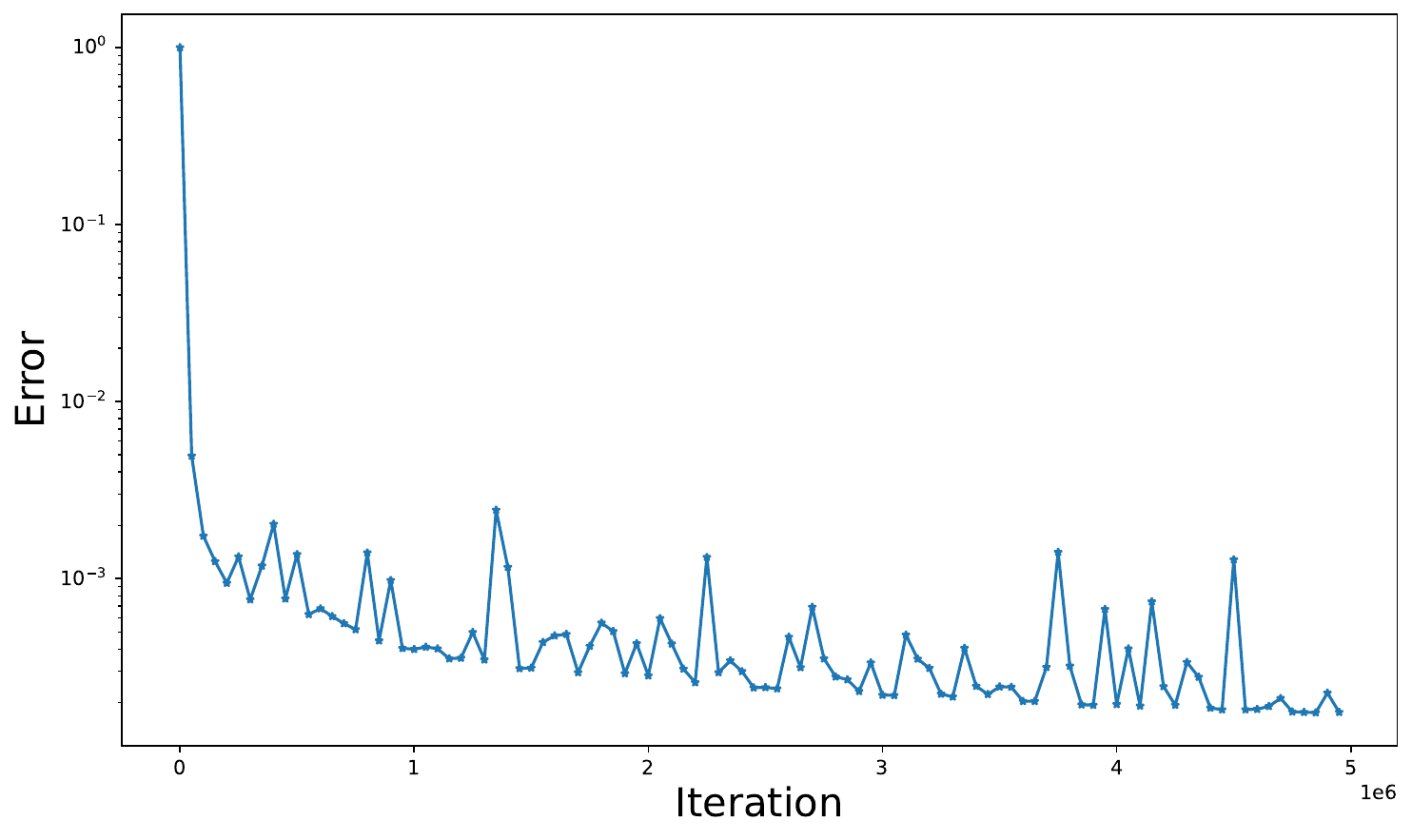}
    \caption{$d=2$ error}\label{sinn_2_error_u}
  \end{subfigure}\hfill
  \begin{subfigure}[b]{0.16\linewidth}\centering
    \includegraphics[width=\linewidth]{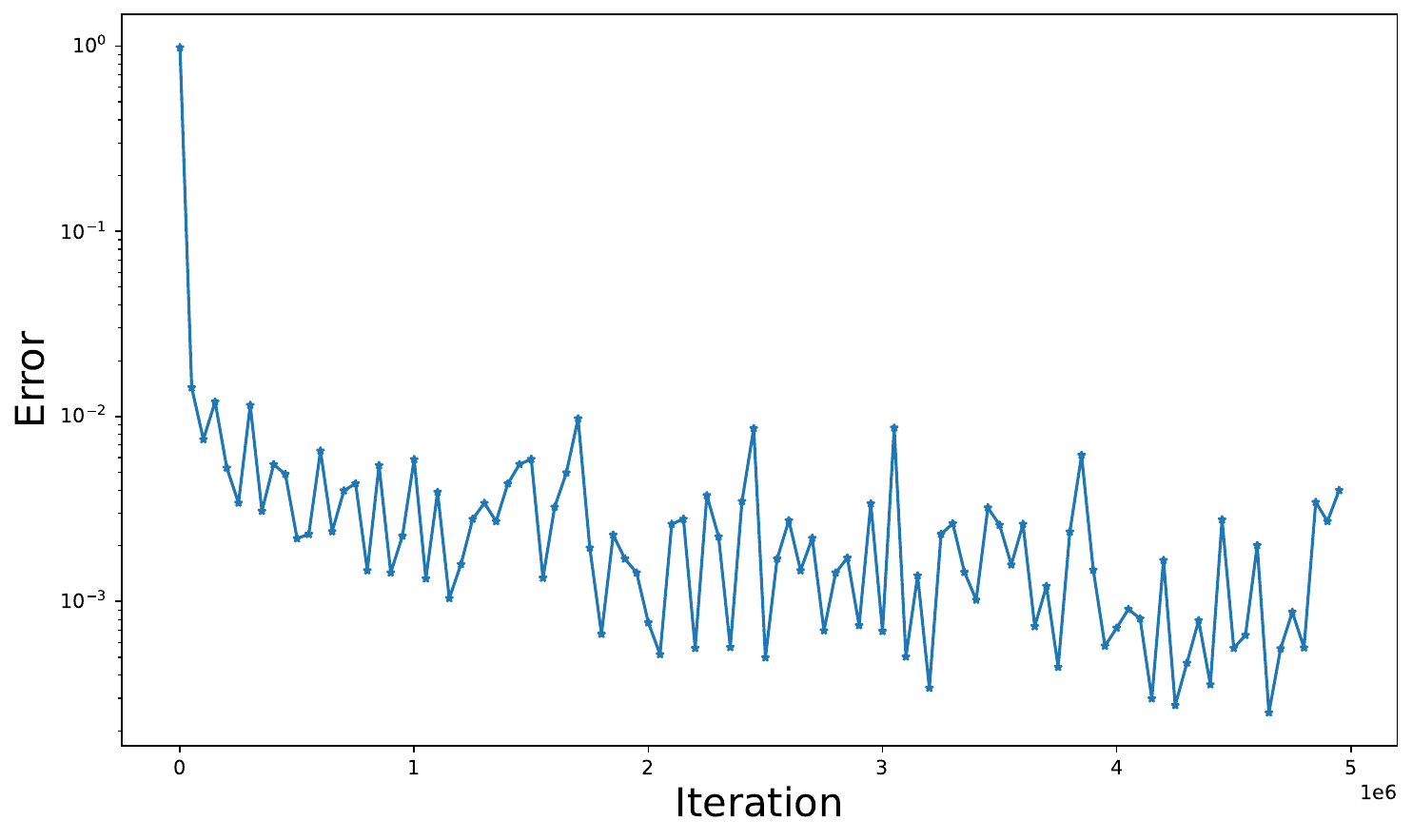}
    \caption{$d=5$ error}\label{sinn_5_error_u}
  \end{subfigure}\hfill
  \begin{subfigure}[b]{0.16\linewidth}\centering
    \includegraphics[width=\linewidth]{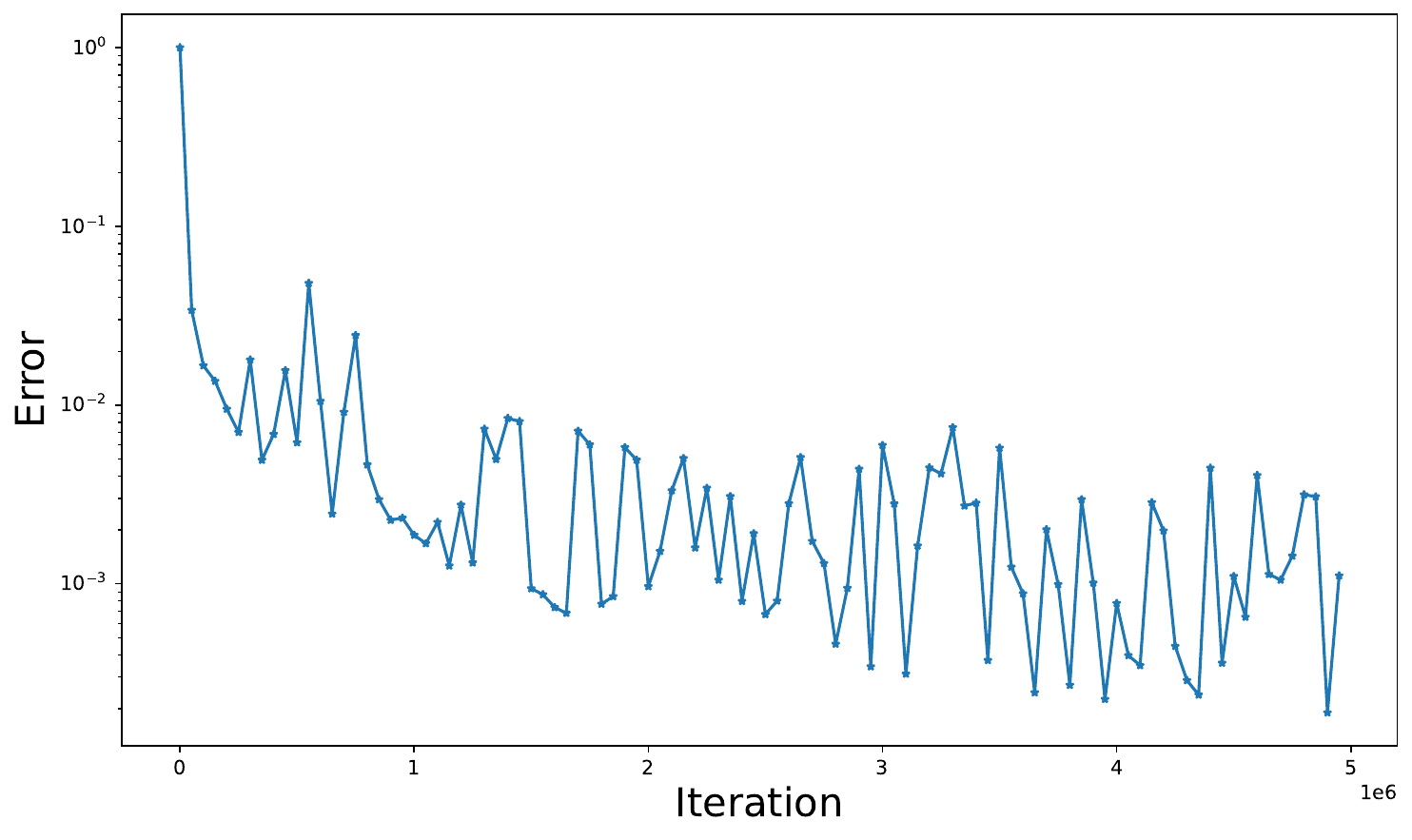}
    \caption{$d=10$ error}\label{sinn_10_error_u}
  \end{subfigure}\hfill
  \begin{subfigure}[b]{0.16\linewidth}\centering
    \includegraphics[width=\linewidth]{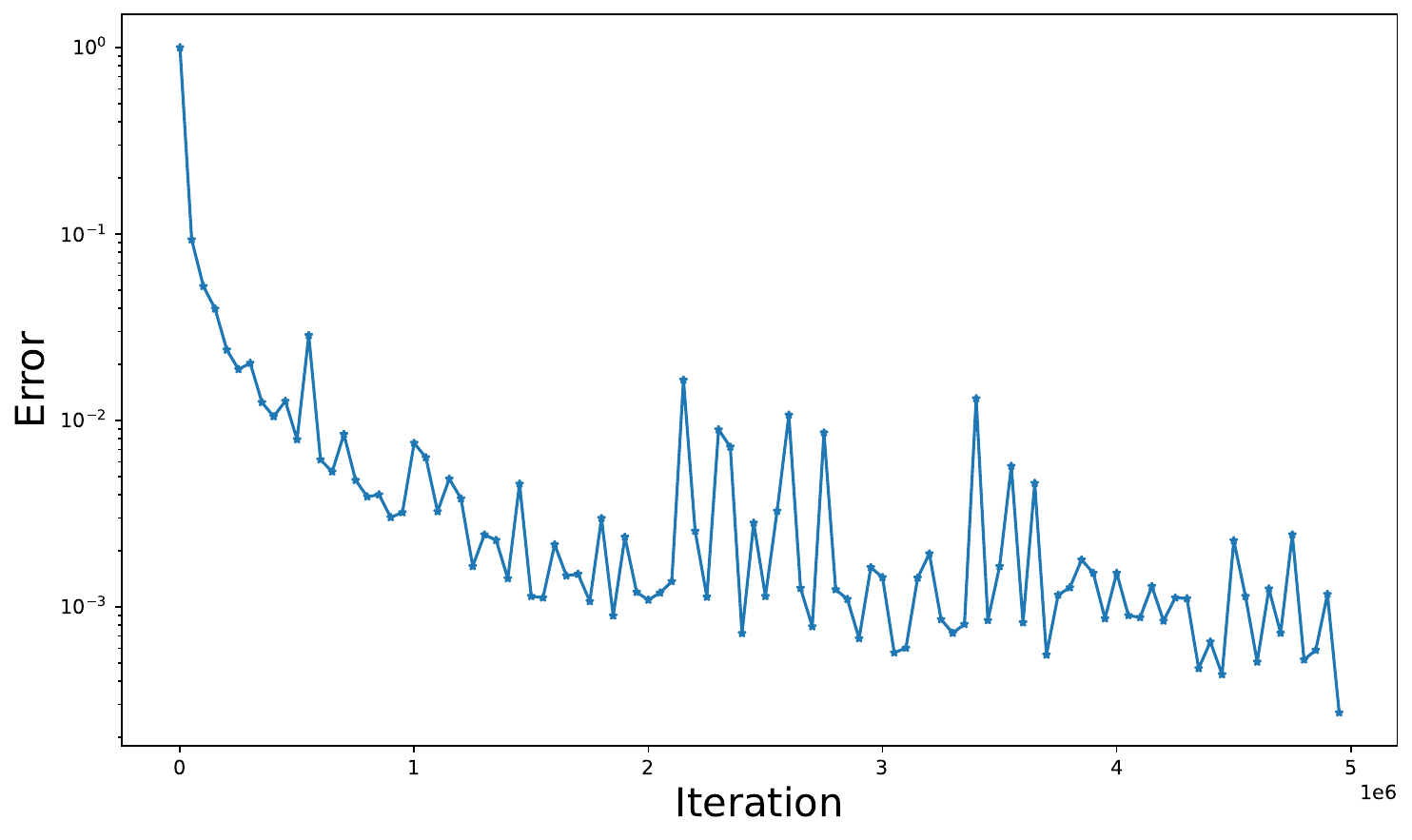}
    \caption{$d=30$ error}\label{sinn_30_error_u}
  \end{subfigure}\hfill
  \begin{subfigure}[b]{0.16\linewidth}\centering
    \includegraphics[width=\linewidth]{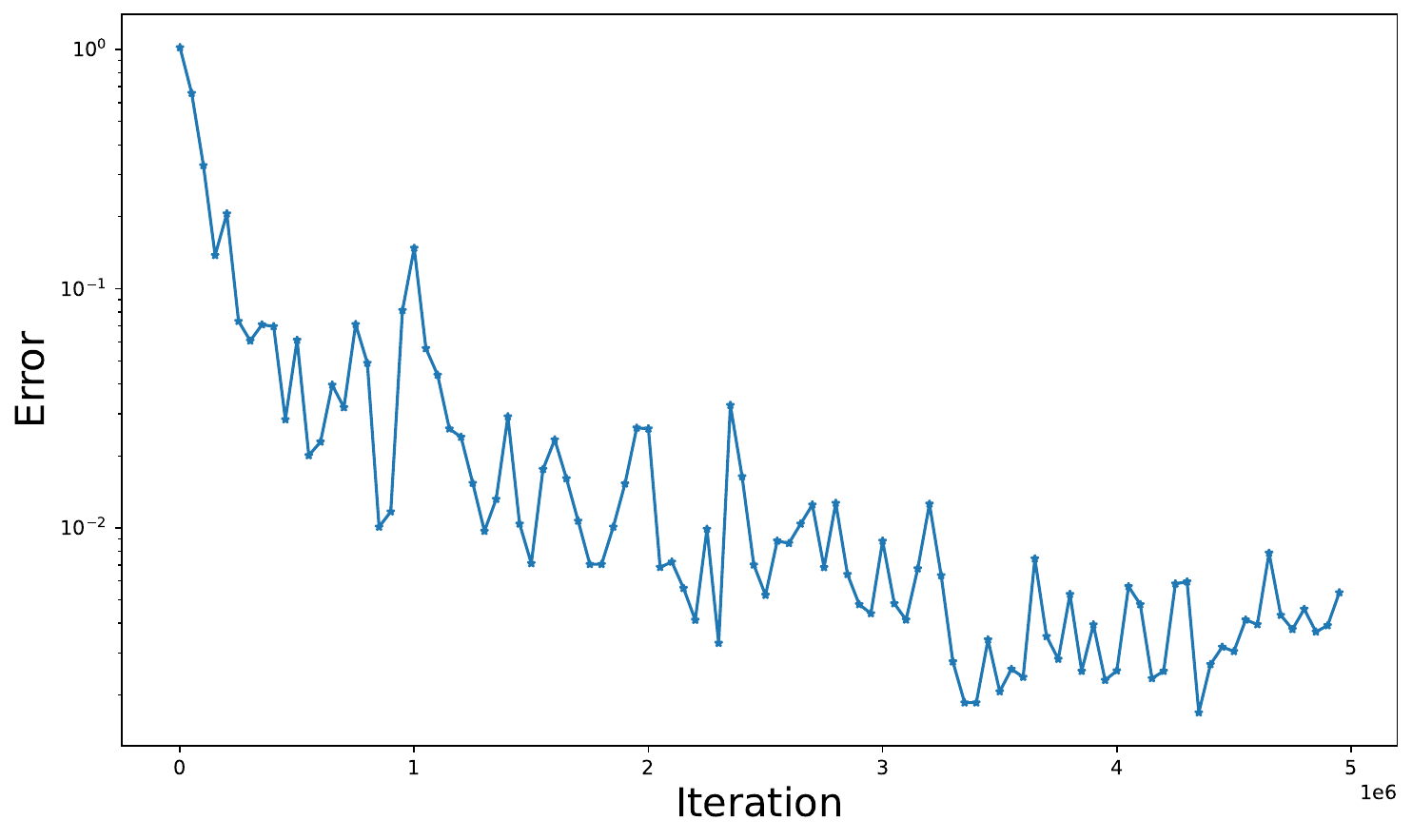}
    \caption{$d=50$ error}\label{sinn_50_error_u}
  \end{subfigure}\hfill
  \begin{subfigure}[b]{0.16\linewidth}\centering
    \includegraphics[width=\linewidth]{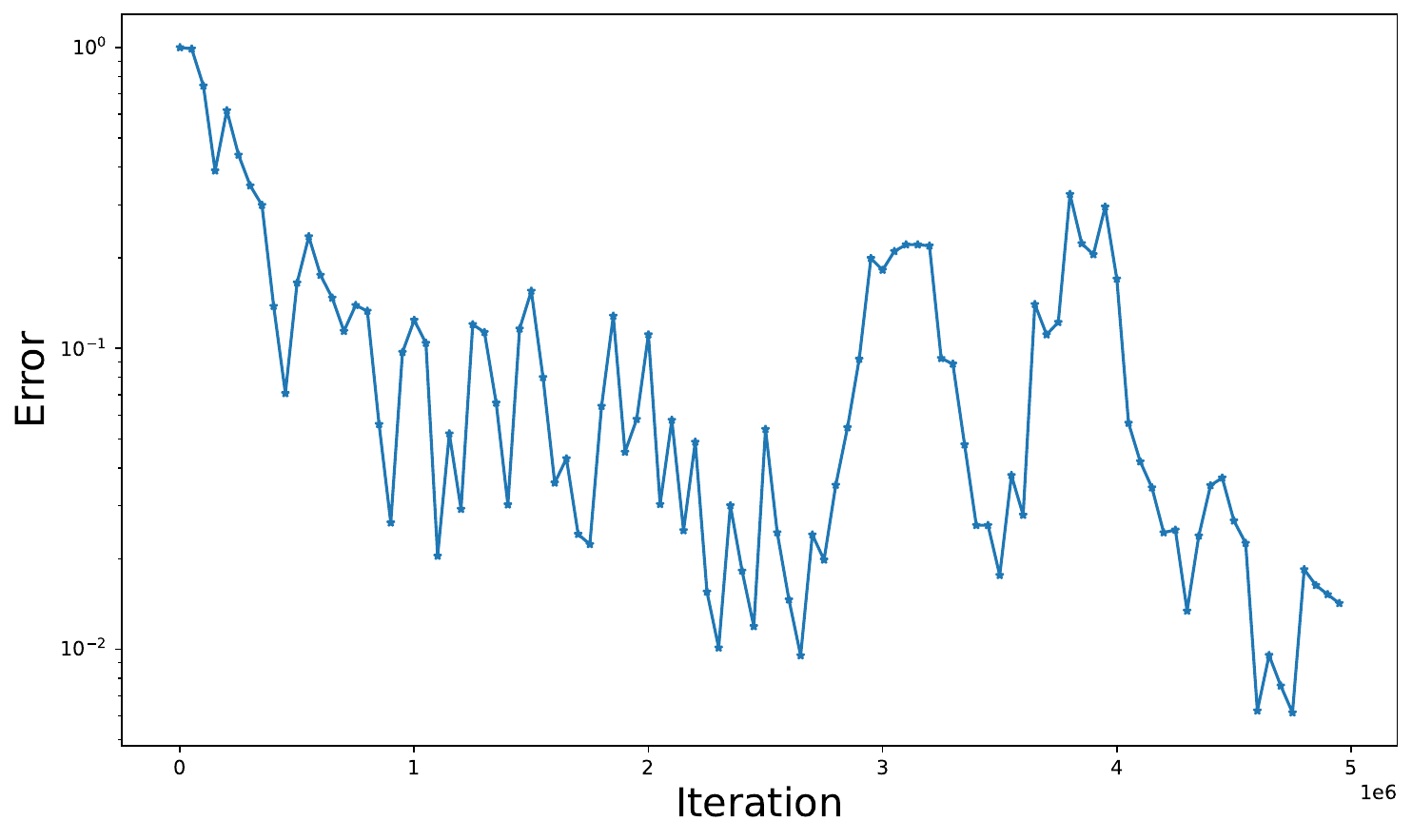}
    \caption{$d=100$ error}\label{sinn_100_error_u}
  \end{subfigure}
  \caption{Training dynamics of SINN performance for high-dimensional Poisson's equations. The first row reports loss curves and the second row reports relative errors of $u$.}
  \label{fig:sinn_loss_error_u}
\end{figure}
\subsection{Models for Poisson's Equations}\label[appsec]{appendix: models for high-poisson equations}
Here we use \cref{eq: high_poisson} with periodic boundary conditions for all models. 

\paragraph{PINN.}
The PINN uses the residual equation directly, thus the loss function is as follows:
\begin{equation}
    \mathcal{L}=\mathcal{L}_r+100\mathcal{L}_b,
\end{equation}
where
\begin{equation}
\mathcal{L}_r(\theta) = \frac{1}{N_f}\sum_{j=1}^{N_f} 
\Big| \Delta [u^\theta](\boldsymbol{x}_j^r) - f(\boldsymbol{x}_j^r) \Big|^2 ,
\end{equation}
\begin{equation}
\mathcal{L}_b(\theta) = \frac{1}{N_b}\sum_{j=1}^{N_b} 
\Big| u^\theta(\boldsymbol{x}_j^b) - g(\boldsymbol{x}_j^b) \Big|^2.
\end{equation}
\paragraph{DRM.}
The DRM uses the energy functional equation of \cref{eq: high_poisson} which is as follows:
\begin{equation}
    \mathcal{J}[u]=\int_{[0,2\pi]^d} \left(\frac{1}{2}\|\nabla u(\boldsymbol{x})\|_2^2+f(\boldsymbol{x})u(\boldsymbol{x})\right)\mathrm{d}\boldsymbol{x},
\end{equation}
thus the loss function is as follows:
\begin{equation}
    \mathcal{L}=\mathcal{L}_e+100\mathcal{L}_b,
\end{equation}
where
\begin{equation}
\mathcal{L}_e(\theta) = \frac{1}{N_e}\sum_{j=1}^{N_e} 
\left(\frac{1}{2}\left\|\nabla [u^\theta](\boldsymbol{x}_j^e)\right\|_2^2 + f(\boldsymbol{x}_j^e)u^\theta(\boldsymbol{x}_j^e)\right),
\end{equation}
\begin{equation}
\mathcal{L}_b(\theta) = \frac{1}{N_b}\sum_{j=1}^{N_b} 
\Big| u^\theta(\boldsymbol{x}_j^b) - g(\boldsymbol{x}_j^b) \Big|^2 .
\end{equation}
For NDRM, the $\mathcal{L}_e(\theta)$ changes to  
\begin{equation}
\mathcal{L}_e(\theta) = \frac{1}{N_e}\sum_{j=1}^{N_e} 
\left(\frac{1}{2}\left\|\nabla [u^\theta](\boldsymbol{x}_j^e)\right\|_2^2 + \left(u^\theta(\boldsymbol{x}_j^e)-\mathcal{L}_b(\theta)\right)f(\boldsymbol{x}_j^e)\right),
\end{equation}
\paragraph{SINN.}
The SINN uses the coefficients in the spectral domain, suppose that $\hat{u}_{\boldsymbol{k}}$ is the Fourier coefficients of $u$ and $\hat{f}_{\boldsymbol{k}}$ is the Fourier coefficients of $f$, then the loss function is as follows:
\begin{equation}
    \mathcal{L}=\mathcal{L}_f,
\end{equation}
since $\mathcal{N}$ is a linear operator and Fourier basis has diagonal differential operator, the \cref{eq: sinn_f_loss} is simplified to 
\begin{equation}
    \mathcal{L}_f=
\sum_{\boldsymbol{k}\in \mathcal{K}_{N_f}} \Bigg| 
\left\|\boldsymbol{k}\right\|_2^2\cdot\hat{u}^\theta(\boldsymbol{k})+ \hat{f}_{\boldsymbol{k}}
\Bigg|^2.
\end{equation}
Since SINN holds the periodic boundary condition naturally, the $\mathcal{L}_b$ is removed.  

\subsection{Models for Heat Equations}\label[appsec]{appendix: models for high-heat equations}
Here we use \cref{eq: high_heat} with periodic boundary conditions for all models. 

\paragraph{PINN.}
The PINN uses the residual equation directly, thus the loss function is as follows:
\begin{equation}
    \mathcal{L}=\mathcal{L}_r+100\mathcal{L}_b+100\mathcal{L}_i,
\end{equation}
where
\begin{equation}
\mathcal{L}_r(\theta) = \frac{1}{N_f}\sum_{j=1}^{N_f} 
\Big| \Delta [u^\theta](\boldsymbol{x}_j^r,t_j^r) - u_t^\theta(\boldsymbol{x}_j^r,t_j^r) \Big|^2 ,
\end{equation}
\begin{equation}
\mathcal{L}_b(\theta) = \frac{1}{N_b}\sum_{j=1}^{N_b} 
\Big| u^\theta(\boldsymbol{x}_j^b,t_j^b) - g(\boldsymbol{x}_j^b,t_j^b) \Big|^2.
\end{equation}
\begin{equation}
\mathcal{L}_i(\theta) = \frac{1}{N_i}\sum_{j=1}^{N_i} 
\Big| u^\theta(\boldsymbol{x}_j^i,0) - h(\boldsymbol{x}_j^i) \Big|^2.
\end{equation}
\paragraph{DRM.}
The DRM uses the energy functional equation of \cref{eq: high_heat} which is as follows:
\begin{equation}
    \mathcal{J}[u]=\int_{[0,2\pi]^d} \left(\frac{1}{2}\|\nabla u(\boldsymbol{x},t)\|_2^2+u_t(\boldsymbol{x},t)u(\boldsymbol{x},t)\right)\mathrm{d}\boldsymbol{x},
\end{equation}
thus the loss function is as follows:
\begin{equation}
    \mathcal{L}=\mathcal{L}_e+100\mathcal{L}_b+100\mathcal{L}_i,
\end{equation}
where
\begin{equation}
\mathcal{L}_e(\theta) = \frac{1}{N_e}\sum_{j=1}^{N_e} 
\left(\frac{1}{2}\left\|\nabla [u^\theta](\boldsymbol{x}_j^e,t_j^e)\right\|_2^2 + u_t^\theta(\boldsymbol{x}_j^e,t_j^e)u^\theta(\boldsymbol{x}_j^e,t_j^e)\right),
\end{equation}
\begin{equation}
\mathcal{L}_b(\theta) = \frac{1}{N_b}\sum_{j=1}^{N_b} 
\Big| u^\theta(\boldsymbol{x}_j^b,t_j^b) - g(\boldsymbol{x}_j^b,t_j^b) \Big|^2 .
\end{equation}
\begin{equation}
\mathcal{L}_i(\theta) = \frac{1}{N_i}\sum_{j=1}^{N_i} 
\Big| u^\theta(\boldsymbol{x}_j^i,0) - h(\boldsymbol{x}_j^i) \Big|^2.
\end{equation}
\paragraph{SINN.}
The SINN uses the coefficients in the spectral domain, suppose that $\hat{u}_{\boldsymbol{k}}$ is the Fourier coefficients of $u$ and $\hat{h}_{\boldsymbol{k}}$ is the Fourier coefficients of $h$, then the loss function is as follows:
\begin{equation}
    \mathcal{L}=\mathcal{L}_f+100\mathcal{L}_i,
\end{equation}
since $\mathcal{N}$ is a linear operator and Fourier basis has diagonal differential operator, the \cref{eq: sinn_f_loss} is simplified to 
\begin{equation}
    \mathcal{L}_f=
\sum_{(\boldsymbol{k},t)\in \mathcal{K}_{N_f}\times \mathcal{T}_{f}} \Bigg| 
\left\|\boldsymbol{k}\right\|_2^2\cdot\hat{u}^\theta(\boldsymbol{k},t)+ \hat{u}_t^\theta(\boldsymbol{k},t)
\Bigg|^2.
\end{equation}
\begin{equation}
    \mathcal{L}_i(\theta) = \sum_{\boldsymbol{k}\in \mathcal{K}_{N_i}} 
\Big| \hat{u}^\theta(\boldsymbol{k},0) - \hat{h}_{\boldsymbol{k}} \Big|^2.
\end{equation}
Since SINN holds the periodic boundary condition naturally, the $\mathcal{L}_b$ is removed.  

\subsection{Challenging Case}
This section gives results for high-dimensional Poisson's equations on the challenging benchmark described in \cref{appendix: high-dim}. The results are summarized in \cref{table: high_pdes_possion_challenge} for varying dimensionalities and values of $N_{\text{valid}}$. Overall, SINN achieves consistently low numerical errors across all tested dimensions, while maintaining stable performance as the dimensionality increases. In contrast, PINN and DRM exhibit rapidly deteriorating accuracy as either the dimensionality or $N_{\text{valid}}$ increases under the same training configurations. This behavior is consistent with the reported spectral bias phenomena in PINNs \cite{wang2022and}. The effect is particularly pronounced in the $d=5$ experiments in \cref{table: high_pdes_possion}. When $N_{\text{valid}} = 2$, the valid frequency set is ${|\boldsymbol{k}|} = \{3, 9\}$, whereas for $N_{\text{valid}} = 50$, it is ${|\boldsymbol{k}|} = \{1, 2, 3, 4, 5, 6, 7, 8, 9\}$. Due to the larger spectral gap in the former case, PINN and DRM exhibit significantly larger approximation errors for $N_{\text{valid}} = 2$ than for $N_{\text{valid}} = 50$, despite the latter involving a larger number of valid frequencies. These results suggest that SINN provides a more robust inductive bias for high-dimensional and multiscale problems.
\begin{table}[ht]
  \caption{High-dimensional Poisson's equations with challenging benchmark.}
  \label{table: high_pdes_possion_challenge}
  \begin{center}
      \begin{sc}
        \begin{tabular}{llccc}
          \toprule
          Dim   & $N_{\text{valid}}$ & PINN (PirateNets)        & DRM (NDRMs)              & SINN                     \\
          \midrule
          \multirow{2}{*}{2}     & 2    & $5.82\text{e-}4 \pm 1.75\text{e-}4$ & $3.18\text{e-}3 \pm 7.93\text{e-}4$ & $1.51\text{e-}5 \pm 1.76\text{e-}5$ \\
                                  & 5    & $1.23\text{e-}3 \pm 6.89\text{e-}4$ & $3.40\text{e-}3 \pm 3.06\text{e-}4$ & $7.53\text{e-}7 \pm 3.39\text{e-}7$ \\ \midrule
          \multirow{2}{*}{5}     & 2    & $7.24\text{e-}1 \pm 4.62\text{e-}2$ & $1.16\text{e-}1 \pm 5.51\text{e-}3$ & $3.40\text{e-}7 \pm 3.72\text{e-}7$ \\
                                  & 50   & $3.92\text{e-}1 \pm 1.65\text{e-}2$ & $5.69\text{e-}2 \pm 1.00\text{e-}3$ & $6.21\text{e-}7 \pm 5.46\text{e-}7$ \\ \midrule
          \multirow{3}{*}{10}    & 2    & $5.05\text{e-}1 \pm 7.55\text{e-}2$ & $7.02\text{e-}2 \pm 7.51\text{e-}4$ & $1.87\text{e-}7 \pm 1.76\text{e-}7$ \\
                                  & 50   & $9.32\text{e-}1 \pm 3.25\text{e-}2$ & $5.38\text{e-}1 \pm 1.53\text{e-}3$ & $2.38\text{e-}7 \pm 3.53\text{e-}8$ \\
                                  & 100  & $> 1$ & $9.95\text{e-}1 \pm 8.66\text{e-}3$ & $3.54\text{e-}7 \pm 7.39\text{e-}8$ \\ \midrule
          \multirow{3}{*}{30}    & 2    & $> 1$ & $> 1$ & $4.14\text{e-}7 \pm 2.31\text{e-}7$ \\
                                  & 150  & $> 1$ & $> 1$ & $4.46\text{e-}7 \pm 1.65\text{e-}8$ \\
                                  & 300  & $> 1$ & $> 1$ & $3.36\text{e-}7 \pm 1.27\text{e-}8$ \\ \midrule
          \multirow{3}{*}{50}    & 2    & $> 1$ & $> 1$ & $4.04\text{e-}7 \pm 4.78\text{e-}8$ \\
                                  & 250  & $> 1$ & $> 1$ & $4.92\text{e-}7 \pm 2.22\text{e-}8$ \\
                                  & 500  & $> 1$ & $> 1$ & $2.22\text{e-}3 \pm 1.68\text{e-}3$ \\ \midrule
          \multirow{4}{*}{100}   & 2    & $> 1$ & $> 1$ & $7.53\text{e-}7 \pm 1.01\text{e-}7$ \\
                                  & 500  & $> 1$ & $> 1$ & $2.89\text{e-}3 \pm 2.11\text{e-}3$ \\
                                  & 1000 & $> 1$ & $> 1$ & $6.76\text{e-}3 \pm 1.26\text{e-}3$ \\
                                  & 10000 & $> 1$ & $> 1$ & $4.09\text{e-}1 \pm 1.05\text{e-}1$ \\
          \bottomrule
        \end{tabular}
      \end{sc}
  \end{center}
  \vskip -0.1in
\end{table}

\subsection{Cost}
Here we report the computational cost of these models for high-dimensional Poisson's equations. For PINN, we train with $N_f=2000$ and $N_b=2000$ for $50000$ iterations. For DRM, we train with $N_e=50000$ and $N_b=50000$ for $50000$ iterations. For SINN, we train with $N_f=50000$ for $50000$ iterations. Notably, the number of training points for PINN is smaller than for the others due to out-of-memory issues: (i) the network used is larger than the others, and (ii) it requires second derivatives. However, this smaller number does not influence the final results since PINN has already converged before $50000$ iterations.

\cref{table: high_pdes_possion_cost} shows the model size and training rate of the aforementioned models and establishes that SINN is the most effective approach for high-dimensional PDE solving. Notably, for PINN, due to Fourier feature embeddings, the model size (learnable parameters) does not depend on the number of input channels. Apart from the model size of PINN, the scalability of these methods under high-dimensional conditions varies considerably. 

We also report the memory usage and inference time of these models in \cref{table: high_pdes_poisson_memory_inference}. As shown in \cref{table: high_pdes_poisson_memory_inference}, SINN requires substantially less memory than PINN and DRM, primarily because it avoids automatic differentiation of spatial derivatives. In contrast, SINN has higher inference time, since it must process both the real and imaginary parts of spectral coefficients. Nevertheless, the inference cost remains acceptable, and SINN still achieves the fastest training shown in \cref{table: high_pdes_possion_cost}.
\begin{table}[ht]
  \caption{Cost of high-dimensional Poisson's equations.}
  \label{table: high_pdes_possion_cost}
  \begin{center}
      \begin{sc}
        \begin{tabular}{llccc}
          \toprule
          Dim   &  & PINN        & DRM            & SINN                     \\
          \midrule
          \multirow{2}{*}{2}     & Model Size    & $5.27\text{e+}5$ & $3.09\text{e+}4$ & $3.28\text{e+}4$ \\
                                  & Rate& $1.26\text{e+}2$  & $1.26\text{e+}2$ & $1.04\text{e+}3$ \\ \midrule
          \multirow{2}{*}{5}     & Model Size    & $5.27\text{e+}5$ & $3.15\text{e+}4$ & $3.56\text{e+}4$\\
                                  & Rate& $4.58\text{e+}1$  & $1.09\text{e+}2$ & $5.84\text{e+}2$ \\ \midrule
          \multirow{2}{*}{10}    & Model Size    & $5.27\text{e+}5$ & $3.25\text{e+}4$ & $3.61\text{e+}4$ \\
                                  & Rate& $2.10\text{e+}1$  & $8.93\text{e+}1$  & $2.27\text{e+}2$ \\ \midrule
          \multirow{2}{*}{30}    & Model Size    & $5.27\text{e+}5$ & $3.65\text{e+}4$ & $3.81\text{e+}4$ \\
                                  & Rate& $6.48\text{e+}0$  & $2.04\text{e+}1$  & $4.36\text{e+}1$ \\ \midrule
          \multirow{2}{*}{50}    & Model Size    & $5.27\text{e+}5$ & $4.05\text{e+}4$ & $4.02\text{e+}4$ \\
                                  & Rate& $3.76\text{e+}0$  & $1.21\text{e+}1$ & $2.82\text{e+}1$ \\ \midrule
          \multirow{2}{*}{100}   & Model Size    & $5.27\text{e+}5$ & $5.05\text{e+}4$ & $4.52\text{e+}4$ \\
                                  & Rate& $1.46\text{e+}0$ & $3.45\text{e+}0$  & $1.46\text{e+}1$ \\
          \bottomrule
        \end{tabular}
      \end{sc}
  \end{center}
  \vskip -0.1in
\end{table}

\begin{table}[ht]
  \caption{Memory usage and inference time on high-dimensional Poisson's equations.}
  \label{table: high_pdes_poisson_memory_inference}
  \begin{center}
      \begin{sc}
        \begin{tabular}{llccc}
          \toprule
          DIM & Metric & PINN & DRM & SINN \\
          \midrule
          \multirow{2}{*}{2}   & Memory usage (MB) & $1.26\text{e+}3$ & $2.27\text{e+}3$ & $9.95\text{e+}2$ \\
                               & Inference time (s) & $3.43\text{e-}2$ & $1.68\text{e-}2$ & $1.99\text{e-}2$ \\\midrule
          \multirow{2}{*}{5}   & Memory usage (MB) & $1.27\text{e+}3$ & $2.27\text{e+}3$ & $9.95\text{e+}2$ \\
                               & Inference time (s) & $3.75\text{e-}2$ & $1.71\text{e-}2$ & $2.33\text{e-}2$ \\\midrule
          \multirow{2}{*}{10}  & Memory usage (MB) & $1.78\text{e+}3$ & $2.27\text{e+}3$ & $9.95\text{e+}2$ \\
                               & Inference time (s) & $3.82\text{e-}2$ & $1.71\text{e-}2$ & $2.68\text{e-}2$ \\\midrule
          \multirow{2}{*}{30}  & Memory usage (MB) & $4.86\text{e+}3$ & $2.28\text{e+}3$ & $9.95\text{e+}2$ \\
                               & Inference time (s) & $4.20\text{e-}2$ & $1.74\text{e-}2$ & $4.99\text{e-}2$ \\\midrule
          \multirow{2}{*}{50}  & Memory usage (MB) & $4.87\text{e+}3$ & $2.29\text{e+}3$ & $9.95\text{e+}2$ \\
                               & Inference time (s) & $4.90\text{e-}2$ & $1.75\text{e-}2$ & $6.82\text{e-}2$ \\\midrule
          \multirow{2}{*}{100} & Memory usage (MB) & $6.95\text{e+}3$ & $2.31\text{e+}3$ & $9.97\text{e+}2$ \\
                               & Inference time (s) & $5.04\text{e-}2$ & $1.80\text{e-}2$ & $1.15\text{e-}1$ \\
          \bottomrule
        \end{tabular}
      \end{sc}
  \end{center}
  \vskip -0.1in
\end{table}

\section{Metrics}\label[appsec]{appendix: metrics}

In this paper, we utilize the relative $L^2$ error:
\begin{equation}\label{eq: relative l2 error}
    \mathrm{Relative L2}=\frac{\|\boldsymbol{y}-\hat{\boldsymbol{y}}\|_2}{\|\boldsymbol{y}\|_2},
\end{equation}
where $\boldsymbol{y}\in\mathbb{R}^N$ is the target value, and $\hat{\boldsymbol{y}}\in\mathbb{R}^N$ is the value predicted by the neural network.

\end{document}